\newcommand\del[1]{}
\newcommand\think[1]{}
\newcommand\new[1]{}
\newcommand\zus[1]{}
\newcommand\comd[1]{} 
\newcommand\Redd[1]{} 
\def\bdm{\begin{displaymath}}
\def\edm{\end{displaymath}}
\def\bea{\begin{eqnarray}}
\def\eea{\end{eqnarray}}
\newtheorem{theorem}{Theorem}[section]
\newtheorem{lem}[theorem]{Lemma}
\newtheorem{defn}[theorem]{Definition}
\newtheorem{prop}[theorem]{Proposition}
\newtheorem{coro}[theorem]{Corollary}
\newtheorem{remark}{Remark}
\numberwithin{equation}{section}
\begin{document}

\date{\today}

\title[FSNSEs  \today]
{On the fractional stochastic Navier-Stokes equations on the Torus and on bounded domains.}

\author[L.{} Debbi]{Latifa Debbi.\\ {}\\  \SMALL This work is supported by  Alexander von Humboldt Foundation.}

\email{ldebbi@ymath.uni-bielefeld.de}
\address{Fakultat fur Mathematik, Universitat Bielefeld. Universitatsstratsse 25, 33615, Bielefeld. Germany.}

\maketitle

\begin{abstract}
In this work, we introduce and study the well-posedness of the multidimensional fractional stochastic Navier-Stokes equations  
on bounded domains and on the torus (Briefly dD-FSNSE).  We prove  the existence of a martingale solution for the general regime. We establish the uniqueness in the case a martingale solution enjoys  a condition of Serrin's type on the fractional Sobolev spaces. If an $L^2-$ local weak (strong in probability) solution exists and enjoys  conditions of Beale-Kato-Majda type, this solution is global and unique. These conditions are automatically satisfied for the 2D-FSNSE on the torus if the initial data has $ H^1-$regularity and the diffusion term satisfies growth and Lipschitz conditions corresponding to $ H^1-$spaces. The case of 2D-FSNSE on the torus is studied separately. In particular, we established thresholds for the global existence, uniqueness,  space and time regularities of the weak (strong in probability) solutions in the subcritical regime. 

\del{In this work, we introduce and study the wellposedness of the multidimensional fractional stochastic Navier-Stokes equation  
on bounded domains and on the torus (Briefly dD-FSNSE).  For the subcritical regime, we establish some thresholds for which  maximal local $L^q-$mild and $L^2-$weak-strong solutions exist and have a specified required space regularity. We prove that under conditions of Beale-Kato-Majda type, these solutions are global and unique. These conditions are automatically satisfied for the 2D-FSNSE on the torus if the initial data has $ H^1-$regularity and the diffusion term satisfies growth and Lipschitz conditions corresponding to $ H^1-$spaces.  The case of 2D-FSNSE on the torus is studied separately and improved thresholds, guaranties the global existence, the uniqueness, the space and time regularities of weak (strong in probability) solutions have been obtained. In particular, we prove for the subcritical and critical regimes the existence and the uniqueness of $L^2-$global weak 
solution with continuous trajectories. A space regularity result is also obtained for the subcritical regime. For the general regime, we prove  the existence of a martingale solution  and we prove the uniqueness under a condition of Serrin's type on the fractional Sobolev spaces.}

\del{In this work, we introduce and study the wellposedness of the multidimensional fractional stochastic Navier-Stokes equation  
on bounded domains and on the Torus (Briefly FSNSE).
In particular, we investigate two approaches to define this equation corresponding to the difficulties coming from the presence of the boundaries. We prove; the existence, uniqueness and the space regularity of a global weak (strong in probability) solution for the $2D-$FSNSE on the Torus in 
the subcritical and critical regimes, $ \alpha \in [1, 2]$ resp. $ \alpha \in (1, 2]$ resp. $ \alpha \in [\frac43, 2]$  and the local  existence and the
regularity of $ L^q-$mild solution for the $dD-$FSNSE,  on bounded domains and on the Torus, 
with $ d\geq 2$,
$ \alpha \in (1+\frac dq, 2]$ and $ q>d$. Under an extra condition on the noise, this solution is global and unique for the 2D-FSNSE on the Torus. If in addition, the mild solution of the $dD-$FSNSE, with $ d\geq 2$, enjoy a kind of Giga-Matsui-Sawada condition, then it is also global and unique.
We also prove the local existence of weak  (strong in probability) solution of the $dD-$FSNSE on bounded domains and on the Torus, with $ d\in \{2, 3\}$ and $ \alpha \in (1+\frac{d-1}3, 2]$. These solutions are global for the FSNSEs on the 2D-torus and on the 3D-torus under  a kind of Beale-Kato-Majda condition. The existence of a martingale solution for the general regime and the uniqueness 
of the martingale and the weak solutions under a similar Serrin's conditions on the Sobolev spaces, are also obtained for the $dD-$FSNSE on bounded domains and on the Torus.}

\vspace{0.25cm}
Keywords: {Fractional stochastic Navier-Stokes equation, classical Navier-Stokes equation,
fractional stochastic vorticity Navier-Stokes equation, 
Q-Wiener process, trace class operators, subcritical, critical, supercritical, dissipative and hyperdissipative regimes, 
martingale, \del{mild, }global and local  weak-strong solutions, 
Riesz transform, Serrin's condition, Beale-Kato-Majda condition, fractional Sobolev spaces, 
\del{$\gamma-$radonifying operators,
UMD Banach spaces of type 2,}pseudo-differential operators, Skorokhod embedding theorem, Faedo-Galerkin approximation, compactness method, representation theorem.}

\vspace{0.15cm}

Subjclass[2000]: {58J65, 60H15, 35R11.}
\end{abstract}

\section{Introduction}\label{sec-intro}

The Navier-Stokes equation (briefly NSE) has been derived,  more than one century ago, by the engineer C.L. Navier 
to describe the motion of an incompressible Newtonian fluid. Later, it has been reformulated by the mathematician-physicist 
G. H. Stokes.  Since that time, this equation continues to attract  a great deal of attention 
due to its mathematical  and physical importance. This equation appears,
alone or coupled with other active and passive scalar equations, in the study of many phenomena, see e.g. the list of references in this 
work and in \cite{Debbi-scalar-active}. The 3D-stochastic Navier-Stokes equation (briefly 3D-SNSE) is the most realistic model in fluid dynamics and  for many other physical purposes, see e.g. \cite{Temam-NS-Functional-95}. The 2D-SNSE is
used as an approximation of the 3D model when the velocity of the fluid belongs to a plane\del{ (surface velocity)}, as  is the case for  basins and oceans.
The 2D-SNSE on a bounded domain $ O\subset \mathbb{R}^2$
governs the flow of a fluid which fills in infinite cylinder of cross-section  $ O$ and moves parallel to the plane of $ O$.
Physically, NSE on the torus is not a realistic model, but it is used for some idealizations and for homogenization problems in turbulence see e.g. \cite{Foias-book-2001}.\del{ps29-30} Mathematically, basic questions like the existence and the uniqueness of a global
smooth solution of the dD-NSE is still an open problem for $ d\geq 3$. In particular, for $ d=3$, the statement above is one formulation of the so called the millennium problem of the Navier-Stokes equation. 
 The main difficulty in the study of the dD-NSE is related to the nonlinear term.
In particular, as this latter comes from kinematical considerations, i.e. deduced from a mathematical calculus,  it is no possible to
change or to replace it.

\vspace{0.25cm}

In this work, we deal with the d-dimensional fractional stochastic Navier-Stokes equation (dD-FSNSE) on bounded domains and on the torus.  One of the benefits  of the study of the fractional Navier-stokes equation  is  to contribute in the understanding of  millennium problem. In fact, this last is regarded as a dimensional problem due to the fact that, contrarily to the 3D-NSE, the 2D-NSE is well-posed and well understood.  However, it is also known that the 3D-hyperdissipative  Navier-Stokes equation admits a  global
 classical solution, provided that the order of dissipation is greater 
than or equal to $ \frac52$, see e.g. \cite{Lions-methodes,  WUJ-06}. 
In \cite{KatzPavlo-Caffa-02}, the authors established a cheap
Caffarelli-Kohn-Nirenberg inequality for the 3D-hyperdissipative Navier-Stokes equation. 
This latter is also used to regularize the classical NSE, see e.g. 
\cite{Lions-methodes, Mattingly-sinai02}.
 Therefore, in addition to the dimension, the problem of the dD-NSE could also be regarded as 
a dissipative problem as well. Moreover, the author claims that the 2D-FSNSE behaves, i.e. exhibits difficulties, like the classical 3D-NSE, see the proof in Section \ref{sec-Domain}. The global existence of the solution for the 2D-FSNSE is obtained by using the vorticity regularization effect. This proves one of the classical conveniences stating that the main differences between the 2D \& the 3D NSE appears in the vorticity, see e.g. \cite{Chemin-Book-98}. To support more the authors's claim above, we draw attention to the undimensional similarity between the $3D-$vorticity NSE,  see e.g. \cite{Chemin-Book-98, Giga-al-Globalexistence-2001} and the no-free divergence mode scalar active equation studied in \cite{Debbi-scalar-active}. For this latter, we are not able to prove the global existence.
 
\vspace{0.25cm}

 Recently, the author studied a class of  fractional stochastic active scalar equations generalizing, among many other equations, the $2D-$fractional stochastic vorticity Navier-Stokes equation and the dD-stochastic quasi-geostrophic equation \cite{Debbi-scalar-active}. In particular, thresholds to ensure the existence, uniqueness and the regularities of several kinds of solutions have been established. The author characterized, among others, the following two intrinsic thresholds $ \alpha_0(d, q):= 1+\frac dq,\; q>d$ and $ \alpha_0(d):= 1+\frac{d-1}3$, for $ d\in \{2, 3\}$, 
which guarantee the existence of the $ L^q-$ respectively the $ L^{\frac{3d}{d-1}}$-mild solutions (weak-strong as well). Other critical dissipation values are also obtained according to the different Sobolev regularities required for the solutions.\del{ It is also importance here to mention the differential dimension of the domain of the fractional operator,\footnote{See the definition of the differential dimension of a Sobolev space in \cite{R&S-96}.} $ c_{dd}:= \alpha-\frac dq$, which is in this work strictly bigger than 1. In the classical NSE, the differential dimension coresponding to the 2D-NSE ($\alpha=2,\; d=q=2$)  and to the 3D-NSE ($\alpha=2,\; d=3,\; q=2$) are $ c_{dd}= 1$ and 
$ c_{dd}= \frac12$ respectively.}

\vspace{0.25cm}

Motivated by the results in \cite{Debbi-scalar-active}, we try to make precise some balance relationships between the dissipation order, the dimension and the regularity of the solutions and establish  dissipative thresholds for the well-posedness of the $dD-$FSNSE. In this work, we consider the Hilbert setting. To the best knowledge of the author, the present work is the first in the target to 
study the well-posedness of the $dD-$Navier-Stokes equation, fractional and classical, from this triple-view, i.e. simultaneously taking into account the dissipation, the regularity and the dimension, quantifying the balance between them and establishing optimal thresholds.

\del{in this work, we make precise a balance relationship between the dissipation order, the dimension and the
regularity of the solutions and establish  dissipative thresholds for the well-posedness of the $dD-$FSNSE. In particular, 
we characterize the following two intrinsic thresholds $ \alpha_0(d, q):= 1+\frac dq,\; q>d$ and 
$ \alpha_0(d):= 1+\frac{d-1}3$, for $ d\in \{2, 3\}$, 
which guarantee the local existence of the $ L^q-$mild respectively $L^2-$ duality-martingale solutions for the dD-FSNSEs. Other critical dissipation values are also obtained according to the different Sobolev regularities required for the solutions.\del{ It is also importance here to put a great emphasis on the optimal role of the space $ L^{\frac{3d}{d-1}}$\del{for the existence of the weak and the mild solutions and to put emphasis on} and on the critical differential dimension. The critical differential dimension 
we are able to deal with in this work is $c_{cdd} =1$, i.e. the differential dimension of the domain of the 
fractional operator,\footnote{See the definition of the differential dimension of a Sobolev space in \cite{R&S-96}.} $ c_{dd}:= \alpha-\frac dq$, is strictly bigger than 1. In the classical NSE, the differential dimension coresponding to the 2D-NSE ($\alpha=2,\; d=q=2$)  and to the 3D-NSE ($\alpha=2,\; d=3,\; q=2$) are $ c_{dd}= 1 (= c_{cdd})$ and 
$ c_{dd}= \frac12 (<c_{cdd} =1)$ respectively.} It is also importance here to put a great emphasis on the optimal role of the space $ L^{\frac{3d}{d-1}}$\del{for the existence of the weak and the mild solutions and to put emphasis on} and on the differential dimension of the domain of the fractional operator,\footnote{See the definition of the differential dimension of a Sobolev space in \cite{R&S-96}.} $ c_{dd}:= \alpha-\frac dq$, which is in this work strictly bigger than 1. In the classical NSE, the differential dimension coresponding to the 2D-NSE ($\alpha=2,\; d=q=2$)  and to the 3D-NSE ($\alpha=2,\; d=3,\; q=2$) are $ c_{dd}= 1$ and 
$ c_{dd}= \frac12$ respectively.

\del{ One of the possible ways to improve actual results for the  Navier-Stokes equation, classical and fractional, would be to  elaborate technical tools valid for a critical differential dimension  $c_{cdd}\leq1$.} One of the possible ways to improve actual results for the  Navier-Stokes equation, classical and fractional, would be to develop the techniques and the tools used for the FNSE, as those presented in this work, to be valid for a differential dimension  $c_{dd}\leq1$. To the best knowledge of the author, the present work is the first to 
study the well-posedness of the $dD-$Navier-Stokes equation, fractional and classical, from this triple-view, i.e. simultaneously taking into account the dissipation, the regularity and the dimension, quantifying the balance between them\del{ dissipation, the regularity and the dimension} and establishing optimal thresholds.}

\del{Consequently, it is easy to deduce the optimal role of the $ L^{\frac{3d}{d-1}}-$space for the existence of the weak and the mild solutions simultaneously.
 In particular,  the  $ L^6-$space plays an such role for the 2D-FNSEs. Furthermore,  Let us also enphase that  
 To the best knowledge of the author, the present work is the first to 
study the well posdeness of the $dD-$Navier-Stokes equation, fractional and classical,  from this triplet-view, 
\del{from this triplet-perspectives, from this triplet-framework,}  quantify the balance between the dissipation, regularity and dimension and establish, following this techniques used, optimal thresholds,
after the author's first work about the scalar active equations \cite{Debbi-scalar-active}.}
\vspace{0.25cm}

To further clarify what is new in the present work, it is of great importance to point out some features and some delicate problems 
related to the FSNSE. Some of these problems are inherited  from the classical NSE. Other problems for general fractional stochastic partial differential equations have been discussed in
\cite{Debbi-scalar-active} see also \cite{DebbiDozzi1}.

\vspace{0.25cm}
The energy method applied for the dissipative PDEs is based on the ability to control the  kinetic energy $e(u)$ 
and the
enstrophy energy $ E(u)$\footnote{strophy comes from Greek and means rotation.} of the solution $u$. 
Recall that
\begin{equation}\label{eq-energy-enstrophy}
 e(u):= \frac12\int_O|u|^2dx, \;\;\;\; E(u):= \int_O|\nabla u|^2dx.
\end{equation}
The control of these quantities for the classical NSE emerges from the structure of
the equation itself. However, for the FNSE,
a priori, there is no guarantee about the control of the enstrophy energy. The structure of this equation\del{FNSE} guarantees 
only the control of
a weaker Sobolev norm. This fact is again  due to the
weakness of the fractional dissipation. In some special cases like the $ 2D-$FNSE on the torus, see Section 
\ref{sec-Torus}, 
the control of the
enstrophy energy emerges from the structure of the fractional equation.  
This improvement  and also the improvement  of the results in this case are consequences of the 
$ H^{1, 2}$-orthogonality. These facts generalize the classical features  
known for the classical NSE on the 2D-torus, see e.g. \cite{Temam-NS-Functional-95}\del{\cite[ps 19-20]{Temam-NS-Functional-95}}.

\vspace{0.25cm}

\del{As a consequence of the structure of the FSNSE, the estimation of the nonlinear term is one of the delicate features of the} 

A delicate technical feature of FNSE is  the  estimation of the nonlinear term. In fact, as the structure of 
the equation cannot initially
guarantee  the boundedness of the enstrophy energy,  mathematically, we are not allowed to estimate terms  
by $ H^1-$norm. Moreover, contrarily to the classical NSE, where the $ H^1-$space plays a common role for the 
linear and the nonlinear terms, the components of the  Gelfand triple corresponding to 
the FNSE are not automatically coherent with respect to the two terms. More precisely, the nonlinear term is not bounded on the domain of definition of the fractional Stokes operator. Indeed, this latter is larger than $H^1$,  see details in Section 
\ref{sec-nonlinear-prop} and in Remark \ref{Rem-1}.\del{ the domain of definition of the fractional operator is so large that the nonlinear term could not  be bounded on, see details in Section 
\ref{sec-nonlinear-prop}.} Therefore, an extension of the nonlinear term is needed. In order to construct a coherent Gelfand triple, to extend and to estimate the nonlinear term, we have established more 
refined estimates via fractional
 Sobolev spaces of order less than one. These estimates are completely new.

\vspace{0.25cm}

\del{We end this enumeration of novelties and features by mentioning the following two  questions, which are simple to resolve but important to deal with.} We add to this enumeration of novelties and features the following two  questions, which are simple to resolve but important to deal with. To introduce the FSNSE defined on $ O= \mathbb{R}^d$ or $ O= \mathbb{T}^d$, 
we take the fractional power of the Stokes operator, which is here equal to minus the Laplacian. Moreover, these equations take more advantage of the facts that the fractional power of the Stokes operator is  defined as a pseudodifferential operator and 
commutes with the Helmholtz projection and with the partial differential operators $ \partial_j, j=1,\cdots,d$. Contrarily to these two cases, the situation for the FSNSE on a bounded domain, $ O\subset \mathbb{R}^d$
is much more involved. In fact, it is well known that in this case, the Stokes operator is different than the Laplacian.
To introduce the FNSE on  a bounded domain we can use two approaches by 
taking either the fractional power of the Stokes operator or by taking the fractional power of minus 
the Dirichlet-boundary Laplacian operator and than apply Helmholtz projection. 
Initially, due to the effect of the boundaries and to the application of  Helmholtz projection, we cannot conclude, a priori, whether or not the two approaches yield the same equation.\del{equivalent equations\footnote{For simplicity, one can referee to  the two equations by fractional version of the stochastic NSE (FSNSE) 
respectively the stochastic version of the fractional NSE (SFNSE), however, we could not do such way here as later we shall prove that the two equations are equivalent}} In particular, it is intuitively seen that the fractional equation obtained by the first  approach is more theoretical and the equation obtained by the second approach is more suitable for physical modeling, see more discussion in sections \ref{sec-formulation} \& \ref{sec-Torus} and Appendix \ref{Appendix-Equivalence}. In this work, we introduce both equations and prove that they are well defined and equivalent. The author does not know any works considering  deterministic or stochastic FNSE on bounded domains. 

\vspace{0.25cm}

To prove the global existence of the weak solutions for the 2D-FSNSE on the torus, 
we use the regularization effect of the vorticity and the results from \cite{Debbi-scalar-active}.
For the classical NSE, the evolution equation describing the vorticity is obtained by the application of the 
curl operator on the pathwise velocity equation, see e.g. \cite{Chemin-Book-98, Majda-Bertozzi-02, Marchioro-Puvirenti-Vortex-84}. As the fractional operator is nonlocal, it is of 
great importance to\del{ check whether or not this  equation is still valid} derive the vorticity equation corresponding to the FSNSE.\del{ For the 2D-FSNSE on the torus, we obtain, without difficulties, the fractional stochastic vorticity NSE by application of the curl operator to the abstract integral FSNSE}  We obtain, without difficulties, the 2D-fractional stochastic vorticity NSE by application of the curl operator to the abstract integral 2D-FSNSE. In particular, we investigate, in a rigorous way,
 the curl  of the stochastic term and the composition of the  curl 
and the fractional Stokes operators,\del{. The main ingredients are 
 Fourier transform and the commutativity property of the  fractional 
Stokes operator and the partial differential operators $ \partial_j, j=1,\cdots,d$,} see Appendix \ref{sec-Passage Velocity-Vorticity}. The study  of the FSNSE on a bounded domain
is more difficult in both classical and fractional cases. In fact, 
it is well known that, when boundaries are present for the classical 
NSE (either deterministic or
stochastic), there is no simple boundary condition to impose on the vorticity in such a way that the 
velocity satisfies the right boundary conditions, see e.g. \cite{ 
Giga-vorticity-sing-NS-2011, Giga-al-Globalexistence-2001, Majda-Bertozzi-02, Marchioro-Puvirenti-Vortex-84}.
In the fractional case, a new difficulty emerges due to the  fact that  the boundaries are also included in the definition of the fractional operator.\del{it is even difficult to make rigorously the composition of the curl and the fractional Stokes operator.} 
Therefore, due to these multiple difficulties and to the fact that we need results already proved in the work in progress 
\cite{Debbi-scalar-activeBD}, we  postpone the study of this case. 

\vspace{0.25cm}

\del{We end this enumeration of novelties by drawing attention to the scheme introduced in Section \ref{sec-1-approx-local-solution} to get the local mild solution and to the approximation method used in Section \ref{sec-Domain} to get the maximal  weak solution, see related scheme in  \cite{Kunze-Neereven-Cont-parm-reaction-diffusion-2012} and related approximation in \cite{Mikulevicius-H1-NS-solution-2004}. To the best knowledge of the author, these two tools are new. 

\vspace{0.25cm}}

Recently, the deterministic fractional Navier-Stokes equation has been studied in some works using analytical and probabilistic tools, see e.g. 
\cite{Cannone-Fract-NS-08, Dong-Li- Optimal-GNSE-2009, KatzPavlo-Caffa-02, Tao-global-Regu-Logarithm-2009,
 WUJ-Global-regu-2011, WUJ-06, Zhang-FNSE-stocastic-tool-2012}.\del{ In all these works, the authors  consider the
deterministic FNSE 
on the whole space  $ \mathbb{R}^d$.} The existence and the uniqueness of a local solution for the 
FNSE in Besov space in the subcritical regime and under conditions on the regularity of the 
initial data, have been proved in \cite[Theorems 6.2 \& 6.3]{WUJ-06}. If moreover, the Besov norm of the initial data is dominated by the viscosity, \del{ the author
proved in \cite[Theorems 6.1]{WUJ-06} that }the solution is global \cite[Theorems 6.1]{WUJ-06}. In \cite{Cannone-Fract-NS-08}, 
the authors studied the 2D-FNSE and proved the existence and the uniqueness of a global solution in some Besov spaces.  They also proved that the family of viscosity fractional diffusion solutions converges in $ L^q-$space (with $ q$  depends on $\alpha$)  to the unique
solution of Euler equation. In particular, for the subcritical regime
the convergence is obtained in Besov space. The convergence rates in  
both regimes have been established as well.  In \cite{Dong-Li- Optimal-GNSE-2009}, the authors used the smoothing property of the fractional Oseen kernel, to establish the space analyticity and the decay estimates of the local mild solution of the 
FNSE in the subcritical regime.  The results are proved in time weighted space. The stochastic Lagrangian particle approach has been used in 
\cite{Zhang-FNSE-stocastic-tool-2012}
to prove the local existence and the uniqueness of the solution of the subcrirical  
NSE driven by 
the infinitesimal generator of a L\'evy semigroup. The author assume that the real part of the L\'evy-Khintchine 
formula behaves as a fractional power symbol and that the initial condition has $H^{1, q}-$regularity. The solution conserves the 
$H^{1, q}-$regularity, satisfies the nonlocal NSE in  distribution sense and when the dimension $ d=2$, 
the solution is global \cite[Theorem 3.6 \& 2.4]{Zhang-FNSE-stocastic-tool-2012}. 
In the periodic case and under the large viscosity condition, the author proved that the solution 
is global \cite[Theorem 5.1]{Zhang-FNSE-stocastic-tool-2012}. In addition to the references about the hyperdissipative regime,
\cite{KatzPavlo-Caffa-02, Lions-methodes, Mattingly-sinai02} cited above, we mention here also the references
\cite{Tao-global-Regu-Logarithm-2009, WUJ-Global-regu-2011}, where the authors treated the regularity properties of the solution of the hyperdissipative regime FNSE respectively of 
Magnetohydrodynamic equations with dissipation order $ \alpha\geq 1+d/2$.

\vspace{0.25cm}

As mentioned above, the aim of this work is to study the multi-dimesional fractional stochastic Navier-Stokes equation (dD-FSNSE) on  bounded domains in $ \mathbb{R}^d$ and on the torus $ \mathbb{T}^d$,  with $ d\geq 2$. We investigate the existence, the uniqueness and the regularity of weak (strong in probability) solution for the critical and subcritical 2D-FSNSE on the torus, martingale solution for general regime dD-FSNSE. In particular, we established, in the fractional framework, conditions of Serrin's and of Beale-Kato-Majda type ensuring the global existence and the uniqueness of weak-strong solutions. The threshold $\alpha_0(d): = 1+\frac{d-1}{3}$ and the Sobolev order $\frac{d+2-\alpha}{4}$  also emerge. We do not assume  any restrictions neither on the viscosity nor on the initial condition (smallness or regularity). The local solutions can start from an $ L^2-$ \del{(and even less than the ${L}^q-$regularity, see Remark \ref{Rem-initial-data-mild-solu}) }initial data.\del{The  $H^{1, q}-$regularity of the initial condition is required to prove the global existence. In \cite{Debbi-scalar-active}, the author proved, in a general framework the existence and the uniqueness of the global mild and weak solutions for the 2D-vorticity FSNSE on the torus, for $ \alpha\in (1+\frac dq, 2]$. The solution starts from an $ \mathbb{L}^q-$regularity (and even less than the $ \mathbb{L}^q-$regularity, see Remark \ref{Rem-initial-data-mild-solu}) of the initial data. 
Here, we extend the results to cover all the subcritical and critical regimes, for the 2D-FSNSE on the torus when the smoothness of the initial data is of $ H^1$. In addition, we prove also the  existence of a maximal local weak solution for $ \alpha \in (1+\frac{d-1}{3}, 2]$.} The results obtained in this work  cover not only
our scopes of interest, which are the subcritical, critical and supercritical regimes and the stochastic case, but they are also
valid for the deterministic case and for the dissipative\del{ (Laplacian dissipation)} and the hyperdissipative regimes. In some places, we need the condition $ \alpha < 2$, but in these cases the same result can be proved for $ \alpha\geq2$ by using classical and simpler methods.\del{ the calculus there can be done differently by more simple ways.}

\del{In addition to the subcritical, critical and supercritical regimes and to stochastic case, we are interested in 
the results of this work are still
valid for the deterministic case and for the Laplacian and the hyperdissipation regimes.}

\vspace{0.25cm}

The paper is organized as follow, in Section \ref{sec-formulation}, we introduce rigorously the FSNSE. We prove in Appendix \ref{Appendix-Equivalence} that the two approaches described above yield to the same equation. The main definitions and results are presented in Section 
\ref{sec-Results}.  Section \ref{sec-nonlinear-prop} is devoted to the study of the nonlinear term. The proofs of the results are distributed in the remaining sections 
\ref{sec-Torus}-\ref{sec-global-weak-solution} and Appendices \ref{Appendix-Equivalence}-\ref{Appendix-Sobolev} according to the subtitles.

  \vspace{0.25cm}

\noindent {\bf Preliminary Notations \& General Remarks}
Let  $ \mathbb{N}_k:=\del{\mathbb{N}-}\{j\in \mathbb{N},\; s.t.\; j>k\}$\del{, $ \mathbb{Z}^d:=\{(k_1, k_2, \cdots, k_d),\; k_j\in \}$} and 
$ \mathbb{Z}^d_0:=\mathbb{Z}^d-\{0\}$. 
For $ d \in \mathbb{N}_0$, we denote by $ \mathbb{T}^d$  the $d-$dimensional torus and by $ D(\mathbb{T}^d)$ the set of infinitely differentiable scalar-valued (complex) functions on $ \mathbb{T}^d$.\del{$ C^k(\mathbb{T}^d), k\in \mathbb{N}_0, $ is the set of  $k-$differentiable functions on $ \mathbb{T}^d$,} By a domain " $ O$" we mean an open non empty set.
For either $ O=\mathbb{T}^d $ or $ O\subset \mathbb{R}^d$ bounded, we define $ H_l^{\beta, q}(O):= (H^{\beta, q}(O))^l, l\in \mathbb{N}_0, 
\beta \in \mathbb{R}, 1<q<\infty$, in particular for $ \beta =0$,  $L^q_l(O):= (L^q(O))^l$. Recall that $ H^{\beta, q}(O)$, according to $ O$, are either the Sobolev spaces on a bounded domain or the null average periodic Sobolev spaces on the torus. $ C_0^\infty(O) $ is the set of infinitely differentiable real functions\del{ with values in $\mathbb{R}$}\del{ or in $\mathbb{C}$),} with compact support on the bounded domain $ O\subset \mathbb{R}^d(O)$, $ \mathring{H}_d^{\beta, q}(O), \beta \in \mathbb{R}_+, 1<q<\infty$ is the completion of $ C_0^\infty(O) $ in $ H_d^{\beta, q}(O)$, with $ O\subset \mathbb{R}^d$ bounded. $ \partial_{x_j}$ stands for the partial derivative
with respect to the component $ x_j$, sometimes we also use the notation  $ \partial_j$. We use the notation $ |\cdot|_{X}$ to indicate the norm in $ X$. For simplicity, we denote the norm of a matrix by the corresponding scalar  space notation of the components or by a symbol of this space. The Sobolev norms used are those defined by Riesz-potential. \del{We denote by $\mathcal{L}(X)$ the set of  bounded linear operators on a Banach space $ X$.  
The notation $|.|_{L^{r_1}\rightarrow  L^{r_2}}$
stands for the usual norm of bounded operators from  $L^{r_1}$ to $  L^{r_2}$.} 
The classification of the subcritical,  critical and
superctitical regimes corresponds to  $ \alpha \in (1, 2)$, $ \alpha =1$ and $ \alpha \in (0, 1)$ respectively.
The dissipative ( sometimes called also the Laplacian dissipation) and the hyperdissipative regimes correspond to $ \alpha =2$ respectively to 
$ \alpha >2$. \del{The abbreviations (FSNSE), (SFNSE)and (FNSE) are used respectively for 
fractional 
stochastic Navier-Stokes equation, the
stochastic fractional Navier-Stokes equation, the deterministic fractional 
stochastic Navier-Stokes equation.}The abbreviations (FSNSE), (SNSE) and (FNSE) are used respectively for 
fractional 
stochastic Navier-Stokes equation, the
stochastic Navier-Stokes equation and the deterministic fractional 
stochastic Navier-Stokes equation. The abbreviation i.i.d  means independent and identically distributed.
 $ \{a_1, a_2\}\leq_k b$ (respectively $ \{a_1, a_2\}\geq_k b$) means $ a_k\leq b$, $ a_j< b, \; j\neq k$ 
and $ a_1=a_2 < b$ (respectively $
a_k\geq b$, $ a_j> b, \; j\neq k$ and $ a_1=a_2 > b$).  The expression $ q\leq_\infty q_0$ means 
$ q\leq q_0<\infty$ and $ q< q_0=\infty$.\del{ $ \max^1_>\{a, b\} $ equals $ a$, if $ a>b$ and strictly greater 
than $ b$ if  $ a\leq b$. A mathematical notion with statement is noted by the usual notation with subscription
 "s" e.g. $ [_s=[$ if the statement "s" is satisfied and $ [_s=($ if not.}
We say that  $ q^*$ is the  conjugate of $q $, if for $ 1<q<\infty$,
 $ q^*$ satisfies the equation $ \frac 1q+\frac{1}{q^*} = 1$  and for $ q=1$ respectively 
$ q=\infty$, $ q^*=\infty$ respectively $ q^*=1$.
We define, in distribution sense, the curl of a vector \del{distribution} field 
 $ v=(v_1, v_2)$ by $ curl v := \partial_1v_2 -\partial_2v_1 $. The vorticity matrix of a $dD-$vector field $ v $ on $ \mathbb{R}^d$\del{ to $ \mathbb{R}^d$} is the null diagonal, antisymmetric matrix defined by $ \Omega(v):= ((\Omega(v))_{i, j})_{1\leq i, j\leq d}$, where $ (\Omega(v))_{i, j}:= \partial_i v_j-\partial_j v_i$. For $ d=2$, the vorticity $ \Omega(v)$ is identified to the scalar function $curl v$ and for $ d=3$ to the transpose of the $3D-$vector function $ (\partial_2 v_3-\partial_3 v_2, \partial_1 v_3-\partial_3 v_1, \partial_1 v_2-\partial_2 v_1)$. In Appendix \ref{Sobolev pointwise multiplication-Bounded-Domain}, we have proved that if a Sobolev\del{ embedding and} pointwise multiplication estimate is satisfied for Sobolev spaces on $ \mathbb{R}^d$ and if $ O\subset \mathbb{R}^d$ is a "good" bounded domain, then this pointwise multiplication estimate is also valid for Sobolev spaces on bounded domains. Therefore, in many cases, we referee directely to the source of the estimate  on $ \mathbb{R}^d$.\del{ Moreover, we
also pass from vectorial valued functions to real values functions and vise-versa without 
mentioning every time.}  We use the Einstein summation
convention. Constants vary from line to line and we often delete
their dependence on parameters.

\section{Formulation of the problem.}\label{sec-formulation}

To introduce the fractional stochastic Navier-Stokes equation\del{ on a bounded domain
$ O \subset \mathbb{R}^d$ and on the torus $O=\mathbb{T}^d $, $ d\in \mathbb{N}_1$} we are interested in, 
let us first recall the following classical dD-deterministic Navier-Stokes equation on a bounded domain $ O \subset \mathbb{R}^d$, $ d\in\mathbb{N}_1$
\begin{equation}\label{Eq-classical-SNSE-O-p}
\Bigg\{
\begin{array}{lr}
\partial_tu= \nu \Delta u + (u.\nabla) u - \nabla\pi + f, \;\; t>0, \;
x \in O , \\
div u=0,\;\;\;  \text{(incompressible condition)},\\
\end{array}
\end{equation}
with no-slip boundary condition 
\begin{equation}\label{Eq-Brut-SNSE-BC}
u/ \partial O=0\\
\end{equation}
and initial condition
\begin{equation}\label{Eq-Brut-SNSE-IC}
u(0) = u_0.
\end{equation}
The unknown quantity\del{in Navier-Stokes equation} is the vector $ (u, \pi)$. The vector $u:=(u_j(t, x))_{1\leq j\leq d}$ and 
the scalar $ \pi:=p(t, x)$ describe  respectively the motion velocity and the pressure of
an incompressible fluid evaluated at time $ t$ and at point $ x\in O$. 
The positive constant $ \nu>0$  (later we take, for simplicity, $ \nu =1$) is the viscosity of the fluid  and $ f $ is an external force, which could be random and  could depend on 
the velocity $ u$.
The notation $  (u.\nabla) u$ stands for the product of $ u$ and the gradient Matrix $ (\partial_iu_j)_{1\leq i, j\leq d}$. 
The no-slip boundary condition means that the fluid  is in
a domain which is  bounded by solid impermeable walls. For simplicity, we assume that

\vspace{0.15cm}
  \textit{'' $ O$ is an open bounded and connected set with a $ C^\infty$ boundary
$ \partial O$  and}\\
\vspace{-0.15cm}

\hspace{1.75cm}  \textit{such that $ O$ is on only one side of $ \partial O$.``}
\vspace{0.15cm}

\noindent It is well documented that to deal mathematically with Navier-Stokes equation, we have to split up the
problem  (\ref{Eq-classical-SNSE-O-p}-\ref{Eq-Brut-SNSE-IC})  in to $u-$ respectively $ \pi-$unkown problems.\del{, where the
 unknown in each is respectively $ u$ and $ \pi$.} In this aim, we introduce the following spaces
\begin{equation}
 \mathbb{L}^q(O) := \text{completion in} \;\;  L^q_d(O):= (L^q(O))^d \;\;\;  \;\; \text{ of }
\;\;\; \{ u\in (C_0^\infty(O))^d; div u= 0\},
\end{equation}
\del{where $ C_0^\infty(O) $ is the set of infinitely differentiable functions with compact support (see Notations above).}
\begin{equation}
 \mathcal{Y}_q (O):= \{\nabla p,\;\; p\in H^{1, q}(O)\},
\end{equation}
\del{where $H^{s, q}(O)$ is the Sobolev space of order $ s\in \mathbb{R}$  with  $H^{0, q}(O)=L^q(O) $.} 
Then we get the Helmholtz decomposition
\begin{equation}\label{direct-sum}
  L^q_d(O) = \mathbb{L}^q(O) \oplus\mathcal{Y}_q(O), 
\end{equation}
where the notation $ \oplus$ stands the direct sum, see e.g.  
\cite{Amann-solvability-NSE-2000, Farwig-Sohr-L-p-theory-2005, Foias-book-2001, Fujiwara-Morimoto-L-r-Helmholtz-decomposition-77,
 Giga-Solu-Lr-NS-85, Gigaweak-strong83}. In the case $ q=2$,
the sum above reduced to the orthogonal decomposition see e.g. \cite{Temam-NS-Functional-95, Temam-NS-Main-79} .
Explicitly, $ \mathbb{L}^q(O) $ is given by, see e.g. 
\cite{Daprato-Debussche-Martingale-Pbm-2-3-NS08, Sohr-Generalized-resolvent-94, Foias-book-2001, Gigaweak-strong83, Sohr-al-Imaginary-power-Helmholtz-98} and
\cite[p. 104]{Temam-Inf-dim-88},
\begin{eqnarray}\label{eq-def-main-L-q}
\mathbb{L}^q(O) =\{ u\in L_d^q(O); div u=0, \; on\; O,\; \; u\cdot \vec{n} =0, \; on \; \partial O\},\nonumber\\
\vec{n} \;\;\; \text{is the unit interior normal vector to}\;\; \partial O.
\end{eqnarray}
\noindent We denote by $\Pi_q$ the continuous Helmholtz projection, see e.g. \cite{Abel-Helmut-bound-Stokes-02,  Fujiwara-Morimoto-L-r-Helmholtz-decomposition-77, Gigaweak-strong83,
 Lemari-book-NS-probelems-02},
\begin{equation}\label{Helmoltz-projection}
\Pi_q: L_d^q(O) \rightarrow  \mathbb{L}^q(O).
\end{equation}
It is easy to prove, using \eqref{direct-sum} and the embedding property of the $L^q-$spaces on $ O$\del{ ( with $ O\subset \mathbb{R}^d$ being bounded or 
$ O=\mathbb{T}^d$)} that the restriction of $ \Pi_{q'}$ on $L_d^q(O)$, with $ q'\leq q$,
coincides with $ \Pi_{q}$. Consequently,  we will omit later the dependence in $ q$. The notations $ -A_q^D$ and $ A_q^S$ stand for
the Laplacian with Dirichlet boundary condition respectively
Stokes operator, i.e., see e.g. \cite{Sohr-Generalized-resolvent-94,  Flandoli-Schmalfub-99, Foias-book-2001, Giga-Solu-Lr-NS-85, Giga-Doamian-fract-Stokes-Laplace,  
Temam-NS-Functional-95},
\begin{eqnarray}\label{def-A-D}
 A_q^D = -\Delta \;\;\; \text{with} \;\;  D(A_q^D)&=&   H_d^{2, q}(O) \cap \mathring{H}_d^{1, q}(O)\nonumber\\
&=&\{u\in H_d^{2, q}(O):= (H^{2, q}(O))^d; \; u/\partial O=0\}.
\end{eqnarray}
respectively
\begin{equation}\label{def-A-S}
A_q^S:= -\Pi_q \Delta, \;\; D(A_q^S)= D(A_q^D) \cap \mathbb{L}^q(O).
\end{equation}
\noindent Let us also recall, see e.g. \cite{Fujiwara-Morimoto-L-r-Helmholtz-decomposition-77, Giga-Solu-Lr-NS-85}, that $ \mathbb{L}^q(O)$ is a closed subspace of $ L_d^q(O)$, the operator $ \Pi_q$\del{$: L_d^q(O) \rightarrow L_d^q(O)$} defined on $ L_d^q(O)$(for simplicity we keep the same notation) is bounded and its dual is $ (\Pi_q)^*= \Pi_{q^*},\; ( 1/q+1/{q^*}=1)$ and 
\begin{equation}
(A_q^S)^* = A_{q^*}^S, \;\; (\mathbb{L}^q(O))^*= \mathbb{L}^{q^*}(O), \; ( 1/q+1/{q^*}=1).
\end{equation}
\noindent  Applying Helmholtz projection $ \Pi $ on the two sides of Equation \eqref{Eq-classical-SNSE-O-p},
we get\del{ (\ref{Eq-classical-SNSE-O-p}- \ref{Eq-Brut-SNSE-IC})} on $ \mathbb{L}^q(O)$,
\begin{equation}\label{Eq-classical-SNSE-O}
\Bigg\{
\begin{array}{lr}
\partial_tu= -\nu A^S u + B(u) + \tilde{f}, \;\; t>0, \;
x \in O , \\
 u(0)= \Pi u_0,\\
\end{array}
\end{equation}
where $ \tilde{f}:= \Pi f$ and
\begin{equation}\label{eq-B-projection}
 B(u):= \Pi((u.\nabla) u).
\end{equation}

If $ \tilde{f}$ is random, then Equation \eqref{Eq-classical-SNSE-O} is called stochastic Navier-Stokes equation.

\noindent For $ O=\mathbb{T}^d$, $ d\in \mathbb{N}_1$, we consider the Navier-Stokes problem \eqref{Eq-classical-SNSE-O-p} and \eqref{Eq-Brut-SNSE-IC} and we use the zero space average Lebesgue and Sobolev spaces.  Physically, this condition is meaningful when
the volume forces have zero space average.\del{, see more details  in Section \ref{sec-Comments}} The above calculus remains also 
valid for  $ O=\mathbb{T}^d$ with 
\begin{equation}\label{eq-def-Torus-Lq} 
\mathbb{L}^q(\mathbb{T}^d):= \{ u \in L_d^q(\mathbb{T}^d) := (L^q(\mathbb{T}^d))^d, div u=0 \},\;\;  1<q<\infty,
\end{equation}
\begin{equation}\label{eq-def-Torus-Hs}
\mathbb{H}^{\beta, q}(\mathbb{T}^d):= H_d^{\beta, q}(\mathbb{T}^d)\cap \mathbb{L}^q(\mathbb{T}^d),\; \beta\in \mathbb{R}_+, 1<q<\infty,
\end{equation}
where $ (L^q(\mathbb{T}^d))^d $ and $ (H^{\beta, q}(\mathbb{T}^d))^d,\;\; \beta\in \mathbb{R}, 1<q<\infty $ are the 
corresponding vectorial spaces  of
the following null average Lebesgue and periodic Riesz potential spaces, see e.g.\del{ \cite[ps.44-48]{Foias-book-2001}}
\cite{Debbi-scalar-active, Foias-book-2001, Schmeisser-Tribel-87-book, Sickel-Pontwise-Torus, 
Sickel-periodic spaces-85, Sinai-Mattg-Gibbs-2001},
\del{\begin{eqnarray}
 L^{q}(\mathbb{T}^d):= \{\!\!\!\! &{}&\!\!\!\! f: \mathbb{T}^d \rightarrow \mathbb{C}; f(x):=
\sum_{k\in\mathbb{Z}^d}c_ke^{ikx},\;s.t. \;\;c_0=0, c_{-k}=
\overline{c_k} \;\; \text{and}\nonumber\\
&{}&\;\;
|f|_{L^{q}}:= | \sum_{k\in\mathbb{Z}_0^d}c_ke^{ik\cdot}|_{L^q} <\infty\},
\end{eqnarray}
respectively,
\begin{eqnarray}\label{def-H-s-q}
 H^{\beta, q}(\mathbb{T}^d):= \{\!\!\!\!&{}&\!\!\!\!\!f\in D'(\mathbb{T}^d),\; s.t.\; \hat{f}(0)=0, \; \hat{f}(-k) = \overline{\hat{f}(k)}\; \text{and}\nonumber\\
 &{}&\; |f|_{H^{\beta,q}}:=
| \sum_{k\in\mathbb{Z}_0^d}\del{(1+|k|^2)^\frac \beta2}|k|^\beta\hat{f}(k)e^{ik\cdot}|_{L^q}<\infty\},
\end{eqnarray}
where $ D'(\mathbb{T}^d)$ is the topological dual of $ D(\mathbb{T}^d)$,  $ (c_k:= \hat{f}(k))_{k\in\mathbb{Z}^d}$ 
is the sequence of Fourier
coefficients corresponding to  $ f$,
\begin{equation}\label{Fourier-coeff-Distr-}
c_k=\hat{f}(k):= (2\pi)^{-d} f(e^{ik\cdot}).
\end{equation}
\noindent The notation $ \overline{c_k}$ stands for the complex conjugate. If $ f \in L^q(\mathbb{T}^d)\subset D'(\mathbb{T}^d)$, then
\begin{equation}\label{Fourier-coeff}
c_k=\hat{f}(k):= (2\pi)^{-d}\langle f, e^{ik\cdot}\rangle = (2\pi)^{-d}\int_{\mathbb{T}^d}f(x)e^{-ixk}dx,
\end{equation}
where the brackets in \eqref{Fourier-coeff} stand for the duality, in particular, it also denotes the
scalar product in the Hilbert space  $ L^2(\mathbb{T}^d)$.
An equivalent definition to the spaces $ H^{s, q}(\mathbb{T}^d)$ could be given by using the Bessel potential see
\cite{Debbi-scalar-active}. Moreover, the divergence free condition could be written as, see e.g. \cite{Foias-book-2001},
\begin{equation}\label{div-free-Torus}
 div u = 0  \Leftrightarrow \langle \hat{u}(k),  k\rangle_{\mathbb{Z}_0^d} = 0, \; \forall k \in \mathbb{Z}_0^d.
\end{equation}}
\begin{eqnarray}
 L^{q}(\mathbb{T}^d):= \{\!\!\!\! &{}&\!\!\!\! f: \mathbb{T}^d \rightarrow \mathbb{C}; f(x):=
\sum_{k\in\mathbb{Z}^d}c_ke^{ikx},\;s.t. \;\;c_0=0 \; \text{and}\;
|f|_{L^{q}}:= | \sum_{k\in\mathbb{Z}_0^d}c_ke^{ik\cdot}|_{L^q} <\infty\},\nonumber\\
\end{eqnarray}
respectively,
\begin{eqnarray}\label{def-H-s-q}
 H^{\beta, q}(\mathbb{T}^d):= \{\!\!\!\!&{}&\!\!\!\!\!f\in D'(\mathbb{T}^d),\; s.t.\; \hat{f}(0)=0,\; \text{and}\; |f|_{H^{\beta,q}}:=
| \sum_{k\in\mathbb{Z}_0^d}\del{(1+|k|^2)^\frac \beta2}|k|^\beta\hat{f}(k)e^{ik\cdot}|_{L^q}<\infty\},\nonumber\\
\end{eqnarray}
where $ D'(\mathbb{T}^d)$ is the topological dual of $ D(\mathbb{T}^d)$,  $ (c_k:= \hat{f}(k))_{k\in\mathbb{Z}^d}$ 
is the sequence of Fourier
coefficients corresponding to  $ f$,
\begin{equation}\label{Fourier-coeff-Distr-}
c_k=\hat{f}(k):= (2\pi)^{-d} f(e^{ik\cdot}).
\end{equation}
\noindent If $ f \in L^q(\mathbb{T}^d)\subset D'(\mathbb{T}^d)$, then
\begin{equation}\label{Fourier-coeff}
c_k=\hat{f}(k):= (2\pi)^{-d}\langle f, e^{ik\cdot}\rangle = (2\pi)^{-d}\int_{\mathbb{T}^d}f(x)e^{-ixk}dx,
\end{equation}
where the brackets in \eqref{Fourier-coeff} stand for the duality, in particular, it also denotes the
scalar product in the Hilbert space  $ L^2(\mathbb{T}^d)$.
An equivalent definition to the spaces $ H^{s, q}(\mathbb{T}^d)$ could be given by using the Bessel potential see
\cite{Debbi-scalar-active}. Physically, as the velocity is a real function, one can add to the definition of $ L^{q}(\mathbb{T}^d)$ and $H^{\beta, q}(\mathbb{T}^d)$ the condition $ \hat{f}(-k) = \overline{\hat{f}(k)}$, where the notation\del{ $ \overline{c_k}$} $\overline{\hat{f}(k)}$ stands for the complex conjugate, see e.g. \cite{Foias-book-2001,  Temam-NS-Functional-95}. The techniques developed here and in \cite{Debbi-scalar-active} are valid for both the  complex and the real cases.  Moreover, the divergence free condition could be written as, see e.g. \cite{Foias-book-2001},
\begin{equation}\label{div-free-Torus}
 div u = 0  \Leftrightarrow \langle \hat{u}(k),  k\rangle_{\mathbb{Z}_0^d} = 0, \; \forall k \in \mathbb{Z}_0^d.
\end{equation}
Recall also that in this case (i.e. $ O=\mathbb{T}^d$) and thanks to \eqref{eq-def-Torus-Hs} we have, see e.g. \cite{Foias-book-2001,  Temam-NS-Functional-95} for $ q=2$\del{p52, p9}
\begin{equation}\label{def-doman-As-torus}
 D(A_q^S) = \mathbb{H}^{2, q}(\mathbb{T}^d).
\end{equation}
\noindent 
The equation characterizing the pressure $ \pi$ is derived  
by applying the  divergence operator on both sides of Equation \eqref{Eq-classical-SNSE-O-p}, then we get 
\begin{equation}\label{eq-pressure}
\Delta \pi = div((u\cdot \nabla) u) + div f.
\end{equation}
For brevity reasons, we keep the study of the pressure $ \pi$ beyond the scope of the present work. More discussions about the resolution of  Equation \eqref{eq-pressure} and the conditions ensuring the uniqueness of the solution, could be found e.g. in \cite{Chemin-Book-98, Foias-book-2001, Giga-al-Globalexistence-2001, Lemari-book-NS-probelems-02}. 
\del{Furthermore, we mention that the initial condition
problem \eqref{Eq-classical-SNSE-O-p} and \eqref{Eq-Brut-SNSE-IC} on the torus  and on bounded domains with periodic boundary conditions are equivalent, see e.g. \cite{ DaPrato-Debussche-NS-periodic-2002, Foias-book-2001, Temam-NS-Functional-95, Temam-Inf-dim-88}.}
\vspace{0.25cm}

We assume that $ d\in \mathbb{N}_1$ and either $ O= \mathbb{T}^d$ or $ O\subset \mathbb{R}^d$ is a bounded domain. We 
define the d-dimensional fractional stochastic Navier-Stokes equation  (dD-FSNSE) on $ O$ by replacing the Stokes  
operator $ A^S$ in Equation \eqref{Eq-classical-SNSE-O} by  $ A_\alpha:= (A^S)^\frac\alpha2$, i.e. the dD-FSNSE  
is then given by 
\begin{equation}\label{Main-stoch-eq}
\Bigg\{
\begin{array}{lr}
 du(t)= \left(-\nu
A_{\alpha}u(t) + B(u(t))\right)dt+ G(u(t))dW(t), \; 0< t\leq T,\\
u(0)= u_0,
\end{array}
\end{equation}
where  $ B$ is given by \eqref{eq-B-projection}, 
$ W:= (W(t), t\in [0, T])$ is a Wiener process, $ G$ is a map from $ \mathbb{L}^q(O)$ to a set of bounded operators to be precise later and we assume that the initial data is of divergence free, i.e. $ u_0:= \Pi u_0(\cdot)= \Pi u(0, \cdot)$. To prove that Equation \eqref{Main-stoch-eq}, with $ A_\alpha$ being defined either by 
$ (A^S)^\frac\alpha2$, for $ O=\mathbb{T}^d$ and  $ O\subset\mathbb{R}^d$ bounded,  or by $ \Pi(A^D)^\frac\alpha2\Pi$ in the case $ O\subset\mathbb{R}^d$ bounded, are well defined, we investigate 
simultaneously, some intrinsic properties of the Stokes operator 
$ A^S $ and of 
the Laplacian operator with Dirichlet boundary condition $ A^D$. Later on, we prove  that the two equations are equivalent. 
\del{To prove that 
Equation \eqref{Main-stoch-eq} is well defined, we investigate some intrinsic properties of Stokes operator 
$ A^S $ in both cases
$ O=\mathbb{T}^d$ and  $ O\subset\mathbb{R}^d$ being bounded. Simultaneously, we investigate also similar properties for 
the  Laplacian operator with Dirichlet boundary conditions $ A^D$ in the case $ O\subset\mathbb{R}^d$ bounded.
Our aim behind these two investigations is to prove that 
\eqref{Main-stoch-eq}, with $ A_\alpha$ being defined either by 
$ (A^S)^\frac\alpha2$ or by $ \Pi(A^D)^\frac\alpha2$ and that the two equations are equivalent. }
\begin{theorem}\label{Prop-1-Laplace-Stokes}
[\cite[Lemma 1.1]{Giga-Solu-Lr-NS-85}, \cite[Lemma 2.1]{Gigaweak-strong83}, \cite[Theorem 2]{Giga-semigroup-S-81} 
and 
\cite{Giga-Doamian-fract-Stokes-Laplace, Taylor-PDE-III}.]\del{\cite[p 28-L-p Spectral theory]{Taylor-PDE-III}}
\noindent The operators $ A^S $ and $ A^D$ are densely defined, have bounded inverse 
($0$ is in the resolvent)  and the corresponding semi groups $ (e^{-tA^S})_{t\geq 0}$
respectively  $ (e^{-tA^D})_{t\geq 0}$ are \del{holomorphic of class $ C_0$.} analytic on $\mathbb{L}^q(O)$ 
respectively $ L_d^q(O)$,
 where $ \mathbb{L}^q(O) $ is defined by either \eqref{eq-def-main-L-q} or by \eqref{eq-def-Torus-Lq}.
\end{theorem}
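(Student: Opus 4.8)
The plan is to verify the three asserted properties---dense domain, bounded inverse, and analyticity of the associated semigroup---separately for each operator and each geometry, invoking the cited literature where the estimates are classical and supplying a direct Fourier argument on the torus. Since $A^S=-\Pi_q\Delta$ is built from $A^D=-\Delta$ by composing with the Helmholtz projection \eqref{Helmoltz-projection}, I would first settle the properties for $A^D$ and then transfer them to $A^S$, handling the bounded-domain case and the periodic case in turn.

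First, for the Dirichlet Laplacian $A^D$ on a bounded domain $O$, density of $D(A^D)$ in $L_d^q(O)$ is immediate because this domain, given by \eqref{def-A-D}, contains $(C_0^\infty(O))^d$; the bounded inverse follows from the Poincar\'e inequality, which forces the bottom of the spectrum to be strictly positive so that $0$ lies in the resolvent set; and analyticity of $(e^{-tA^D})_{t\geq 0}$ on $L_d^q(O)$ is the classical sectorial resolvent estimate $\norm{(\lambda+A^D)^{-1}}\leq C/|\lambda|$ on a sector $|\arg\lambda|<\frac\pi2+\delta$, available from \cite{Taylor-PDE-III}. For the Stokes operator $A^S=-\Pi_q\Delta$ with $D(A^S)=D(A^D)\cap\mathbb{L}^q(O)$ as in \eqref{def-A-S}, the same three properties are precisely the content of \cite[Lemma 1.1]{Giga-Solu-Lr-NS-85}, \cite[Lemma 2.1]{Gigaweak-strong83} and \cite[Theorem 2]{Giga-semigroup-S-81}; the crucial input there is Giga's resolvent bound for the Stokes resolvent problem, obtained by combining the boundedness of $\Pi_q$ on $L_d^q(O)$ with the resolvent estimate for $A^D$, so that analyticity is inherited on the closed subspace $\mathbb{L}^q(O)$ defined by \eqref{eq-def-main-L-q}.

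Second, on the torus the argument can be made fully explicit via Fourier series. Working in the zero-average spaces \eqref{eq-def-Torus-Lq}, the operator $-\Delta$ acts as the Fourier multiplier $k\mapsto|k|^2$ over $k\in\mathbb{Z}^d_0$, and by \eqref{def-doman-As-torus} the Stokes operator is $-\Delta$ restricted to the divergence-free fields selected by \eqref{div-free-Torus}. Density is clear since trigonometric polynomials are dense in each space; the bounded inverse holds because $|k|^2\geq 1$ on $\mathbb{Z}^d_0$, so the symbol $|k|^{-2}$ is bounded and $0$ belongs to the resolvent; and analyticity follows because the symbol $|k|^2$ lies in $[1,\infty)$, whence the resolvent symbol $(\lambda+|k|^2)^{-1}$ obeys the sectorial bound uniformly in $k$, yielding $\norm{(\lambda+A^S)^{-1}}\leq C/|\lambda|$ on a sector. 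The divergence-free constraint \eqref{div-free-Torus} is preserved by this scalar multiplier, so restricting to $\mathbb{L}^q(\mathbb{T}^d)$ causes no difficulty and the same picture governs both $A^S$ and $A^D$.

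I expect the genuine obstacle to be the Stokes operator on a bounded domain, where---unlike on the torus---$\Pi_q$ does not commute with $\Delta$ and $D(A^S)$ is a proper, subtly characterized subspace of $D(A^D)$. There the sectorial estimate cannot simply be read off from that of $A^D$, and one must invoke Giga's resolvent analysis for the full Stokes resolvent system directly. By contrast, the Dirichlet Laplacian and the entire torus case are either classical or reduce to the explicit spectral picture above, so the weight of the argument rests on assembling and correctly applying the cited Stokes-resolvent estimates.
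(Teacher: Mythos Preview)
Your proposal is correct and aligns with the paper's treatment: the paper gives no proof of this theorem at all, simply recording it as a collection of known results with the bracketed citations to Giga's works and Taylor. Your sketch---classical $L^q$ elliptic theory and Poincar\'e for $A^D$, Giga's Stokes resolvent estimates for $A^S$ on bounded domains, and an explicit Fourier-multiplier argument on the torus---is exactly the content behind those citations and would serve as a useful expansion of what the paper leaves implicit.
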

\noindent Consequently, as $ A^S$  and $ A^D$ are  the infinitesimal generators of analytic semigroups, then
we can define the fractional power of $ A^\beta, \beta \in \mathbb{R}$,  where $ A $ stands either for $ A^S$  
or $ A^D$,
 see e.g. \cite[Definition 6.7]{Pazy-83}, \cite[Chap. IX]{Yosida} and \cite{Giga-Doamian-fract-Stokes-Laplace},
\begin{defn}
For all $ \beta >0$, we define $ A^{\beta}$, the fractional power of the operator $ A$, as the inverse of
\begin{equation}\label{Eq-def-A-alpha}
 A^{-\beta}:= \frac{1}{\Gamma(\beta)} \int_0^\infty z^{\beta-1} e^{-zA}dz,
\end{equation}
where the Dunford integral in RHS of \eqref{Eq-def-A-alpha} converges in the uniform operator topology.
\end{defn}
\noindent Moreover, the domain of $A^S$  is given by the following complex interpolation, see e.g. \cite{Abel-Helmut-bound-Stokes-02, Foias-book-2001,  Fujiwara-Morimoto-L-r-Helmholtz-decomposition-77, Giga-Solu-Lr-NS-85, Giga-Doamian-fract-Stokes-Laplace, Gigaweak-strong83, Taylor-PDE-III,  Temam-NS-Functional-95, Triebel-interpolation-theory},
\del{\cite[p28]{Taylor-PDE-III}}
\begin{theorem}\label{theorem-domains-A-D-A-S}
 For every $ 0<\beta <2$, we have 
\begin{itemize}

\item  For $ O\subset\mathbb{R}^d$ bounded
\begin{equation}
D((A^D)^\frac\beta2)= [L_d^q(O), D(A^D)]_{\frac\beta2}= \mathring{H}_d^{\beta, q}(O).
\end{equation}
\begin{equation}\label{eq-embedded-domain-A-beta}
\del{\mathbb{H}^{\beta, q}(O):=} D((A^S)^\frac\beta2)= [\mathbb{L}^q(O), D(A^S)]_{\frac\beta2}= D((A^D)^\frac\beta2)\cap \mathbb{L}^q(O) \hookrightarrow  
H_d^{\beta, q}(O)\cap \mathbb{L}^q(O).
\end{equation}
where $ \hookrightarrow $ means continuously embedded. 

\item For $ O= \mathbb{T}^d$,
\begin{equation}\label{eq-embedded-domain-A-beta-torus}
D((A^S)^\frac\beta2)= [\mathbb{L}^q(\mathbb{T}^d), D(A^S)]_{\frac\beta2}
\del{=   H_d^{\beta, q}(O)\cap \mathbb{L}^q(O)}= \mathbb{H}^{\beta, q}(\mathbb{T}^d).
\end{equation}
\end{itemize} 
\end{theorem}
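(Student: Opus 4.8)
The plan is to reduce both assertions to two classical ingredients: the interpolation characterization of domains of fractional powers of a positive operator, and the known interpolation identities for scales of Sobolev spaces. Theorem \ref{Prop-1-Laplace-Stokes} already supplies that $A^D$ and $A^S$ are densely defined, boundedly invertible and generate analytic semigroups, so the first thing I would record is that both operators admit bounded imaginary powers (Seeley for the Dirichlet Laplacian $A^D$, Giga and Giga--Sohr for the Stokes operator $A^S$; see \cite{Giga-Solu-Lr-NS-85, Giga-Doamian-fract-Stokes-Laplace}). For a positive operator with bounded imaginary powers, the complex interpolation formula $D(A^{\theta}) = [X, D(A)]_{\theta}$ holds for $0<\theta<1$ (cf.\ \cite{Triebel-interpolation-theory}). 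Taking $\theta = \beta/2 \in (0,1)$ then yields at once the \emph{first} equality in each displayed line, namely $D((A^D)^{\beta/2}) = [L_d^q(O), D(A^D)]_{\beta/2}$, $D((A^S)^{\beta/2}) = [\mathbb{L}^q(O), D(A^S)]_{\beta/2}$, and the torus analogue. This isolates the genuine work, which is to identify these interpolation spaces explicitly.

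For the Dirichlet Laplacian on a bounded $O$, I would invoke the classical description of complex interpolation between $L_d^q(O)$ and $D(A^D) = H_d^{2,q}(O)\cap \mathring{H}_d^{1,q}(O)$, see \eqref{def-A-D}. The standard theory of Sobolev spaces subject to boundary conditions gives $[L_d^q(O),\, H_d^{2,q}(O)\cap \mathring{H}_d^{1,q}(O)]_{\beta/2} = \mathring{H}_d^{\beta,q}(O)$, where $\mathring{H}_d^{\beta,q}(O)$ is the completion of $(C_0^\infty(O))^d$ in $H_d^{\beta,q}(O)$; here one keeps in mind the well-known exceptional value $\beta = 1/q$, below which $\mathring{H}_d^{\beta,q}(O) = H_d^{\beta,q}(O)$ because no Dirichlet trace survives. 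This establishes the $A^D$ line.

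For the Stokes operator on bounded $O$ the natural idea is to transfer the Dirichlet result through the Helmholtz projection $\Pi$, treating the couple $(\mathbb{L}^q(O), D(A^S))$ as a retract of $(L_d^q(O), D(A^D))$: the inclusion $D(A^S) = D(A^D)\cap \mathbb{L}^q(O)\hookrightarrow D(A^D)$ is the coretraction, and one would like $\Pi$ to be the retraction. Here lies the \textbf{main obstacle}: although $\Pi$ is a bounded projection of $L_d^q(O)$ onto $\mathbb{L}^q(O)$, it does \emph{not} preserve the full no-slip condition. Indeed for $u\in D(A^D)$ one has $\Pi u = u - \nabla p$ with $\partial_n p = u\cdot\vec n = 0$ on $\partial O$, so the normal trace $(\Pi u)\cdot\vec n$ vanishes but the tangential trace equals $-\nabla_{\mathrm{tan}} p$, which is generally nonzero; hence $\Pi D(A^D)\not\subset D(A^S)$ and the clean retraction argument fails. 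The resolution, which I would follow (and attribute to Giga \cite{Giga-Solu-Lr-NS-85, Giga-Doamian-fract-Stokes-Laplace}), is to compare $(A^S)^{\beta/2}$ and $(A^D)^{\beta/2}$ directly at the level of resolvents, showing their difference is governed by the lower-order pressure term and so the domains coincide with equivalent norms, $[\mathbb{L}^q(O), D(A^S)]_{\beta/2} = \mathring{H}_d^{\beta,q}(O)\cap\mathbb{L}^q(O) = D((A^D)^{\beta/2})\cap\mathbb{L}^q(O)$; the subtleties again concentrate near the boundary-trace thresholds ($\beta$ near $1/q$ and $1+1/q$). The continuous embedding into $H_d^{\beta,q}(O)\cap\mathbb{L}^q(O)$ is then immediate from $\mathring{H}_d^{\beta,q}(O)\hookrightarrow H_d^{\beta,q}(O)$.

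Finally, the torus case is the cleanest, and I would dispatch it by Fourier analysis rather than by the retraction machinery. Since there is no boundary and $\Pi$ commutes with $-\Delta$, the Stokes operator is diagonalized by the exponential basis and $(A^S)^{\beta/2}$ acts as the Fourier multiplier $|k|^{\beta}$ on divergence-free fields. Comparing with the definitions \eqref{eq-def-Torus-Hs} and \eqref{def-H-s-q}, the domain $D((A^S)^{\beta/2})$ is precisely the set of $u\in\mathbb{L}^q(\mathbb{T}^d)$ for which $\sum_{k\in\mathbb{Z}_0^d}|k|^{\beta}\hat u(k)e^{ik\cdot}$ converges in $L^q$, that is $\mathbb{H}^{\beta,q}(\mathbb{T}^d)$; the interpolation identity $[\mathbb{L}^q(\mathbb{T}^d), D(A^S)]_{\beta/2} = \mathbb{H}^{\beta,q}(\mathbb{T}^d)$ then follows from this direct description together with \eqref{def-doman-As-torus}, or equivalently from the bounded imaginary powers of $A^S$.
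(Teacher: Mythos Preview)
The paper does not supply its own proof of this theorem; it is stated as a known result and attributed to the literature (Abels, Foias et al., Fujiwara--Morimoto, Giga, Taylor, Temam, Triebel). Your outline is a correct reconstruction of precisely those classical arguments---bounded imaginary powers to obtain the interpolation identity for $D(A^{\beta/2})$, the Seeley/Triebel description of $[L^q_d,D(A^D)]_{\beta/2}$, Giga's comparison of $(A^S)^{\beta/2}$ with $(A^D)^{\beta/2}$ on $\mathbb{L}^q$, and the direct Fourier-multiplier computation on the torus---so there is nothing to compare: you have written out what the paper merely cites.
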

\noindent  Recall that for $ O\subset \mathbb{R}^d$ bounded, see e.g. \cite{Lions-Magenes-72-I},
$$
\mathring{H}_d^{\beta, q}(O)= H_d^{\beta, q}(O),\; \text{for} \; \beta\leq \frac dq\;\; \text{and}\;
\mathring{H}^{\beta, q}(O) \subsetneqq H_d^{\beta, q}(O),\; \text{for} \; \beta> \frac dq.
$$
\noindent \del{We define the Riesz potential Sobolev spaces,}To identify the notations in formulae \eqref{eq-embedded-domain-A-beta} and \eqref{eq-embedded-domain-A-beta-torus} and the defninition in \eqref{eq-def-Torus-Hs}, we define for $ O\subset \mathbb{R}^d$ being bounded\del{ or $ O=\mathbb{T}^d$, we denote,} \begin{equation}
\mathbb{H}^{\beta, q}(O):= D(( A_q^S)^\frac\beta2), \; \beta\in\mathbb{R},\; 1<q<\infty.
\end{equation}
\noindent For $ O=\mathbb{T}^d$, this notation has already been used for the Riesz potential Sobolev spaces \eqref{eq-def-Torus-Hs}. It is important to mention  that the Dirichlet boundary condition is included in the definition of $ \mathbb{H}^{\beta, q}(O)$ in the case  $ O$ being bounded. Moreover, we have, see e.g. \cite{ Giga-Solu-Lr-NS-85},
\begin{equation}
(\mathbb{H}^{\beta, q}(O))_{\mathbb{L}^q}^*=\mathbb{H}^{-\beta, q^*}(O)\;\; \text{and}\;\; (\mathbb{H}^{\beta, q}(O))_{L^q_d}^*=H_d^{-\beta, q^*}(O).
\end{equation} 
For further discussion see e.g. \cite{Abel-Helmut-bound-Stokes-02, Fujiwara-Morimoto-L-r-Helmholtz-decomposition-77, Giga-Solu-Lr-NS-85, Giga-Doamian-fract-Stokes-Laplace, Gigaweak-strong83} and the references therein.\del{Moreover, by a classical interpolation method, we construct} Using a standard proof like in \cite[Lemma 2.1 \& Lemma 2.2]{Gigaweak-strong83}, see also 
\cite[Theorem 1.7.7]{Pazy-83} and \cite{Yosida}, we infer that (bellow $ A=A^S$ but the same result remains true for $ A=A^D$ and $ \mathbb{L}^q(O)$ replaced by $L_d^q(O)$),
\begin{lem}\label{Lem-semigroup}
The operator $ A^\frac\alpha2:= (A^S)^\frac\alpha2$ is the infinitesimal generator of an analytic semi group
$(e^{-tA^\frac\alpha2} )_{t\geq 0}$ on
$ \mathbb{L}^q(O)$. Moreover, we have for $ \beta \geq 0$,
\begin{equation}\label{eq-semigp-property}
 | A^\frac\beta2 e^{-tA_\alpha}|_{\mathcal{L}(\mathbb{L}^q)}\leq c t^{-\frac\beta\alpha}.
\end{equation}
\end{lem}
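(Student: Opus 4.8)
The plan is to deduce both assertions from the sectoriality of $A^S$ recorded in Theorem \ref{Prop-1-Laplace-Stokes}, combined with the standard calculus of fractional powers of generators of analytic semigroups. First I would observe that, writing $A:=A^S$, the hypotheses of Theorem \ref{Prop-1-Laplace-Stokes} say precisely that $A$ is a sectorial operator on $\mathbb{L}^q(O)$ of some angle $\omega<\pi/2$ and, since $0$ lies in the resolvent set, its semigroup is not merely analytic but uniformly bounded (indeed exponentially stable): $|e^{-tA}|_{\mathcal{L}(\mathbb{L}^q)}\leq M e^{-\delta t}$ for suitable $M,\delta>0$. This boundedness is what will later make the smoothing bound uniform for all $t>0$.

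Next I would invoke the fact that fractional powers of a sectorial operator are again sectorial. For the regime $\alpha\leq 2$ one has $\alpha/2\leq 1$, so $A^{\alpha/2}$ is sectorial of angle $(\alpha/2)\omega\leq\omega<\pi/2$; being sectorial of angle strictly below $\pi/2$ is equivalent to $-A^{\alpha/2}$ generating a bounded analytic semigroup, which gives the first claim. This is exactly the content of the classical results cited in the statement, \cite[Theorem 1.7.7]{Pazy-83} and \cite{Yosida}, adapted to the Stokes setting as in \cite[Lemma 2.1 \& Lemma 2.2]{Gigaweak-strong83}. In the hyperdissipative case $\alpha>2$ the angle could a priori grow, but it stays below $\pi/2$ because $A^S$ possesses bounded imaginary powers, which keeps the relevant sectorial angle arbitrarily small; alternatively, for $\alpha\geq 2$ the simpler direct methods alluded to before the statement apply.

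For the smoothing estimate I would use the elementary property of an analytic semigroup generated by $-B$: for every $\gamma\geq 0$ there is a constant $c_\gamma$ with $|B^{\gamma}e^{-tB}|_{\mathcal{L}(\mathbb{L}^q)}\leq c_\gamma\,t^{-\gamma}$ for all $t>0$ (again \cite[Theorem 1.7.7]{Pazy-83}, using the uniform boundedness noted above). Applying this with $B:=A_\alpha=A^{\alpha/2}$ and the exponent $\gamma=\beta/\alpha$, and invoking the law of exponents $(A^{\alpha/2})^{\beta/\alpha}=A^{\beta/2}$, I obtain
\begin{equation*}
|A^{\beta/2}e^{-tA_\alpha}|_{\mathcal{L}(\mathbb{L}^q)}=|(A_\alpha)^{\beta/\alpha}e^{-tA_\alpha}|_{\mathcal{L}(\mathbb{L}^q)}\leq c\,t^{-\beta/\alpha},
\end{equation*}
which is \eqref{eq-semigp-property}.

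The step I expect to require the most care is the identification $(A^{\alpha/2})^{\beta/\alpha}=A^{\beta/2}$. The multiplicativity rule $(A^{r})^{s}=A^{rs}$ for fractional powers is delicate in general and must be justified from the sectoriality of $A^S$ together with the fact that its fractional powers are well defined through the Dunford integral \eqref{Eq-def-A-alpha}; here it is available precisely because $0\in\rho(A^S)$, which also underlies the bounded imaginary powers used for the angle control. Finally, as indicated just before the statement, the entire argument carries over verbatim with $A^S$ replaced by $A^D$ and $\mathbb{L}^q(O)$ replaced by $L_d^q(O)$, since $A^D$ enjoys the same sectoriality properties by Theorem \ref{Prop-1-Laplace-Stokes}.
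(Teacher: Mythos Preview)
Your proposal is correct and follows essentially the same approach as the paper, which simply defers to \cite[Lemma 2.1 \& Lemma 2.2]{Gigaweak-strong83}, \cite[Theorem 1.7.7]{Pazy-83}, and \cite{Yosida} without writing out the details. You have supplied precisely the standard argument those references encode: sectoriality of $A^S$ passes to $A^{\alpha/2}$ with angle shrunk by the factor $\alpha/2\leq 1$, and the smoothing bound follows from the generic analytic-semigroup estimate together with the law of exponents $(A^{\alpha/2})^{\beta/\alpha}=A^{\beta/2}$, which is legitimate here because $0\in\rho(A^S)$.
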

\noindent Furthermore, we  recall, see \cite{DaPrato-Debussche-NS-periodic-2002, Debbi-scalar-active, Sohr-Generalized-resolvent-94,  Foias-book-2001, Flandoli-Schmalfub-99}, \cite[ps. 283, 303]{Taylor-PDE-I},
\cite[Chap. II]{Temam-Inf-dim-88}, that $A_2: D(A_2)\rightarrow \mathbb{L}^2(O)$ is an isomorphism,
\del{where $ \mathbb{L}^2(O) $ is defined by either \eqref{eq-def-main-L-q}, for $ O$ being a bounded domain or by \eqref{eq-def-Torus-Lq-Hs} 
bellow for
$ O= \mathbb{T}^d$.}the inverse $A^{-1}$ is self adjoint and
thanks to the compact embedding of $ D(A)$ in
$\mathbb{L}^2(O)$, we conclude that  $A^{-1}$ is compact in $\mathbb{L}^2(O)$.
Hence, there exists an orthonormal basis $( e_j)_{j\in \mathbb{N}}\subset D(A)$ consisting of eigenfunctions of
 $ A^{-1}$ and such that the sequence of eigenvalues
$ (\lambda_j^{-1})_{j\in \mathbb{N}}$ with $ \lambda_j>0 $,  converges to zero. Consequently, $( e_j)_{j\in \mathbb{N}}$ is also
a sequence of eigenfunctions of $ A$  corresponding to the eigenvalues $ (\lambda_j)_{j\in \mathbb{N}}$.
The operator  $ A$ is  positive, self adjoint on $\mathbb{L}^2(O)$ and densely defined. Using the spectral decomposition,
we  construct  positive and negative fractional
powers  $A^{\frac\beta2}, \; \beta \in \mathbb{R} $. In particular, as the spectrum of $ A$ is
reduced to the discrete one, we get an elegant representation for  $(A^{\frac\beta2}, D(A^{\frac\beta2}))$. In fact, let  $\beta \geq 0$, then, see e.g. \cite{Flandoli-Schmalfub-99},
\begin{eqnarray}\label{construction-of-fract-bounded}
\mathbb{H}^{\beta, 2}(O):= D(A^\frac\beta2)&=& \{v\in \mathbb{L}^2(O), \; s.t.\; |v|^2_{D(A^\frac\beta2)}:=
\sum_{j\in \mathbb{N}} \lambda_j^{\beta} \langle v, e_j \rangle^2<\infty\}, \nonumber\\
A^\frac\beta2 v &=& \sum_{j\in \mathbb{N}} \lambda_j^{\frac\beta2}\langle v, e_j \rangle e_j,\;  
\forall v\in D(A^\frac\beta2),
\end{eqnarray}
\noindent with $ (\langle v, e_j \rangle := \hat{v}(j))_j$ is the sequence of Fourier coefficients in the case $ O=\mathbb{T}^d$. 
Furthermore, it is easy to see that
\begin{equation}\label{basis}
A^{\frac\alpha2} e_k := \lambda_k^{\frac\alpha2} e_k, \; k\in \mathbb{N}.
\end{equation}

Now, we introduce the stochastic term. We fix the stochastic basis $ (\Omega, \mathcal{F}, P, \mathbb{F}, W)$, where
$ (\Omega, \mathcal{F}, P) $  is a complete probability space, $\mathbb{F} := (\mathcal{F}_t)_{t\geq 0}$
is a filtration satisfying the usual conditions, i.e. $(\mathcal{F}_t)_{t\geq 0}$ is an increasing right continuous filtration containing all null sets.
The stochastic process $ W:= (W(t), t\in [0, T])$ is a Wiener process with covariance operator $ Q $ being a positive symmetric trace 
class on $ \mathbb{L}^2(O)$\del{$ L_d^2(O)$}. By a  Wiener process on an abstract Hilbert space $ H$, we mean, see e.g.
\cite[Definition 2.1]{Sundar-Sri-large-deviation-NS-06} and \cite{Millet-Chueshov-Hydranamycs-2NS-10, DaPrato-Zbc-92},
\begin{defn}
A stochastic process $ W:= (W(t), t\in [0, T])$ is said to be an $H$-valued 
$ \mathcal{F}_t-$adapted  Wiener process with covariance operator $ Q$, if
\begin{itemize}
\item for all $ 0\neq h\in H$, the process $  (|Q^\frac12 h|^{-1}\langle W(t), h\rangle, t\in [0, T])$ is a standard one dimensional Brownian motion,
\item for all $  h\in H$, the process $  (\langle W(t), h\rangle, t\in [0, T])$ is a martingale adapted to $\mathbb{F}$.
\end{itemize}
\end{defn}
\noindent Otherwise,  the process
$ W:= (W(t), t\in [0, T])$ is a mean zero
 Gaussian process defined on the filtered probability space $ (\Omega,
\mathcal{F}, P, \mathbb{F} )$ with time  stationary independent  increments and covariance function given by:
\begin{equation}\label{Eq-Cov-W}
\mathbb{E}[\langle W(t), f\rangle \langle W(s), g\rangle]= (t\wedge s)\langle Qf, g\rangle,   \;\;\;   t,s \geq 0,\; f, g \in H.
\end{equation}
\noindent  Formally,  we write $ W$ as the sum of an infinite series
\begin{equation}\label{Seri-W}
W(t):= \sum_{j\in\Sigma}\beta_j(t)Q^\frac12e_j,
\end{equation}
\del{\begin{equation}\label{Seri-W}
W(t):= \sum_{j\in\Sigma} q_j^\frac12\beta_j(t)e_j,
\end{equation}}
where $\Sigma= \mathbb{Z}_0^d $, if  $ O= \mathbb{T}^d$, or $ \Sigma= \mathbb{N}_0$, if  $ O\subset \mathbb{R}^d$ bounded,
$(\beta_j)_{j\in \Sigma}$ is an i.i.d.  sequence of  real Brownian motions and
$ (e_j)_{j\in \Sigma}$ is any orthonormal basis, here we consider the basis of the Stokes eigenvalues.  For more illustration one can assume that the basis $ (e_j)_{j\in \Sigma}$ diagonalizes simultaneously the Stokes operator $ A$
and the Covariance  $ Q$ with  $ (q_j)_{j\in \Sigma}$  is the sequence
of the eigenvalues of  $ Q$,
\begin{equation}\label{cond-Q-tr}
 Qe_j = q_j e_j, \;\;\; \text{and} \;\;\; tr(Q):= \sum_{j\in \Sigma} q_j<\infty.
\end{equation}
\noindent The following approximation result, see e.g. \cite{Millet-Chueshov-Hydranamycs-2NS-10, DaPrato-Zbc-92}\del{p99},  will be used later in  some proofs,
\begin{equation}\label{eq-W-n}
W(t)= \lim_{n\rightarrow \infty} W_n(t)\;\; in \; L^2(\Omega; H),\;  \text{ where }\;  
W_n(t):= \sum_{|j|\leq n} \beta_j(t)Q^\frac12e_j.
\end{equation}

\noindent In \del{a great part of} this work, we consider the stochastic Ito integral in Hilbert spaces. 
In particular, we define the following Hilbert spaces
\begin{eqnarray}\label{eq-def-H-0}
 H_0:&=& Q^{\frac12}(H) \;\;\;  \text{endowed with the scalar product} \nonumber\\
&{}&  \langle \phi, \psi\rangle_{H_0}:=
\langle Q^{-\frac12} \phi, Q^{-\frac12}\psi\rangle_{H}, \;\; \forall \phi,\;  \psi\in H_0,
\end{eqnarray}
\begin{eqnarray}\label{eq-def-L-Q}
 L_Q(H):&=& \{S: H \rightarrow H\, s.t.\; SQ^\frac12 \;\text{ is a Hilbert Schmidt operator}\},\nonumber\\
&{}& \langle S_1, S_2\rangle_{L_Q}:= tr (S_2^*QS_1), \; \forall S_1, S_2 \in L_Q(H)
\end{eqnarray}
and
\begin{eqnarray}\label{eq-def-P-T}
\mathcal{P}_T(H):&=&\{\sigma \in L^2(\Omega\times [0, T];\; L_Q(H)),\;\;\text{ predictable processes\del{progressively measurable  processes}}\}, \nonumber\\
&{}& \langle \sigma_1, \sigma_2 \rangle_{\mathcal{P}_T} := \mathbb{E}\int_0^T    \langle \sigma_1(s), \sigma_2(s) \rangle_{L_Q}ds.
\end{eqnarray}
\noindent To be more precise, $ \mathcal{P}_T(H)$ is the set of equivalence classes of predictable processes, but, here we follow the custom to do not make a difference between a class of processes and a process representing this class, see e.g. \cite{Karatzas-Book}.
It is to be noted that as the operator $ Q$ is of trace class, then the canonical injection $ i: H_0\rightarrow H$ is a Hilbert-Schmidt operator. Moreover, we have  $ ii^*=Q$. It is well known that the stochastic integral, $ (\int_0^t \sigma(s)dW(s), t\in [0, T])$,
is well defined  for all $ \sigma \in \mathcal{P}_T(H)$, see e.g. \cite{DaPrato-Zbc-92}.\del{ In this setup,  we consider
in Section \ref{sec-Torus} the space $ \mathcal{P}_T(\mathbb{H}^{1, 2}(O))$ and in Section  \ref{sec-Domain}\del{ and  \ref{sec-Marting-solution},} the space $ \mathcal{P}_T(\mathbb{L}^{2}(O))$. In Section \ref{sec-Marting-solution}, we use a subset of $ \mathcal{P}_T(\mathbb{L}^{2}(O))$ consisting of progressively measurable processes.}  In this setup,  we consider
in Section \ref{sec-Torus} the space $ \mathcal{P}_T(\mathbb{H}^{1, 2}(O))$ and in sections  \ref{sec-Domain} and  \ref{sec-Marting-solution}, the space $ \mathcal{P}_T(\mathbb{L}^{2}(O))$. As we shall refer to  results from \cite{Debbi-scalar-active}\del{In Section \ref{sec-global-mild-solution}{sec-1-approx-local-solution}}, where stochastic integrals have been considered in the  Banach spaces $ \mathbb{H}^{\delta, q}(O)$ with $ 2<q<\infty$ and $\delta \geq0$ and we shall prove some results in the general framework of $L^q-$spaces, we give here some definitions about this notion. It is well known that the spaces $ \mathbb{H}^{\delta, q}(O), q\geq 2$ are UMD Banach spaces of type 2\del{ (They are the finite product of UMD Banach spaces of type 2.}. It is well known, see e.g.
\cite{Neerven-2010, Neerven-Evolution-Eq-08,  Nerveen-stochasticIntegral-07} and the references therein, that for a separable UMD Banach space of type 2 $ X$ and a Hilbert space  $ H$,
the stochastic integral with respect to $W$ \del{in this setting}is  well defined\del{ and the necessary tools such as the Banach version of the Burkholder-Davis-Gundy inequality are disposable,} provided that the integrator $ \sigma:[0, T]\times \Omega \rightarrow \mathcal{L}(H, X) $ is an \del{$\mathbb{L}^2-$}$H-$strongly measurable, (i.e. $ \sigma$ is the pointwise limit of a sequence simple functions), $ \mathcal{F}_t-$adapted process which takes values in the space of $\gamma-$radonifying operators $ R_\gamma(\mathbb{L}^{2}, \mathbb{H}^{\delta, q})$, see e.g. \cite[Theorem 3.6 \& Corollary 3.10]{Nerveen-stochasticIntegral-07},\del{  We define for general separable Banach and Hilbert spaces $ X$  respectively $ H$,}
\begin{eqnarray}\label{eq-def-R-Q}
R_Q(H, X):&=&\{S: H \rightarrow X, s.t.\; SQ^\frac12 \in  R_\gamma(H, X): \text{ the set of }\; 
\gamma-\text{radonifying  operators}\},\nonumber\\
&{}&  ||S||^2_{R_Q}:= ||S Q^\frac12||^2_{R_\gamma} := \mathbb{E}'|\sum_{j\in\Sigma}
\gamma_jS Q^\frac12 h_j|^2_{X},\; \forall S \in R_Q(H, X),
\end{eqnarray}
where  $ (\gamma_j)_{j\in \Sigma}$ is a sequence of independent standard
real-valued Gaussian random variables on a probability space
$(\Omega', \mathcal{F}', P')$ and  $(h_j)_{j\in \Sigma}$ is any orthonormal basis. Moreover, the necessary tools such as the Banach versions of the Ito isometry and the Burkholder-Davis-Gundy inequalities are also disposable, see for more details \cite{Nerveen-stochasticIntegral-07}. Similarly as above we can define 
\begin{eqnarray}\label{eq-def-BP-T}
\mathcal{P}_T(H, X):&=& \{ \sigma \in L^2(\Omega\times [0, T],  R_Q(H, X)),\; \text{H-strongly measurable $\mathcal{F}_t-$adapted  processes }\}, \nonumber\\
&{}& ||\sigma||^2_{\mathcal{P}_T(H, X)} := \mathbb{E}\int_0^T ||\sigma(s)||^2_{R_Q}ds = \mathbb{E}\int_0^T ||\sigma(s)Q^\frac12 ||^2_{R_\gamma}ds.
\end{eqnarray}
\noindent Recall that  for $ X=H$ being a Hilbert space,  $R_Q(H, H) =  L_Q(H)$ and  $ \mathcal{P}_T(H, H) = \mathcal{P}_T(H)$. 
To simplify the notations, we use later on the subscript  
$ \mathcal{P}_T$.\del{, for both $ X$ being either  Hilbert or  Banach spaces.} Furthermore for the same reason above, we introduce the set of diffusion terms we are dealing with in the general framework of Banach spaces, see
for a comparison purpose the conditions in
\cite{Millet-Chueshov-Hydranamycs-2NS-10, Mikulevicius-H1-NS-solution-2004, Flandoli-3DNS-Dapratodebussche-06, Flandoli-Gatarek-95,  Roeckner-Zhang-tamedNS-12,  Roeckner-Zhang-tamedNS-09, Sundar-Sri-large-deviation-NS-06},

{\bf Assumption $(\mathcal{C})$} For fixed $ 2\leq q <\infty$ and $ \delta \geq 0$, we assume that
the operator
$$
G: \mathbb{L}^2(O)\rightarrow R_\gamma(\mathbb{L}^{2}, \mathbb{H}^{\delta, q})
$$
satisfies,
\begin{itemize}
 \item Lipschitz condition: For all $ R>0$, there exists a constant $C_R>0$,  s.t.
\begin{equation}\label{Eq-Cond-Lipschitz-Q-G}
||G(u)-G(v)||_{R_Q}:=
||(G(u)-G(v))Q^\frac{1}{2}||_{R_\gamma(\mathbb{L}^{2}, \mathbb{H}^{\delta, q})}\leq C_R|u-v|_{\mathbb{H}^{\delta, q}},
\;\;\; \forall |u|_{\mathbb{H}^{\delta, q}}, |v|_{\mathbb{H}^{\delta, q}}\leq R,
\end{equation}
\item Linear growth: There exists a constant $ c>0$, s.t
\begin{equation}\label{Eq-Cond-Linear-Q-G}
||G(u)||_{R_Q}:= ||G(u)Q^\frac{1}{2}||_{R_\gamma(\mathbb{L}^{2}, \mathbb{H}^{\delta, q})}\leq c(1+|u|_{\mathbb{H}^{\delta, q}}),
\;\;\; \forall u\in \mathbb{H}^{\delta, q}(O).
\end{equation}
\end{itemize}
The parameters $ q$ and $\delta$ are chosen independently for every result. \del{To prove the global existence of a mild solution for the dD-FSNSE, we need the following extra condition, 
see for a comparison purpose the conditions in \cite{Daprato-Debussche-Martingale-Pbm-2-3-NS08, Flandoli-3DNS-Dapratodebussche-06, Flandoli-Gatarek-95, Roeckner-Zhang-tamedNS-09}.} It is of great interest to mention here that for simplicity reasons and without any loss of generality, 
we have assumed that the diffusion term $ G(u)$ acts on $ \mathbb{L}^2(O)$. Otherwise, we can use $ \Pi G(u)$.

\del{\noindent {\bf Assumption $(\mathcal{C}_b)$}: We assume that\del{for  $ u\in \mathbb{L}^q(O)$, $ G(u)$ maps 
$ D(A_q^\frac12)$ in to $ D(A_q^{\frac12-\frac\eta2})$, the operator 
$ A^{\frac12-\frac\eta2}G(u) \in R_Q(\mathbb{L}^2, \mathbb{H}^{-\theta, q})$, with 
$ \frac\eta\alpha+\frac\theta2<\frac12-\frac1p$, $p>2$ and   
there exists a constant $c:=c_{\eta, \theta,  q}>0$,
s.t.} the mapping   $ u\in \mathbb{L}^q(O) \mapsto A^{\frac12-\frac\eta2}G(u) \in R_Q(\mathbb{L}^2, \mathbb{H}^{-\theta, q})$, with 
$ \frac\eta\alpha+\frac\theta2<\frac12-\frac1p$, $p>2$ is bounded and   
there exists a constant $c:=c_{\eta, \theta,  q}>0$,
s.t.
\begin{equation}\label{Eq-Cond-unif-bound}
\sup_{u}||A_{q}^{\frac12-\frac\eta2}G(u)||_{R_Q(\mathbb{L}^2, \mathbb{H}^{-\theta, q})} \leq c_{\eta, \theta,  q}.
\end{equation} 

\del{\noindent {\bf Assumption $(\mathcal{C}_c)$}: For  $ u\in \mathbb{H}^{1, 2}(O)$, $ curlG(u) \in R_Q(\mathbb{L}^2, L_1^q)$, $ 2\leq q<\infty$  and satisfies \eqref{Eq-Cond-Lipschitz-Q-G} and \eqref{Eq-Cond-Linear-Q-G} with 
$R_Q(\mathbb{L}^2, \mathbb{H}^{\delta, q})$ in the LHSs is repalced by $ R_Q(\mathbb{L}^2, L_1^q)$ and $ \delta=1$ in the RHSs.}\del{ $ \mathbb{L}^2$ and $ \mathbb{H}^{\delta, q}$ are replaced by 
$ L^2_1$ and  $ L^q_1$ respectively.}

\noindent It is easy to see that the set of diffusions $ G$ satisying Assumption  $(\mathcal{C}_b)$ is not empty. In fact, for $ G=I$, Assumption $(\mathcal{C}_b)$ is then  given by 
\begin{equation}
tr(A^{1-\eta}Q)<\infty,
\end{equation}
which is a weaker condition than \cite[Cond. 1.3.]{Daprato-Debussche-Martingale-Pbm-2-3-NS08}. Moreover,  we introduce the following examples.
Let us take $ \theta =0$ and denote by
\begin{equation}\label{eq-def-sigma-k}
\sigma^k(u):= G(u)Q^{\frac12}e_k= q_{k}^\frac12G(u)e_k,  for\;\;  k\in\Sigma.
\end{equation}
Then the following condition 
\begin{equation}\label{falndoli-example-1}
 \sum_{j\in \Sigma}q_j\sigma_j^2<\infty, \;\;\; \text{with}\;\;\; \sigma_j^2:= \sup_{u}
 |G(u(s))e_j|^2_{\mathbb{H}^{1-\eta, q}}
\end{equation}
is sufficient to prove that $(\mathcal{C}_b)$ is satisfied. Remark that the condition above is weaker than the condition \cite[H2]{Roeckner-Zhang-tamedNS-09}. The following example is a slit generalization of the example introduced in \cite{Flandoli-Gatarek-95}, 
\begin{equation}
 G(u)x= ((G(u)x)^j)_{j}:= 
 (\sum_{k\in \Sigma} \xi_k^j(u)\langle x, e_j\rangle e_k^j)_j,\;\; j=1,\cdots , d,\del{ \;\; \text{with the condition here}\;\; 
\sum_{j\in \Sigma}q_j\sigma_j^2\lambda_j^{1-\eta}|e_j|^2_{\mathbb{L}^q}<\infty}
\end{equation}
where $(\xi_k^j(u))_k$, $ j=1,\cdots , d$ are sequences of real kernels. 
 Then the condition \eqref{falndoli-example-1} becomes 
 \begin{equation}
 \sum_{k\in \Sigma}q_k\xi_k^2\lambda_k^{1-\eta}<\infty, \;\;\; \text{with}\;\;\; \xi_k^2:= \sup_{u}
 |\xi_k(u)|_{\mathbb{R}^d}^2.
\end{equation}
\noindent It is of great interest to mention here that for simplicity reasons and without any loss of generality, 
we have assumed that the diffusion term $ G(u)$ acts on $ \mathbb{L}^2(O)$. Otherwise, we can use $ \Pi G(u)$. The following lemma plays an important role in the estimations of the stocastic term.

\begin{lem}\label{lem-est-z-t}
Let us fix $ \alpha\in (0, 2]$, $ 2\leq q <\infty$,  $ p> 2$ and 
$ u \in L^p(\Omega; L^\infty(0, T; \mathbb{H}^{\beta, q}(O)))$, $ \beta \geq 0$, an $ \mathbb{F}-$adapted stochastic process.
We introduce the Ornstein-Uhlenbeck stochastic process,
\begin{equation}\label{Eq-z-t}
 z(t):=  \int_0^te^{-A_\alpha (t-s)}G
(u(s))W(ds).
\end{equation}
Assume $ G$ satisfies \eqref{Eq-Cond-Linear-Q-G}. Then 
$ z\in L^p(\Omega; C([0, T]; \mathbb{H}^{\beta+\eta, q}(O)))$,\del{$ (z(t), t\in [0, T])$ has a.s. continuous trajectories and for
all} for $ 0\leq \eta <\alpha(\frac12-\frac1p)$ and there exists a constant $c$, such that
\begin{equation}\label{Eq-est-z-t}
  \mathbb{E}\sup_{[0, T]}| z(t)|^p_{\mathbb{H}^{\beta+\eta, q}}\leq c(1+\mathbb{E}\sup_{[0, T]}|u(s)|_{\mathbb{H}^{\beta, q}}^p).
 \end{equation}
Moreover, if $ G$ satisfies Assumption $(\mathcal{C}_b)$,
 then there exists a constant $ c:= c_{\eta, \theta,  q}>0$, s.t.
\begin{equation}\label{Eq-nabla-z-t}
   \mathbb{E}\sup_{[0, T]}| \nabla z(t)|^p_{q}\leq c_{\eta, \theta,  q}<\infty.
 \end{equation}
\end{lem}
\begin{proof}
 The first statement is a straightforward application of \cite[Proposition 4.2]{Neerven-Evolution-Eq-08}. To prove the second statement, we apply again \cite[ Proposition 4.2]{Neerven-Evolution-Eq-08} and use Assumption $(\mathcal{C}_b)$, we infer for $ 0<\gamma <1/2$ and 
 $ \theta $ and $ \eta $ satisfying $ \frac\eta\alpha+\frac\theta2<\gamma-\frac1p$,  that 
 \begin{eqnarray}\label{Eq-nabla-z-t-proof}
    \mathbb{E}\sup_{[0, T]}| \nabla z(t)|^p_{q}\del{&\leq&
    c\mathbb{E}\sup_{[0, T]}| \int_0^tA^\frac12e^{-A_\alpha (t-s)}G
(u(s))W(ds)|^p_{\mathbb{L}^{q}}\nonumber\\}
&\leq&
    c\mathbb{E}\sup_{[0, T]}|\int_0^tA^{\frac12-\frac\eta2}e^{-A_\alpha (t-s)}G
(u(s))W(ds)|^p_{\mathbb{H}^{2\frac\eta\alpha, q}}\nonumber\\
&\leq&
    c\int_0^T\mathbb{E}\big(\int_0^t(t-s)^{-2\gamma}||A^{\frac12-\frac\eta2}G
(u(s))||_{R_Q(\mathbb{L}^2, \mathbb{H}^{-\theta, q}}^2ds\big)^\frac p2dt\leq c.
\del{&\leq&
    c\int_0^T\mathbb{E}\big(\int_0^t(t-s)^{-2\gamma}||A^{\frac12-\frac\eta2}e^{-A_\alpha (t-s)}G
(u(s))W(ds)|^p_{R_Q(\mathbb{L}^2, \mathbb{H}^{-\theta, q}}ds\big)dt\nonumber\\}
 \end{eqnarray} 
\end{proof}}

\del{\noindent {\bf Assumption $(\mathcal{C}_b)$}: We assume that the mapping   $ u\in \mathbb{L}^q(O) \mapsto G(u) \in R_Q(\mathbb{L}^2,\; \mathbb{L}^{q})$ is bounded and   
there exists a constant $c:=c_{\eta, \theta,  q}>0$ such that 
\begin{equation}\label{Eq-Cond-unif-bound}
\sup_{u}||G(u)||_{R_Q(\mathbb{L}^2,\; \mathbb{L}^{q})} \leq c_{q}.
\end{equation} 
\noindent It is easy to see that the set of diffusions $ G$ satisfying Assumption  $(\mathcal{C}_b)$ is not empty. In fact, for $ G=I$, Assumption $(\mathcal{C}_b)$ is then  given by 
\begin{equation}\label{imed-cond}
tr(Q)<\infty.
\end{equation}
The condition \eqref{imed-cond} is weaker than the condition  \cite[Cond. 1.3]{Daprato-Debussche-Martingale-Pbm-2-3-NS08}. Moreover,  we introduce bellow some other examples. We denote by
\begin{equation}\label{eq-def-sigma-k}
\sigma^k(u):= G(u)Q^{\frac12}e_k= q_{k}^\frac12G(u)e_k,\;  for\;\;  k\in\Sigma.
\end{equation}
The following condition 
\begin{equation}\label{falndoli-example-1}
 \sum_{j\in \Sigma}q_j\sigma_j^2<\infty, \;\;\; \text{with}\;\;\; \sigma_j^2:= \sup_{u}
 |G(u(s))e_j|^2_{\mathbb{L}^{q}}
\end{equation}
is sufficient to prove that $(\mathcal{C}_b)$ is satisfied. Remark that the condition above is weaker than the condition \cite[H2]{Roeckner-Zhang-tamedNS-09}. The following example has been introduced in \cite{Flandoli-Gatarek-95}, but our condition here is weaker, 
\begin{equation}
 G(u)x= ((G(u)x)^j)_{j}:= 
 (\sum_{k\in \Sigma} \xi_k^j(u)\langle x, e_j\rangle e_k^j)_j,\;\; j=1,\cdots , d,\del{ \;\; \text{with the condition here}\;\; 
\sum_{j\in \Sigma}q_j\sigma_j^2\lambda_j^{1-\eta}|e_j|^2_{\mathbb{L}^q}<\infty}
\end{equation}
where $(\xi_k^j(u))_k$, $ j=1,\cdots , d$ are sequences of real kernels. 
 The condition \eqref{falndoli-example-1} becomes 
 \begin{equation}
 \sum_{k\in \Sigma}q_k\xi_k^2<\infty, \;\;\; \text{with}\;\;\; \xi_k^2:= \sup_{u}
 |\xi_k(u)|_{\mathbb{R}^d}^2.
\end{equation}
\noindent It is of great interest to mention here that for simplicity reasons and without loss of generality, we have assumed that the diffusion term $ G(u)$ acts on $ \mathbb{L}^2(O)$. Otherwise, we can use $ \Pi G(u)$. The following lemma plays an important role in the estimations of the stochastic term. We give it here in a general framework see also \cite{Debbi-scalar-active}.

\begin{lem}\label{lem-est-z-t}
Let us fix $ \alpha\in (0, 2]$, $ 2\leq q <\infty$,  $ p> 2$ and 
$ u \in L^p(\Omega; L^\infty(0, T; \mathbb{H}^{\beta, q}(O)))$, $ \beta \geq 0$, a strongly measurable $ \mathbb{F}-$adapted stochastic process.
We introduce the Ornstein-Uhlenbeck stochastic process
\begin{equation}\label{Eq-z-t}
 z(t):=  \int_0^te^{-A_\alpha (t-s)}G
(u(s))W(ds).
\end{equation}
Assume that $ G$ satisfies Assumption $(\mathcal{C})$ \del{\eqref{Eq-Cond-Linear-Q-G} }(with $ \delta$ here is replaced by $ \beta$). Then $ z$ is well defined, has a strongly measurable $ \mathbb{F}-$adapted version, $ z\in L^p(\Omega; C([0, T]; \mathbb{H}^{\beta+\eta, q}(O)))$,\del{$ (z(t), t\in [0, T])$ has a.s. continuous trajectories and for
all} and for $ 0\leq \eta <\alpha(\frac12-\frac1p)$ there exists a constant $c>0$ such that
\begin{equation}\label{Eq-est-z-t}
  \mathbb{E}\sup_{[0, T]}| z(t)|^p_{\mathbb{H}^{\beta+\eta, q}}\leq c(1+\mathbb{E}\sup_{[0, T]}|u(s)|_{\mathbb{H}^{\beta, q}}^p).
 \end{equation}
Moreover, if $ G$ satisfies Assumption $(\mathcal{C}_b)$ (with $ \delta$ here is replaced by $ \beta$), then there exists a constant $ c:= c_{q}>0$ such that 
\begin{equation}\label{Eq-nabla-z-t}
   \mathbb{E}\sup_{[0, T]}|z(t)|^p_{\mathbb{L}^q}\leq c_{q}<\infty.
 \end{equation}
\end{lem}
\begin{proof}
Thanks to the strong measurablility and  $ \mathbb{F}-$adaptdness of the  stochastic process $(u(t), t\in [0, T]) $, the continuity of $ G$ and to Proposition A.1 in  \cite{Neerven-Evolution-Eq-08}, we prove that the process $ (z(t), t\in [0, T])$ is well defined and has strongly measurable $ \mathbb{F}-$adapted version. The
first estimation \eqref{Eq-est-z-t} is a straightforward application of  \cite[Proposition 4.2]{Neerven-Evolution-Eq-08}, of the type 2 inequality and of the definition of the fractional power of $A$. To prove the second estimation \eqref{Eq-nabla-z-t}, we apply again the same ingredients above\del{\cite[ Proposition 4.2]{Neerven-Evolution-Eq-08}} and use Assumption $(\mathcal{C}_b)$, we infer for $ 0<\gamma <1/2$ satisfying $ 0<\gamma-\frac1p$,  that 
 \begin{eqnarray}\label{Eq-nabla-z-t-proof}
    \mathbb{E}\sup_{[0, T]}| z(t)|^p_{\mathbb{L}^q}
\del{&\leq&
    c\mathbb{E}\sup_{[0, T]}|\int_0^tA^{\frac12-\frac\eta2}e^{-A_\alpha (t-s)}G
(u(s))W(ds)|^p_{\mathbb{H}^{2\frac\eta\alpha, q}}\nonumber\\}
&\leq&
    c\int_0^T\mathbb{E}\big(\int_0^t(t-s)^{-2\gamma}||G
(u(s))||_{R_Q(\mathbb{L}^2,\; \mathbb{L}^{q})}^2ds\big)^\frac p2dt\leq c.
\del{&\leq&
    c\int_0^T\mathbb{E}\big(\int_0^t(t-s)^{-2\gamma}||A^{\frac12-\frac\eta2}e^{-A_\alpha (t-s)}G
(u(s))W(ds)|^p_{R_Q(\mathbb{L}^2, \mathbb{H}^{-\theta, q}}ds\big)dt\nonumber\\}
 \end{eqnarray} 
\end{proof}}

\del{Now, Equation \eqref{Main-stoch-eq} with $ A_\alpha:= (A^S)^{\frac{\alpha}{2}}$ is well defined. 
This equation can be seen as the fractional version of the stochastic Navier-Stokes equation (FSNSE). 
A stochastic version of the fractional Navier-Stokes equation (SFNSE)
 can also be constructed, by taking 
$ A_\alpha:= \Pi(-\Delta)^{\frac{\alpha}{2}}$ on $ \mathbb{L}^q(O)$. 
For simplicity, let us keep in mind, for a short time, the two different terminologies (FSNSE and SFNSE) and the distinction between the two equations.\del{ and referee to  the two equations by fractional version of the stochastic NSE (FSNSE) 
respectively the stochastic version of the fractional NSE (SFNSE).} Later on, we shall prove that the two equations are equivalent. Recall that by a fractional Navier-Stokes equation (FNSE), we mean Equation 
 \eqref{Eq-classical-SNSE-O-p}, with $ \Delta$ replaced by $ -(-\Delta)^\frac\alpha2$. The SFNSE is a 
stochastic perturbation of FNSE. Thanks to theorems \ref{Prop-1-Laplace-Stokes}-\ref{Lem-semigroup} 
and the calculus above, the SFNSE is also well defined.
The main question now is whether or not the two equations FSNSE and SFNSE are equivalent. Let us  before moving to this last question,  
clarify some pratical and theoritical matters for the two versions. 
Indeed, as the derivation of the equations describing physical phenomena, is mainley based on the deterministic case, it is intuitively seen that the stochastic version of the fractional Navier-Stokes equation (SFNSE) is more suitable for physical modelings, rather than the fractional version of the stochastic Navier-Stokes equation (FSNSE). One can see e.g. \cite{SugKak} for the derivation of the deterministic fractional Burgers equations, see also \cite{Caffarelli-2009}. Moreover, as we shall prove the equivalence of the two versions, we conclude that the FSNSE, seems intuitevly more theoretical, is  of practical intereset as well.

\noindent  In the case $ O= \mathbb{T}^d$, the operators  $ \Delta $ and $ div$ are commuting. Therefore, the Stokes operator $-A^S$
is the Laplacian $ \Delta$ on $ \mathbb{L}^q( \mathbb{T}^d)$, see e.g.
\cite{Gigaweak-strong83},
\cite[p. 48]{Foias-book-2001}, \cite[p. 9]{Temam-NS-Functional-95} and \cite[p 105]{Temam-Inf-dim-88} for the Torus,
 \cite{Kato-Ponce-86} for $ O=\mathbb{R}^d$  and see also a direct  proof in Section \ref{sec-Torus}. Combining this statment with the results in Theorem \ref{theorem-domains-A-D-A-S}, we conclude that  
\begin{eqnarray}\label{eq-A-S-alpha-Delta-alpha}
D((A^S)^\frac\alpha2) &=& D(\Pi_q(-\Delta)^\frac\alpha2\Pi_q) = H_d^{\alpha, q}(\mathbb{T}^d)\cap \mathbb{L}^q(\mathbb{T}^d),\nonumber\\
(A^S)^\frac\alpha2 u &=& \Pi(-\Delta)^\frac\alpha2 u= (-\Delta)^\frac\alpha2 u, \;\; \forall u \in D((A^S)^\frac\alpha2).
\end{eqnarray}
This proves that the two equations FSNSE and SFNSE, on the torus, are ''equivalent''. 
In the case $ O\subset \mathbb{R}^d$  is bounded, the Stokes operator $ A^S$ is not equal to $ -\Delta$. In fact,
 as  we can not expect that for general $ u \in D(A^S)$, we can get
$ \Delta u\cdot \vec{n} =0 $ on $\partial O$, it is not obvious whether or not $ \Delta u \in \mathbb{L}^q(O)$. Our claim is that $ (A^S)^\frac\alpha2 =  \Pi(A^D)^\frac\alpha2 \Pi$.
In fact, thanks to \eqref{def-A-D} and \eqref{def-A-S}, it is easy to deduce that 
 $ A^S = \Pi A^D\Pi$, see also \cite{Fujiwara-Morimoto-L-r-Helmholtz-decomposition-77}.\del{In fact, this latter is a simple 
consequence of \eqref{def-A-D} and \eqref{def-A-S}.
using \eqref{def-A-D} and \eqref{def-A-S}, we get
\begin{eqnarray}
 D(\Pi A^D\Pi)&=& D(A^D)\cap \mathbb{L}^q(O) = D(A^S)\nonumber\\
\Pi A^D u &=& -\Pi \Delta u = A^Su, \;\;\;  \forall u \in  D(\Pi A^D)\cap \mathbb{L}^q(O).
\end{eqnarray}
In the second step, we prove that $ (A^S)^\frac\alpha2 =  \Pi(A^D)^\frac\alpha2 \Pi$.}
Using Theorem \ref{theorem-domains-A-D-A-S}, we infer that
\begin{equation}
 D(\Pi (A^D)^\frac\alpha2\Pi)=  D((A^D)^\frac\alpha2)\cap \mathbb{L}^q(O) = D((A^S)^\frac\alpha2).
\end{equation}
\del{Recall the following definition of the negative power of $ A$, with  
$ A$ could be either $ A^S$ or $ A^D$,  see e.g. 
\cite{Giga-Doamian-fract-Stokes-Laplace, Pazy-83}, 
\begin{equation}\label{negative-fractional-A}
 A^{-\frac\alpha2} u = \frac1{2\pi i}\int_\Gamma z^{-\frac\alpha2}(A-zI)^{-1} u dz, \; \forall u \in \mathbb{L}^q(O).
\end{equation}
where  $ \Gamma $ is the path running the resolvent set from $ \infty e^{-i\theta}$ to  $ \infty e^{i\theta}$, $ <0\theta <\pi$,  avoiding the 
negative real axis and the origin and such that the branch $  z^{-\frac\alpha2}$ is taken to be positive for real for real positive values of $z$.
The integral in the RHS of \eqref{negative-fractional-A-D} converges in the uniform operator topology.}
Moreover, using the definition of the negative power of $ A^S$ and $ A^D$ via the resolvent, see e.g. 
\cite{Giga-Doamian-fract-Stokes-Laplace, Pazy-83} and the definition of the Helmholtz projection, we infer that 
\begin{equation}\label{negative-fractional-A-D}
 (A^D)^{-\frac\alpha2} \Pi^{-1}u = \frac1{2\pi i}
 \int_\Gamma z^{-\frac\alpha2}(A^D-zI)^{-1}\Pi^{-1}u dz,\;\;\; \forall u \in \mathbb{L}^q(O),
\end{equation}
where  $ \Gamma $ is the path running the resolvent set from $ \infty e^{-i\theta}$ to  
$ \infty e^{i\theta}$, $ 0<\theta <\pi$,  avoiding the 
negative real axis and the origin and such that the branch $  z^{-\frac\alpha2}$ is taken to be positive
for real  positive values of $z$.
The integral in the RHS of \eqref{negative-fractional-A-D} 
converges in the uniform operator topology. Therefore, for all $ u \in \mathbb{L}^q(O)$,
\begin{eqnarray}\label{negative-fractional-A-D-last}
 (A^D)^{-\frac\alpha2} \Pi^{-1}u &=& \frac1{2\pi i}\int_\Gamma z^{-\frac\alpha2}(\Pi A^D-z\Pi)^{-1}u dz
 =\frac1{2\pi i}\int_\Gamma z^{-\frac\alpha2}(A^S-zI)^{-1}u dz
= (A^S)^{-\frac\alpha2}.\nonumber\\
\end{eqnarray}
As the operators $(A^S)^{-1}$ and $(A^D)^{-1}$ are one-to-one this achieved the equivalence between the FSNSE and the 
SFNSE.} 

Now, thanks to theorems \ref{Prop-1-Laplace-Stokes}-\ref{Lem-semigroup} 
and to the calculus above, "equations" \eqref{Main-stoch-eq} with either $ A_\alpha:= (A^S)^{\frac{\alpha}{2}}$ or $ A_\alpha:= \Pi(-\Delta)^{\frac{\alpha}{2}}$ on $ \mathbb{L}^q(O)$  are well defined. The main question is whether or not the two equations are equivalent. The answer is yes. The proof and further discussions\del{, in particular  about the theoretical and the practical interest of the equation are discussed} are presented in Appendix \ref{Appendix-Equivalence}. We end this section by the following assumption on the initial condition

\noindent {\bf Assumption $(\mathcal{B})$}:  Assume that the initial condition $u_0$ is an 
$\mathcal{F}_0-$random variable satisfying
\begin{equation}\label{Eq-initial-cond}
u_0\in L^{p}(\Omega, \mathcal{F}_0, P; \mathbb{H}^{\delta_0, q_0}(O)),
\end{equation}
with  $ p\geq 2$ and either $2\leq q_0\leq \infty$ and $ \delta_0= 0$ or  $2\leq q_0< \infty$ and $ \delta_0> 0$.

\del{\noindent {\bf Assumption $(\mathcal{B}_\alpha)$}:  Assume that the initial condition $u_0$ is an 
$\mathcal{F}_0-$random variable s.t
\begin{equation}\label{assumption-B-alpha}
u_0\in L^2(\Omega, \mathcal{F}_0, P, \mathbb{H}^{\frac{d+2-\alpha}{4}, 2}(O)).
\end{equation}}
 \section{Definitions of solutions and Results.}\label{sec-Results}
In this section, we give the different  definitions of solutions we are interested in. 
\del{\begin{defn}\label{def-variational solution}
Let  $ H$ be a separable Hilbert space and let us give the Gelfand triple
\begin{equation}
 V \hookrightarrow H\hookrightarrow V^*,
\end{equation}
where $ V$ is a  separable reflexive Banach space with topological dual  $ V^* $.
Assume that $ u_0 \in L(\Omega, \mathcal{F}_0, P; H)$.
The couple $(u, \tau)$, where $(u(t), t\in [0, T])$ is an $H$-valued $ \mathbb{F}-$adapted stochastic process and $(\tau)$ is a predictable stoping time,  is called a local weak solution of Equation
\eqref{Main-stoch-eq}, iff
 \begin{equation}\label{eq-solu-weak-stopped-1}
 u(t) = u(t\wedge \tau),\; P-a.s.\;\; \forall t\in [0, T],
 \end{equation}
 \begin{equation}\label{eq-set-solu-weak}
 u(\cdot, \omega) \in \del{L^\infty(0, T; H) \cap}L^2(0, T; V)\cap C([0, T]; H)\;\;\; a.s.
 \end{equation}
\del{for a given $ \delta \geq 0$ and} such that $ P-a.s.$ the following identity holds, for all $ t\in [0, T]$ and for all $ \varphi \in V$,
\begin{eqnarray}\label{Eq-weak-Solution}
\langle u(t), \varphi\rangle_H &=& \langle u_0, \varphi\rangle_H + \int_0^{t\wedge \tau} {}_{V^*}\langle A^\frac\alpha2 u(s), \varphi_{V} \rangle ds+
\int_0^{t\wedge \tau} \langle B(u(s), \varphi), u(s)\rangle ds\nonumber\\
& + & {}_{V^*}\langle\int_0^{t\wedge \tau}G(u(s))dW(s), \varphi\rangle_{V}.
\end{eqnarray}
In the case  $ \tau=T, \; P-a.s.$, we say that $(u(t), t\in [0, T]) $ is a weak solution(global).
\end{defn}}
\begin{defn}\label{def-variational solution}
Let  $ H$ be a separable Hilbert space and let
$ V, V_1, V_2$ be  separable reflexive Banach spaces such that,
\begin{equation}\label{vspaces-def-veriational}
V_2 \hookrightarrow V \hookrightarrow H\cong H^* \hookrightarrow V^* \hookrightarrow V_1,
\end{equation}
with  $ V^*$  being the topological dual of $ V$.
Assume that $ u_0\in L^p(\Omega, \mathcal{F}_0, P, H)$.
A $ \mathcal{F}_t-$adapted $H$-valued\del{  progressively measurable}  stochastic process $(u(t), t\in [0, T])$  is called a weak solution of Equation \eqref{Main-stoch-eq}, if
 \begin{equation}\label{eq-set-solu-weak}
 u(\cdot, \omega) \in L^\infty(0, T; H) \cap L^2(0, T; V)\cap C([0, T]; V_1)\;\;\; P-a.s.
 \end{equation}
and for all $ t\in [0, T]$,  the following identity holds  $ P-a.s.$,  for all $ \varphi \in V_2$,
\begin{eqnarray}\label{Eq-weak-Solution}
\langle u(t), \varphi\rangle_H &=& \langle u_0, \varphi\rangle_{H} + \int_0^t {}_{V_2^*}\langle A^\frac\alpha2 u(s), \varphi \rangle_{V_2} ds+
\int_0^t {}_{V_2^*}\langle B(u(s)), \varphi)\rangle_{V_2} ds\nonumber\\
& + & {}_{V_2^*}\langle\int_0^tG
(u(s))dW(s), \varphi\rangle_{V_2}.
\end{eqnarray}
\end{defn}
\del{\begin{defn}\label{def-maxi-variational solution}
Let $ \tau $ be a predictable stopping time such that $ P(\tau >0)=1$ and let $(u(t), t\in [0, T])$  be  a 
progressively measurable $H$-valued  stochastic process, $(u(t), t\in [0, T])$, where $ H$ is a separable Hilbert space. The couple $ (u, \tau)$ is called 
a local weak solution of Equation \eqref{Main-stoch-eq}, iff
\begin{equation}\label{eq-solu-weak-stopped-1}
 u(t) = u(t\wedge \tau),\; P-a.s.\;\; \forall t\in [0, T],
 \end{equation}
and for $ 0<S<\tau $ the stopped process $(u(t\wedge S), t\in [0, T])$ is a weak solution of Equation \eqref{Main-stoch-eq} in the 
sense of Definition \ref{def-variational solution}.
 \end{defn}}
\del{\begin{defn}\label{def-maxi-variational solution}
Let $ \tau $ be a predictable stopping time such that $ P(\tau >0)=1$, i.e. there exists an increasing sequence of stopping times $(\tau_n)_n\nearrow \tau$ and let $(u(t), t\in [0, T])$  be  a 
progressively measurable $H$-valued  stochastic process,  where $ H$ is a separable Hilbert space. The couple $ (u, \tau)$ is called 
a local weak solution of Equation \eqref{Main-stoch-eq}, iff 
\begin{equation}\label{eq-solu-weak-stopped-1}
 u(t) = u(t\wedge \tau),\; P-a.s.\;\; \forall t\in [0, T]
 \end{equation}
and for all $n $ the stopped process $(u(t\wedge \tau_n), t\in [0, T])$ is a weak solution of the stopped Equation \eqref{Eq-weak-Solution}, in the sense of Definition \ref{def-variational solution}.\\ The local solution $ (u, \tau)$ is said to be maximal if there exists a seperable Hilbert space $ V_0\hookrightarrow H$, such that 
\begin{equation}
\limsup_{t\nearrow \tau}|u(t)|_{V_0}=\infty \;\; on\;\; \{\tau<\infty\}\;\; P-a.s.
\end{equation}
\end{defn}}
\del{\begin{defn}\label{def-maxi-variational solution}Let  $ H$ be a separable Hilbert space and let $ \tau $ be a predictable stopping time such that $ P(\tau >0)=1$, i.e. there exists an increasing sequence of stopping times $(\tau_n)_n\nearrow \tau$ and let $(u(t), t\in [0, T])$  be  an $H$-valued  $ \mathcal{F}_t-$adapted stochastic process. The couple $ (u, \tau)$ is called 
a local weak solution of Equation \eqref{Main-stoch-eq}, iff 
\begin{equation}\label{eq-solu-weak-stopped-1}
 u(t) = u(t\wedge \tau),\; \;\; \forall t\in [0, T],  \;\; P-a.s.
 \end{equation}
and for all $n $ the stopped process $(u(t\wedge \tau_n), t\in [0, T])$ is a weak solution of the stopped Equation \eqref{Eq-weak-Solution} in the sense of Definition \ref{def-variational solution}.\\ The local solution $ (u, \tau)$ is said to be maximal if there exists a seperable Hilbert space $ V_0\hookrightarrow H$ such that 
\begin{equation}
\limsup_{t\nearrow \tau}|u(t)|_{V_0}=\infty \;\; on\;\; \{\tau<\infty\}\;\; P-a.s.
\end{equation}
\end{defn}}
\del{\begin{defn}\label{def-maxi-variational solution}Let  $ H$ be a separable Hilbert space and let $ \tau $ be a \del{predictable} stopping time,\del{ (i.e. there exists an increasing sequence of stopping times  $(\tau_n)_n\nearrow \tau$)}   such that $ P(\tau >0)=1$ and let $(u(t), t\in [0, T])$  be  an $H$-valued  $ \mathcal{F}_t-$adapted stochastic process. The couple $ (u, \tau)$ is called 
a local weak solution of Equation \eqref{Main-stoch-eq} if
\begin{equation}\label{eq-solu-weak-stopped-1}
 u(t) = u(t\wedge \tau),\; \;\; \forall t\in [0, T],  \;\; P-a.s.
 \end{equation}
and the stopped process $(u(t\wedge \tau), t\in [0, T])$ is a weak solution of the stopped Equation \eqref{Eq-weak-Solution} in the sense of Definition \ref{def-variational solution}.\\ The local solution $ (u, \tau)$ is said to be maximal if\del{ there exists a seperable Hilbert space $ V_0\hookrightarrow H$ such that} 
\begin{equation}
\limsup_{t\nearrow \tau}|u(t)|_{H}=\infty  \;\; on\;\; \{\tau<T\}.
\end{equation}
\end{defn}}

\begin{defn}\label{def-maxi-variational solution}Let  $ H$ be a separable Hilbert space and let $ \tau $ be a \del{predictable} stopping time,\del{ (i.e. there exists an increasing sequence of stopping times  $(\tau_n)_n\nearrow \tau$)}   such that $ P(\tau >0)=1$ and let $(u(t), t\in [0, T])$  be  a time strongly continuous  $H$-valued  $ \mathcal{F}_t-$adapted stochastic process. The couple $ (u, \tau)$ is called 
a local weak solution of Equation \eqref{Main-stoch-eq} if
\begin{equation}\label{eq-solu-weak-stopped-1}
 u(t) = u(t\wedge \tau),\; \;\; \forall t\in [0, T],  \;\; P-a.s.
 \end{equation}
and the stopped process $(u(t\wedge \tau), t\in [0, T])$ is a weak solution of the stopped Equation \eqref{Eq-weak-Solution} in the sense of Definition \ref{def-variational solution}.\\ The local solution $ (u, \tau)$ is said to be maximal if\del{ there exists a seperable Hilbert space $ V_0\hookrightarrow H$ such that} 
\begin{equation}
\limsup_{t\nearrow \tau}|u(t)|_{H}=\infty  \;\; on\;\; \{\tau<T\}.
\end{equation}
\end{defn}

\begin{defn}\label{def-martingle-solution}
The multiple  $ (\Omega^*, \mathcal{F}^*, P^*, \mathbb{F}^*, W^*, u^*)$, where
$ (\Omega^*, \mathcal{F}^*, \mathbb{P}^*, \mathbb{F}^*, W^*)$ is a stochastic basis with
$ W^* $ being  a $ Q-$Wiener process of trace class and $ u^*:=(u^*(t), t\in[0, T])$ being an \del{$ \mathcal{B}([0, t])\times \mathcal{F}^*_t-$, $\mathcal{F}^*_t\times \mathcal{B}((0, t))-$ progressevely measurable} $\mathcal{F}^*_t-$adapted stochastic process, is called a martingale solution of Equation \eqref{Main-stoch-eq},  if  $ \theta^*$ is a solution of Equation \eqref{Main-stoch-eq} in
the sense of Definition \ref{def-variational solution}
on  the basis $ (\Omega^*, \mathcal{F}^*, P^*, \mathbb{F}^*, W^*)$.
\end{defn}

\del{We will also give some results concerning the global existence of mild solutions, see Definition in  \cite[Pages 969-967]{Neerven-Evolution-Eq-08}. 

\begin{defn}\label{def-mild-solution}
Let $ X$ be an UMD-Banach space of type 2. Assume that $ u_0:\Omega\rightarrow X$ is strongly $ \mathcal{F}_0-$measurable\del{$ \in L^p(\Omega, \mathcal{F}_0, P; X)$}.  A strongly measurable $ \mathcal{F}_t-$adapted $ X$-valued
stochastic process, $(u(t), t\in [0, T])$,  is called a mild solution of Equation \eqref{Main-stoch-eq} if
\begin{itemize}
\item for all $ t\in [0, T]$, $ s \mapsto e^{-(t-s)A_\alpha}B(u(s))$ is in $ L^0(\Omega, L^1(0, t: X))$,
\item for all $ t\in [0, T]$, $ s \mapsto e^{-(t-s)A_\alpha}G(u(s))$ is $ H-$strongly measurable $ \mathcal{F}_t-$adapted ans a.s. in $ R_Q(H, X)$,   
\item $ \forall t \in [0, T]$, $ P-a.s.$ the following equality holds in $ X$
\begin{equation}\label{Eq-Mild-Solution}
u(t)= e^{-A_\alpha t}u_0 +
\int_0^t e^{-A_\alpha (t-s)}B(u (s))ds + \int_0^te^{-A_\alpha (t-s)}G
(u(s))W(ds).
\end{equation}
\end{itemize}
\end{defn}

\begin{defn}\label{def-local-mild-solution}
Let \del{$ p\geq 2$ and }$ X$ be an UMD-Banach space of type 2. Assume that\del{ $ u_0 \in L^p(\Omega, \mathcal{F}_0, P; X)$}$ u_0:\Omega\rightarrow X$ is strongly $ \mathcal{F}_0-$measurable. A local mild solution of Equation \eqref{Main-stoch-eq} is a couple $(u, \tau_\infty)$, where  $\tau_\infty\leq T\;  a.s. $ being a predictable stopping time (see \cite{Revuz-Yor}) and
 $(u(t), t\in [0, \tau_\infty))$ is a strongly measurable $ \mathcal{F}_t-$adapted $ X$-valued stochastic process such that
\begin{itemize}
 \item  there exists an increasing sequence of stopping times
$ (\tau_n)_{n\in \mathbb{N}}$, s.t. $ \tau_n\nearrow \tau_\infty $.
\item For all $ n\in \mathbb{N}$, the process $ (u(t\wedge \tau_n), t\in [0, T])$  satisfies \del{\eqref{eq-cond-norm-mild-solution} and} the stopped  \eqref {Eq-Mild-Solution} equation,
i.e. for all $ n \in \mathbb{N}$ and $ \forall t \in [0, T]$ the following equation is satisfied 
\begin{eqnarray}\label{Eq-Mild-Solution-stoped}
u(t\wedge \tau_n)= e^{-A_\alpha (t\wedge \tau_n)}u_0 &+&
\int_0^{(t\wedge \tau_n)} e^{-A_\alpha (t\wedge \tau_n-s)}B(u (s\wedge \tau_n))ds\nonumber \\
& + &\int_0^te^{-A_\alpha (t-s)}1_{[0, \tau_n)}(s)G
(u(s\wedge \tau_n))W(ds), \; P-a.s. 
\end{eqnarray}
\end{itemize}
\end{defn}}

\begin{remark}\label{Rem-1}
\begin{itemize}
\item As a consequence of the condition\del{ \eqref{eq-cond-cont-mild-solution} and} \eqref{eq-set-solu-weak},\del{$ u(\cdot, \omega) \in L^\infty(0, T; X) \cap C([0, T]; X_1)$ with $ X_1\hookrightarrow X$,} the trajectories of the \del{mild solutions in definitions \ref{def-mild-solution} and \ref{def-local-mild-solution} and the}weak solutions in the definition \ref{def-variational solution} \del{and  \ref{def-maxi-variational solution} are $X-$weakly respectively} are $H-$weakly continuous and $ P-a.s.$, \del{$ u(t)\in X$ respectively }$ u(t)\in H$ for all $ t\in [0, T]$, see e.g. \cite{Flandoli-Gatarek-95, Temam-NS-Main-79}.
\item Remark that for $ V_2 =V$ in \eqref{vspaces-def-veriational}, we get the classical Gelfant triple and the well known classical definition of the weak solution (one can forget about $ V_1$). We shall see that this classical formulation is also valued for the fractional case provided that $\alpha \geq \alpha_0(d):= 1+\frac{d-1}{3}$, see Section \ref{sec-Domain} and Section \ref{sec-Marting-solution}. See  also \cite{Debbi-scalar-active} for a similar definition and conditions for the $ L^q$-spaces with $ 2\leq q<\infty$. The main feature for the sub-super and critical regimes \del{difficulty, for which we need the space $ V_2 \hookrightarrow V$ and $ V_2 \neq V$,} is that contrarily to the dissipative and the hyperdissipative regimes ($\alpha\geq 2$), the decrease of the values of $ \alpha$ makes e.g. the spaces $ \mathbb{H}^{\frac\alpha2, 2}(O)$ and their duals $ \mathbb{H}^{-\frac\alpha2, 2}(O)$ approaching simultaneously the space $ \mathbb{L}^2(O)$ and thus approaching each other. Therefore the difficulty to give a sense to the fourth term in \eqref{Eq-weak-Solution} arises. According to our calculus, the values  $ \alpha_0(d)$ makes a threshold which characterizes the two phenomena.
\item The solution in Definition \ref{def-variational solution} is known in the literature either as a strong or a weak or a weak-strong solution, see e.g. \cite{Millet-Chueshov-Hydranamycs-2NS-10, Roeckner-Zhang-tamedNS-12, Roeckner-Zhang-tamedNS-09, Sundar-Sri-large-deviation-NS-06}. In fact, this
solution is strong in probabilistic sense and weak in the analytic sense. In this work, we use initially the terminology weak. In some places, if there is need to recall, we also use the terminology weak-strong.
\item The Definition \ref{def-maxi-variational solution} is used in \cite{Mikulevicius-H1-NS-solution-2004} in more general framework, see also similar definitions in \cite{Daprato-Debussche-Martingale-Pbm-2-3-NS08, Mikulevicius-H1-NS-solution-2009, Mikulevicius-H1-NS-solution-2004}. 
\del{\item The Definition of the local solution is based on the idea to stop each term of Equation \eqref {Eq-Mild-Solution}.
Contrary to other terms, it is not obvious how to stop the stochastic term; that is due to the fact that the integrand should be adapted.
The  stopped formula in  \eqref {Eq-Mild-Solution-stoped} is obtained in \cite{Brzez-Beam-eq}.}
\end{itemize}
\end{remark}

\noindent The main results of this work are

\del{\begin{theorem}\label{Main-theorem-mild-solution-d}\del{[Multidimensional case \& subcritical regime]}
Let $ d \in \mathbb{N}_{1}$, $ T>0$ and  $ p> 2$, $ \alpha \in ]1, 2] $\del{$ \delta \geq 0$} and $ d<q\leq_\infty q_0\leq \infty$ be fixed.
Assume $ u_0$\del{ $ u_0 \in L^p(\Omega, \mathcal{F}_0, P; D(A^\frac\delta2_{q_0}))$} and  $ G$  satisfying assumptions $(\mathcal{B})$ and $(\mathcal{C})$ respectively,
 with\del{  $\frac d{\alpha-1}< q\leq_\infty q_0$ and $ \frac d{\alpha-1} < q_0\leq \infty$ and} $ 0\leq \delta\leq \delta_0$. \del{and that  $ \alpha \in ( 1+ \frac dq,  2]$ with}Then
\begin{itemize}

\vspace{0.35cm}

\item  { (3.6.1)  [Local mild solution for dD-FSNSE.]} For $ \alpha \in (1+\frac dq, 2]$, Equation \eqref{Main-stoch-eq} has a local mild solution $ (u, \tau_\infty)$ in the sense of Definition \ref{def-local-mild-solution}, \del{satisfying for all stopping time $ S<\tau_\infty$,
\del{\begin{equation}\label{Eq-proper-local-solution}
 u \in L^p(\Omega;  L^\infty([0, S]; \mathbb{H}^{\delta_1, q}(O))\cap L(\Omega;  C([0, S];
\mathbb{H}^{-\delta'', q}(O)),
 \end{equation}}
with $ X:=D(A_q^{\frac\delta2})=\mathbb{H}^{\delta, q}(O)$, $  X_1:=D(A_{q}^{-\frac{\delta''}2}) = \mathbb{H}^{\delta'', q}(O)$
with $ X:=D(A_q^{\frac\delta2})=\mathbb{H}^{\delta, q}(O)$, $  X_1:=D(A_{q^*}^{-\frac{\delta''}2}) = \mathbb{H}^{\delta'', q^*}(O)$ and $\delta_1 \leq \delta$ and $\delta''\geq \alpha+1+\frac dq-\delta$.
\begin{equation}\label{Eq-proper-local-solution}
 u \in L^p(\Omega;  L^\infty([0, \tau_\infty); \mathbb{H}^{\delta, q}(O))\cap L(\Omega;  C([0, \tau_\infty);
\mathbb{H}^{-\delta'', q}(O)),
 \end{equation}}
with $ X:=D(A_q^{\frac\delta2})=\mathbb{H}^{\delta, q}(O)$, $  X_1:=D(A_{q}^{-\frac{\delta''}2}) = \mathbb{H}^{-\delta'', q}(O)$  and $\delta''\geq \alpha+1+\frac dq-\delta$.

\vspace{0.35cm}

\item { (3.6.2) [Global mild solution for the 2D-FSNSE on the torus.]} If  $ O= \mathbb{T}^2$ and in addition,\del{$ curl u_0\in L^p(\Omega, \mathcal{F}_0, P; L^{q}(\mathbb{T}^2)$ and} $ G$ satisfies also Assumption $(\mathcal{C})$ for $\delta$ replaced by $ 1$ with $ p\geq q$ and $u_0$ satisfies 
\begin{equation}\label{eq-curl-u-0-torus}
curl u_0 \in L^p(\Omega, \mathcal{F}_0, P; L^{q_0}(\mathbb{T}^d)),\,
\end{equation}
with  $p\geq \max\{q_0, 4\}$, then Equation \eqref{Main-stoch-eq} admits a unique global mild solution satisfying\del{  for $ P-a.s.$,}
\del{\begin{equation}
 u(\cdot, \omega) \in  L^q(\Omega; L^\infty(0, T ; \mathbb{H}^{1, q}(\mathbb{T}^2)), \;\; P-a.s.
\end{equation}}
\begin{equation}\label{propty-of-2D-global-mild-sol}
 \mathbb{E}\sup_{[0, T]}|u(t)|^q_{\mathbb{H}^{1, q}}+ \mathbb{E}\int_0^T|u(t)|^2_{\mathbb{H}^{1+\frac\alpha2, 2}}dt<\infty
\end{equation}
provided that one of the following cases is satisfied

{\bf case 1. }$ \alpha \in (\frac43, 2]$, $6\leq q_0 \leq \infty$ and $\frac2{\alpha-1} < q\leq 6$.

{\bf case 2. } $6 < q\leq \min\{q_0, \frac{4}{2-\alpha}\}$ and  $ 2-\frac{4}{q} <\alpha \leq 2$.

{\bf case 3. }  $  \frac{2}{\alpha-1}< q \leq q_0 \leq 6$ and  $1+\frac{2}{q}< \alpha \leq 2$.
\del{\begin{equation}\label{est-u-torus-H-1-q-1}
 \mathbb{E}\sup_{[0, T]}|u(t)|^2_{\mathbb{H}^{1, q}}<\infty.
\end{equation}
Assume $ \alpha \in (\frac43, 2]$ and
\begin{equation}\label{eq-curl-u-0-torus}
curl u_0 \in L^p(\Omega, \mathcal{F}_0, P; L^{q_0}(\mathbb{T}^d)),\,
\end{equation}
with $6\leq q_0 \leq \infty$,  $p\geq q_0$ and supposed large enough,  $ G$ satisfying
Assumption $ (\mathcal{C}_1)$ with $ \delta =0, \frac2{\alpha-1} < q\leq 6$ and that
Equation \eqref{Main-stoch-eq} admits a weak solution in the sense of Definition
\ref{def-variational solution} satisfying
\eqref{eq-set-solu-weak-torus} and \eqref{cond-solu-torus-H1}. Then the solution satisfies the following estimation

\noindent Furthermore,  Estimation \eqref{est-u-torus-H-1-q-1} is also satisfied in the following cases

{\bf case 1. } $6 \leq q\leq \min\{q_0, \frac{4}{2-\alpha}\}$ and  $ 2-\frac{4}{q} <\alpha \leq 2$.

{\bf case 2. }  $  \frac{2}{\alpha-1}< q \leq q_0 \leq 6$ and  $1+\frac{2}{q}< \alpha \leq 2$.

  $ curl u_0\in L^p(\Omega, \mathcal{F}_0, P; L^{q}(\mathbb{T}^2)$
 and $ G$ satisfies Assumption $(\mathcal{C})$ for $\delta$ replaced by $ 1$ and Assumption $(\mathcal{C}_b)$, then Equation \eqref{Main-stoch-eq} admits a unique global mild solution satisfying for $ P-a.s.$,}

\vspace{0.35cm}

\item { (3.6.3) [Global unique mild solution for the dD-FSNSE.]} Assume that $ d\geq 2$, $ O$ is either bounded or $ O= \mathbb{T}^2$,  $ G$ satisfies in addition\del{Assumption $(\mathcal{C})$ for $\delta$ replaced by $ 1$ and} Assumption $(\mathcal{C}_b)$, with $ p\geq q$\del{ (prefer to be large)} and Equation \eqref{Main-stoch-eq} admits a local mild solution $(u, \tau_\infty)$ which enjoys 
\begin{equation}\label{cond-global-mild-solu}
\mathbb{E}\sup_{[0, \tau_\infty)}\int_0^t (t-s)^{-\frac{d}{\alpha q}}|\nabla u(s)|_{q}ds\leq c<\infty,
\end{equation}
then Equation \eqref{Main-stoch-eq} admits an $ \mathbb{L}^q-$valued  unique global mild solution.
\end{itemize}
\end{theorem}}

\begin{theorem}\label{Main-theorem-strog-Torus}
Let $ O=\mathbb{T}^2$, $ \alpha \in [1, 2]$ and  $ u_0$  satisfying Assumption $(\mathcal{B})$, with $ \delta_0\geq 1$, $ q\geq 2$ and $ p\geq 4$.
\del{\begin{equation}\label{cond-torus-u0}
u_0\in L^p(\Omega, \mathcal{F}_0, P; \mathbb{H}^{1, 2}(\mathbb{T}^2) ).
\end{equation}}
\begin{itemize}

\vspace{0.35cm}

\item { (\ref{Main-theorem-strog-Torus}.1) [Existence of weak-strong solution.]}  Assume that $ G$ satisfies  Assumption $ (\mathcal{C})$ with $ q=2$,\del{ $ \delta=1$} $ \delta\in \{0, 1\}$ and $ C_R$ being independent of $ R$
(global Lipschitz). Then for $ \alpha \in [\frac43, 2]$, Equation \eqref{Main-stoch-eq} admits a weak solution (strong in probability)
$ (u(t), t\in [0, T])$ in the sense of Definition \ref{def-variational solution}, with the corresponding Gelfant triple
\begin{equation}\label{Gelfand-triple-Torus}
 \mathbb{H}^{1+\frac\alpha2, 2} (\mathbb{T}^2)\hookrightarrow \mathbb{H}^{1, 2}(\mathbb{T}^2)\hookrightarrow
(\mathbb{H}^{1+\frac\alpha2, 2}(\mathbb{T}^2))^*=\mathbb{H}^{1-\frac\alpha2, 2}(\mathbb{T}^2),
\end{equation}
\del{and which }satisfying
\begin{equation}\label{eq-set-solu-weak-torus}
u(\cdot, \omega) \in L^\infty(0, T; \mathbb{H}^{1, 2}(\mathbb{T}^2))\cap L^2(0, T; \mathbb{H}^{1+\frac \alpha2, 2}(\mathbb{T}^2))\cap
C([0, T]; \mathbb{L}^{2}(\mathbb{T}^2))\;\; P-a.s.
 \end{equation}
and \del{for all $ p'\leq p$}
\begin{eqnarray}\label{cond-solu-torus-H1}
\mathbb{E}\Big(\sup_{[0, T]}|u(t)|^{p}_{\mathbb{H}^{1, 2}}&+&
\int_0^T|u(t)|^2_{\mathbb{H}^{1+\frac\alpha2, 2}} dt\Big)<\infty.
\end{eqnarray}
\del{\item Assume $ \alpha \in [\frac{p}{p-1}, 2]$ and Equation \eqref{Main-stoch-eq} admits a weak solution in the sense of Definition
\ref{def-variational solution} satisfying
\eqref{eq-set-solu-weak-torus} and \eqref{cond-solu-torus-H1}. Assume $ G$ satisfying  Assumption $ (\mathcal{C})$ with $ q=2$,
$ \delta= 0$ (local Lipschitz). Then the solution is unique.}

\del{If in addition,  $ G$ satisfies \eqref{Eq-Cond-Linear-Q-G}, with $ \delta =1+\frac\alpha2$ and $ q=2$ and  $ \alpha \in (\frac43, 2]$, then 
\begin{equation}\label{Eq-H-1-regu-2D-torus}
u(\cdot, \omega)\in C([0, T]; \mathbb{H}^{1, 2}( \mathbb{T}^2)).\;\; a.s.
\end{equation}}

\vspace{0.35cm}

\item { (\ref{Main-theorem-strog-Torus}.2) [Uniqueness of the weak solution.]}  Assume\del{ $ \alpha \in [1, 2]$ and} that $ G$ satisfies  
Assumption $ (\mathcal{C})$ with\del{ ( \eqref{Eq-Cond-Lipschitz-Q-G}, with} $ q=2$ and $ \delta= 0$ (local Lipschitz). Then if for $ \alpha\in[1, 2]$,
Equation \eqref{Main-stoch-eq} admits a weak solution in the sense of Definition \ref{def-variational solution} satisfying
\eqref{eq-set-solu-weak-torus} and \eqref{cond-solu-torus-H1}, pathwise uniqueness holds.\del{ it is unique.}

\vspace{0.35cm}

\item { (\ref{Main-theorem-strog-Torus}.3) [Space regularity of the weak solution.]}  Assume that $ \alpha \in (1, 2]$, $ G$ satisfies
Assumption $ (\mathcal{C})$ with $ \delta =1$ and $\frac2{\alpha-1} < q<\infty$, $ u_0$ satisfies 

\begin{equation}\label{eq-curl-u-0-torus}
curl u_0 \in L^p(\Omega, \mathcal{F}_0, P; L^{q_0}(\mathbb{T}^d)),\,
\end{equation}
and Equation \eqref{Main-stoch-eq} admits a weak solution $(u(t), t\in [0, T])$ in the sense of Definition
\ref{def-variational solution} satisfying
\eqref{eq-set-solu-weak-torus} and \eqref{cond-solu-torus-H1}. Then $(u(t), t\in [0, T])$ satisfies 

\begin{equation}\label{propty-of-2D-global-mild-sol}
 \mathbb{E}\sup_{[0, T]}|u(t)|^q_{\mathbb{H}^{1, q}}+ \mathbb{E}\int_0^T|u(t)|^2_{\mathbb{H}^{1+\frac\alpha2, 2}}dt<\infty,
\end{equation}
\del{\eqref{propty-of-2D-global-mild-sol},} for $ \alpha$, $ q$ and $ q_0$ follow one of the cases 

{\bf case 1. }$ \alpha \in (\frac43, 2]$, $6\leq q_0 \leq \infty$ and $\frac2{\alpha-1} < q\leq 6$.

{\bf case 2. } $6 < q\leq \min\{q_0, \frac{4}{2-\alpha}\}$ and  $ 2-\frac{4}{q} <\alpha \leq 2$.

{\bf case 3. }  $  \frac{2}{\alpha-1}< q \leq q_0 \leq 6$ and  $1+\frac{2}{q}< \alpha \leq 2$.

\del{
{\bf case 1. }$ \alpha \in (\frac43, 2]$, $6\leq q_0 \leq \infty$ and $\frac2{\alpha-1} < q\leq 6$.

{\bf case 2. } $6 \leq q\leq \min\{q_0, \frac{4}{2-\alpha}\}$ and  $ 2-\frac{4}{q} <\alpha \leq 2$.

{\bf case 3. }  $  \frac{2}{\alpha-1}< q \leq q_0 \leq 6$ and  $1+\frac{2}{q}< \alpha \leq 2$.}
\del{\item (3.7.3) Assume $ \alpha \in (\frac43, 2]$ and
\begin{equation}\label{eq-curl-u-0-torus}
curl u_0 \in L^p(\Omega, \mathcal{F}_0, P; L^{q_0}(\mathbb{T}^d)),\,
\end{equation}
with $6\leq q_0 \leq \infty$,  $p\geq q_0$ and supposed large enough,  $ G$ satisfying
Assumption $ (\mathcal{C}_c)$ with $\frac2{\alpha-1} < q\leq 6$ and that
Equation \eqref{Main-stoch-eq} admits a weak solution in the sense of Definition
\ref{def-variational solution} satisfying
\eqref{eq-set-solu-weak-torus} and \eqref{cond-solu-torus-H1}. Then the solution satisfies the following estimation
\begin{equation}\label{est-u-torus-H-1-q-1}
 \mathbb{E}\sup_{[0, T]}|u(t)|^2_{\mathbb{H}^{1, q}}<\infty.
\end{equation}
\noindent Furthermore,  Estimation \eqref{est-u-torus-H-1-q-1} is also satisfied in the following cases

{\bf case 1. } $6 \leq q\leq \min\{q_0, \frac{4}{2-\alpha}\}$ and  $ 2-\frac{4}{q} <\alpha \leq 2$.

{\bf case 2. }  $  \frac{2}{\alpha-1}< q \leq q_0 \leq 6$ and  $1+\frac{2}{q}< \alpha \leq 2$.}
\end{itemize}
\del{Moreover, if  $ G$ satisfies also the following conditions:
\noindent there exists a constant $ c>0$, such that
\begin{equation}\label{cond-torus-lingrow}
|[\sum_{k\in\mathbb{Z}^d_0} q_k|curl(G(u)e_k)|^2]^\frac12|_{L^q}  \leq c(1+ |curl u|_{L^{q_0}}), \; \forall u\in \mathbb{H}^{1, q}(\mathbb{T}^2).
\end{equation}
For all $ R>0$, there exists $ c_R>0$, such that
\begin{equation}\label{cond-torus-lipschitz}
|[\sum_{k\in\mathbb{Z}^d_0} |curl((G(u) -G(v))Q^\frac12 e_k)|^2]^\frac12|_{L^q}  \leq c_R|curl (u-v)|_{L^q},
\; \forall |u|_{\mathbb{H}^{1, q}}, |v|_{\mathbb{H}^{1, q}} \leq R,
\end{equation}
where $ q_0\geq 6$ and  $ curl u \in L_2^6(\mathbb{T}^2)$. Then $(u(t), t\in[0, T])$, satisfying
\begin{equation}\label{eq-T-2-strong-solu}
u(\cdot, \omega) \in L^\infty(0, T;  \mathbb{H}^{1, q}(\mathbb{T}^2))\cap L^2(0, T;  \mathbb{H}^{1+\frac\alpha2, 2}
(\mathbb{T}^2))\cap C([0, T]; \mathbb{H}^{\frac\alpha2, 2}(\mathbb{T}^2)). a.s.
\end{equation}}
\end{theorem}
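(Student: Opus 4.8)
The three assertions are all proved at the \emph{enstrophy level} fixed by the Gelfand triple \eqref{Gelfand-triple-Torus}, i.e. with pivot space $H=\mathbb{H}^{1,2}(\mathbb{T}^2)$ and energy space $V=\mathbb{H}^{1+\frac\alpha2,2}(\mathbb{T}^2)$; this is the fractional analogue of the classical $L^2$-vorticity theory for the $2$D Navier--Stokes equation. \textbf{(\ref{Main-theorem-strog-Torus}.1) Existence.} I would construct the solution by a Faedo--Galerkin scheme: project \eqref{Main-stoch-eq} onto $H_n=\mathrm{span}(e_1,\dots,e_n)$ spanned by the Stokes eigenfunctions \eqref{basis}, solve the resulting finite-dimensional It\^o system (globally solvable, the global Lipschitz hypothesis on $G$ and the a priori bound below ruling out explosion), and pass to the limit. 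The decisive step is the a priori estimate obtained by applying It\^o's formula to $|u_n(t)|_{\mathbb{H}^{1,2}}^2$. Here the $H^{1,2}$-orthogonality is the $2$D miracle: testing the convective term against $u_n$ at the $\mathbb{H}^{1,2}$ level (equivalently, writing the equation for the scalar vorticity $\omega_n=\mathrm{curl}\,u_n$ and pairing with $\omega_n$ in $L^2$) makes its contribution vanish, since $\langle(u_n\cdot\nabla)\omega_n,\omega_n\rangle=\tfrac12\int_{\mathbb{T}^2}u_n\cdot\nabla|\omega_n|^2=0$ by the divergence-free condition. What remains is the dissipation $-\nu|u_n|_{\mathbb{H}^{1+\frac\alpha2,2}}^2$, while the stochastic terms are absorbed through the growth condition \eqref{Eq-Cond-Linear-Q-G} (with $q=2$, $\delta\in\{0,1\}$) and the Burkholder--Davis--Gundy inequality. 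A Gronwall argument then gives the uniform bound \eqref{cond-solu-torus-H1}.

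\textbf{Compactness and the main obstacle.} Feeding these bounds, together with fractional time-regularity estimates for the drift and for the stochastic convolution, into an Aubin--Lions--Simon argument based on the compact embedding $\mathbb{H}^{1+\frac\alpha2,2}\hookrightarrow\hookrightarrow\mathbb{H}^{1,2}$, I would prove tightness of the laws of $(u_n)$ in $L^2(0,T;\mathbb{H}^{1,2})\cap C([0,T];\mathbb{L}^2)$. The Skorokhod representation theorem yields a.s. convergent copies on a new stochastic basis, and the usual martingale-representation identification produces a martingale solution in the sense of Definition \ref{def-variational solution}. The principal difficulty is the passage to the limit in the nonlinear term: one must pass from $B(u_n)$ to $B(u)$ in the energy duality, which demands that $B$ map $V$ continuously into $V^*=\mathbb{H}^{1-\frac\alpha2,2}$ and be stable under the only convergence available (strong in $L^2(0,T;\mathbb{L}^2)$, weak in $L^2(0,T;V)$). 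This is exactly where the refined fractional multiplication estimates of Section \ref{sec-nonlinear-prop} are needed, and where the threshold $\alpha\ge\alpha_0(2)=\tfrac43$ enters: below it the Gelfand triple ceases to be coherent for the convective term. Finally, combining this martingale solution with the pathwise uniqueness of (\ref{Main-theorem-strog-Torus}.2) and a Gy\"ongy--Krylov argument upgrades it to a solution that is strong in probability on the original basis.

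\textbf{(\ref{Main-theorem-strog-Torus}.2) Uniqueness.} For two solutions $u_1,u_2$ issued from the same data, put $w=u_1-u_2$ and localize with the stopping times $\tau_R=\inf\{t:\,|u_1(t)|_{\mathbb{H}^{1,2}}\vee|u_2(t)|_{\mathbb{H}^{1,2}}>R\}$, so that the local Lipschitz hypothesis on $G$ ($q=2$, $\delta=0$) applies. Applying It\^o to $|w|_{\mathbb{L}^2}^2$, the dissipation gives $-\nu|w|_{\mathbb{H}^{\frac\alpha2,2}}^2$, the term $\langle(u_2\cdot\nabla)w,w\rangle$ vanishes by incompressibility, and the surviving convective term $\langle(w\cdot\nabla)u_1,w\rangle$ must be controlled. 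Using $\nabla u_1\in\mathbb{H}^{\frac\alpha2,2}\hookrightarrow L^{4/(2-\alpha)}$, H\"older's inequality and the $2$D Sobolev/Gagliardo--Nirenberg interpolation of $\mathbb{L}^2\cap\mathbb{H}^{\frac\alpha2,2}$, I would obtain $|\langle(w\cdot\nabla)u_1,w\rangle|\le\tfrac\nu2|w|_{\mathbb{H}^{\frac\alpha2,2}}^2+\psi(s)|w|_{\mathbb{L}^2}^2$ with $\int_0^T\psi(s)\,ds<\infty$ by \eqref{cond-solu-torus-H1}; this interpolation closes precisely in the range $\alpha\ge1$, which is the relevant Serrin-type condition. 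A stochastic Gronwall inequality then forces $w\equiv0$ on $[0,\tau_R]$, and letting $R\to\infty$ gives pathwise uniqueness.

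\textbf{(\ref{Main-theorem-strog-Torus}.3) Space regularity.} I would pass to the vorticity $\omega=\mathrm{curl}\,u$, which, by the commutation of $\mathrm{curl}$ with $A_\alpha=(-\Delta)^{\frac\alpha2}$ on $\mathbb{T}^2$ and with the stochastic integral (Appendix \ref{sec-Passage Velocity-Vorticity}), solves the scalar fractional stochastic vorticity equation with drift $(u\cdot\nabla)\omega+\nu(-\Delta)^{\frac\alpha2}\omega$ and diffusion $\mathrm{curl}\,G(u)$. Applying It\^o to $|\omega|_{L^q}^q$ and using the fractional positivity (C\'ordoba--C\'ordoba type) inequality $\langle(-\Delta)^{\frac\alpha2}\omega,|\omega|^{q-2}\omega\rangle\ge0$, the convective term again disappears because $\langle(u\cdot\nabla)\omega,|\omega|^{q-2}\omega\rangle=\tfrac1q\int_{\mathbb{T}^2}u\cdot\nabla|\omega|^q=0$, while the forcing is dominated in $L^q$ by the growth condition \eqref{Eq-Cond-Linear-Q-G} (with $\delta=1$) and Burkholder--Davis--Gundy; this yields $\mathbb{E}\sup_{[0,T]}|\omega|_{L^q}^q<\infty$, equivalently \eqref{propty-of-2D-global-mild-sol} through the identification $|u|_{\mathbb{H}^{1,q}}\simeq|\mathrm{curl}\,u|_{L^q}$ on $\mathbb{T}^2$ and the subcritical balance $q>\tfrac2{\alpha-1}$. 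The enumeration of cases $1$--$3$ simply records the ranges of $(\alpha,q,q_0)$ for which all the Sobolev embeddings and the interplay with the distinguished exponent $6=\tfrac{3\cdot2}{2-1}$ remain admissible; alternatively the whole statement can be quoted from the active-scalar results of \cite{Debbi-scalar-active} applied to this vorticity equation. The main obstacle here is the rigorous derivation of the fractional stochastic vorticity equation, i.e. justifying $\mathrm{curl}$ applied to the nonlocal dissipation and to the stochastic convolution, which is the content of Appendix \ref{sec-Passage Velocity-Vorticity}.
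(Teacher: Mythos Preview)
Your treatment of parts (\ref{Main-theorem-strog-Torus}.2) and (\ref{Main-theorem-strog-Torus}.3) matches the paper's: for uniqueness the paper likewise applies It\^o to $e^{-r(t)}|w(t)|_{\mathbb{L}^2}^2$ and controls the surviving convective term via the trilinear estimate \eqref{3linear-H1+alpha2-H-1} (which is exactly your interpolation computation), choosing $r'(s)=c|u^1(s)|_{\mathbb{H}^{1+\frac\alpha2,2}}^{2\alpha/(3\alpha-2)}$; for regularity the paper passes to the vorticity equation (Appendix \ref{sec-Passage Velocity-Vorticity}), verifies that $\tilde G(\theta)=\mathrm{curl}\,G(\mathcal R^1\theta)$ inherits the required growth and Lipschitz bounds, and then quotes \cite{Debbi-scalar-active} directly. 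One small point on uniqueness: since the solutions are only known to be continuous in $\mathbb{L}^2$ (not in $\mathbb{H}^{1,2}$), the paper localizes with $\mathbb{L}^2$-norm stopping times rather than your $\mathbb{H}^{1,2}$-ones.

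For the existence part (\ref{Main-theorem-strog-Torus}.1), however, you take a genuinely different route. The paper does \emph{not} go through tightness, Skorokhod, martingale solutions and Gy\"ongy--Krylov. After obtaining the uniform Galerkin bounds (your first paragraph, which is indeed Lemma \ref{lem-unif-bound-theta-n-H-1}), the paper instead extracts weak limits directly: $u_n\rightharpoonup u$ in $L^2(\Omega\times[0,T];\mathbb{H}^{1+\frac\alpha2,2})$, weak-$*$ in $L^p(\Omega;L^\infty(0,T;\mathbb{H}^{1,2}))$, $P_nF(u_n)\rightharpoonup F_1$ in $L^2(\Omega\times[0,T];\mathbb{H}^{1-\frac\alpha2,2})$ and $P_nG(u_n)\rightharpoonup G_1$ in $L^2(\Omega\times[0,T];L_Q(\mathbb{H}^{1,2}))$, and then identifies $F_1=F(u)$, $G_1=G(u)$ by a \emph{local monotonicity} argument in the style of Krylov--Rozovskii / Liu--R\"ockner. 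The key inequality is
\[
-2\langle A_\alpha(u-v),u-v\rangle_{\mathbb{L}^2}+2\langle B(u)-B(v),u-v\rangle_{\mathbb{L}^2}+\|G(u)-G(v)\|_{L_Q(\mathbb{L}^2)}^2\le c\bigl(1+|v|_{\mathbb{H}^{1+\frac\alpha2,2}}^{\frac{2\alpha}{3\alpha-2}}\bigr)|u-v|_{\mathbb{L}^2}^2,
\]
combined with the exponential-weight trick $r(t)=c\int_0^t(1+|v(s)|_{\mathbb{H}^{1+\frac\alpha2,2}}^{2\alpha/(3\alpha-2)})\,ds$. This delivers a strong-in-probability solution \emph{directly on the original stochastic basis}, with no change of probability space. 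Your compactness route is also correct and standard for stochastic Navier--Stokes, but here the monotonicity approach is shorter and avoids the detour through a martingale solution; it exploits precisely the same fractional estimates of Section \ref{sec-nonlinear-prop}.

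A related correction: the threshold $\alpha\ge\tfrac43$ does \emph{not} come from coherence of the $\eta=1$ Gelfand triple \eqref{Gelfand-triple-Torus}---by Theorem \ref{theo-gelfand-gene} that triple accommodates $B$ for all $\alpha\in[1,2]$ when $d=2$. In the paper it arises from the interpolation bound \eqref{Eq-B-n-weak-estimation-1-Torus-deter-2step}, which secures the uniform $L^2(\Omega\times[0,T];V^*)$-bound on $P_nB(u_n)$ needed to extract the weak limit $F_1$; the exponent condition $4(2-\alpha)/\alpha\le2$ there is exactly $\alpha\ge\tfrac43$.
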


\begin{theorem}\label{Main-theorem-martingale-solution-d} \del{[Multidimensional case\& general regime\del{Martingale solution}]}
Let $ d \in \mathbb{N}_{1}$,  $ \alpha \in (0, 2] $ and $ T>0$ be fixed.
Assume that $ u_0 $ and $ G$  satisfy Assumption  $(\mathcal{B})$ respectively Assumption $(\mathcal{C})$ (\eqref{Eq-Cond-Linear-Q-G} with $ \delta =0, q=2\leq_\infty q_0\leq \infty$ and $ p\geq 4$). Then  
 
\begin{itemize}
\item { (\ref{Main-theorem-martingale-solution-d}.1) [Martingale solution.]}  Equation \eqref{Main-stoch-eq}
has a martingale solution, \\
$ (\Omega^*, \mathcal{F}^*, P^*, \mathbb{F}^*, W^*, u^*)$,  in the sense of  Definition \ref{def-martingle-solution}, satisfying \eqref{eq-set-solu-weak}, \eqref{Eq-weak-Solution}, with $ V_2=V=\mathbb{H}^{\frac\alpha2, 2}(O)$, $ H=\mathbb{L}^{2}(O)$, $ V_1=\mathbb{H}^{-\delta', 2}(O)$, with $\delta'> 1+\frac d2$ and satisfies \del{for all $ p'\leq p$the following estimate}
\begin{equation}\label{eq-mart-l-2solu}
\mathbb{E}\sup_{[0, T]}|u(t)|^p_{\mathbb{L}^2}+ \mathbb{E}\int_0^{T}|u(t)|^2_{\mathbb{H}^{\frac\alpha2, 2}}\leq c<\infty.
\end{equation}
\del{\begin{eqnarray}\label{cond-solu-torus-l-2}
\mathbb{E}\Big(\sup_{[0, T]}|u(t)|^{p'}_{\mathbb{H}^{1, 2}}&+&
\int_0^T|u(t)|^{p'-2}_{\mathbb{H}^{1, 2}}\Big(|u(t)|^2_{\mathbb{H}^{1+\frac\alpha2, 2}} +
|u(t)|_{\mathbb{H}^{\beta, q_1}}^2 \Big)dt \nonumber\\
&+& \int_0^T|u(t)|^4_{\mathbb{H}^{1, 2}}dt + \int_0^T|u(t)|^{\frac{\alpha}{\eta}}_{\mathbb{H}^{1+\eta, 2}}dt\Big)<\infty,
\end{eqnarray}
where $ \beta \leq 1+\frac\alpha2-\frac d2+\frac d{q_1}$, $ 2\leq q<\infty$ and $ 0<\eta\leq \frac\alpha2$.} In particular, for $ 1+\frac{d-1}{3}<\alpha \leq 2$, we can take $ V_2= \mathbb{H}^{\frac\alpha2, 2}(O)$, $ V_1=\mathbb{H}^{-\delta', 2}(O)$.

\del{\item {\bf (\ref{Main-theorem-martingale-solution-d}.2) Regularity of the martingale solution.} 
  If in addition $ \alpha \in [\alpha_0:=1+\frac{d-1}{3}, 2]$, then 
\begin{equation}\label{cont-cond-martg}
 u^*(\cdot, \omega)\in C([0, T]; \mathbb{L}^2(O)),\; P^*-a.s.
\end{equation}}
\item { (\ref{Main-theorem-martingale-solution-d}.2) [Uniqueness  of the martingale solution.]}
If $ G$  satisfies  \eqref{Eq-Cond-Lipschitz-Q-G} with $ q=2, \delta =0$ and Equation \eqref{Main-stoch-eq} has a martingale solution $ u^*$ satisfying the following condition
\begin{equation}\label{uniquness-con-martg}
 P^*(u^*(\cdot, \omega) \in L^{\frac{4\alpha}{3\alpha-d-2}}(0, T; \mathbb{H}^{\frac{d+2-\alpha}{4}, 2}(O)))=1,
\end{equation}
then pathwise uniqueness holds and consequently $ u^*$ is the unique global strong-weak solution.\del{  corollary of the pathwise uniqueness  in (3.9.2), the martingale solution becomes probabilistically strong.}
\end{itemize}

\del{\noindent Moreover, the solution $ (\theta(t), t\in [0, T])$ satisfies \eqref{eq-set-solu-weak},
\eqref{Eq-weak-Solution} and \eqref{Eq-reg-theta-mild}, for  $ d(1-\frac2q)\leq \alpha \leq 2$ and either
\vspace{-0.35cm}
\begin{itemize}
\item $ d\leq \alpha$ and   $ 2\leq q\leq_\infty q_0$, or
\item $ d> \alpha$ and $ 2\leq q \leq_\infty\min\{q_0, \frac{2d}{d-\alpha}\}$,
\end{itemize}
\begin{itemize}
\item $ d=1$, $ \alpha\geq 1$ and   $ 2\leq q\leq_\infty q_0$, or
\item $ d=2$, $ \alpha= 2$ and   $ 2\leq q\leq_\infty q_0$, or
\item $ d\geq \alpha$ and $ 2\leq q \leq q_0\leq \frac{2d}{d-2} $.
\end{itemize}}
\end{theorem}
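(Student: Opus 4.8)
The plan is to obtain (\ref{Main-theorem-martingale-solution-d}.1) by the Faedo--Galerkin/compactness method and (\ref{Main-theorem-martingale-solution-d}.2) by an It\^o argument on the difference of two solutions combined with a stochastic Gronwall lemma. For the existence I would fix the Stokes eigenbasis $(e_j)$ from \eqref{basis} and let $P_n$ be the orthogonal projection of $\mathbb{L}^2(O)$ onto $\mathrm{span}\{e_1,\dots,e_n\}$. The finite-dimensional system
\begin{equation}
du_n=\left(-\nu A_\alpha u_n+P_nB(u_n)\right)dt+P_nG(u_n)dW,\qquad u_n(0)=P_nu_0,
\end{equation}
has locally Lipschitz coefficients, hence a unique local solution; global existence follows from the a priori bound. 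Applying It\^o's formula to $|u_n(t)|^2_{\mathbb{L}^2}$ and using the fundamental cancellation $\langle B(v),v\rangle_{\mathbb{L}^2}=0$ (valid since $v$ is divergence free, so $\langle(v\cdot\nabla)v,v\rangle=\tfrac12\int_O(v\cdot\nabla)|v|^2dx=0$), the identity $\langle A_\alpha u_n,u_n\rangle=|u_n|^2_{\mathbb{H}^{\frac\alpha2,2}}$, the linear growth \eqref{Eq-Cond-Linear-Q-G} (with $\delta=0$, $q=2$), the Burkholder--Davis--Gundy inequality and Gronwall's lemma, I expect the estimate \eqref{eq-mart-l-2solu} uniformly in $n$, namely
\begin{equation}
\sup_n\Big(\mathbb{E}\sup_{[0,T]}|u_n(t)|^p_{\mathbb{L}^2}+\mathbb{E}\int_0^T|u_n(t)|^2_{\mathbb{H}^{\frac\alpha2,2}}dt\Big)\leq c<\infty.
\end{equation}

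Next I would prove tightness of the laws of $(u_n)$. From the equation the drift $-\nu A_\alpha u_n+P_nB(u_n)$ is bounded, uniformly in $n$, in $L^2(0,T;\mathbb{H}^{-\delta',2})$: the linear part is controlled by $|u_n|_{\mathbb{H}^{\frac\alpha2,2}}$, while $B(u_n)=\Pi\,\mathrm{div}(u_n\otimes u_n)$ lies in $\mathbb{H}^{-\delta',2}$ with $\delta'>1+\tfrac d2$ because $u_n\otimes u_n\in L^1$ embeds into $\mathbb{H}^{-\frac d2-\epsilon}$; this is exactly where the choice $V_1=\mathbb{H}^{-\delta',2}$ enters. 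The stochastic convolution is $\gamma$-H\"older in time with values in $\mathbb{L}^2$ for $\gamma<\tfrac12$ by \eqref{Eq-Cond-Linear-Q-G} and the Kolmogorov criterion, so $(u_n)$ is bounded in $L^2(\Omega;L^2(0,T;\mathbb{H}^{\frac\alpha2,2})\cap W^{\gamma,p}(0,T;\mathbb{H}^{-\delta',2}))$. Since $\mathbb{H}^{\frac\alpha2,2}\hookrightarrow\hookrightarrow\mathbb{L}^2\hookrightarrow\mathbb{H}^{-\delta',2}$, the Aubin--Lions--Simon compactness lemma yields tightness of the laws on $L^2(0,T;\mathbb{L}^2)$, cf. \cite{Flandoli-Gatarek-95}. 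Prokhorov's theorem and the Skorokhod representation theorem then provide a new basis $(\Omega^*,\mathcal{F}^*,P^*,\mathbb{F}^*)$ and processes $(u_n^*,W_n^*)$ with the same laws converging $P^*$-a.s. to $(u^*,W^*)$ in the path space. The only delicate point in passing to the limit is the nonlinear term: the strong convergence $u_n^*\to u^*$ in $L^2(0,T;\mathbb{L}^2)$ handles the quadratic $B$ tested against $\varphi\in V_2$, while the martingale representation together with the continuity of $G$ identifies $\int_0^\cdot G(u_n^*)dW_n^*\to\int_0^\cdot G(u^*)dW^*$, giving \eqref{Eq-weak-Solution} with $V_2=V=\mathbb{H}^{\frac\alpha2,2}$, $H=\mathbb{L}^2$, $V_1=\mathbb{H}^{-\delta',2}$. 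I expect the genuine obstacle to be making $\langle B(u^*),\varphi\rangle_{V_2}$ meaningful for test functions only in $V_2=V$: for $\alpha\leq\alpha_0(d)=1+\tfrac{d-1}3$ the sharp fractional estimates of Section \ref{sec-nonlinear-prop} force $V_2\subsetneq V$, and only for $\alpha>\alpha_0(d)$ can one take $V_2=V$, which is the source of the stated threshold.

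For the uniqueness (\ref{Main-theorem-martingale-solution-d}.2), let $u_1,u_2$ be two solutions on the same basis with the same data, with $u_1=u^*$ satisfying the Serrin-type bound \eqref{uniquness-con-martg}, and put $w=u_1-u_2$. Note that $\tfrac{d+2}3=\alpha_0(d)$, so the finiteness of the exponent in \eqref{uniquness-con-martg} already forces $\alpha>\alpha_0(d)$, the regime in which $B(u_i)\in L^2(0,T;\mathbb{H}^{-\frac\alpha2,2})=L^2(0,T;V^*)$ and the Lions--Temam It\^o formula for $|w|^2_{\mathbb{L}^2}$ in the Gelfand triple $V\hookrightarrow H\hookrightarrow V^*$ is licit. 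Using $B(u_1)-B(u_2)=B(w,u_1)+B(u_2,w)$ and the cancellation $\langle B(u_2,w),w\rangle=0$, I get
\begin{equation}
d|w|^2_{\mathbb{L}^2}+2\nu|w|^2_{\mathbb{H}^{\frac\alpha2,2}}\,dt=2\langle B(w,u_1),w\rangle\,dt+\|G(u_1)-G(u_2)\|^2_{R_Q}\,dt+dM_t,
\end{equation}
with $M$ a local martingale. The key step is the interpolation bound $|\langle B(w,u_1),w\rangle|\leq c\,|u_1|_{\mathbb{H}^{\frac{d+2-\alpha}4,2}}|w|_{\mathbb{L}^2}^{2-\theta}|w|_{\mathbb{H}^{\frac\alpha2,2}}^{\theta}$ with $\theta=\tfrac{d+2+\alpha}{2\alpha}<2$, coming from the nonlinear estimates of Section \ref{sec-nonlinear-prop}, so that Young's inequality gives
\begin{equation}
|\langle B(w,u_1),w\rangle|\leq\nu|w|^2_{\mathbb{H}^{\frac\alpha2,2}}+c\,|u_1|^{\frac{4\alpha}{3\alpha-d-2}}_{\mathbb{H}^{\frac{d+2-\alpha}4,2}}|w|^2_{\mathbb{L}^2}.
\end{equation}
Absorbing the dissipation and estimating $\|G(u_1)-G(u_2)\|^2_{R_Q}\leq c_R|w|^2_{\mathbb{L}^2}$ by \eqref{Eq-Cond-Lipschitz-Q-G}, the weight $\phi(t):=|u_1(t)|^{\frac{4\alpha}{3\alpha-d-2}}_{\mathbb{H}^{\frac{d+2-\alpha}4,2}}$ lies in $L^1(0,T)$ exactly by \eqref{uniquness-con-martg}. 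After localization by stopping times making $M$ a true martingale, a stochastic Gronwall lemma yields $w\equiv0$, i.e. $u_1=u_2$ $P$-a.s.; pathwise uniqueness together with existence of a martingale solution then gives, by a Yamada--Watanabe type argument, a unique probabilistically strong solution. Throughout, the main obstacle is the fractional nonlinear term: since the dissipation only controls $\mathbb{H}^{\frac\alpha2,2}$, both the compactness and the trilinear estimate rely essentially on the sharp fractional product estimates of Section \ref{sec-nonlinear-prop}, and the exponents in \eqref{uniquness-con-martg} are precisely those rendering the Young step scale-critical.
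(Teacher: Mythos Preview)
Your proposal is correct and follows essentially the same route as the paper: Faedo--Galerkin approximation, uniform energy bounds via the cancellation $\langle B(v),v\rangle=0$, tightness through the $W^{\gamma,2}(0,T;\mathbb{H}^{-\delta',2})\cap L^2(0,T;\mathbb{H}^{\frac\alpha2,2})$ bound and the Flandoli--Gat\c arek compact embedding, then Prokhorov/Skorokhod and the martingale representation; for uniqueness, the same trilinear estimate via \eqref{Eq-B-H-alpha-2-est} and interpolation gives exactly your exponent $\theta=\frac{d+2+\alpha}{2\alpha}$ and hence the weight $|u_1|^{\frac{4\alpha}{3\alpha-d-2}}_{\mathbb{H}^{\frac{d+2-\alpha}4,2}}$, followed by Yamada--Watanabe. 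The only cosmetic difference is that the paper carries the weight through the exponential $e^{-r(t)}|w(t)|^2_{\mathbb{L}^2}$ with $r'(t)=c\big(1+|u^1(t)|^{\frac{4\alpha}{3\alpha-d-2}}_{\mathbb{H}^{\frac{d+2-\alpha}4,2}}\big)$ and localizes with stopping times $\tau_N^i=\inf\{t:|u^i(t)|_{\mathbb{L}^2}>N\}$ (needed also because $G$ is only locally Lipschitz), rather than invoking a stochastic Gronwall lemma; the two are equivalent.
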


\begin{theorem}\label{Main-theorem-boubded-2}
Let $ d\in\{2, 3\}$, $ O=\mathbb{T}^d$ or $ O\subset \mathbb{R}^d$ bounded, $ \alpha \in (1+\frac{d-1}3, 2]$ and  $ u_0$  satisfies
\del{\begin{equation}\label{cond-torus-u0}
u_0\in L^q(\Omega, \mathcal{F}_0, \mathbb{P}; \mathbb{H}^{1, q}(O) ).
\end{equation} 
\eqref{Eq-initial-cond}} Assumption $(\mathcal{B})$, with $ q_0=2$, $ \delta_0\geq 0$ and $ p\geq 4$.
Assume that $ G$ satisfies Assumption $ (\mathcal{C})$ with $ C_R=c$ independent of $ R$, $ q=2$  and $ \delta =0$. Then 

\begin{itemize}

\del{\item { (\ref{Main-theorem-boubded-2}.1) [Maximal local weak solution.]} 
Equation \eqref{Main-stoch-eq} admits a maximal local solution
$ (u, \xi)$ in the sense of Definition \ref{def-maxi-variational solution}, with the Gelfand triple 
\begin{equation}\label{Gelfand-triple-Domain}
 V_2=V:= \mathbb{H}^{\frac\alpha2, 2} (O)\hookrightarrow \mathbb{L}^{2}(O)\hookrightarrow \mathbb{H}^{-\frac\alpha2, 2}(O)=:V^* 
\end{equation}
and $ V^*\hookrightarrow V_1:=\mathbb{H}^{-\delta', 2}(O), \;\; \delta'>1+\frac d2$, satisfying for a sequence of stopping times $\xi_m \nearrow \xi$\del{ $ m\in \mathbb{N}_0$, and for a sequence $ (c_m)_m$} and for all $m\geq 1$,\del{, there exists a constant $ c_m>0$, s.t.\del{ and such that}
 \begin{equation}\label{eq-set-solu-weak-L-2}
 u(\cdot, \omega) \in L^\infty(0, T; \mathbb{H}^{1, 2}(O))\cap L^2(0, T; \mathbb{H}^{1+\frac \alpha2, 2}(O))\cap
C([0, T]; \mathbb{H}^{-?, 2}(O))\;\;\; a.s.
 \end{equation}}
\del{\begin{equation}\label{eq-weak-l-2solu}
\mathbb{E}\sup_{[0, T\wedge \xi_m]}|u(t)|^p_{\mathbb{L}^2}+ \mathbb{E}\int_0^{T\wedge \xi_m}|u(t)|^2_{\mathbb{H}^{\frac\alpha2, 2}}dt\leq c_m<\infty.
\end{equation}}
\begin{equation}\label{eq-weak-H-d-alpha-solu}
\mathbb{E}\sup_{[0, T\wedge \xi_m]}|u(t)|^p_{\mathbb{H}^{\frac{d+2-\alpha}{4}, 2}}+ \mathbb{E}\int_0^{T\wedge \xi_m}|u(t)|^2_{\mathbb{H}^{\frac{d+2-\alpha}{4}+\frac\alpha2, 2}}dt\leq c_m<\infty.
\end{equation}

\vspace{0.25cm}}

\item { (\ref{Main-theorem-boubded-2}.1) [Global weak solution for the 2D-FSNSE on the torus.]}

for $ O=\mathbb{T}^2$, $ G$ satisfies, in addition, Assumption $(\mathcal{C})$ for $\delta$ replaced by $ 1$ and $ u_0$ satisfies  \eqref{eq-curl-u-0-torus} and Equation \eqref{Main-stoch-eq} admits a local weak solution in the sense of Definition \ref{def-maxi-variational solution}, with $ V_2=V:=\mathbb{H}^{\frac\alpha2, 2}(O)$, $V_1:=\mathbb{H}^{-\delta', 2}(O)$, $ \delta'>1+\frac d2$ and $ H=\mathbb{L}^{2}(O)$, then this local solution becomes global in the sense of Definition \ref{def-variational solution} and satifies \eqref{propty-of-2D-global-mild-sol}, according to the values of $ p, \; q,\; \alpha$ in the cases $(\ref{Main-theorem-strog-Torus}.3)$ and $ q$ satisfies in addition that $1+\frac{2d}{\alpha}\leq q$.

\item { (\ref{Main-theorem-boubded-2}.2) [Global weak solution for the dD-FSNSE.]}  
If one of the  maximal solutions $ (u, \xi)$  enjoys\del{, for $ 4<q<\infty$} either
\begin{equation}\label{eq-bale-kato-majda-con}
\mathbb{E}\int_0^{T\wedge \xi}|\nabla u(t)|_q^{\frac{1}{1-\frac{2d}{\alpha q}}}dt\leq c<\infty
\end{equation}
or 
\begin{equation}\label{eq-other-bale-kato-majda-con}
\mathbb{E}\int_0^{T\wedge \xi}|u(t)|_{\mathbb{H}^{\frac{d+2-\alpha}{4}, 2}}^{\frac{4\alpha}{3\alpha-d-2}}dt\leq c<\infty
\end{equation}
then  $ T =\xi$ and the process $ (u(t), t\in [0, T])$ is the  unique global weak solution of Equation \eqref{Main-stoch-eq}.

\del{in addition assumptions $ (\mathcal{C})$ respectively 
$ (\mathcal{B})$, with  $ p\geq q_0\geq q\geq  4$, $\delta =1$ and $ C_R=c$ independent of $ R$. Then the local solution 
$ (u, \xi)$ is the unique global solution of \eqref{Main-stoch-eq}, i.e. $ \xi=T$ and $ (u(t), t\in [0, T])$ is the weak
solution of \eqref{Main-stoch-eq} in the sense of Definition \ref{def-variational solution}, with $ V=\mathbb{H}^{\frac\alpha2, 2}(O)$\del{,
$ V^*=\mathbb{H}^{\frac\alpha2, 2}(O)$} and $ H=\mathbb{L}^{2}(O)$ satisfying \eqref{propty-of-2D-global-mild-sol}.}
\del{\begin{equation}\label{eq-set-solu-weak-global}
u(\cdot, \omega) \in L^\infty(0, T; \mathbb{H}^{1, 2}(\mathbb{T}^2))\cap L^2(0, T; \mathbb{H}^{1+\frac \alpha2, 2}(\mathbb{T}^2))\cap
C([0, T]; \mathbb{L}^{2}(\mathbb{T}^2))\;\;\; a.s.
 \end{equation}
and for all $ p'\leq p$, we have
\begin{eqnarray}\label{cond-solu-global-H1}
\mathbb{E}\Big(\sup_{[0, T]}|u(t)|^{q}_{\mathbb{H}^{1, q}}&+&
\int_0^T|u(t)|^{2}_{\mathbb{H}^{1+\frac\alpha2, 2}}dt\del{
+(\int_0^T|u(t)|^_{\mathbb{H}^{1, q}}dt)^\frac12 + \int_0^T|u(t)|^{\frac{\alpha}{\eta}}_{\mathbb{H}^{1+\eta, 2}}dt\Big)}<\infty,
\end{eqnarray}}

\del{\vspace{0.25cm}

\item { (\ref{Main-theorem-boubded-2}.3) [Global weak solution for the dD-FSNSE.]} Assume that one of the  maximal solutions $ (u, \xi)$  enjoys\del{, for $ 4<q<\infty$} either
\begin{equation}\label{eq-bale-kato-majda-con}
\mathbb{E}\int_0^{T\wedge \xi}|\nabla u(t)|_q^{\frac{1}{1-\frac{2d}{\alpha q}}}dt\leq c<\infty
\end{equation}
or 
\begin{equation}\label{eq-other-bale-kato-majda-con}
\mathbb{E}\int_0^{T\wedge \xi}|u(t)|_{\mathbb{H}^{\frac{d+2-\alpha}{4}, 2}}^{\frac{4\alpha}{3\alpha-d-2}}dt\leq c<\infty
\end{equation}
then  $ T =\xi$ and the process $ (u(t), t\in [0, T])$ constructed in $(3.8.1)$ is the  unique global weak solution of Equation \eqref{Main-stoch-eq} satisfying \eqref{eq-weak-l-2solu} up to $ T$\del{$ \xi=T$} $(c_m=c)$.} 

\del{Assume that the maximal solution $ (u, \xi)$ constructed in $(3.8.1)$ enjoys\del{, for $ 4<q<\infty$} the following  condition
\begin{equation}\label{eq-bale-kato-majda-con}
\mathbb{E}\int_0^{T\wedge \xi}|\nabla u(t)|_q^{\frac{1}{1-\frac{2d}{\alpha q}}}dt\leq c<\infty.
\end{equation}
Then $ T =\xi$ and $ (u(t), t\in [0, T])$ is the  unique global weak solution of Equation \eqref{Main-stoch-eq} satisfying \eqref{eq-weak-l-2solu} up to $ T$\del{$ \xi=T$ $(c_m=c)$}.} 
\del{where (NOT YET FINISHED)$ \beta \leq 1+\frac\alpha2-\frac d2+\frac d{q_1}$, $ 2\leq q<\infty$ and $ 0<\eta\leq \frac\alpha2$.
 Furthermore, the couple $  (u, \tau) $ satisfies 
\begin{equation}\label{est-local-u-tau-principle}
 \mathbb{E}\big (\sup_{[0, \tau]}|u(s)|^p_{\mathbb{L}^2}+ \int_0^{\tau}|u(s)|^p_{\mathbb{L}^2}|u(s)|^2_{\mathbb{H}^{\frac\alpha2, 2}}ds\big)<\infty.
\end{equation}}
\end{itemize}

\end{theorem}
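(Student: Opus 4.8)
The plan is to treat part (\ref{Main-theorem-boubded-2}.2) first, since part (\ref{Main-theorem-boubded-2}.1) will follow from it once the requisite regularity on the torus is in hand. The mechanism is a no-blow-up criterion of Ladyzhenskaya--Prodi--Serrin type adapted to the fractional dissipation: I will show that either \eqref{eq-bale-kato-majda-con} or \eqref{eq-other-bale-kato-majda-con} provides an a priori bound, uniform up to the explosion time $\xi$, on the critical norm $|u(t)|_{\mathbb{H}^{\frac{d+2-\alpha}{4},2}}$, which dominates $|u(t)|_{\mathbb{L}^2}$ (as $\tfrac{d+2-\alpha}{4}>0$ for $d\in\{2,3\}$, $\alpha\le2$) and hence rules out the blow-up permitted by Definition \ref{def-maxi-variational solution}, forcing $\xi=T$.

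Setting $\sigma:=\frac{d+2-\alpha}{4}$, I would apply the It\^o formula to $t\mapsto |u(t)|^2_{\mathbb{H}^{\sigma,2}}=\langle A^{\sigma}u(t),u(t)\rangle$ on each stopped interval $[0,t\wedge\xi_m]$, $\xi_m\nearrow\xi$. The linear part yields the dissipation $-2\int|u|^2_{\mathbb{H}^{\sigma+\frac\alpha2,2}}$, the martingale part is handled by the Burkholder--Davis--Gundy inequality, and the It\^o correction $\|A^{\sigma/2}G(u)\|^2_{R_Q}$ is absorbed via the linear growth \eqref{Eq-Cond-Linear-Q-G}. The decisive term is $\int_0^{t\wedge\xi_m}\langle A^{\sigma}B(u(s)),u(s)\rangle\,ds$, which must be controlled, through the refined fractional nonlinear estimates of Section \ref{sec-nonlinear-prop}, in the form
\begin{equation*}
|\langle A^{\sigma}B(u),u\rangle|\ \le\ \tfrac12\,|u|^2_{\mathbb{H}^{\sigma+\frac\alpha2,2}}+c\,\Phi(u)\,|u|^2_{\mathbb{H}^{\sigma,2}},
\end{equation*}
where $\Phi(u)=|\nabla u|_q^{\frac{1}{1-\frac{2d}{\alpha q}}}$ in the case of \eqref{eq-bale-kato-majda-con} and $\Phi(u)=|u|_{\mathbb{H}^{\sigma,2}}^{\frac{4\alpha}{3\alpha-d-2}}$ in the case of \eqref{eq-other-bale-kato-majda-con}; the prescribed time exponents are precisely those making the interpolation and Young step balance the dissipation against $|u|^2_{\mathbb{H}^{\sigma,2}}$. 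Absorbing the first term into the dissipation, I am left with a stochastic Gronwall inequality whose forcing $\Phi(u)$ is, by hypothesis, integrable on $[0,T\wedge\xi]$; the stochastic Gronwall lemma then gives $\mathbb{E}\sup_{[0,T\wedge\xi_m]}|u|^2_{\mathbb{H}^{\sigma,2}}\le c$ uniformly in $m$. Letting $m\to\infty$ excludes blow-up, so $\xi=T$ almost surely.

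The hard part will be exactly this nonlinear estimate: since $B(u)=\Pi((u\cdot\nabla)u)$ is not controlled on $D(A_\alpha)$ and the critical order $\sigma=\frac{d+2-\alpha}{4}$ is strictly below one, the bound must be extracted via fractional Sobolev pointwise-multiplication inequalities of order less than one, and the bookkeeping of exponents (ensuring the scaling closes exactly at the threshold $\alpha>1+\frac{d-1}{3}$) is delicate. For uniqueness I would note that \eqref{eq-other-bale-kato-majda-con} is precisely the condition \eqref{uniquness-con-martg} of Theorem \ref{Main-theorem-martingale-solution-d}, so pathwise uniqueness holds; and \eqref{eq-bale-kato-majda-con} implies \eqref{eq-other-bale-kato-majda-con} through the embedding relating $|\nabla u|_q$ to $|u|_{\mathbb{H}^{\sigma,2}}$ under the matching scaling, so uniqueness follows in both cases. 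Combining globality with pathwise uniqueness identifies $(u(t),t\in[0,T])$ as the unique global weak solution.

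Finally, for part (\ref{Main-theorem-boubded-2}.1) on $\mathbb{T}^2$ the plan is to feed the vorticity regularity into the criterion above. Under the extra hypotheses ($G$ satisfying $(\mathcal{C})$ with $\delta=1$ and $\mathrm{curl}\,u_0\in L^{q_0}$), the space-regularity statement (\ref{Main-theorem-strog-Torus}.3) --- obtained through the fractional stochastic vorticity equation and the results of \cite{Debbi-scalar-active} --- provides $\mathbb{E}\sup_{[0,T]}|u|^q_{\mathbb{H}^{1,q}}<\infty$. Because the time horizon is finite, the additional requirement $1+\frac{2d}{\alpha}\le q$ (which forces $q>\frac{2d}{\alpha}$, so that the exponent $\frac{1}{1-\frac{2d}{\alpha q}}$ is finite) yields $\mathbb{E}\int_0^T|\nabla u|_q^{\frac{1}{1-\frac{2d}{\alpha q}}}\,dt<\infty$, i.e. \eqref{eq-bale-kato-majda-con}. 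Applying part (\ref{Main-theorem-boubded-2}.2) then upgrades the local weak solution to a global one satisfying \eqref{propty-of-2D-global-mild-sol}, completing the proof.
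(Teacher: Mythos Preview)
Your strategy departs from the paper's in a way that creates a genuine gap. The paper does \emph{not} run an energy estimate on $|u|^2_{\mathbb{H}^{\sigma,2}}$ with $\sigma=\tfrac{d+2-\alpha}{4}$; it works entirely at the $\mathbb{L}^2$ level. The reason is precisely the noise: Theorem~\ref{Main-theorem-boubded-2} assumes only Assumption~$(\mathcal{C})$ with $\delta=0$, $q=2$, so \eqref{Eq-Cond-Linear-Q-G} gives $\|G(u)\|_{L_Q(\mathbb{L}^2)}\le c(1+|u|_{\mathbb{L}^2})$ and \emph{nothing} about $\|A^{\sigma/2}G(u)\|_{L_Q}$. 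Your sentence ``the It\^o correction $\|A^{\sigma/2}G(u)\|^2_{R_Q}$ is absorbed via the linear growth \eqref{Eq-Cond-Linear-Q-G}'' is therefore not justified by the hypotheses, and without it the It\^o inequality at the $\mathbb{H}^{\sigma,2}$ level cannot close. This is not a cosmetic issue: if one could freely lift the noise to $\mathbb{H}^{\sigma,2}$, the $\delta=1$ hypothesis in Theorem~\ref{Main-theorem-strog-Torus} would be superfluous.

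The paper's route avoids this by keeping the pivot space at $\mathbb{L}^2$ and using the BKM condition only in the \emph{difference} estimate. Concretely, from the cancellation $\langle B(w),w\rangle_{\mathbb{L}^2}=0$ one gets $\langle B(u)-B(v),u-v\rangle=-\langle B(u-v,v),u-v\rangle$, and then H\"older plus Gagliardo--Nirenberg yield the new local monotonicity bound \eqref{ineq-B-u-v-v-global} with weight $r'(t)=c(1+|\nabla v(t)|_q^{1/(1-\tfrac{2d}{\alpha q})})$ (respectively the $\mathbb{H}^{\sigma,2}$ weight from \eqref{ineq-B-u-v-v-local}). The BKM hypothesis guarantees this weight is integrable for $v$ equal to (the extension of) the given local solution, which places $v$ in the class $\mathscr{E}$. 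One then \emph{re-runs the Faedo--Galerkin existence argument} of Sections~\ref{sec-Torus}--\ref{sec-Domain} with this integrable $r'$ to construct a global solution, and uniqueness in $\mathscr{E}$ (proved with the same weight, following the scheme around \eqref{Torus-uniquenss-ito-formula-H-1}) forces the local solution to coincide with it; hence $\xi=T$.

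A second, smaller gap: your uniqueness step claims \eqref{eq-bale-kato-majda-con} implies \eqref{eq-other-bale-kato-majda-con} via an embedding. The embedding $\mathbb{H}^{1,q}\hookrightarrow\mathbb{H}^{\sigma,2}$ is fine, but matching the time exponents requires $\tfrac{4\alpha}{3\alpha-d-2}\le\tfrac{\alpha q}{\alpha q-2d}$, i.e.\ $q\le\tfrac{8d}{\alpha+d+2}$, which fails for the generic $q$'s allowed here. The paper does not attempt this reduction: under \eqref{eq-bale-kato-majda-con} it proves uniqueness \emph{directly}, estimating $\langle B(w,u^1),w\rangle$ via \eqref{ineq-B-u-v-v-global} with the $|\nabla u^1|_q$ weight. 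Your handling of part~(\ref{Main-theorem-boubded-2}.1) --- verifying \eqref{eq-bale-kato-majda-con} on $\mathbb{T}^2$ from the vorticity regularity of (\ref{Main-theorem-strog-Torus}.3) and then invoking part~(\ref{Main-theorem-boubded-2}.2) --- is the same as the paper's.
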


\del{\begin{remark}\label{Rem-2}
\begin{itemize}
 \del{\item In \cite{Debbi-scalar-active}, the author proved, in a general framework, the existence and the uniquness of the global mild solution of the 2D-vorticity FSNSE on the torus without the extra assumption $(\mathcal{C}_b)$.
  \item From Theorem \ref{Main-theorem-boubded-2}, we remark that the fractional 2D-NSE exibits similarity in the difficulties of the weel-posedness as the 3D-NSE, see further discussion in \ref{sec-Domain}. In the prsent work, the global existence of the solution of the 2D-FSNSE is obtained thanks to the regularization effect of the vorticity.\del{ This latter has been studied in a general framework in \cite{Debbi-scalar-active}.}}
    \del{The results in $ (3.6.3)$ and in \cite{Debbi-scalar-active} confirm and extend to the fractional case the well known difference between the classical 2D & 3D NSEs concerningthe regularization effect of the vorticity, see for the calssical case \cite{Giga-vorticity-sing-NS-2011, Giga-al-Globalexistence-2001, Lemari-book-NS-probelems-02, Marchioro-Puvirenti-Vortex-84}. Let us for simplicity illustrate this idea via the detrministic case. The $3D-$vorticity equation is given by,  see e.g.\cite{Giga-al-Globalexistence-2001},
    \begin{equation} 
\end{equation}}     
\del{ In fcat, following the same  calculus as in Appendix \ref{sec-Passage Velocity-Vorticity}, we get the fa$3D-$vorticity equation . Following a  calculus similar to that in Appendix \ref{sec-Passage Velocity-Vorticity}, we get the $3D-$vorticity equation on the torus as in 
  As it it  well known in the calssical theory of Navier-Stokes that one of the main diffence between the 2D and the 3D NSE is related to the regularity of the vorticity}
\item  }
\begin{remark}\label{Rem-2}
\begin{itemize}
\del{\item The result in Theorem \ref{Main-theorem-mild-solution-d} generalizes the existence result in \cite[Theorem 3.2]{Giga-Solu-Lr-NS-85}.}
\item The conditions  \del{\eqref{cond-global-mild-solu},} \eqref{eq-bale-kato-majda-con} and \eqref{eq-other-bale-kato-majda-con} are of Beale-Kato-Majda type, see e.g. \cite{Beale-kato-Majda-Paper-84, Majda-Bertozzi-02}\del{p115}. \del{In particular, the condition \eqref{cond-global-mild-solu} is weaker than the condition in \cite[Theorem 5]{Giga-al-Globalexistence-2001} and \cite[ Definition 2.1]{Mikulevicius-H1-NS-solution-2009}. }
\item The condition \eqref{uniquness-con-martg}, we have assumed for the uniqueness of the martingale solution is of  Serrin's type on Sobolev spaces\del{ rather than $ L^q-$spaces}, see similar extension of the Serrin's condition in \cite[Theorem 5.2]{Farwig-Sohr-Serrin-cond-2012}.  In particular, our condition in this work is weaker in the time integrability than the condition in \cite{Farwig-Sohr-Serrin-cond-2012}.\del{and identic for the space reguarity  the Sobolev space $ D(A^\frac14)$, for $ d=2$ and silitas }\del{ In fact, due to the weak dissipation the estimation of the  nolinear term could be} See also similar condition in \cite[Theorem 2.8]{Kiselev-Nazarov-Fractal-Burgers-08}.  
\item Remark that thanks to \cite[Theorem 2.6]{Debbi-scalar-active}, Appendix \ref{sec-Passage Velocity-Vorticity}, the conditions in 
(\ref{Main-theorem-boubded-2}.1)\del{Assumption $(\mathcal{C})$, with $\delta$ replaced by $ 1$ and $ u_0$ satisfies  \eqref{eq-curl-u-0-torus},} we conclude that the condition \eqref{uniquness-con-martg} is satisfied in the case $ O=\mathbb{T}^2$. Therefore, one can get the existence and the uniqueness of the global solution under these conditions for the 2D-fractional stochastic Navier-Stokes equation on the torus. In Section \ref{sec-Torus}, the results are more stronger. The aim to develop (\ref{Main-theorem-boubded-2}.1) is to show that the conditions \eqref{eq-bale-kato-majda-con} and \eqref{eq-other-bale-kato-majda-con}  sound natural.
\item Similarly, \cite[Theorem 2.6]{Debbi-scalar-active}, Appendix \ref{sec-Passage Velocity-Vorticity} and the conditions in 
(\ref{Main-theorem-boubded-2}.1) ensure that the condition \eqref{uniquness-con-martg} is satisfied. Therefore a unique global strong-weak solution exists in sense of Definition \ref{def-variational solution}, with $ V_2= V=\mathbb{H}^{\frac\alpha2, 2}(O)$, $ H=\mathbb{L}^{2}(O)$, $ V_1=\mathbb{H}^{-\delta', 2}(O)$, with $\delta'> 1+\frac d2$ and satisfies \eqref{propty-of-2D-global-mild-sol} according to the cases in (\ref{Main-theorem-strog-Torus}.3).
\item The condition $ q\geq 1+\frac{2d}{\alpha}$ in $ (\ref{Main-theorem-boubded-2}.2)$ is not optimal.
\del{\item The uniqueness for  the local mild and weak solutions in the set 
 \end{itemize}}

\end{itemize}
\end{remark}


\section{Properties of the nonlinear term}\label{sec-nonlinear-prop}
\noindent Our aim in this section is to study the nonlinear operator $ B$  defined by \eqref{eq-B-projection}. 
Here $ O$ denotes either the torus
$ \mathbb{T}^d$ or a bounded domain from $ \mathbb{R}^d$ with smooth boundary as mentioned  above. 
We define the bilinear operator
$ B:(\mathcal{D}(O))^2 \rightarrow \mathbb{L}^2(O)$ and the tri-linear form
$ b:(\mathcal{D}(O))^3 \rightarrow \mathbb{R}$ by,
\begin{equation}\label{Eq-def-B-theta1-Theata2}
B(u, v): = \Pi ((u\cdot \nabla) v), \;\;\;\; \forall (u, v),
\in (\mathcal{D}(O))^2
\end{equation}
respectively,
\begin{equation}\label{Eq-3lin-form}
b(u, \theta, v):= \langle B(u, v), v\rangle,\;\;\; \forall (u, \theta, v) \in (\mathcal{D}(O))^3,
\end{equation}
where the brackets in RHS of \eqref{Eq-3lin-form} stand for the scalar product in $ \mathbb{L}^2(O)$, see e.g. \cite{Amann-solvability-NSE-2000, Farwig-Sohr-L-p-theory-2005},
\begin{eqnarray}
 \mathcal{D}(O):&=& \{u\in (C^\infty(O))^d, div u=0\,\,  \text{ and $ u$ has a compact support when } O\,
 \text{is a bounded domain}\}\nonumber\\
 &=& \left\{
 \begin{array}{lr}
 (C_0^\infty(O))^d\cap \mathbb{L}^q(O), \;\; \text{when}\;\; O \;\; \text{is bounded},\nonumber\\
 (C^\infty(O))^d\cap \mathbb{L}^q(O), \;\; \text{when}\;\; O =\mathbb{T}^d.\nonumber\\
 \end{array}
 \right.
\end{eqnarray}
\del{\mathcal{D}(O):= \{\text{the set of infinitely differential functions, with compact support in the case } O
 \text{is a bounded domain and such that }  div u=0\}.}
The bilinear operator $ B $ and the trilinear form  $ b$\del{  as defined above \eqref{Eq-def-B-theta1-Theata2} and  
\eqref{Eq-3lin-form}} have several extensions based on the $ \mathbb{}H^{\beta, q}-$norm, with $ \beta\geq 1$, 
 see e.g. \cite{Temam-NS-Functional-95} and \cite[p. 97]{Foias-book-2001} for Hilbert spaces and
\cite{Giga-Solu-Lr-NS-85}  for Banach spaces and for a general survey.\del{ of many other results.} Unfortunately, 
 due to the weakness of the fractional dissipation in our equation these extensions are
useless for  our case.
 Let us before dealing with the extensions we are interested in here, recall the following intrinsic properties
\begin{equation}\label{Eq-3lin-propsym}
b(u, \theta, v)= -b(u, v, \theta), \;\; \forall u, v, \theta  \in \mathbb{H}^{1, 2}(O).
\end{equation}
Hence
\begin{equation}\label{Eq-3lin-propnull}
b(u, v, v)= 0, \;\; \forall u, v \in \mathbb{H}^{1, 2}(O).
\end{equation}
In particular, for $ O= \mathbb{T}^2$, we have also, see e.g. either \cite[Lemma 3.1]{Temam-NS-Functional-95} or  \cite[Lemma VI.3.1]{Temam-Inf-dim-88}.
\begin{equation}\label{vanishes-bilinear-tous-H1}
 \langle B(u), u\rangle_{\mathbb{H}^{1, 2}} = \langle B(u), u\rangle_{H_2^{1, 2}} =0,\;\;\; \forall u \in D(A):= \mathbb{H}^{2, 2}(\mathbb{T}^2).
\end{equation}
Now, we cite some basic lemmas.
\begin{lem}\label{Lem-classic}
For all  $ 1\leq j\leq d$, $ \eta\geq 0$ and  $1<q<\infty$,  the operators $ A^{-\frac12}\Pi\partial_{j}$ extends uniquely to a bounded
linear operator from  $ H_d^{\eta, q}(O)$ to $ \mathbb{H}^{\eta, q}(O)$.\del{Furthermore $ \partial_{j}IA^{-\frac12}$
is a bounded operator from $ \mathbb{L}^q(\mathbb{T}^d)$ to $ L_d^q(\mathbb{T}^d)$, where $ I $ denotes the injection
$ \mathbb{L}^q(\mathbb{T}^d)\subset L_d^q(\mathbb{T}^d)$.}
\end{lem}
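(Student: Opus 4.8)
The plan is to strip off the fractional power, prove the $\eta=0$ case by duality, and then propagate along the Sobolev scale. Throughout I write $A=A^S$ and $T:=A^{-1/2}\Pi\partial_j$, defined initially on $\mathcal{D}(O)$ (which is dense). Since $0$ is in the resolvent set of $A$ by Theorem~\ref{Prop-1-Laplace-Stokes}, the quantity $|A^{\eta/2}\cdot|_{\mathbb{L}^q}$ is an equivalent norm on $\mathbb{H}^{\eta,q}(O)=D(A^{\eta/2})$. Hence the assertion that $T:H_d^{\eta,q}(O)\to\mathbb{H}^{\eta,q}(O)$ is bounded is equivalent to the estimate
\[
|A^{\eta/2}Tf|_{\mathbb{L}^q}=|A^{(\eta-1)/2}\Pi\partial_j f|_{\mathbb{L}^q}\le c\,|f|_{H_d^{\eta,q}},
\]
so the whole matter reduces to controlling $A^{(\eta-1)/2}\Pi\partial_j$ as a map into $\mathbb{L}^q$.

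For the base case $\eta=0$ I would argue by duality. Using that $\Pi$ is a bounded projection with $\Pi^*=\Pi_{q^*}$, that $\Pi A^{-1/2}=A^{-1/2}$ (the range of $A^{-1/2}$ already lies in $\mathbb{L}^q$), together with $(A^{-1/2})^*=(A_{q^*})^{-1/2}$ and $(\partial_j)^*=-\partial_j$, the adjoint is $T^*=-\partial_j (A_{q^*})^{-1/2}$, a map from $\mathbb{L}^{q^*}(O)$ into $L_d^{q^*}(O)$. Its boundedness is exactly the embedding $D((A_{q^*})^{1/2})=\mathbb{H}^{1,q^*}(O)\hookrightarrow H_d^{1,q^*}(O)$ furnished by Theorem~\ref{theorem-domains-A-D-A-S}: the single derivative gained by $A^{-1/2}$ is spent by $\partial_j$. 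By duality this gives $T:L_d^q\to\mathbb{L}^q$ bounded, which in particular yields the claimed unique bounded extension from $\mathcal{D}(O)$.

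To reach general $\eta$ I would split into the two geometries. On $\mathbb{T}^d$ the operator $T$ is the Fourier multiplier $k\mapsto \tfrac{ik_j}{|k|}\big(\mathrm{Id}-\tfrac{k\otimes k}{|k|^2}\big)$, homogeneous of degree $0$ and smooth off the origin; a periodic Mikhlin-type multiplier theorem then gives boundedness on every $H^{\eta,q}(\mathbb{T}^d)$, and since this multiplier commutes with $A^{\eta/2}=(-\Delta)^{\eta/2}$ and its range consists of divergence-free fields, the conclusion follows at once for all $\eta\ge 0$. On a bounded domain I would instead interpolate: Theorem~\ref{theorem-domains-A-D-A-S} gives $[\mathbb{L}^q,D(A)]_{\eta/2}=\mathbb{H}^{\eta,q}(O)$ on the target side and $[L_d^q,H_d^{2,q}]_{\eta/2}=H_d^{\eta,q}(O)$ on the source side, so complex interpolation between $\eta=0$ and an integer endpoint covers the intermediate range, the full scale being reached by using higher integer endpoints.

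The main obstacle is precisely that higher endpoint on a bounded domain. The target spaces $\mathbb{H}^{\eta,q}(O)=D((A^S)^{\eta/2})$ inherit Dirichlet-type boundary conditions from $A^D$ (Theorem~\ref{theorem-domains-A-D-A-S}), whereas $H_d^{\eta,q}(O)$ carries none, and $\Pi\partial_j f$ need not satisfy those conditions, since its tangential trace on $\partial O$ generally survives the projection. Showing that $A^{-1/2}$ restores exactly the boundary behaviour encoded in $D((A^S)^{\eta/2})$ is the delicate coherence issue highlighted in the introduction; I expect to handle it through the identity $(A^S)^{\beta/2}=\Pi(A^D)^{\beta/2}\Pi$ together with the analyticity and resolvent estimates of Theorem~\ref{Prop-1-Laplace-Stokes}, following the scheme of \cite[Lemmas 2.1 \& 2.2]{Gigaweak-strong83}.
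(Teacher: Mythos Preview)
Your proposal follows essentially the same route as the paper: Fourier-multiplier/Marcinkiewicz theory on $\mathbb{T}^d$ (with the symbol you wrote), the Giga $\eta=0$ result on bounded domains (your duality argument is precisely Giga's proof), and interpolation for intermediate $\eta$. The one place where the paper proceeds differently is the $\eta\geq 1$ endpoint on a bounded domain: rather than asking $A^{-1/2}$ to restore boundary behaviour, the paper asserts directly that the Helmholtz projection $\Pi:H_d^{\beta,q}(O)\to\mathbb{H}^{\beta,q}(O)$ is bounded for every $\beta\geq 0$ and then repeats the Giga argument at that level; interpolation between $\eta=0$ and $\eta=1$ finishes $0<\eta<1$. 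This sidesteps the trace issue you raised by pushing it entirely onto the mapping properties of $\Pi$ rather than of $A^{-1/2}$, so your planned detour through $(A^S)^{\beta/2}=\Pi(A^D)^{\beta/2}\Pi$ is not needed.
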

\begin{proof}
For $ \eta \geq 0 $ and  $ O= \mathbb{T}^d$, we 
use Marcinkiewicz's theory for the pseudodifferential  operator $ A^{-\frac12}\Pi\partial_{j}$. In fact,
the  symbol of this latter in  Fourier modes is given by the matrix $ i|k|^{-1}k_j(\delta_{m, n}-|k|^{-2}k_mk_n)_{mn}$.
See also \cite{Debbi-scalar-active} and also \cite{Kato-Ponce-86} for similar calculus for the  case $ O= \mathbb{R}^d$. 
The case $ O$ bounded and $ \eta =0 $  has been proved in 
\cite[Lemma 2.1]{Giga-Solu-Lr-NS-85}. We claim here that the method in \cite{Giga-Solu-Lr-NS-85} and also 
the proof bellow are also valid 
for $ O= \mathbb{T}^d$.
For $ \eta \geq 1$  and $ O$ is either a bounded domain of $ \mathbb{R}^d$ or  $ O= \mathbb{T}^d$,
thanks to the properties of 
Helmholtz projection, we prove for all $ \beta\geq 0$ and $1< q<\infty$,
that $ \Pi: H_d^{\beta, q}(O) \rightarrow \mathbb{H}^{\beta, q}(O)$ is well defined and bounded. 
Using this statement and arguing
 as in the proof of \cite[Lemma 2.1]{Giga-Solu-Lr-NS-85}, we get the result for $ \eta\geq 1$. The result for 
 $ 0< \eta <1$ is a consequence of the interpolation for $ \eta= 0$ and $ \eta= 1$. 
\end{proof}
\del{\begin{remark}
 Following the same ideas in Lemma \ref{Lem-classic} and in \cite[Lemma 2.1]{Giga-Solu-Lr-NS-85}, 
 we can prove the result for $ \eta\geq -1$. To be checked.
\end{remark}}

\del{\begin{lem}\label{Giga-Mikayawa-solutionLr-NS}
Let $1<r_1\leq r_2<\infty$ and $ \frac{d}{2}(\frac{1}{r_1}-\frac{1}{r_2})\leq \epsilon <\frac{d}{2r_1}$. Then, the operator
$ A_{r_1}^{-\epsilon}$  extends uniquely to a bounded operator from
$  \mathbb{L}^{r_1}(O)$ to $ \mathbb{L}^{r_2}(O)$.
\end{lem}

\begin{proof}
Thanks to Theorem \ref{theorem-domains-A-D-A-S}\del{{eq-embedded-domain-A-beta}}, it is sufficient to show that
$$  \mathbb{H}^{2\epsilon, r_1}(O) \hookrightarrow H_d^{2\epsilon, r_1}(O)\cap \mathbb{L}^{r_1}(O)  \hookrightarrow \mathbb{L}^{r_2}(O).$$
This last follows from the Sobolev  embedding; $ H_d^{2\epsilon, r_1}(O) \hookrightarrow L_d^{r_2}(O)$,
\del{$  \mathbb{H}^{2\epsilon, r_1}(O) \hookrightarrow \mathbb{L}^{r_2}(O)$,} which is
satisfied provided $ \frac{d}{2}(\frac{1}{r_1}-\frac{1}{r_2})\leq \epsilon <\frac{d}{2r_1}$,
see e.g. \cite[Theorem 7.63 p 221 and 7.66 p 222]{Adams-Hedberg-94} for bounded domain and
 see either \cite[Theorem 3.5.4.ps.168-169 and Corollary 3.5.5.p 170]{Schmeisser-Tribel-87-book} or \cite{Sickel-periodic spaces-85} for  the torus. See also \cite{Amann-solvability-NSE-2000} for solenoidal Sobolev spaces.
\end{proof}

\noindent We call the property in Lemma \ref{Giga-Mikayawa-solutionLr-NS}, the $ L^{r_1}\rightarrow  L^{r_2}$
smoothing property of  $ A_{r_1}^{-\epsilon}$.  As an application of Lemma \ref{Giga-Mikayawa-solutionLr-NS}, we have
\begin{coro}\label{coro-lem-Giga-epsilon}
 For all $ \epsilon $, such that $ \frac{d}{2q}\leq \epsilon < \frac{d}{q}$, the operator $ A^{-\epsilon}: \mathbb{L}^{\frac q2}(O)  \rightarrow
\mathbb{L}^{q}(O)$ is bounded.
\end{coro}}

\noindent The following Lemma has been proved in \cite{Giga-Solu-Lr-NS-85} for bounded domain. Our claim is that
the same proof is also valid for $ O= \mathbb{T}^d$,\del{as  the Stokes operator in the case $ O= \mathbb{T}^d$ coincides with the Laplacian, the same proof is also valid for this case,} see
similar calculus in \cite{Debbi-scalar-active} and for $ q=2 $, see e.g. \cite[p 13]{Temam-NS-Functional-95}.

\begin{lem}\label{Lemma-bound-op-RGradient-Via-gamma} \cite[Lemma 2.2]{Giga-Solu-Lr-NS-85}
Let $ 0\leq \delta < \frac12+\frac d2(1-\frac1q)$. Then
\begin{eqnarray}
|A^{-\delta}\Pi (u. \nabla) v|_{\mathbb{L}^q} \leq M|A^{\nu} u|_{\mathbb{L}^q}
|A^{\rho} v|_{\mathbb{L}^q}.
\end{eqnarray}
with some constant $ M:= M_{\delta, \rho, \nu, q}$,\del{ the operator $ (u, v) \mapsto u . \nabla v $ is
continuously extended to $B: D(A^\nu) \times D(A^\rho)
\rightarrow D(A^{-\delta})$} provided that $ \nu, \rho >0$, $\delta+ \rho > \frac12$,
$\delta+ \nu + \rho \geq \frac{d}{2q}+ \frac12$. 
\end{lem}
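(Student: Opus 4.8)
The plan is to reduce the estimate to a Hölder-type product inequality in Sobolev spaces, exploiting the divergence-free condition on $u$ together with the one-derivative smoothing of $A^{-\frac12}\Pi\partial_j$ supplied by Lemma \ref{Lem-classic}. Since $\mathrm{div}\,u=0$, one has $(u\cdot\nabla)v=\partial_j(u_jv)$ (Einstein convention), so that $\Pi(u\cdot\nabla)v=\Pi\partial_j(u_jv)$ and
\[
A^{-\delta}\Pi(u\cdot\nabla)v=A^{\frac12-\delta}\big(A^{-\frac12}\Pi\partial_j\big)(u_jv).
\]
By Lemma \ref{Lem-classic}, $A^{-\frac12}\Pi\partial_j$ maps $H^{\eta,q}_d(O)$ boundedly into $\mathbb{H}^{\eta,q}(O)$ for every $\eta\ge0$, so morally one derivative has been "spent" on the nonlinearity for free and must be recovered from the regularity of $u$ and $v$; the remaining factor $A^{\frac12-\delta}$ then either smooths (if $\delta\ge\frac12$) or must be absorbed by the regularity budget of the product (if $\delta<\frac12$).

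I would then carry out the estimate by duality. Writing $\|A^{-\delta}\Pi(u\cdot\nabla)v\|_{\mathbb{L}^q}=\sup\{|\langle (u\cdot\nabla)v,\psi\rangle| : \psi=A_{q^*}^{-\delta}\Pi\phi,\ \|\phi\|_{\mathbb{L}^{q^*}}\le1\}$, the identities $(\mathbb{L}^q)^*=\mathbb{L}^{q^*}$, $(A^S_q)^*=A^S_{q^*}$ and Theorem \ref{theorem-domains-A-D-A-S} give $\psi\in\mathbb{H}^{2\delta,q^*}(O)$ with $\|\psi\|_{\mathbb{H}^{2\delta,q^*}}\lesssim\|\phi\|_{\mathbb{L}^{q^*}}$. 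The trilinear form $\langle(u\cdot\nabla)v,\psi\rangle$ carries a single derivative, which I distribute between $v$ and $\psi$ by a fractional integration by parts: I place an order-$s$ derivative on $\psi$ and an order-$(1-s)$ derivative on $v$, which is legitimate precisely because $2\delta+2\rho>1$, i.e. the hypothesis $\delta+\rho>\frac12$ (the antisymmetry \eqref{Eq-3lin-propsym} and the vanishing \eqref{Eq-3lin-propnull} are the underlying structural facts that make this reallocation admissible). Three Sobolev embeddings furnished by Theorem \ref{theorem-domains-A-D-A-S} — $\mathbb{H}^{2\nu,q}\hookrightarrow L^{q_2}$ for $u$, the order-$(1-s)$ derivative of $v$ into $L^{q_1}$, and the order-$s$ derivative of $\psi$ into $L^{q_3}$ — combined with Hölder's inequality for $\tfrac1{q_1}+\tfrac1{q_2}+\tfrac1{q_3}=1$, bound the form by $\|A^\nu u\|_{\mathbb{L}^q}\,\|A^\rho v\|_{\mathbb{L}^q}\,\|\psi\|_{\mathbb{H}^{2\delta,q^*}}$, which is the claim.

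The admissibility of the Hölder exponents is exactly the scaling condition: summing the minimal values of the three embedding exponents gives $\tfrac1q+1-\tfrac{2(\nu+\rho+\delta)-1}{d}$ (the splitting parameter $s$ cancels), and requiring this to be $\le1$ is precisely $\nu+\rho+\delta\ge\tfrac{d}{2q}+\tfrac12$. The upper bound $\delta<\tfrac12+\tfrac d2(1-\tfrac1q)$ is equivalent to $2\delta-1<\tfrac{d}{q^*}$ and guarantees that the derivative of $\psi$ lands in a genuine $L^{q_3}$ with $q_3<\infty$, a non-endpoint embedding, while $\delta+\rho>\tfrac12$ provides the strictly positive regularity surplus needed to split the derivative in the first place. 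I expect the main obstacle to be exactly this fractional distribution of the single derivative in the regime where both $\delta<\tfrac12$ and $\rho<\tfrac12$: there neither the pure "derivative on $\psi$" nor the pure "derivative on $v$" integration by parts is available, and one must invoke the fractional Leibniz / Sobolev pointwise-multiplication estimates (cf. Appendix \ref{Sobolev pointwise multiplication-Bounded-Domain} and \cite{Giga-Solu-Lr-NS-85}) and verify that all intermediate exponents remain in $(1,\infty)$. The difficulty is the bookkeeping of these exponents rather than any single hard inequality.
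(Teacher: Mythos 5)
The first thing to say is that the paper does not prove this lemma at all: it is imported verbatim as \cite[Lemma 2.2]{Giga-Solu-Lr-NS-85}, with only the remark that the same proof carries over to $\mathbb{T}^d$. So there is no in-paper argument to compare against; what you have written is a reconstruction of the Giga--Miyakawa proof, and as a reconstruction it is essentially faithful. The skeleton is right: reduce to $\Pi\partial_j(u_jv)$ via $\operatorname{div}u=0$, peel off one derivative with the bounded operator $A^{-\frac12}\Pi\partial_j$ of Lemma \ref{Lem-classic}, pass to duality against $\psi=A_{q^*}^{-\delta}\phi$ using $(A_q^S)^*=A_{q^*}^S$, and balance exponents by H\"older plus Sobolev; your scaling computation (the cancellation of the splitting parameter $s$ and the resulting condition $\nu+\rho+\delta\ge\frac{d}{2q}+\frac12$) is correct, as is the identification of $\delta<\frac12+\frac{d}{2q^*}$ as the non-degeneracy condition on the dual side and of $\delta+\rho>\frac12$ as the surplus that permits the split.

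Two caveats. First, the "fractional integration by parts" that places an order-$s$ derivative on $\psi$ and an order-$(1-s)$ derivative on $v$ is \emph{not} a consequence of the antisymmetry identities \eqref{Eq-3lin-propsym}--\eqref{Eq-3lin-propnull}; those only give the integer reallocation. The genuine tool is the Sobolev pointwise-multiplication (fractional Leibniz) estimate $|u_jv|_{H^{\sigma,q_0}}\lesssim |u|_{H^{\sigma_1,p_1}}|v|_{H^{\sigma_2,p_2}}$ applied to the product \emph{before} dualizing, exactly as in \cite[Theorem 4.6.1]{R&S-96} and Appendix \ref{Sobolev pointwise multiplication-Bounded-Domain} -- you do name this at the end, but it is the load-bearing step, not a residual technicality. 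Second, on a bounded domain the identification $D(A_{q^*}^{\delta})=\mathbb{H}^{2\delta,q^*}$ that you draw from Theorem \ref{theorem-domains-A-D-A-S} is only stated there for $0<2\delta<2$, whereas the hypothesis allows $2\delta$ up to $1+\frac{d}{q^*}>2$; moreover for $2\delta>\frac{d}{q^*}$ the space $D(A^{\delta})$ carries boundary conditions ($\mathring{H}$ versus $H$), which is precisely where the commutation of fractional powers with the pairing must be handled with care. Both points are resolved in \cite{Giga-Doamian-fract-Stokes-Laplace, Giga-Solu-Lr-NS-85}, so the sketch is repairable, but as written these are the places where the argument is not yet a proof.
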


\noindent As a corollary  of Lemma \ref{Lemma-bound-op-RGradient-Via-gamma}, we cite the following results, which will be generalized later on.\del{These results will be for a general
framework, to prove Theorem \ref{theo-gelfand-gene}.}
\del{\begin{coro}\label{coro-lem-Giga}
\noindent Let either $ O\subset \mathbb{R}^2$ a bounded domain with smooth boundary or
\begin{itemize}
 \item For all  $( u, v) \in \mathbb{H}^{1-\frac\alpha2,2}(O^2)\times \mathbb{H}^{1+\frac\alpha2,2}(O^2)$,
there exists a constant $ c>0$, such that
\begin{eqnarray}\label{Eq-B-L-2-est}
|u\nabla v|_{\mathbb{L}^2} &\leq& c|u|_{H^{1-\frac\alpha2,2}}|v
|_{H^{1+\frac\alpha2,2}}.
\end{eqnarray}
For all  $( u, v) \in (\mathbb{H}^{1-\frac\alpha4,2}(O^2))^2$,
there exists a constant $ c>0$, such that
\begin{eqnarray}\label{Eq-B-H-alpha-2-est}
|u\nabla v|_{\mathbb{H}^{-\frac\alpha2, 2}} &\leq& c|u|_{\mathbb{H}^{1-\frac\alpha4, 2}}|u|_{\mathbb{H}^{1-\frac\alpha4, 2}}.
\end{eqnarray}
\end{itemize}
\end{coro}
\begin{proof}
We can use several methods such as,
\begin{itemize}
 \item Lemma \ref{Lem-classic}. In this proof, we  take $  \delta =0, \rho = \frac12+\frac\alpha4$ and $ \nu = \frac12-\frac\alpha4 $  to get
\eqref{Eq-B-L-2-est} and  we take $  \delta =\frac\alpha4, \rho = \nu = \frac12-\frac\alpha8$ to get \eqref{Eq-B-H-alpha-2-est}.
\item or  \cite[Theorem 4.6.1 p 190 \& Proposition Tr. 2.3.5 p 14]{R&S-96} and Lemma \eqref{Theo-pointwiseMulti-Bounded-Domain},
for $ O\subset \mathbb{R}^2$ being a bounded domain. For the case $O= \mathbb{T}^2 $, we use
\cite[Theorem IV.2.2 (ii)]{Sickel-Pontwise-Torus}, the monotonicity property  \cite[Remark 4, p 164]{Schmeisser-Tribel-87-book} and
\cite[Theorem 3.5.4.ps.168-169]{Schmeisser-Tribel-87-book}.
\item or \cite[Lemma 2.1 and in p 13]{Temam-NS-Functional-95} with $  m_3=0, m_2= \frac\alpha2, m_1= 1-\frac\alpha2$  to prove  \eqref{Eq-B-L-2-est}.
\end{itemize}
\end{proof}}

\begin{coro}\label{coro-lem-Giga}
\noindent Let either $ O= \mathbb{T}^d$ or $ O\subset \mathbb{R}^d$ be a bounded domain. Then
\begin{itemize}
 \item For $ \alpha\in (0,2) $, there exists a constant $ c:= c(\alpha, d)>0$ such that for all
$( u, v) \in \mathbb{H}^{\frac d2-\frac\alpha2,2}(O)\times \mathbb{H}^{1+\frac\alpha2,2}(O)$
\begin{eqnarray}\label{Eq-B-L-2-est}
|B(u, v)|_{\mathbb{L}^2} &\leq& c|u|_{\mathbb{H}^{\frac d2-\frac\alpha2,2}}|v
|_{\mathbb{H}^{1+\frac\alpha2,2}}.
\end{eqnarray}
\item  For $ \alpha\in (0,2] $ there exists a constant $ c:= c(\alpha, d)>0$ such that for all $( u, v) \in (\mathbb{H}^{\frac{2+d-\alpha}{4},2}(O))^2$,
\begin{eqnarray}\label{Eq-B-H-alpha-2-est}
|B(u, v)|_{\mathbb{H}^{-\frac\alpha2, 2}} &\leq& c|u|_{\mathbb{H}^{\frac{2+d-\alpha}{4},2}}
|v|_{\mathbb{H}^{\frac{2+d-\alpha}{4},2}}.
\end{eqnarray}
\end{itemize}
\end{coro}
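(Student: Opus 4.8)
The plan is to deduce both inequalities directly from Lemma \ref{Lemma-bound-op-RGradient-Via-gamma} specialized to $q=2$, using the identification $\mathbb{H}^{\beta,2}(O)=D(A^{\frac\beta2})$ together with the defining identity $|w|_{\mathbb{H}^{\beta,2}}=|A^{\frac\beta2}w|_{\mathbb{L}^2}$ recorded in \eqref{construction-of-fract-bounded}. Since $B$ is bilinear and the inequalities are continuous in the stated norms, it suffices to verify them for $u,v\in\mathcal{D}(O)$ and then extend to the full spaces $\mathbb{H}^{\beta,2}(O)$ by density of $\mathcal{D}(O)$; in fact the boundedness asserted in Lemma \ref{Lemma-bound-op-RGradient-Via-gamma} already delivers the extension on $D(A^\nu)\times D(A^\rho)$, so the only work is the bookkeeping of the fractional exponents and checking that the admissibility conditions hold.

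For \eqref{Eq-B-L-2-est} I would take $\delta=0$ (so that $A^{-\delta}B(u,v)=B(u,v)$ and the left side is the $\mathbb{L}^2$-norm) and choose $\nu=\frac{d-\alpha}{4}$, $\rho=\frac{2+\alpha}{4}$, which give $|A^\nu u|_{\mathbb{L}^2}=|u|_{\mathbb{H}^{\frac d2-\frac\alpha2,2}}$ and $|A^\rho v|_{\mathbb{L}^2}=|v|_{\mathbb{H}^{1+\frac\alpha2,2}}$. The hypotheses of Lemma \ref{Lemma-bound-op-RGradient-Via-gamma} then read: $\delta=0<\frac12+\frac d2(1-\frac12)=\frac{2+d}{4}$; $\nu,\rho>0$ (here $\nu>0$ because $\alpha<2\leq d$, and $\rho>0$ trivially); $\delta+\rho=\frac{2+\alpha}{4}>\frac12$, valid for $\alpha>0$; and $\delta+\nu+\rho=\frac{d+2}{4}=\frac{d}{2q}+\frac12$, so the sum condition is met with equality. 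Note that the open range $\alpha\in(0,2)$ is precisely what keeps $\nu>0$ when $d=2$, explaining why this estimate excludes the endpoint.

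For \eqref{Eq-B-H-alpha-2-est} I would instead take $\delta=\frac\alpha4$, so that $|A^{-\delta}B(u,v)|_{\mathbb{L}^2}=|A^{-\frac\alpha4}B(u,v)|_{\mathbb{L}^2}=|B(u,v)|_{\mathbb{H}^{-\frac\alpha2,2}}$, and choose the symmetric exponents $\nu=\rho=\frac{2+d-\alpha}{8}$, yielding $|A^\nu u|_{\mathbb{L}^2}=|u|_{\mathbb{H}^{\frac{2+d-\alpha}{4},2}}$ and likewise for $v$. The conditions become: $\delta=\frac\alpha4<\frac{2+d}{4}$ i.e. $\alpha<2+d$; $\nu,\rho>0$ i.e. $\alpha<2+d$; $\delta+\rho=\frac{\alpha+2+d}{8}>\frac12$ i.e. $\alpha+d>2$ (true for $d\geq2$, $\alpha>0$); and again $\delta+\nu+\rho=\frac{2+d}{4}=\frac{d}{2q}+\frac12$, equality. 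All hold for $d\geq2$ and all $\alpha\in(0,2]$, which is why this estimate reaches the endpoint $\alpha=2$.

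I do not anticipate a real obstacle: the substance is entirely contained in Lemma \ref{Lemma-bound-op-RGradient-Via-gamma}, and the only delicate points are that both sum conditions are satisfied as equalities (not strict inequalities) and that the strict conditions $\delta+\rho>\frac12$, $\nu,\rho>0$ genuinely hold in the stated $\alpha$-ranges. The one item worth double-checking is the reading of the negative-order norm in the second estimate, i.e. the passage from $D(A^{-\frac\alpha4})$ to the notation $\mathbb{H}^{-\frac\alpha2,2}$; this is justified by the duality identity $(\mathbb{H}^{\beta,2}(O))^*_{\mathbb{L}^2}=\mathbb{H}^{-\beta,2}(O)$ and the spectral definition of $A^{-\frac\alpha4}$ from \eqref{construction-of-fract-bounded}, both available from the preceding material. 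One may alternatively establish the same two bounds via Sobolev pointwise multiplication estimates combined with the embeddings of Theorem \ref{theorem-domains-A-D-A-S}, but the route through Lemma \ref{Lemma-bound-op-RGradient-Via-gamma} is the most economical.
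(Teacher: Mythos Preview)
Your proof is correct and follows exactly the approach the paper intends: apply Lemma \ref{Lemma-bound-op-RGradient-Via-gamma} with $q=2$, choosing $\delta=0$, $\nu=\frac{d-\alpha}{4}$, $\rho=\frac{2+\alpha}{4}$ for \eqref{Eq-B-L-2-est} and $\delta=\frac\alpha4$, $\nu=\rho=\frac{2+d-\alpha}{8}$ for \eqref{Eq-B-H-alpha-2-est}. Your verification of the admissibility conditions (including the equality in the sum condition and the role of the strict inequality $\nu>0$ in forcing the open interval $\alpha\in(0,2)$ for the first estimate when $d=2$) is accurate and matches the paper's parameter choices.
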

\del{\begin{proof}
Using  Lemma \ref{Lemma-bound-op-RGradient-Via-gamma} with $  \delta =0, \rho = \frac12+\frac\alpha4$ and $ \nu = \frac d4-\frac\alpha4 $  to get
\eqref{Eq-B-L-2-est} and  with $  \delta =\frac\alpha4, \rho = \nu = \frac{2+d-\alpha}{8}$ to get 
\eqref{Eq-B-H-alpha-2-est},
see also other proof in \cite[Lemma 2.1 and p 13]{Temam-NS-Functional-95}.\del{
 with $  m_3=0, m_2= \frac\alpha2, m_1= 1-\frac\alpha2$  to prove  \eqref{Eq-B-L-2-est}.}
\end{proof}}
\noindent The following results generalize 
\cite[Lemma 2.2]{Giga-Solu-Lr-NS-85} for $ O\subset \mathbb{R}^d$ and
\cite[Lemma 1.4]{Kato-Ponce-86} for $ O=\mathbb{R}^d$.

\del{\begin{prop}\label{Prop-Main-B}
Let $ 2<q<\infty$ and $ \eta\geq 0$. The bilinear form $B :(\mathcal{D}(O))^2
\rightarrow \mathbb{L}^q(O)$ given by Formula
\eqref{eq-B-projection}, extends uniquely, (we keep the same notation) to
\begin{eqnarray}\label{Eq-def-Bilinear-form}
B:&(\mathbb{H}^{\eta, q}(O))^2& \rightarrow \mathbb{H}^{\eta-1-\frac dq, q}(O)\nonumber\\
&(u, v) & \mapsto B(u,v):= \Pi((u\cdot\nabla) v).
\end{eqnarray}
Moreover, there exists a constant $ c:= c_{\eta, d, q}>0$  such that for all 
$ (u,  v) \in (\mathbb{H}^{\eta, q}(O))^2$,
\begin{equation}\label{eq-B-estimatoion}
 | B(u, v)|_{\mathbb{H}^{\eta-1-\frac dq, q}}\leq c |u|_{\mathbb{H}^{\eta, q}} |v|_{\mathbb{H}^{\eta, q}}.
\end{equation}
\end{prop}

\begin{proof}
Let us first mention that  to justify that the bilinear form $B :(\mathcal{D}(O))^2
\rightarrow \mathbb{L}^q(O)$  is well defined, one can use H\"older and Gagliardo-Nirenberg inequalities.
Remark also that, thanks to the free divergence,  we can rewrite  
$ B(u, v)$, for smooth functions $ u$ and $ v$, e.g. $ (u, v)\in (\mathcal{D}(O))^2$, as
\begin{equation}\label{B-theta-divergence-1}
B(u, v) = \Pi(\sum_{j=1}^d \partial_j(u_j v))= \Pi(\partial_j(u_j v)).
\end{equation}
\noindent Using  Corollary \ref{coro-lem-Giga-epsilon} and Lemma \ref{Lem-classic}, we get for  $ \eta =0$, after applying  H\"older inequality,
\begin{eqnarray}\label{est-1-semi-group-div-delta2not2-B}
|B(u, v)|_{\mathbb{H}^{-1-\frac dq, q}} &=& |A^{-\frac12-\frac d{2q}} \Pi\partial_j(u_j v)|_{\mathbb{L}^{q}}
\leq C| A^{-\frac12} \Pi\partial_j(u_j v)|_{\mathbb{L}^{\frac q2}} \nonumber\\
&\leq & C  |u_j v|_{\mathbb{L}^{\frac q2}} \leq  C |u|_{\mathbb{L}^q}| v|_{\mathbb{L}^q}
\end{eqnarray}
 and for $ 0<\eta < \frac dq$, thanks to
\cite[Theorem IV.2.2 (iii) \& Theorem III. 11. (ii)]{Sickel-Pontwise-Torus}  and
\cite[Theorem 3.5.4.ps.168-169]{Schmeisser-Tribel-87-book}, for $ O=\mathbb{T}^d$ and thanks to
\cite[Theorem 1, p. 176 and Proposition Tr 6, 2.3.5, p 14]{R&S-96} and Theorem 
\ref{Theo-pointwiseMulti-Bounded-Domain} bellow,  for the bounded domain,
\begin{eqnarray}\label{est-1-semi-group-div-delta2not2-B}
|B(u, v)|_{\mathbb{H}^{\eta-1-\frac dq, q}} &=& |A^{-\frac d{2q}} A^{\frac\eta2-\frac12}\Pi(\partial_j(u_j v))|_{\mathbb{L}^{q}}
\leq C\del{| A^{\frac\eta2}A^{-\frac12} \Pi\partial_j(u_j v)|_{\mathbb{L}^{\frac q2}} \nonumber\\
&\leq & C  }|u_j v_i|_{H^{\eta, \frac q2}} \leq  C |u|_{\mathbb{H}^{\eta, q}}| v|_{\mathbb{H}^{\eta, q}}.\nonumber\\
\end{eqnarray}

\noindent For $ \eta = \frac dq$, we use  Lemma \ref{Lem-classic},
\cite[Theorem IV.2.2 (ii)]{Sickel-Pontwise-Torus}, \cite[Theorem 3.5.4.ps.168-169]{Schmeisser-Tribel-87-book} 
and the monotonicity property in
\cite[Remark 4.p.164]{Schmeisser-Tribel-87-book} for $ O=\mathbb{T}^d$ and \cite[Theorem 4.6.1, p. 190 and Proposition Tr 6, 2.3.5, p 14]{R&S-96} and Theorem \ref{Theo-pointwiseMulti-Bounded-Domain}, we infer that
\begin{eqnarray}\label{est-1-semi-group-div-delta2not2-B-+2}
|B(u, v)|_{\mathbb{H}^{\eta-1-\frac dq, q}} \leq
 c|u_jv_i|_{L^{q}}
\leq  C  |u|_{\mathbb{H}^{\frac{d}{2q}, q}}|v|_{_{\mathbb{H}^{\frac{d}{2q}, q}}}
\leq  C  |u|_{\mathbb{H}^{\eta, q}}|v|_{\mathbb{H}^{\eta, q}}.
\end{eqnarray}
\noindent For $ \eta > \frac dq $, we use Lemma \ref{Lem-classic} (here we do not use 
Corollary \ref{coro-lem-Giga-epsilon}), the fact that 
$ \mathbb{H}^{\eta, q}(O)$  is an algebra, see e.g.
\cite[Theorem IV.2.2 (ii)]{Sickel-Pontwise-Torus} and \cite[Theorem 3.5.4.ps.168-169]{Schmeisser-Tribel-87-book} for 
$ O=\mathbb{T}^d$ and see e.g. 
\cite[Theorem 1, p. 221 and Proposition Tr 6, 2.3.5, p. 14]{R&S-96} for 
$ O\subset \mathbb{T}^d$ bounded, then we conclude \eqref{eq-B-estimatoion}.  Finally, by  density of the
space $ \mathcal{D}(O)$ in $ \mathbb{H}^{\eta, q}(O)$\del{, see the definition in Section \ref{sec-formulation},} the result is proved.
\end{proof}

\noindent The proof of Proposition \ref{Prop-Main-B} does not cover the case $ q=2$.
This is due to the fact that the results in
Lemmas \ref{Lem-classic}-\ref{Giga-Mikayawa-solutionLr-NS}  and other related tools like 
\cite[Theorem 3.5.4.ps.168-169]{Schmeisser-Tribel-87-book}
do not cover the space $ \mathbb{L}^1(O)$.
We give bellow other estimates for $ q=2$.\del{ This result is used to prove Theorem \ref{Main-theorem-martingale-solution-d}.}}
\begin{prop}\label{prop-Ben-q=2}
Let $\epsilon>0$, then
the bilinear form $ B$ extends uniquely $
B:(\mathbb{L}^{2}(O))^2 \rightarrow \mathbb{H}^{-1-\epsilon-\frac d2, 2}(O)$
and there exists a constant $ c:= c_{\alpha, \epsilon, d}$ such that  for all $ (u, v) \in (\mathbb{L}^{2}(O))^2$,
\begin{equation}\label{eq-B-estimatoion-q=2}
|B(u, v)|_{H^{-1-\epsilon-\frac d2, 2}}\leq c |u|_{\mathbb{L}^{2}} |v|_{\mathbb{L}^{2}}.
\end{equation}
\end{prop}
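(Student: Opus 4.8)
The plan is to prove the estimate by duality, exploiting the divergence-free structure to transfer all the derivatives onto a smooth test function. Recall from Section~\ref{sec-formulation} that for $q=2$ one has the identification $(\mathbb{H}^{\beta,2}(O))^*_{\mathbb{L}^2}=\mathbb{H}^{-\beta,2}(O)$, so, setting $\beta:=1+\epsilon+\frac d2$, it suffices to bound $|\langle B(u,v),\varphi\rangle|$ uniformly over $\varphi\in\mathbb{H}^{\beta,2}(O)$ with $|\varphi|_{\mathbb{H}^{\beta,2}}\le 1$. I would first establish the inequality for $u,v\in\mathcal D(O)$ and then extend by density of $\mathcal D(O)$ in $\mathbb{L}^2(O)$. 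For smooth divergence-free $u$ the identity $(u\cdot\nabla)v=\partial_j(u_jv)$ holds (Einstein summation), hence $B(u,v)=\Pi\,\partial_j(u_jv)$. Since $\Pi$ is the orthogonal projection on $\mathbb{L}^2(O)$ and $\varphi$ is itself divergence free, $\langle\Pi\,\partial_j(u_jv),\varphi\rangle=\langle\partial_j(u_jv),\varphi\rangle$, and integrating by parts (no boundary term appears, because $u_jv$ has compact support when $O$ is bounded, and by periodicity when $O=\mathbb T^d$) yields the key identity
\begin{equation}\label{eq-dual-key}
\langle B(u,v),\varphi\rangle=-\langle u_jv,\partial_j\varphi\rangle.
\end{equation}

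From \eqref{eq-dual-key} the estimate is then essentially a single Hölder step followed by one Sobolev embedding. Cauchy--Schwarz gives $|\langle B(u,v),\varphi\rangle|\le c\,|u|_{\mathbb{L}^2}|v|_{\mathbb{L}^2}|\nabla\varphi|_{L^\infty}$, where the constant $c$ absorbs the dimensional sum over $j$. It remains to dominate $|\nabla\varphi|_{L^\infty}$ by $|\varphi|_{\mathbb{H}^{\beta,2}}$. Since $\beta-1=\frac d2+\epsilon>\frac d2$, the Sobolev embedding $H^{\frac d2+\epsilon,2}(O)\hookrightarrow L^\infty(O)$ applies to each $\partial_j\varphi$ and gives $|\nabla\varphi|_{L^\infty}\le c\,|\varphi|_{H^{1+\frac d2+\epsilon,2}}$; combined with $\mathbb{H}^{\beta,2}(O)\hookrightarrow H_d^{\beta,2}(O)$ from Theorem~\ref{theorem-domains-A-D-A-S}, this produces $|\nabla\varphi|_{L^\infty}\le c\,|\varphi|_{\mathbb{H}^{\beta,2}}$. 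The strict requirement $\epsilon>0$ enters precisely here, as it is exactly what makes the embedding into $L^\infty$ valid. Taking the supremum over $|\varphi|_{\mathbb{H}^{\beta,2}}\le 1$ yields \eqref{eq-B-estimatoion-q=2}.

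The bilinear map $(u,v)\mapsto B(u,v)$, a priori defined on $(\mathcal D(O))^2$, is thus bounded into $\mathbb{H}^{-1-\epsilon-\frac d2,2}(O)$, and since $\mathcal D(O)$ is dense in $\mathbb{L}^2(O)$ it extends uniquely by continuity to $(\mathbb{L}^2(O))^2$ with the same bound, which is the assertion. The points demanding care -- rather than a true obstacle -- are: (i) justifying the duality representation $(\mathbb{H}^{\beta,2})^*_{\mathbb{L}^2}=\mathbb{H}^{-\beta,2}$ together with the self-adjointness of $\Pi$ simultaneously for the torus and for bounded domains, where in the latter case the Dirichlet condition built into $\mathbb{H}^{\beta,2}(O)$ is what guarantees that \eqref{eq-dual-key} carries no boundary contribution; and (ii) verifying uniformity of the constants, which is immediate since the only analytic inputs are Hölder and one Sobolev embedding. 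An alternative route would write $B(u,v)=A^{1/2}\,(A^{-1/2}\Pi\partial_j)(u_jv)$ and combine Lemma~\ref{Lem-classic} with $L^1(O)\hookrightarrow H^{-\frac d2-\epsilon,2}(O)$; this also works but requires extending Lemma~\ref{Lem-classic} to negative smoothness by duality, so the direct dual pairing above is the cleaner implementation.
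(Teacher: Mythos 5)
Your argument is correct and coincides with the paper's own proof, which is deferred to the proof of Lemma \ref{lem-bounded-W-gamma-p}: there the pairing $\langle B(u,v),\varphi\rangle$ is rewritten via the antisymmetry identity \eqref{Eq-3lin-propsym} (exactly your integration by parts of $\partial_j(u_jv)$ against a divergence-free test function), estimated by $|\nabla\varphi|_{L^\infty}|u|_{\mathbb{L}^2}|v|_{\mathbb{L}^2}$, and closed with the Sobolev embedding $H^{\frac d2+\epsilon,2}(O)\hookrightarrow L^\infty(O)$. Your duality setup, the role of $\epsilon>0$, and the density extension all match the intended argument.
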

\noindent  We omit the proof here,  as a more general one will be given
in the proof of Lemma \ref{lem-bounded-W-gamma-p}.

\del{\begin{prop}\label{Prop-Main-I}
Assume that  $ 2< q<\infty$, $ \eta\geq 0$ and  $1 + \frac dq <\alpha\leq 2$
and  let $ u, v \in L^\infty(0, T; D(A^\frac{\eta}{2}_q))$. Then
\begin{equation}
 \int_0^. e^{-A_\alpha (.-s)}B(u(s), v(s)) ds\in L^\infty(0, T; D(A^\frac{\beta}{2}_q)),
\end{equation}
for all $ \beta $  satisfies
\begin{equation}
  0\leq \beta <\eta+ \alpha-1-\frac dq.
\end{equation}
Moreover, there exist  $ c, \mu >0$ (depend on $ d, q, \alpha, \beta$), such that
 the following estimate holds
\begin{equation}\label{est-semigroup-B-main-2}
 | A^\frac{\beta}{2}\int_0^t e^{-A_\alpha (t-s)}B(u(s), v(s)) ds|_{\mathbb{L}^q}\leq c t^{\mu}
|u|_{L^\infty(0, t; D(A^{\frac{\eta}{2}}_q))}|v|_{L^\infty(0, t; D(A^{\frac{\eta}{2}}_q))}.
\end{equation}
\end{prop}

\begin{proof}
Using Proposition \ref{Prop-Main-B} and the semigroup property \eqref{eq-semigp-property}, we infer that
\begin{eqnarray}\label{est-1-semi-group-second-term-1}
|\int_0^t e^{-A_\alpha (t-s)}B(u(s)\!\!\!\!&,& \!\!\!\!v(s))ds|_{D(A^{\frac\beta2}_q)}\nonumber\\
&\leq& C\int_0^t |A^{\frac{\beta-\eta+1+\frac dq}2} e^{-A_\alpha (t-s)}|_{\mathcal{L}(\mathbb{L}^q)}
|B(u(s), v(s))|_{\mathbb{H}^{\eta-1-\frac dq, q}} ds \nonumber\\
&\leq &  C  \int_0^t (t-s)^{-\frac{ 1}{\alpha}(\beta-\eta+1+\frac dq)}|u(s)|_{D(A_q^{\frac\eta2})}|v(s)|_{D(A_q^{\frac\eta2})}ds
\nonumber\\
&\leq &  C T^{1-\frac{ d}{\alpha}(\frac{1+\beta-\eta}{d}+\frac1q)}|u|_{L^\infty(0, T; D(A_q^{\frac\eta2}))}
|v|_{L^\infty(0, T; D(A_q^{\frac\eta2}))}.
\end{eqnarray}
\end{proof}}

\begin{prop}\label{prop-est-B-eta-q=2}
Let $\eta \geq 0$ and
 \begin{equation}\label{critical-value-alpha}
\alpha(d, \eta) :=
\Bigg\{
\begin{array}{lr}
\max\{\frac{d+2-2\eta}{3},\; 2\eta+2-d\},\;\;\; \  if\; \;  \eta \in [0, \frac{d}{2})\cap 
(\frac d2-2, \frac{d}{2}), \\
1, \;\;\; if \,\,\, \eta\geq \frac d2.
\end{array}
\end{equation}
\del{\footnote{Recall $ \max^1_>\{a, b\} $ equals $ a$, if $ a>b$ and greater than $ b$ if  $ a\leq b$}}
Then for either $ \alpha \in [\alpha(d, \eta), 2)$ with $(\eta \in [0, \frac{d-1}{2})\cap 
(\frac d2-2, \frac{d-1}{2})) 
\cup [\frac d2, \infty)$ or
$ \alpha \in (\alpha(d, \eta), 2)$ with $\eta \in [\frac{d-1}{2},  \frac{d}{2})$,
 the bilinear operator $ B$ extends uniquely
$$
B:(\mathbb{H}^{\eta +\frac\alpha2, 2}(O))^2 \rightarrow \mathbb{H}^{\eta -\frac\alpha2, 2}(O)$$
and there exists a constant $ c:= c_{\alpha, \eta, d}$ such that  for all 
$ (u, v) \in (\mathbb{H}^{\eta -\frac\alpha2, 2}(O))^2$,
\begin{equation}\label{eq-B-estimatoion-q=2-eta}
|B(u, v)|_{\mathbb{H}^{\eta -\frac\alpha2, 2}}\leq c |u|_{\mathbb{H}^{\eta +\frac\alpha2, 2}} |v|_{\mathbb{H}^{\eta +\frac\alpha2, 2}}.
\end{equation}
\end{prop}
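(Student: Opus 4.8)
The plan is to reduce \eqref{eq-B-estimatoion-q=2-eta} to a single pointwise multiplication estimate in the $L^2$-based Sobolev scale and then to read the admissible triples $(\alpha,\eta,d)$ directly off the hypotheses of the product theorem. First I would use the divergence-free condition to put $B$ in divergence form, $B(u,v)=\Pi\,\partial_j(u_j v)$ (Einstein convention), exactly as in the rewriting behind the $q>2$ estimate, and recall that for $q=2$ one has $\mathbb{H}^{\beta,2}(O)=D(A^{\beta/2})$ with $|w|_{\mathbb{H}^{\beta,2}}=|A^{\beta/2}w|_{\mathbb{L}^2}$ for every $\beta\in\mathbb{R}$, by the spectral calculus \eqref{construction-of-fract-bounded} and its dual extension to negative orders. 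Factoring $A^{\frac12(\eta-\frac\alpha2)}\Pi\partial_j=A^{\frac12(\eta-\frac\alpha2+1)}\bigl(A^{-\frac12}\Pi\partial_j\bigr)$ then gives
\begin{equation}
|B(u,v)|_{\mathbb{H}^{\eta-\frac\alpha2,2}}=\bigl|A^{-\frac12}\Pi\partial_j(u_j v)\bigr|_{\mathbb{H}^{\eta-\frac\alpha2+1,2}}.
\end{equation}

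Since $\alpha<2$ forces $s_0:=\eta-\frac\alpha2+1>\eta\geq0$, Lemma \ref{Lem-classic} applies with the \emph{nonnegative} index $s_0$ and bounds the right-hand side by $c\,|u_j v|_{H_d^{s_0,2}}$. The whole statement thus reduces to the multiplication estimate
\begin{equation}
|u_j v|_{H_d^{s_0,2}}\leq c\,|u|_{H_d^{s_1,2}}\,|v|_{H_d^{s_2,2}},\qquad s_0=\eta-\tfrac\alpha2+1,\quad s_1=s_2=\eta+\tfrac\alpha2,
\end{equation}
after which the embedding $\mathbb{H}^{\eta+\frac\alpha2,2}(O)\hookrightarrow H_d^{\eta+\frac\alpha2,2}(O)$ of \eqref{eq-embedded-domain-A-beta} bounds the full Sobolev norms of $u$ and $v$ by their solenoidal norms and produces the right-hand side of \eqref{eq-B-estimatoion-q=2-eta}. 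For $O=\mathbb{T}^d$ I would invoke Sickel's product theorem together with the embedding/algebra results (\cite[Theorem IV.2.2]{Sickel-Pontwise-Torus}, \cite[Theorem 3.5.4]{Schmeisser-Tribel-87-book}); for a bounded $O\subset\mathbb{R}^d$ I would use the corresponding estimate on $\mathbb{R}^d$ (\cite{R&S-96}) and transfer it to $O$ by the result of Appendix \ref{Sobolev pointwise multiplication-Bounded-Domain}.

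The admissible range is precisely what these product theorems demand once specialized to the above indices. The constraint $s_0\leq\min\{s_1,s_2\}$ is equivalent to $\alpha\geq1$; the resonant (high-high) condition $s_1+s_2-s_0\geq\frac d2$ is equivalent to $\alpha\geq\frac{d+2-2\eta}{3}$; and the case split governing whether the target order is sub- or super-critical, $s_0\lessgtr\frac d2$, yields $\alpha\geq 2\eta+2-d$ (that is, $s_0\leq\frac d2$), which becomes the binding constraint exactly on the window $\eta\in[\frac{d-1}2,\frac d2)$ and must there be imposed strictly—this is the source of the open interval $(\alpha(d,\eta),2)$. For $\eta\geq\frac d2$ both factors already exceed $\frac d2$, $\mathbb{H}^{\eta+\frac\alpha2,2}$ is a multiplication algebra, and $\alpha\geq1$ suffices, matching $\alpha(d,\eta)=1$. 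Taking the maximum of the binding constraints reproduces $\alpha\geq\alpha(d,\eta)$ in each regime, while the nonemptiness $\alpha(d,\eta)<2$ is exactly the side condition $\eta>\frac d2-2$; density of $\mathcal{D}(O)$ then yields the unique extension. As a cross-check on the subcase $\eta<\frac\alpha2$ (negative target order), Lemma \ref{Lemma-bound-op-RGradient-Via-gamma} with $q=2$, $\nu=\rho=\frac12(\eta+\frac\alpha2)$ and $\delta=\frac12(\frac\alpha2-\eta)$ reproduces the threshold $\alpha\geq\frac{d+2-2\eta}{3}$, consistently with Corollary \ref{coro-lem-Giga}.

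The hard part will be the borderline bookkeeping rather than any individual inequality: for each of the three competing constraints I must decide whether equality is admissible (closed versus open $\alpha$-interval), which demands the sharp form of the product theorem at the endpoints $s_1+s_2-s_0=\frac d2$ and $s_0=\frac d2$ rather than the crude ``$>\frac d2$'' version, and a careful matching against the $\eta$-ranges in \eqref{critical-value-alpha}. A secondary technical point is the faithful transfer of the $\mathbb{R}^d$ estimates to the torus and to a bounded domain—verifying the ``good domain'' hypothesis of Appendix \ref{Sobolev pointwise multiplication-Bounded-Domain} and that the cited estimates hold for the full spaces $H_d^{s,2}$, so that $\Pi$ and the embedding \eqref{eq-embedded-domain-A-beta} may be used to move between $\mathbb{H}$ and $H_d$. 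Finally, the present $q=2$ argument succeeds where a direct imitation of the $q>2$ route fails precisely because the product is estimated directly in the $L^2$-based scale and never passes through $\mathbb{L}^{q/2}$ or $\mathbb{L}^1$.
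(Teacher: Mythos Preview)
Your proposal is correct and follows essentially the same route as the paper: reduce via Lemma \ref{Lem-classic} to the product estimate $|u_jv|_{H^{\eta+1-\frac\alpha2,2}}$, then split into the algebra case $\eta\geq\frac d2$ and the subcritical case $\eta<\frac d2$ where the Runst--Sickel/Sickel product theorems supply the constraints $\alpha\geq\frac{d+2-2\eta}{3}$ and $\alpha>2\eta+2-d$. The only cosmetic difference is that the paper first applies the product theorem at the sharp borderline index $s_1=s_2=\frac{d+2+2\eta-\alpha}{4}$ (so that $s_1+s_2-s_0=\frac d2$ exactly) and then invokes the Sobolev embedding $\mathbb{H}^{\eta+\frac\alpha2,2}\hookrightarrow\mathbb{H}^{\frac{d+2+2\eta-\alpha}{4},2}$, whereas you apply the product theorem directly with $s_1=s_2=\eta+\frac\alpha2$; the two are equivalent and yield the same thresholds.
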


\begin{proof}
Thanks to  Lemma \ref{Lem-classic}, there exists a constant $ c>0$, such that
\begin{eqnarray}\label{eq-fact-b-u-v}
 |B(u, v)|_{\mathbb{H}^{\eta-\frac\alpha2, 2}} &\leq& c |u_jv|_{\mathbb{H}^{\eta+1-\frac{\alpha}{2}, 2}}.
\end{eqnarray}
First, let us suppose that $ \eta \geq \frac d2$. Then  $\mathbb{H}^{\eta+1-\frac\alpha2, 2}(O)$ is an algebra, 
therefore
\del{$ \eta+1-\frac{\alpha}{2} >\frac d2$, then}
\begin{eqnarray}\label{est-B-estimatoion-q=2-eta}
 |B(u, v)|_{\mathbb{H}^{\eta-\frac\alpha2, 2}} &\leq& c |u|_{\mathbb{H}^{\eta+1-\frac{\alpha}{2}, 2}} 
 |v|_{\mathbb{H}^{\eta+1-\frac{\alpha}{2}, 2}}.
\end{eqnarray}
Then  Estimate \eqref{eq-B-estimatoion-q=2-eta} follows from \eqref{est-B-estimatoion-q=2-eta} 
by using the Sobolev embedding
$ \mathbb{H}^{\eta+\frac{\alpha}{2}, 2}(O) \hookrightarrow \mathbb{H}^{\eta+1-\frac{\alpha}{2}, 2}(O)$. 
This last is guaranteed thanks to the condition
$ \alpha\geq 1= \alpha(d, \eta)$.\\
\noindent For $ 0\leq \eta < \frac d2$, we combine \eqref{eq-fact-b-u-v} and either
\cite[Theorem 4.6.1.1, Proposition Tr 6, 2.3.5]{R&S-96} (see also Appendix
\ref{Sobolev pointwise multiplication-Bounded-Domain}) for $ O\subset \mathbb{R}^d$ being a  bounded domain or
\cite[Theorem IV.2.2]{Sickel-Pontwise-Torus} and \cite[Theorem 3.5.4 \& Remark 4 p 164]{Sickel-periodic spaces-85}
for $ O= \mathbb{T}^d$, then we get,
\begin{eqnarray}\label{eq-fact-b-u-v-last-est}
|B(u, v)|_{\mathbb{H}^{\eta-\frac\alpha2, 2}} &\leq& c |u|_{\mathbb{H}^{\frac{d+2+2\eta-\alpha}{4}, 2}}
| v|_{\mathbb{H}^{\frac{d+2+2\eta-\alpha}{4}, 2}}
\end{eqnarray}
provided that $ 2\eta+2-d<\alpha$. Moreover, under the condition $ \frac{d+2-2\eta}{3}\leq \alpha$, Estimate
\eqref{eq-B-estimatoion-q=2-eta} follows from \eqref{eq-fact-b-u-v-last-est} by using the
Sobolev embedding $ \mathbb{H}^{\eta+\frac{\alpha}{2}, 2}(O) \hookrightarrow 
\mathbb{H}^{\frac{d+2+2\eta-\alpha}{4}, 2}(O)$. This achieves the proof of \eqref{eq-B-estimatoion-q=2-eta}. The intervals in the definition of $\alpha(d, \eta) $ in Formula  \eqref{critical-value-alpha}
emerge thanks to the condition $ \eta >\frac d2-2$ which guaranties that $ \frac{d+2-2\eta}{3} <2$ and to the equivalence 
$ 2+2\eta-d<\frac{d+2-2\eta}{3}\Leftrightarrow \eta< \frac{d-1}{2}$.\del{, we deduce the corresponding 
values of $ \alpha$ and $ \eta$ as cited in the proposition.}
\del{ we infer the existence of a constant $ c>0$, such that
\begin{eqnarray}
 |B(u, v)|_{\mathbb{H}^{\eta-\frac\alpha2, 2}} &\leq& c |u|_{\mathbb{H}^{\frac{d+2+2\eta-\alpha}{4}, 2}}
| v|_{\mathbb{H}^{\frac{d+2+2\eta-\alpha}{4}, 2}} \leq c |u|_{\mathbb{H}^{\eta -\frac\alpha2, 2}} |v|_{\mathbb{H}^{\eta -\frac\alpha2, 2}}.
\end{eqnarray}}
\end{proof}

\noindent The investigation of the Gelfand triple corresponding to
the fractional Navier-Stokes equation for which $ B$ can be extended to a  bounded operator,
is\del{, as mentioned in the introduction,} one of the delicate questions of the theory of fractional nonlinear equations.
\del{here, we apply Proposition \ref{prop-est-B-eta-q=2} to discuss this question.}
To characterize this feature, let us first recall the following
classical Gelfand triple
 \begin{equation}
 V_c= D((A^\frac12))=\mathbb{H}^{1, 2}(O)\hookrightarrow  H:=\mathbb{L}^2(O) \tilde{=}  H^*\hookrightarrow  V_c^*,
 \del{:= \mathbb{H}^{-1, 2}(O)}
 \end{equation}
 where $ V_c^*$ is the dual of  $ V_c$.
The operators  $ A: V_c \rightarrow  V_c^*$ and  $ B: D(B):=V_c\times  H \rightarrow  V_c^*$ are bounded.
\del{ then both the operator $ A$ and the restriction of
$ B $ on $ V_c$ (denoted also by $ B$ ) are well defined and bounded from $V_c$ to $ V_c^*$.}
For the fractional case, we have for  $ \alpha >0$,
$A_\alpha: V:=D(A_\alpha^{\frac12})=\mathbb{H}^{\frac\alpha2, 2}(O) \rightarrow 
V^*=(\mathbb{H}^{\frac\alpha2, 2}(O))^*$  is bounded.  However, for $ \alpha<2$,
the space $ V\times H $ is larger than $ D(B)$. In particular,
$ \mathbb{H}^{1}(O) \subsetneq V$. Consequently, we need to extend uniquely the operator $ B $ to a
bounded operator from  $ V$ to $ V^*$. This extension is not possible for all values of 
$ \alpha \in (0, 2)$.\del{ The result is given in the following theorem}

\begin{theorem}\label{theo-gelfand-gene}
Let  $ \alpha \in [\alpha(d, \eta), 2]$, with $\eta \in ([0, \frac{d-1}{2})\cap (\frac{d}{2}, \frac{d-1}{2})) \cup [\frac d2, \infty)$\del{$\eta \in [\max\{0, \frac{d}{2}-2\},  
\frac{d-1}{2}) \cup [\frac d2, \infty)$} or
$ \alpha \in (\alpha(d, \eta), 2]$ with $\eta \in [\frac{d-1}{2},  \frac{d}{2})$
where $ \alpha(d, \eta)$ is defined by \eqref{critical-value-alpha}.
We introduce the following  Gelfant triple
\begin{equation}\label{gelfant-triple-eta}
 V_\eta:= \mathbb{H}^{\eta+\frac\alpha2, 2}(O)\hookrightarrow \mathbb{H}^{\eta, 2}(O)\hookrightarrow
 \mathbb{H}^{\eta-\frac\alpha2, 2}(O).
\end{equation}
Then
\begin{equation}
 B: V_\eta\times V_\eta:= (D(A^{\frac\eta2+\frac\alpha4}))^2= 
 (\mathbb{H}^{\eta+\frac\alpha2, 2}(O))^2\rightarrow V_\eta^* =
\mathbb{H}^{\eta-\frac\alpha2, 2}(O).
\end{equation}
 is bounded. Moreover, there exists a constant $ c:=c_{d, \alpha, \eta}>0$, such that
\begin{equation}\label{Eq-B-u-v-V-dual}
|B(u, v)|_{\mathbb{H}^{\eta-\frac\alpha2, 2}} \leq  c |u|_{\mathbb{H}^{\eta+\frac\alpha2, 2}}
|v|_{\mathbb{H}^{\eta+\frac\alpha2, 2}}.
\end{equation}
In particular, we have the following useful cases,
\begin{itemize}
\item  $ \eta =0$, $ d\in \{2, 3, 4\}$ and $ \alpha\in [\frac{d+2}{3}, 2]$.
\item  $ \eta =1$ and either  $ d=2$  and  $ \alpha \in [1, 2] $ or  $d=3$ and  $ \alpha \in (1, 2]$ or
$d\in \{4, 5, 6\}$ and  $ \alpha \in [\frac{d}{3}, 2]$.
\end{itemize}
\end{theorem}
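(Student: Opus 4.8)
The plan is to read the statement as a repackaging of Proposition \ref{prop-est-B-eta-q=2} into the Gelfand-triple language of \eqref{gelfant-triple-eta}, supplemented by a direct treatment of the endpoint $\alpha=2$ and by the arithmetic verification of the two displayed special cases. For $\alpha$ in the (half-)open range $[\alpha(d,\eta),2)$ or $(\alpha(d,\eta),2)$ dictated by the position of $\eta$, the inequality \eqref{Eq-B-u-v-V-dual} is literally the conclusion \eqref{eq-B-estimatoion-q=2-eta} of Proposition \ref{prop-est-B-eta-q=2}, so the only work is to upgrade this a priori bound on smooth solenoidal fields to the boundedness of $B$ as an operator $V_\eta\times V_\eta\to V_\eta^*$.

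To do this I would first identify the dual space: since the pivot of \eqref{gelfant-triple-eta} is $\mathbb{H}^{\eta,2}(O)$ and the duality $(\mathbb{H}^{\beta,2}(O))^*_{\mathbb{L}^2}=\mathbb{H}^{-\beta,2}(O)$ holds, a shift by $\eta$ gives $V_\eta^*=\mathbb{H}^{\eta-\frac\alpha2,2}(O)$, so that \eqref{eq-B-estimatoion-q=2-eta} is precisely an estimate in the $V_\eta^*$-norm. Because $\mathcal{D}(O)$ is dense in $V_\eta=D(A^{\frac\eta2+\frac\alpha4})=\mathbb{H}^{\eta+\frac\alpha2,2}(O)$, the bilinear form extends uniquely by continuity to all of $V_\eta\times V_\eta$ with the same constant, which settles the case $\alpha<2$.

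For the endpoint $\alpha=2$ (the dissipative regime) I would argue directly, since Proposition \ref{prop-est-B-eta-q=2} is stated only for $\alpha<2$. Here $V_\eta=\mathbb{H}^{\eta+1,2}(O)$, $V_\eta^*=\mathbb{H}^{\eta-1,2}(O)$, and \eqref{eq-fact-b-u-v} reduces to $|B(u,v)|_{\mathbb{H}^{\eta-1,2}}\le c\,|u_jv|_{\mathbb{H}^{\eta,2}}$. When $\eta>\frac d2$ the space $\mathbb{H}^{\eta,2}(O)$ is an algebra and the bound is immediate; for $\frac d2-2\le\eta<\frac d2$ I would invoke the same pointwise multiplication estimates as in \eqref{eq-fact-b-u-v-last-est} (Appendix \ref{Sobolev pointwise multiplication-Bounded-Domain} and \cite{R&S-96} for bounded $O$, \cite{Sickel-Pontwise-Torus,Sickel-periodic spaces-85} for $O=\mathbb{T}^d$), obtaining $|u_jv|_{\mathbb{H}^{\eta,2}}\le c\,|u|_{\mathbb{H}^{\frac{d+2\eta}4,2}}|v|_{\mathbb{H}^{\frac{d+2\eta}4,2}}$, followed by the Sobolev embedding $\mathbb{H}^{\eta+1,2}(O)\hookrightarrow\mathbb{H}^{\frac{d+2\eta}4,2}(O)$, which holds exactly because $\eta\ge\frac d2-2$, i.e. because $2=\alpha\ge\alpha(d,\eta)$. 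I expect the only genuinely delicate point to be the borderline $\eta=\frac d2$, where $\mathbb{H}^{\frac d2,2}(O)$ just fails to be an algebra; there one falls back on the classical Navier--Stokes estimate for $B$ (or on a logarithmic refinement of the product rule). Density of $\mathcal{D}(O)$ then yields \eqref{Eq-B-u-v-V-dual} for $\alpha=2$ as above.

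Finally, the two itemized special cases follow by evaluating the threshold \eqref{critical-value-alpha}. For $\eta=0$ one has $\alpha(d,0)=\max\{\frac{d+2}3,\,2-d\}=\frac{d+2}3$ for $d\ge1$, while the admissibility window $\eta\in(\frac d2-2,\frac{d-1}2)$ forces $d<4$; this leaves $d\in\{2,3,4\}$ with $\alpha\in[\frac{d+2}3,2]$, the window degenerating to the single endpoint $\alpha=2$ when $d=4$. For $\eta=1$ one has $\alpha(d,1)=1$ when $d\le2$ (branch $\eta\ge\frac d2$) and $\alpha(d,1)=\max\{\frac d3,\,4-d\}=\frac d3$ for $d\ge3$; matching $\eta=1$ against the intervals $[\frac{d-1}2,\frac d2)$ versus $[0,\frac{d-1}2)$ then produces the closed range for $d=2$ and for $d\in\{4,5,6\}$ and the open range $\alpha\in(1,2]$ for $d=3$, exactly as stated. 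The main obstacle, as noted, is confined to the $\alpha=2$, $\eta=\frac d2$ borderline; everything else is a direct citation of the preceding proposition together with a density argument.
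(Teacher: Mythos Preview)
Your proposal is correct and matches the paper's approach: the paper's proof is the one-liner ``straightforward application of Proposition \ref{prop-est-B-eta-q=2} and the classical case for $\alpha=2$,'' and you have simply unpacked this, supplying the density argument, the explicit treatment of the $\alpha=2$ endpoint, and the arithmetic for the two itemized cases. Your flagged borderline $\eta=\tfrac d2$, $\alpha=2$ is indeed where the classical Navier--Stokes estimate is invoked rather than the algebra property, and your handling of the degenerate windows ($d=4$ for $\eta=0$, $d=6$ for $\eta=1$, where $\alpha(d,\eta)=2$ so only the classical endpoint survives) is the right reading of the statement.
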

\begin{proof}
 The proof is a straightforward application of Proposition \ref{prop-est-B-eta-q=2} and 
the classical case for $ \alpha =2$.
\end{proof}

\del{\noindent We mention here the following useful  cases,
\begin{coro}
\begin{itemize}
 \item For $ \eta =0$, $ d\in \{2, 3\}$, $ \alpha_0(d) := \alpha_0(d, 0) =\frac{d+2}{3}\leq \alpha <2 $, we consider the Gelfant triple
\begin{equation}\label{gelfant-triple-eta=0}
 \mathbb{H}^{\frac\alpha2, 2}(O)\hookrightarrow \mathbb{L}^{ 2}(O)\hookrightarrow\mathbb{H}^{-\frac\alpha2, 2}(O).
\end{equation}
Then the operator $ B $ is extended uniquely to a bounded operator
\begin{equation}
 B:  (V_0:= \mathbb{H}^{\frac\alpha2, 2}(O))^2\rightarrow V_0^* =
\mathbb{H}^{-\frac\alpha2, 2}(O).
\end{equation}
\item For $ \eta =1$, we consider the Gelfant triple
\begin{equation}\label{gelfant-triple-eta=1}
 \mathbb{H}^{1+\frac\alpha2, 2}(O)\hookrightarrow \mathbb{H}^{1, 2}(O)\hookrightarrow\mathbb{H}^{1-\frac\alpha2, 2}(O).
\end{equation}
Then for either  $ d=2$  and  $ \alpha \in [1, 2) $ or  $d=3$ and  $ \alpha \in (1, 2) $ or
$d\in \{4, 5\}$ and  $ \alpha \in (\frac{d}{3}, 2) $
the operator $ B $ is extended uniquely to a bounded operator.
\end{itemize}
\end{coro}}

\del{
In the following lemma, we give further estimations
for the nonlinear term and for the trilinear form
\begin{lem}\label{lem-B-different-q}
\begin{itemize}
 \item $(i)$ For $ 0\leq \eta <\frac{d}{2}$ and $ \alpha \in (2+2\eta-d, 2]$ there exists a constant $ c>0$, such that
\begin{equation}\label{main-est-b-B-eta-1}
 |B(u, v)|_{\mathbb{H}^{\eta-\frac\alpha2, 2}}\leq c |u|_{\mathbb{H}^{\frac{d+2+2\eta-\alpha}{4}, 2}}
| v|_{\mathbb{H}^{\frac{d+2+2\eta-\alpha}{4}, 2}}.
\end{equation}
 \item $(i')$ Let $ d\in \{2, 3\}$, $ 1<q<\frac{d}{d-1}$  and $ 0<\alpha\leq 2$. Then, there exists a constant $ c>0$, such that
\begin{equation}
 |B(u, v)|_{\mathbb{H}^{-\frac\alpha2, q}}\leq c |u|_{\mathbb{H}^{\frac\alpha2, 2}} | v|_{\mathbb{H}^{1-\frac\alpha2, 2}},
\end{equation}
provided $ \frac dq-d+\frac\alpha2>0$.
\item $(ii)$ For $ \frac{d-2}{3}\leq \alpha<2$,
\begin{eqnarray}\label{est-nonlinear-general}
| B(u, v)|_{{\mathbb{H}^{1-\frac{\alpha}{2}, 2}} } &\leq & | u|_{{\mathbb{H}^{\frac{d+2-\alpha}4, 2}}}| v|_{{\mathbb{H}^{\frac{d+2-\alpha}4, 2}}}.
\end{eqnarray}
\del{\item $(iii)$ For $ \frac{d-2}{3}\leq \alpha<2$ and $ \frac d2 <s<\alpha$,
\begin{eqnarray}\label{est-nonlinear-general}
| \langle B(u), v\rangle|_{{\mathbb{H}^{-\frac{\alpha}{2}, 2}} } &\leq & | u|_{{\mathbb{H}^{1-\frac\alpha2, 2}}}| v|_{{\mathbb{H}^{\frac{d+2-\alpha}4, 2}}}.
\end{eqnarray}}
\item $(iii)$ For $ \frac{d-2}{3}\leq \alpha<2$ and $ u\in \mathbb{H}^{1+\frac\alpha2, 2}$,
\begin{eqnarray}\label{est-tree-linear-H1}
|\langle B(u), u\rangle_{\mathbb{H}^{1, 2}}|&\leq & c\big(| u|_{{\mathbb{H}^{1+\frac\alpha2, 2}}}^2 + | u|^2_{{\mathbb{H}^{1, 2}}}\big).
\end{eqnarray}
\item $(iv)$  Let $ 1\leq \alpha \leq 2$. For all $ (u, w)\in \mathbb{H}^{1, 2}(O)\times \mathbb{H}^{\frac\alpha2, 2}(O)$,
\begin{eqnarray}\label{3linear-H1-H-1}
|\langle B(w), u\rangle_{\mathbb{L}^{2}}|&\leq & c| u|_{\mathbb{H}^{1, 2}}|w|^{\frac2\alpha}_{\mathbb{H}^{\frac{\alpha}{2}, 2}}|w|^{2\frac{1-\alpha}\alpha}_{{\mathbb{L}^{2}}}.
\end{eqnarray}
\end{itemize}
\end{lem}

\begin{proof}
\begin{itemize}
 \item $(i)$ Using Lemma \ref{Lem-classic} and
\cite[Theorem 4.6.1.1, Proposition Tr 6, 2.3.5]{R&S-96} for bounded domain (see also Appendix
\ref{Sobolev pointwise multiplication-Bounded-Domain}). For the Torus, we use again Lemma \ref{Lem-classic} and
\cite[Theorem IV.2.2]{Sickel-Pontwise-Torus} and \cite[Theorem 3.5.4 \& Remark 4 p 164]{Sickel-periodic spaces-85}, we infer the existence of
 a constant $ c>0$, such that
\begin{eqnarray}
 |B(u, v)|_{\mathbb{H}^{\eta-\frac\alpha2, 2}} &\leq& c |u_jv|_{\mathbb{H}^{1+\eta-\frac{\alpha}{2}, 2}}
\leq c |u|_{\mathbb{H}^{\frac{d+2+2\eta-\alpha}{4}, 2}}
| v|_{\mathbb{H}^{\frac{d+2+2\eta-\alpha}{4}, 2}}.
\end{eqnarray}

 \item $(i')$ Let $ 0\leq \eta <\frac{d}{2}$ and $ \alpha \in (2+2\eta-d, 2$, Using Lemmas \ref{Lem-classic}, \cite[Theorem 1.4.4.4 ]{R&S-96} and the condition $ \frac dq-d+\frac\alpha2>0$, we infer that
\begin{eqnarray}\label{Eq-B-H-alpha-2-est-d}
|B(u, v)|_{\mathbb{H}^{-\frac\alpha2, q}}&\leq& c |u_iv_j|_{\mathbb{H}^{1-\frac\alpha2, q}} \leq c |u_i|_{\mathbb{H}^{\frac\alpha2, 2}}
 | v_j|_{\mathbb{H}^{1-\frac\alpha2, 2}}.
\end{eqnarray}
\item $(ii)$ Thanks to the condition $ \frac{d-2}{3}\leq \alpha<2$, Estimation in \eqref{est-nonlinear-general} follows from
\cite[Theorem 4.6.1.1 (iii)p 190 \& Proposition Tr 6, 2.3.5 (vii)]{R&S-96}, \cite[p.177]{Adams-Hedberg-94} and Appendix
\ref{Sobolev pointwise multiplication-Bounded-Domain} in the case of
$ O\subset \mathbb{R}^d$ is bounded and \cite[Theorem iv.2.2 (ii)]{Sickel-Pontwise-Torus},
\cite[Theorem 3.5.4 (v) p. 108 \& Remark 4 p. 104]{Schmeisser-Tribel-87-book} for the case $ O= \mathbb{T}^d$.
\item $(iii)$ Using Lemma \ref{Lem-classic},\del{ \cite[Theorem 4.6.1.1]{R&S-96} (see also Appendix \ref{Sobolev pointwise multiplication-Bounded-Domain} )}
the fact that $ \mathbb{H}^{2-\frac\alpha2, 2}$ is a multiplicative algebra
\begin{eqnarray}\label{est-tree-linear-H1-first}
|\langle B(u), u\rangle_{\mathbb{H}^{1, 2}}|&\leq &| u|_{{\mathbb{H}^{1+\frac\alpha2, 2}}}^2| B(u)|_{{\mathbb{H}^{1-\frac\alpha2, 2}}}
\leq  c| u|_{{\mathbb{H}^{1+\frac\alpha2, 2}}}\sum_{j=1}^d| u^ju|_{{\mathbb{H}^{2-\frac\alpha2, 2}}}
\leq c| u|_{{\mathbb{H}^{1+\frac\alpha2, 2}}}|u|_{{\mathbb{H}^{2-\frac\alpha2, 2}}}^2,
\end{eqnarray}
where $ \max\{\frac d2, 2-\frac\alpha2\}<s<\alpha$. Recall that $ s$ exists thanks to the conditions >>>>>>>>>>>>>>>>>
\item $(iv)$  Let $ 1\leq \alpha \leq 2$ and  $ (u, w) \in (\mathcal{D}(O))^2$. Recall that  $ (u, w) \in (\mathcal{D}(O))^2$ is dense in
$\mathbb{H}^{1, 2}(O)\times \mathbb{H}^{\frac\alpha2, 2}(O)$. Using Lemma \ref{Lem-classic},
 H\"older inequality and  Gagliardo-Nirenberg inequality, we infer
\begin{eqnarray}\label{3linear-H1-H-1-est-1}
|\langle B(w), u\rangle_{\mathbb{L}^{2}}|&\leq &
| u|_{{\mathbb{H}^{1, 2}}}|B(w)|_{{\mathbb{H}^{-1, 2}}}\leq
c| u|_{{\mathbb{H}^{1, 2}}}|w_jw|_{{\mathbb{L}^{2}}}\leq
c| u|_{{\mathbb{H}^{1, 2}}}|w|^2_{{\mathbb{L}^{4}}}\leq
c| u|_{\mathbb{H}^{1, 2}}|w|^{\frac2\alpha}_{\mathbb{H}^{\frac{\alpha}{2}, 2}}|w|^{2\frac{\alpha-1}\alpha}_{{\mathbb{L}^{2}}}.\nonumber\\
\end{eqnarray}
\end{itemize}
\end{proof}

\del{Scours:
Using Lemma \ref{Lem-classic} and
\cite[Theorem 4.6.1.1, Proposition Tr 6, 2.3.5]{R&S-96} for bounded domain (see also Appendix
\ref{Sobolev pointwise multiplication-Bounded-Domain}). For the Torus, we use again Lemma \ref{Lem-classic} and
\cite[Theorem IV.2.2]{Sickel-Pontwise-Torus} and \cite[Theorem 3.5.4 \& Remark 4 p 164]{Sickel-periodic spaces-85}}

\subsection{Applications: the Gelfand triple corresponding to the FSNSE}
The notion of Gelfand triple is  one of the main tools in the study of nonlinear partial differential equations,
in particular in the study of deterministic and stochastic
Navier-Stokes  equation and in the application of the  monotonicity method see e.g \cite{Temam-NS-Main-79}.
To characterize the specificity of the  Gelfand triple corresponding to
the fractional Navier-Stokes  equation, let us first recall the following
classical Gelfand triple
 \begin{equation}
 V_c= D((A^\frac12))\hookrightarrow  H:=\mathbb{L}^2(O) \tilde{=}  H^*\hookrightarrow  V_c^*:= \mathbb{H}^{-1, 2}(O).
 \end{equation}
The operators  $ A: V_c \rightarrow  V_c^*$ and  $ B: V_c\times  V_c \rightarrow  V_c^*$ are bounded.
\del{ then both the operator $ A$ and the restriction of
$ B $ on $ V_c$ (denoted also by $ B$ ) are well defined and bounded from $V_c$ to $ V_c^*$.}
For the fractional case, we have for  $ \alpha >0$,
$A_\alpha: V=D(A_\alpha^{\frac12}) \rightarrow V^*=H^{-\frac\alpha2, 2}(O)$  is bounded.  However for $ \alpha<2$,
the space $ V $ is larger than the domain of definition of $ B$. In particular,
$ \mathbb{H}^{1}(O) \subsetneq V$. Consequently, we need to extend uniquely the operator $ B $ to a
bounded operator from  $ V$ to $ V^*$. This  is the content of the following proposition

\begin{theorem}\label{theo-gelfand-gene}
Let $ d\in \{2, 3\}$, $ 0\leq \eta\frac{d-4}{2}$ and\del{ such that $ \frac{d+2-6\eta}{3}< 2$. We define}
\del{\begin{equation}
\alpha_G(d, \eta) :=
\Bigg\{
\begin{array}{lr}
\frac{d+2-6\eta}{3},\;\;\; \  if\; \;  d+2-6\eta \geq 0, \\
0, \;\;\; otherwise.
\end{array}
\end{equation}}
\begin{equation}\label{alpha-critical}
\alpha_G(d, \eta) := 1+ \frac{d-1+2\eta}{3}.
\end{equation}
Assume $\alpha \in [\alpha_G(d, \eta), 2]$.
Then the operator $ B $ is extended uniquely to a bounded operator
(we keep the same notation)
\begin{equation}
 B: V_\eta:= D(A^{\frac\eta2+\frac\alpha4})= \mathbb{H}^{\eta+\frac\alpha2, 2}(O)\rightarrow V_\eta^* = D(A^{\frac\eta2-\frac\alpha4})=
\mathbb{H}^{\eta-\frac\alpha2, 2}(O).
\end{equation}
\end{theorem}

\begin{proof}
\del{Using Lemma \ref{Lemma-bound-op-RGradient-Via-gamma} with $ \delta := \frac\eta2 +\frac{\alpha}{4}$,
$ \rho=\nu = \frac{d+2--2\eta-\alpha}{8}$ and $ p= 2$,  we conclude
the existence of a constant $ c>0$, such that
\begin{equation}\label{Eq-B-u-v-V-dual}
|B(u, v)|_{V_\eta^*} \leq  c|u |_{D(A^{\frac{d+2-2\eta- \alpha}4})} |v|_{D(A^{\frac{d+2 -2\eta-\alpha}4})}.
\end{equation}
Thanks to the condition $ \alpha \geq \alpha_0(d)$, we get
\begin{equation}\label{Eq-B-u-v-V-dual}
|B(u, v)|_{V_\eta^*} \leq  c |u|_{V_\eta}|v|_{V_\eta}.
\end{equation}}

\noindent Using Estimation \eqref{main-est-b-B-eta-1} and the Sobolev embedding ( remark that ) we conclude
the existence of a constant $ c>0$, such that
\begin{equation}\label{Eq-B-u-v-V-dual}
|B(u, v)|_{V_\eta^*}\leq  c|u |_{D(A^{\frac{d+2-2\eta- \alpha}4})} |v|_{D(A^{\frac{d+2 -2\eta-\alpha}4})}.
\end{equation}
Thanks to the condition $ \alpha \geq \alpha_0(d)$, we get
\begin{equation}\label{Eq-B-u-v-V-dual}
|B(u, v)|_{V_\eta^*} \leq  c |u|_{V_\eta}|v|_{V_\eta}.
\end{equation}
\end{proof}
In particular, we have
\begin{coro}
\begin{itemize}
 \item For $ d\in \{2, 3\}$, $ \alpha_G(d) := \alpha_G(d, 0) =\frac{d+2}{3}\leq \alpha <2 $,
the operator $ B $ is extended uniquely to a bounded operator
\begin{equation}
 B:  V_\eta:= D(A^{\frac\alpha4})= \mathbb{H}^{\frac\alpha2, 2}(O)\rightarrow V_\eta^* = D(A^{-\frac\alpha4})=
\mathbb{H}^{-\frac\alpha2, 2}(O).
\end{equation}
\item For $ d\in \{2, 3, \cdots, 9\}$, $ \alpha_G(d, 1) := \min\{0, \frac{d-4}{3}\}\leq \alpha <2 $, the operator $ B $
is extended uniquely to a bounded operator
\begin{equation}
 B:  V_\eta:= D(A^{\frac12+\frac\alpha4})= \mathbb{H}^{1+\frac\alpha2, 2}(O)\rightarrow V_\eta^* = D(A^{-\frac12-\frac\alpha4})=
\mathbb{H}^{-1-\frac\alpha2, 2}(O).
\end{equation}
\end{itemize}
\end{coro}}

\noindent Further estimations for the bilinear operator $ B$ and the trilinear form $ b$ are summarized in the following lemma
\begin{lem}\label{lem-further-estimation}
\begin{itemize}

\item $(i)$ Assume $0\leq  \eta <\frac d2$, and $ \alpha \in (2\eta+2-d, 2]$. Then for all  
$ (u, v)\in (\mathbb{H}^{\frac{d+2+2\eta-\alpha}{4}, 2}(O))^2$, we have 
 \begin{eqnarray}\label{B-u-v-h-alpha-2-d}
|B(u, v)|_{\mathbb{H}^{\eta-\frac\alpha2, 2}} &\leq& c |u|_{\mathbb{H}^{\frac{d+2+2\eta-\alpha}{4}, 2}}
| v|_{\mathbb{H}^{\frac{d+2+2\eta-\alpha}{4}, 2}}.
\end{eqnarray}
\item $(ii)$ For  $ \eta \geq \frac d2$ and   $u, v \in \mathbb{H}^{\eta+1-\frac\alpha2, 2}(O)$, we have
\begin{eqnarray}\label{est-B-estimatoion-q=2-eta-to-use}
 |B(u, v)|_{\mathbb{H}^{\eta-\frac\alpha2, 2}} &\leq& c |u|_{\mathbb{H}^{\eta+1-\frac{\alpha}{2}, 2}} 
 |v|_{\mathbb{H}^{\eta+1-\frac{\alpha}{2}, 2}}.
\end{eqnarray}
\item $(iii)$ Assume $ d \in \{2, 3, 4\}$ and  $ \frac d2\leq \alpha\leq 2$. For all 
$ (u, w)\in \mathbb{H}^{1, 2}(O)\times \mathbb{H}^{\frac\alpha2, 2}(O)$,
\begin{eqnarray}\label{3linear-H1-H-1}
|\langle B(w), u\rangle_{\mathbb{L}^{2}}|&\leq & c| u|_{\mathbb{H}^{1, 2}}
|w|^{\frac d{\alpha}}_{\mathbb{H}^{\frac{\alpha}{2}, 2}}
|w|^{\frac{2\alpha-d}\alpha}_{{\mathbb{L}^{2}}}.
\end{eqnarray}
\item $(iv)$   Assume $ d \in \{2,\cdots, 5\}$ and  $ \frac d3\leq \alpha<d$. 
For all $ (u, w)\in \mathbb{H}^{1+\frac\alpha2, 2}(O)\times \mathbb{H}^{\frac\alpha2, 2}(O)$,
\begin{eqnarray}\label{3linear-H1+alpha2-H-1}
|\langle B(w), u\rangle_{\mathbb{L}^{2}}|&\leq & c| u|_{\mathbb{H}^{1+\frac\alpha2, 2}}|w|^{\frac{d-\alpha}{\alpha}}_{\mathbb{H}^{\frac{\alpha}{2}, 2}}
|w|^{\frac{3\alpha-d}{\alpha}}_{{\mathbb{L}^{2}}}.
\del{|\langle B(w), u\rangle_{\mathbb{L}^{2}}|&\leq & c| u|_{\mathbb{H}^{1+\frac\alpha2, 2}}|w|^{\frac2\alpha-1}_{\mathbb{H}^{\frac{\alpha}{2}, 2}}
|w|^{\frac{3\alpha-2}\alpha}_{{\mathbb{L}^{2}}}.}
\end{eqnarray}
\item $(v)$  For all $ (u, v) \in \mathbb{H}^{\frac d{4q}, q}(O)$ with $ q>2$,
\begin{equation}\label{eq-est-H-1-d-q}
 |B(u, v)|_{\mathbb{H}^{-1, q}}\leq c |u|_{\mathbb{H}^{\frac d{2q}, q}}|v|_{\mathbb{H}^{\frac d{2q}, q}}. 
\end{equation}

\item $(vi)$  The following estiamte is a classical result. For all $ u\in \mathbb{H}^{1, 2}(O)$,
\begin{equation}\label{classical-B-H-1}
 |B(u)|_{\mathbb{H}^{-1, 2}}\leq c |u|_{\mathbb{H}^{1, 2}}|u|_{\mathbb{L}^{2}}. 
\end{equation}
\end{itemize}
\end{lem}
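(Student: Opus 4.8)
The plan is to handle the six bounds according to the machinery each one needs, reusing as much as possible the estimates already produced in the proof of Proposition~\ref{prop-est-B-eta-q=2}. Parts~$(i)$ and $(ii)$ require essentially no new computation: they are exactly the two intermediate inequalities \eqref{eq-fact-b-u-v-last-est} and \eqref{est-B-estimatoion-q=2-eta}. For $(i)$ one starts from Lemma~\ref{Lem-classic}, which yields $|B(u,v)|_{\mathbb{H}^{\eta-\frac\alpha2,2}}\leq c\,|u_jv|_{\mathbb{H}^{\eta+1-\frac\alpha2,2}}$, and then invokes the Sobolev pointwise multiplication estimate (the torus version of \cite{Sickel-Pontwise-Torus,Sickel-periodic spaces-85} or the bounded-domain version of \cite{R&S-96} combined with Appendix~\ref{Sobolev pointwise multiplication-Bounded-Domain}); the hypothesis $2\eta+2-d<\alpha$ is precisely the admissibility condition that lets the product be estimated into $\mathbb{H}^{\frac{d+2+2\eta-\alpha}{4},2}$. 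For $(ii)$, when $\eta\geq\frac d2$ the space $\mathbb{H}^{\eta+1-\frac\alpha2,2}(O)$ is a multiplicative algebra, so the product norm factorizes at once.

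Parts~$(iii)$ and $(iv)$ I would prove by the same device: the skew-symmetry \eqref{Eq-3lin-propsym} together with a fractional Gagliardo--Nirenberg interpolation. Writing $\langle B(w),u\rangle_{\mathbb{L}^2}=b(w,w,u)=-b(w,u,w)$ and applying H\"older with exponents $\frac2p+\frac1r=1$,
\[
|\langle B(w),u\rangle_{\mathbb{L}^2}|\leq |w|_{\mathbb{L}^{p}}\,|\nabla u|_{\mathbb{L}^{r}}\,|w|_{\mathbb{L}^{p}},
\]
I would then interpolate $|w|_{\mathbb{L}^{p}}\leq c\,|w|^{\theta}_{\mathbb{H}^{\frac\alpha2,2}}|w|^{1-\theta}_{\mathbb{L}^2}$ with $\theta=\frac{2d}{\alpha}\bigl(\frac12-\frac1p\bigr)$. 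For $(iii)$ I take $r=2$, so $\nabla u$ is controlled by $|u|_{\mathbb{H}^{1,2}}$, and $p=4$, which forces $\theta=\frac{d}{2\alpha}$ and produces $|w|^2_{\mathbb{L}^4}\leq c\,|w|^{d/\alpha}_{\mathbb{H}^{\frac\alpha2,2}}|w|^{(2\alpha-d)/\alpha}_{\mathbb{L}^2}$; the constraint $\theta\leq1$ is exactly $\alpha\geq\frac d2$, while $d\leq4$ secures the embedding $\mathbb{H}^{\frac\alpha2,2}\hookrightarrow\mathbb{L}^4$. For $(iv)$, since here $u\in\mathbb{H}^{1+\frac\alpha2,2}$ and hence $\nabla u\in\mathbb{H}^{\frac\alpha2,2}$, I would instead place $\nabla u$ in the larger Lebesgue space $\mathbb{L}^{r}$ with $\frac1r=\frac{d-\alpha}{2d}$ through the critical embedding $\mathbb{H}^{\frac\alpha2,2}\hookrightarrow\mathbb{L}^{r}$ (valid for $\alpha<d$), which fixes $\frac2p=\frac{d+\alpha}{2d}$ and $\theta=\frac{d-\alpha}{2\alpha}$; this $\theta$ lies in $[0,1]$ precisely when $\frac d3\leq\alpha<d$, and it reproduces the stated powers $\frac{d-\alpha}{\alpha}$ and $\frac{3\alpha-d}{\alpha}$, whose sum is $2$ as bilinearity requires.

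For part~$(v)$ I would use the divergence-free form $B(u,v)=\Pi\,\partial_j(u_jv)$ and factor the negative-order norm through Lemma~\ref{Lem-classic}: since $A^{-\frac12}\Pi\partial_j$ is bounded on $L^q$ (the case $\eta=0$), one gets $|B(u,v)|_{\mathbb{H}^{-1,q}}=|A^{-\frac12}\Pi\partial_j(u_jv)|_{\mathbb{L}^q}\leq c\,|u_jv|_{L^q}$. A H\"older split $\frac1q=\frac1{2q}+\frac1{2q}$ followed by the Sobolev embedding $\mathbb{H}^{\frac d{2q},q}(O)\hookrightarrow L^{2q}(O)$, which holds because $\frac1{2q}=\frac1q-\frac{s}{d}$ with $s=\frac{d}{2q}$, then closes the estimate. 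Finally $(vi)$ is the classical two-dimensional bound: by \eqref{Eq-3lin-propsym} one has $|\langle B(u),\phi\rangle|=|b(u,\phi,u)|\leq|u|^2_{\mathbb{L}^4}|\phi|_{\mathbb{H}^{1,2}}$, and Ladyzhenskaya's inequality $|u|^2_{\mathbb{L}^4}\leq c\,|u|_{\mathbb{L}^2}|u|_{\mathbb{H}^{1,2}}$ gives the claim after taking the supremum over $|\phi|_{\mathbb{H}^{1,2}}\leq1$; this is recorded, e.g., in \cite{Temam-NS-Functional-95,Foias-book-2001}.

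The routine ingredients (Lemma~\ref{Lem-classic}, skew-symmetry, H\"older, and the standard embeddings) are all in place, so the one point demanding genuine care is the exponent bookkeeping in $(iii)$ and $(iv)$: I must check that the admissible range of the interpolation parameter $\theta\in[0,1]$ and the validity range of the Sobolev embedding coincide exactly with the stated hypotheses on $\alpha$ and $d$, and that the two powers of $|w|$ always sum to $2$. Everything else reduces to a direct assembly of the tools quoted above.
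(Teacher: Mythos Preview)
Your proof is correct. Parts $(i)$, $(ii)$, $(v)$, and $(vi)$ track the paper's own argument essentially step for step: $(i)$--$(ii)$ are precisely the intermediate inequalities \eqref{eq-fact-b-u-v-last-est} and \eqref{est-B-estimatoion-q=2-eta} from the proof of Proposition~\ref{prop-est-B-eta-q=2}, $(v)$ is Lemma~\ref{Lem-classic} followed by the same product bound, and $(vi)$ is the paper's $|B(u)|_{\mathbb{H}^{-1,2}}\leq c|u_ju|_{\mathbb{L}^2}\leq c|u|^2_{\mathbb{L}^4}$ written in dual form.

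The genuine divergence is in $(iii)$ and especially $(iv)$. You pass through the skew-symmetry $b(w,w,u)=-b(w,u,w)$ and then work entirely in Lebesgue spaces via H\"older, the critical embedding $\mathbb{H}^{\frac\alpha2,2}\hookrightarrow L^{2d/(d-\alpha)}$ for $\nabla u$, and Gagliardo--Nirenberg interpolation on $w$. The paper instead stays in the fractional Sobolev scale throughout: it uses the duality $|\langle B(w),u\rangle|\leq|u|_{\mathbb{H}^{1+\frac\alpha2,2}}|B(w)|_{\mathbb{H}^{-1-\frac\alpha2,2}}$, reduces via Lemma~\ref{Lem-classic} to $|w_jw|_{\mathbb{H}^{-\frac\alpha2,2}}$, applies the negative-order product rule $H^{s,2}\cdot H^{s,2}\hookrightarrow H^{2s-d/2,2}$ with $s=\frac{d-\alpha}{4}$ (from \cite{R&S-96} and \cite{Sickel-Pontwise-Torus}), and then interpolates $\mathbb{H}^{\frac{d-\alpha}{4},2}$ between $\mathbb{L}^2$ and $\mathbb{H}^{\frac\alpha2,2}$. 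Both routes yield exactly the same exponents under the same constraint $\frac d3\leq\alpha<d$. Yours is the more elementary of the two, since it avoids the fractional product calculus in negative-order spaces; the paper's route keeps a uniform machinery that meshes with the rest of Section~\ref{sec-nonlinear-prop}.
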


\begin{proof}

\begin{itemize}
\item $ (i)$-$(ii)$ Estimates \eqref{B-u-v-h-alpha-2-d} and \eqref{est-B-estimatoion-q=2-eta-to-use} are copies  of the estimates \eqref{eq-fact-b-u-v-last-est} respectively \eqref{est-B-estimatoion-q=2-eta} proved above without restriction conditions. We only 
emphasize them here.\del{ as they will be needed later, in particular, for $ \eta=0$ and $ \eta=1$.}
\item $(iii)$  Let  $ d \in \{2, 3, 4\}$,  $ \frac d2\leq \alpha\leq 2$ and  $ (u, w) \in (\mathcal{D}(O))^2$. Recall that  $ (u, w) \in (\mathcal{D}(O))^2$ is dense in
$\mathbb{H}^{1, 2}(O)\times \mathbb{H}^{\frac\alpha2, 2}(O)$. Using the group property\del{ of 
the powers of Stokes operator} $ (A^\beta)_{\beta\in\mathbb{R}}$,
H\"older inequality, Lemma \ref{Lem-classic},
 again H\"older inequality and  Gagliardo-Nirenberg inequality in the case $ \alpha>\frac d2$ and Sobolev embedding 
 in the case $ \alpha=\frac d2$, see e.g. \cite[Theorem 7.63 \& 7.66]{Adams-Hedberg-94} for bounded domain and 
 \cite[Theorem 3.5.4. \& Theorem 3.5.5.]{Schmeisser-Tribel-87-book} for the torus, we infer that
\begin{eqnarray}\label{3linear-H1-H-1-proof}
|\langle B(w), u\rangle_{\mathbb{L}^{2}}|&\leq &
| u|_{{\mathbb{H}^{1, 2}}}|B(w)|_{{\mathbb{H}^{-1, 2}}}\leq
c| u|_{{\mathbb{H}^{1, 2}}}|w_jw|_{{\mathbb{L}^{2}}}
\leq c| u|_{{\mathbb{H}^{1, 2}}}|w|^2_{{\mathbb{L}^{4}}} \nonumber\\
&\leq & c| u|_{\mathbb{H}^{1, 2}}
|w|^{\frac d{\alpha}}_{\mathbb{H}^{\frac{\alpha}{2}, 2}}
|w|^{\frac{2\alpha-d}\alpha}_{{\mathbb{L}^{2}}}.
\end{eqnarray}
\item $(iv)$  Let  $ d \in \{2, \cdots, 5\}$, $ \frac d3\leq \alpha<d$ and  $ (u, w) \in (\mathcal{D}(O))^2$. 
Using the group property of $ (A^\beta)_{\beta\in\mathbb{R}}$, H\"older inequality, Lemma \ref{Lem-classic},
 \cite[Theorem 4.6.1]{R&S-96} for the bounded domain and \cite[Theorem iv.2. ii]{Sickel-Pontwise-Torus} and 
\cite[Remark 4 p 164]{Schmeisser-Tribel-87-book} for $ O=\mathbb{T}^d$ and by interpolation, we infer that
\begin{eqnarray}\label{3linear-H1+alpha-H-1-proof}
|\langle B(w), u\rangle_{\mathbb{L}^{2}}|&\leq &
| u|_{{\mathbb{H}^{1+\frac\alpha2, 2}}}|B(w)|_{{\mathbb{H}^{-1-\frac\alpha2, 2}}}\leq
c| u|_{{\mathbb{H}^{1+\frac\alpha2, 2}}}|w_jw|_{{\mathbb{H}^{-\frac\alpha2, 2}}}
\leq c| u|_{{\mathbb{H}^{1+\frac\alpha2, 2}}}|w|^2_{{\mathbb{H}^{\frac{d-\alpha}4, 2}}} \nonumber\\
&\leq& 
c| u|_{\mathbb{H}^{1+\frac\alpha2, 2}}|w|^{\frac{d-\alpha}{\alpha}}_{\mathbb{H}^{\frac{\alpha}{2}, 2}}
|w|^{\frac{3\alpha-d}{\alpha}}_{{\mathbb{L}^{2}}}.
\end{eqnarray}
\item $(v)$ We use  Lemma \ref{Lem-classic},
\cite[Theorem IV.2.2 (ii)]{Sickel-Pontwise-Torus}, \cite[Theorem 3.5.4.ps.168-169]{Schmeisser-Tribel-87-book} 
and the monotonicity property in
\cite[Remark 4.p.164]{Schmeisser-Tribel-87-book} for $ O=\mathbb{T}^d$ and \cite[Theorem 4.6.1, p. 190 and Proposition Tr 6, 2.3.5, p 14]{R&S-96} and Theorem \ref{Theo-pointwiseMulti-Bounded-Domain}, we infer that
\begin{eqnarray}\label{est-1-semi-group-div-delta2not2-B-+2}
|B(u, v)|_{\mathbb{H}^{-1, q}} \leq
 c|u_jv_i|_{L^{q}}
\leq  C  |u|_{\mathbb{H}^{\frac{d}{2q}, q}}|v|_{_{\mathbb{H}^{\frac{d}{2q}, q}}}.
\end{eqnarray}

The proof of \eqref{eq-est-H-1-d-q}, follows from the first two esimates in 
\eqref{est-1-semi-group-div-delta2not2-B-+2}, with $ \eta=\frac dq$. 

\item $(vi)$ We use Lemma \ref{Lemma-bound-op-RGradient-Via-gamma}, H\"older inequality and  than Gaglairdo-Nirenberg inequality, we get 
 \begin{eqnarray}
 |B(u)|_{\mathbb{H}^{-1, 2}}&\leq& c |u_ju|_{\mathbb{L}^{2}} \leq c |u|^2_{\mathbb{L}^{4}}\leq 
|u|_{\mathbb{H}^{1, 2}}|u|_{\mathbb{L}^{2}}.
\end{eqnarray}
\end{itemize}

\end{proof}


\del{\section{2D-Fractional stochastic Navier-Stokes equation on the Torus with smooth data}\label{sec-Torus}}

\del{\section{Local mild solutions for the multi-dimensional FSNSEs.}\label{sec-1-approx-local-solution}

In this Section, we prove\del{ the first part of Theorem \ref{Main-theorem-mild-solution-d}} ($ 3.6.1$). The scheme to prove
 the existence of local mild solutions for nonlinear deterministic or stochastic partial differential equations is now standard, see e.g. \cite{DaPrato-Zbc-92,  Neerven-Evolution-Eq-08}. The key idea is to apply a fixed point theorem or equivalently  Picard iterations. In \cite{Kunze-Neereven-Cont-parm-reaction-diffusion-2012},  the authors  constructed  an approximative scheme\del{ to prove the existence of a local solution} for an abstract semilinear stochastic evolution equation on a Banach space. The scheme is based on the assumption that the linear, the nonlinear and the diffusion terms of this equation\del{, $ A, B, G$,} can be approximated in appropriate way.  In particular, they applied this scheme on some\del{ to prove the global existence for the} reaction-diffusion equations.\del{ and other related models.} In the deterministic case, one of the elegant schemes for NSE is based on the approximation of the integral representation of the equation, more precisely of the mild solution, see e.g. \cite{ Farwig-Sohr-L-p-theory-2005, Lemari-book-NS-probelems-02}. It is resumed in finding a space $\mathcal{E}_T$, called the admissible path space on which  the 
bilinear operator $ B_*$ defined by
 \begin{eqnarray}\label{B-*}
B_*\!\!\!\!\!\!\!\!\!\!&:&\!\!\!\!\!\!\!  \mathcal{E}_T \times  \mathcal{E}_T \rightarrow  \mathcal{E}_T \nonumber\\
&(u, v)& \mapsto \int_0^{\cdot}e^{-(\cdot-s)A_\alpha}B(u(s), v(s))ds = \int_0^{\cdot}e^{-(\cdot-s)A_\alpha}\Pi((u(s)\nabla) v(s))ds
 \end{eqnarray}
is locally bounded. The associated space of admissible initial conditions $E_T$ is then defined by
$ u_0 \in \mathcal{D}'(O)$ such that $(e^{-tA_\alpha}u_0, t\in[0, T])\in \mathcal{E}_T$.
The space $E_T$ is called adapted value space.\del{To the best knowledge of the author, such scheme does not exist for the stochastic case.} Our aim here is to extend this scheme for the stochastic case, in particular for the stochastic Navier-Stokes equation. We shall use the same notations, but we shall delete 
the terminologies "path space"  and "adapted", as these latter are used, in the stochastic framework, for other probabilistic notions. 
We construct the admissible space $\mathcal{E}_T$
 such as the operators $ B_*$ and $ G_*$ are locally  bounded, where $ B_*$ is given by \eqref{B-*} and $ G_*$ is defined by 
 \begin{eqnarray}\label{G-*}
G_*\!\!\!\!\!\!&:&\!\!\!\mathcal{E}_T \rightarrow  \mathcal{E}_T \nonumber\\
&u& \!\!\!\!\mapsto \int_0^{\cdot}e^{-(\cdot-s)A_\alpha}G(u(s)W(ds).
 \end{eqnarray}
\noindent For the fixed parameters $ 2< p <\infty$, $ 2< q<\infty$, $ \delta\geq 0$ and $ T>0$, we introduce the 
admissible space
\begin{eqnarray}\label{E-cal-T}
\mathcal{E}_T =\{\!\!\!&{}&\!\!\! (u(t), t\in [0, T]), \;\; D(A_q^{\frac {\delta}2})-\text{adapted strongly measurable stochastic
processes},  s.t.\nonumber\\
  &{}&|u |_{\mathcal{E}_{T}}:= \left(\mathbb{E}\sup_{[0, T]}|u(t)|^p_{D(A_q^{\frac{\delta}2})}\right)^\frac 1p<\infty\}
 \end{eqnarray}
and the admissible initial value space 
\begin{equation}\label{E-T}
 E_T:=\{\text{random variables}\; u_0 \text{ with values in} \; \mathcal{D}'(O), \; s.t.\;\; \del{(e^{-tA_\alpha}u_0)_{t\in[0, T]}\in \mathcal{E}_T e^{-\cdot A_\alpha}u_0 } (e^{-tA_\alpha}u_0, t\in[0, T])\in \mathcal{E}_T\}.
\end{equation}
\noindent Now, it is easy, using Lemma \ref{lem-est-z-t} and Propositions \ref{Prop-Main-I}, 
to prove the existence of the local mild solution. The space regularity of the local mild solution is  a consequence of the construction of the solution as an element of the space \eqref{E-cal-T} and the time regularity follows thanks to Lemma \ref{lem-est-z-t}, Proposition \ref{Prop-Main-I}, Assumption $\mathcal{B}$; \eqref{Eq-initial-cond} and the strong continuity
of the semigroup $ (e^{-tA^{\frac{\alpha}{2}}})_{t\geq 0}$ with $ X_1:= H^{-\delta'', q}(O)$ and $ \delta'' \geq \alpha+1+\frac{d}{q}-\delta$. For the convenience of the reader, we give details of this proof in Appendix \ref{appendix-local-solution}, see also \cite{Debbi-scalar-active}.
\begin{remark}\label{Rem-initial-data-mild-solu}
It is obvious that, in the frame work presented above, Assumption $(\mathcal{B})$ is not optimal and may be replaced by the condition $ u_0\in E_T$, where $ E_T$ is given by \eqref{E-T}.
\end{remark}}

\section{2D-FSNSEs on the Torus with smooth data.}\label{sec-Torus}
\del{\subsection{Introduction}}  In this section, we assume\del{ study the fractional stochastic Navier-Stokes equations on the 
2D-Torus,} $ O = \mathbb{T}^2$ and prove Theorem \ref{Main-theorem-strog-Torus}. As mentioned above, we have $ (A^S)^{\frac\alpha2}= (-\Delta)^\frac\alpha2$, $0<\alpha\leq 2$. Furthermore, $ (A^S)^{\frac\alpha2}$ can  also be
defined by \eqref{construction-of-fract-bounded} and  \eqref{basis} with the explicit orthonormal
basis of eigenvectors $ (e_k(\cdot):= \frac{k^\perp}{|k|}e^{ik\cdot})_{k\in \mathbb{Z}_0^d}$ and 
$(\langle v, e_k\rangle :=\hat{v}_{k})_{k\in \mathbb{Z}_0^d}$ being the sequence of Fourier coefficients, 
see e.g. \cite[p316]{Temam-Inf-dim-88}.\del{This last equality could also be deduced easily by using an orthonormal
basis of eigenvectors of $ A$, e.g. $ (e_k(\cdot):= \frac{k^\perp}{|k|}e^{ik\cdot})_{k\in \mathbb{Z}_0^d}$
and formulas \eqref{construction-of-fract-bounded} and  \eqref{basis}, see for similar remark \cite[p316]{Temam-Inf-dim-88}.
In this case  $(\langle v, e_k\rangle :=\hat{v}_{k})_{k\in \mathbb{Z}^d}$
is the sequence of Fourier coefficients). Therefore, the fractional version of the stochastic Navier-Stokes equation and
the stochastic version of the fractional Navier-Stokes equation are equivalent.}


For $ \alpha \in [1, 2]$, we fix the densely,
continuous embedding Gelfand triple \eqref{Gelfand-triple-Torus} and 
we use the following Faedo-Galerkin approximation.
Let us fix $ n\geq 1$ and introduce the projection $ P_n$, $ n\geq 1$ on the finite space 
$ H_n \subset \mathbb{L}^2(\mathbb{T}^2)$ generated by $\{e_k, k\in \mathbb{Z}_0^d,\;s.t.\; |k|\leq n\}$.
\del{We define,
\begin{equation}\label{eq-def-B-n-G-n}
 B_n(u):= \mathcal{X}_n(|u|_{\mathbb{H}^1})B(u), \;\;\; \text{and}\;\;\;\;  G_n(u):= \mathcal{X}_n(|u|_{\mathbb{H}^1})G(u).
\end{equation}}
The Faedo-Galerkin approximation scheme is defined for the process $(u_n(t), t\in [0, T])\in H_n $ by
\begin{equation}\label{FSBE-Galerkin-approxi}
\Bigg\{
\begin{array}{lr}
du_n(t)= (-A_\alpha u_n(t) + P_nB(u_n(t))dt + P_nG(u_n(t))\,dW_n(t), \; 0< t\leq T,\\
u(0) = P_nu_0 = u_{0n},
\end{array}
\end{equation}
where $ W_n(t):= P_nW(t)= \sum_{|j|\leq n} Q^\frac12e_j\beta_j(t)$ (for example 
$ (\sum_{|j|\leq n} q^\frac12_je_j\beta_j(t))$). Let us mention here that using a similar proof as in 
 \cite{Millet-Chueshov-Hydranamycs-2NS-10, Flandoli-3DNS-Dapratodebussche-06}, we can prove that 
 $ W_n(t)$ converges to $ W(t)$  in  the space
$ L^2(\Omega, \mathbb{H}^{1, 2}(\mathbb{T}^2))$, provided  $ A^\frac12Q^\frac12$ is a  Hilbert-Schmidt in
$ \mathbb{L}^2(\mathbb{T}^2)$.
Since the finite dimensionnal space stochastic differential equation  \eqref{FSBE-Galerkin-approxi} has locally  Lipschitz and  linear growth coefficients, then  Equation \eqref{FSBE-Galerkin-approxi} admits a unique strong solution
$(u_n(t), t\in [0, T])\in L^2(\Omega; C([0, T]; H_n) )$, see e.g. \cite{Millet-Chueshov-Hydranamycs-2NS-10, Krylov-simple-proof-90, Roeckner-Zhang-tamedNS-12} and the reference therien. Furthermore, we have the following result,
\begin{lem}\label{lem-unif-bound-theta-n-H-1}
Let $ \alpha\in (0, 2)$ and $ u_0\in L^{p}(\Omega, \mathbb{H}^{1, 2}(\mathbb{T}^d))$  with $ p\geq 4$. Then
the solutions $ (u_n(t), t\in [0, T])$ of equations \eqref{FSBE-Galerkin-approxi},  $ n\in \mathbb{N}_0$,
satisfy the following estimates 
 \begin{eqnarray}\label{Eq-Ito-n-weak-estimation-1-Torus}
\sup_{n}\mathbb{E}\Big(\sup_{[0, T]}|u_n(t)|^{p}_{\mathbb{H}^{1, 2}}&+&
\int_0^T|u_n(t)|^{p-2}_{\mathbb{H}^{1, 2}}\Big(|u_n(t)|^2_{\mathbb{H}^{1+\frac\alpha2, 2}} +
|u_n(t)|_{\mathbb{H}^{\beta, q_1}}^2 \Big)dt \nonumber\\
&+& \int_0^T|u_n(t)|^4_{\mathbb{H}^{1, 2}}dt+ \int_0^T|u_n(t)|^{\frac{\alpha}{\eta}}_{\mathbb{H}^{1+\eta, 2}}dt\Big)<\infty,
\end{eqnarray}
where $ \beta \leq 1+\frac\alpha2-\frac d2+\frac{d}{q_1}$, $ 2\leq q_1<\infty$ and $ \frac\alpha p<\eta\leq \frac\alpha2$.
\begin{eqnarray}\label{Eq-B-n-weak-estimation-1-Torus}
\sup_{n}\left(\mathbb{E}\int_0^T (|P_nB(u_n(t))|^{2}_{\mathbb{H}^{1-\frac\alpha2, 2}} +|A^\frac\alpha2 u_n(t)|^{2}_{\mathbb{H}^{1-\frac\alpha2, 2}} )dt\right)<\infty.
\end{eqnarray}
\end{lem}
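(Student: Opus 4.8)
The plan is to derive both \eqref{Eq-Ito-n-weak-estimation-1-Torus} and \eqref{Eq-B-n-weak-estimation-1-Torus} from an It\^o expansion of the powers of the enstrophy $X_n(t):=|u_n(t)|^2_{\mathbb{H}^{1,2}}=\langle A u_n(t),u_n(t)\rangle$, the decisive point being that on $\mathbb{T}^2$ the nonlinear term produces \emph{no} net contribution to the $\mathbb{H}^{1,2}$-energy balance. First I would apply It\^o's formula to $X_n$ along \eqref{FSBE-Galerkin-approxi}; since $P_n$ commutes with $A$ (both are diagonalised by $(e_k)$), the nonlinear drift is $2\langle P_nB(u_n),u_n\rangle_{\mathbb{H}^{1,2}}=2\langle B(u_n),u_n\rangle_{\mathbb{H}^{1,2}}=0$ by the orthogonality \eqref{vanishes-bilinear-tous-H1}. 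As $\langle Au_n,A_\alpha u_n\rangle=|u_n|^2_{\mathbb{H}^{1+\frac\alpha2,2}}$, the dissipation supplies exactly the coercive term, so that
\[
dX_n+2|u_n|^2_{\mathbb{H}^{1+\frac\alpha2,2}}\,dt=\|P_nG(u_n)Q^{\frac12}\|^2_{R_\gamma(\mathbb{L}^2,\mathbb{H}^{1,2})}\,dt+2\langle u_n,P_nG(u_n)\,dW_n\rangle_{\mathbb{H}^{1,2}},
\]
where the trace term is bounded by $c(1+X_n)$ thanks to the linear growth \eqref{Eq-Cond-Linear-Q-G} (with $q=2$, $\delta=1$). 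This $\mathbb{H}^{1,2}$-orthogonality is precisely the special $2$D-torus structure announced in the introduction, and it is what makes the enstrophy controllable for all $\alpha\in(0,2)$.

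Next I would pass to the $p$-th moment by applying It\^o to $X_n^{p/2}$, integrating, taking $\sup_{[0,T]}$ and expectations. The martingale part is handled by the Burkholder--Davis--Gundy inequality, after which the linear growth bound and Young's inequality let one absorb the stochastic and trace contributions into $\mathbb{E}\sup_{[0,T]}X_n^{p/2}$ plus a term $c\,\mathbb{E}\int_0^T X_n^{p/2}\,dt$, and Gronwall's lemma closes the estimate uniformly in $n$ using $u_0\in L^p(\Omega;\mathbb{H}^{1,2})$. This yields the bounds on $\mathbb{E}\sup_{[0,T]}|u_n|^p_{\mathbb{H}^{1,2}}$ and on $\mathbb{E}\int_0^T|u_n|^{p-2}_{\mathbb{H}^{1,2}}|u_n|^2_{\mathbb{H}^{1+\frac\alpha2,2}}\,dt$. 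The remaining three terms of \eqref{Eq-Ito-n-weak-estimation-1-Torus} follow from these: the term in $|u_n|_{\mathbb{H}^{\beta,q_1}}$ comes from the Sobolev embedding $\mathbb{H}^{1+\frac\alpha2,2}(\mathbb{T}^2)\hookrightarrow\mathbb{H}^{\beta,q_1}(\mathbb{T}^2)$, valid exactly under $\beta\le 1+\frac\alpha2-\frac d2+\frac d{q_1}$; the term $\int_0^T|u_n|^4_{\mathbb{H}^{1,2}}dt\le T\sup_{[0,T]}X_n^2$ is finite since $p\ge 4$; and the term $\int_0^T|u_n|^{\alpha/\eta}_{\mathbb{H}^{1+\eta,2}}dt$ follows by interpolating $\mathbb{H}^{1+\eta,2}=[\mathbb{H}^{1,2},\mathbb{H}^{1+\frac\alpha2,2}]_{2\eta/\alpha}$, which gives $|u_n|^{\alpha/\eta}_{\mathbb{H}^{1+\eta,2}}\le |u_n|^{\alpha/\eta-2}_{\mathbb{H}^{1,2}}|u_n|^2_{\mathbb{H}^{1+\frac\alpha2,2}}$, controlled by the second bound precisely because $\frac\alpha p<\eta\le\frac\alpha2$ forces $2\le\frac\alpha\eta<p$.

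For \eqref{Eq-B-n-weak-estimation-1-Torus}, the linear part is immediate since $|A^{\frac\alpha2}u_n|^2_{\mathbb{H}^{1-\frac\alpha2,2}}=|u_n|^2_{\mathbb{H}^{1+\frac\alpha2,2}}$, whose time integral is bounded by the $p=2$ instance of the energy balance. For the nonlinear part I would use $\|P_n\|\le 1$ on Sobolev spaces and the factorisation $|B(u_n)|_{\mathbb{H}^{1-\frac\alpha2,2}}\le c|u_nu_n|_{\mathbb{H}^{2-\frac\alpha2,2}}$ from Lemma \ref{Lem-classic}, together with an asymmetric (tame) product estimate placing one factor in the low norm $\mathbb{H}^{1,2}$ and one in the high norm $\mathbb{H}^{1+\frac\alpha2,2}$, so that $|B(u_n)|_{\mathbb{H}^{1-\frac\alpha2,2}}\le c|u_n|_{\mathbb{H}^{1,2}}|u_n|_{\mathbb{H}^{1+\frac\alpha2,2}}$ (for $d=2$ this is the $\eta=1$ case underlying Theorem \ref{theo-gelfand-gene} and Lemma \ref{lem-further-estimation}); hence
\[
\mathbb{E}\int_0^T|P_nB(u_n)|^2_{\mathbb{H}^{1-\frac\alpha2,2}}\,dt\le c\,\mathbb{E}\Big(\sup_{[0,T]}|u_n|^2_{\mathbb{H}^{1,2}}\int_0^T|u_n|^2_{\mathbb{H}^{1+\frac\alpha2,2}}\,dt\Big),
\]
which is finite by Cauchy--Schwarz once one controls $\mathbb{E}\sup_{[0,T]}|u_n|^4_{\mathbb{H}^{1,2}}$ and $\mathbb{E}\big(\int_0^T|u_n|^2_{\mathbb{H}^{1+\frac\alpha2,2}}dt\big)^2$, both guaranteed by the $p\ge 4$ estimate above. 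I expect the main obstacle to be exactly this last step: securing an asymmetric product estimate for $B$ (one low-regularity, one high-regularity factor) rather than the symmetric quartic bound, since only the asymmetric form is integrable against the enstrophy dissipation, and this is precisely what forces the hypothesis $p\ge 4$ through the product moments; matching the interpolation exponents so that the auxiliary time-integrals close under the constraints on $\beta$, $q_1$ and $\eta$ is the other point requiring care.
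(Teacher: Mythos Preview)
Your treatment of \eqref{Eq-Ito-n-weak-estimation-1-Torus} is correct and matches the paper's: It\^o on $X_n=|u_n|^2_{\mathbb{H}^{1,2}}$, the $\mathbb{H}^{1,2}$-orthogonality \eqref{vanishes-bilinear-tous-H1} kills the drift, then It\^o on $X_n^{p/2}$, BDG, linear growth, Gronwall; the three auxiliary integrals follow by Sobolev embedding, $p\ge 4$, and interpolation exactly as you describe.

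The gap is in your argument for \eqref{Eq-B-n-weak-estimation-1-Torus}. The asymmetric bound
\[
|B(u)|_{\mathbb{H}^{1-\frac\alpha2,2}}\le c\,|u|_{\mathbb{H}^{1,2}}\,|u|_{\mathbb{H}^{1+\frac\alpha2,2}}
\]
is \emph{false} for $\alpha<2$ on $\mathbb{T}^2$. After Lemma~\ref{Lem-classic} you need $|u_ju|_{H^{2-\frac\alpha2,2}}\lesssim |u|_{H^{1,2}}|u|_{H^{1+\frac\alpha2,2}}$, but the target regularity $2-\frac\alpha2>1$ exceeds the low factor's regularity $1$: if $f\in H^{1,2}(\mathbb{T}^2)\setminus H^{1+\varepsilon,2}$ and $g$ is smooth, then $fg$ is only in $H^{1,2}$, not $H^{2-\frac\alpha2,2}$. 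Equivalently, in the paraproduct term $T_g f$ the derivatives fall on $f$, and since $H^{1,2}(\mathbb{T}^2)\not\hookrightarrow L^\infty$ there is no way to shift them to $g$. Neither Theorem~\ref{theo-gelfand-gene} nor Lemma~\ref{lem-further-estimation} provides this estimate: for $\eta=1$, $d=2$ they give the \emph{symmetric} bounds with both factors in $\mathbb{H}^{1+\frac\alpha2,2}$ or in $\mathbb{H}^{2-\frac\alpha2,2}$, never the mixed $(\mathbb{H}^{1,2},\mathbb{H}^{1+\frac\alpha2,2})$ pairing you want.

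The paper's route is to use the symmetric estimate \eqref{est-B-estimatoion-q=2-eta-to-use} at $\eta=1$, giving $|P_nB(u_n)|^2_{\mathbb{H}^{1-\frac\alpha2,2}}\le c|u_n|^4_{\mathbb{H}^{2-\frac\alpha2,2}}$, and then interpolate $\mathbb{H}^{2-\frac\alpha2,2}=[\mathbb{H}^{1,2},\mathbb{H}^{1+\frac\alpha2,2}]_{(2-\alpha)/\alpha}$ to obtain
\[
\int_0^T|P_nB(u_n)|^2_{\mathbb{H}^{1-\frac\alpha2,2}}\,dt\le c\int_0^T|u_n|^{8(\alpha-1)/\alpha}_{\mathbb{H}^{1,2}}|u_n|^{4(2-\alpha)/\alpha}_{\mathbb{H}^{1+\frac\alpha2,2}}\,dt,
\]
which closes against \eqref{Eq-Ito-n-weak-estimation-1-Torus} precisely when $4(2-\alpha)/\alpha\le 2$, i.e.\ $\alpha\ge\tfrac43$. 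So \eqref{Eq-B-n-weak-estimation-1-Torus} is proved only in the range $\alpha\in[\tfrac43,2]$ (this restriction is explicit in the paper's proof and is what enters Theorem~\ref{Main-theorem-strog-Torus}); your hoped-for asymmetric estimate would have removed that threshold, but it does not hold.
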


\begin{proof}
First, we prove the following estimate
\begin{eqnarray}\label{Eq-Ito-n-weak-estimation-torus}
\sup_{n}\mathbb{E}\left(\sup_{[0, T]}|u_n(t)|^{2}_{\mathbb{H}^{1, 2}}
+\int_0^T|u_n(t)|^2_{\mathbb{H}^{1+\frac\alpha2, 2}} dt \right)&\leq&C<\infty.
\end{eqnarray}
\del{Thanks to the definitions of $B_n$ and $ G_n$ in \eqref{eq-def-B-n-G-n}
and Assumption $ (\mathcal{A})$, we  can see that $ u_n$ satisfies the assumptions of
 \cite[Lemma 5.1]{Krylov-L-p-2010}, then} By application of Ito's formula,\del{see also  \cite[Theorem 2.1]{Krylov-L-p-2010},}  we have
\begin{eqnarray}\label{Eq-first-norm-l-2n-torus}
|u_n(t)|^{2}_{\mathbb{H}^{1, 2}}&=&|u_{n0}|^{2}_{\mathbb{H}^{1, 2}}- 2 \int_0^t \langle
u_n(s), A^\frac\alpha2u_n(s)-P_nB(u_n(s))\rangle_{\mathbb{H}^{1, 2}} ds \nonumber\\
&+& 2 \int_0^t \langle u_n(s),
P_nG(u_n(s))dW_n(s)\rangle_{\mathbb{H}^{1, 2}} +\int_0^t\sum_{|j|\leq n}|P_nG(u_n(s))Q^\frac12 e_j|_{\mathbb{H}^{1, 2}}^2 ds.
\end{eqnarray}
\del{Using the commutator estimate, see e.g. \cite{Cordoba2-Pointwise-commautator, WUJ-06}, we have

\vspace{-0.35cm}

\begin{eqnarray}\label{Eq-commutator-estimate-torus}
\langle u_n(s),A_\alpha u_n(s)\rangle_{\mathbb{H}^{1, 2}} = \langle u_n(s),A_\alpha u_n(s)\rangle_{{H}_d^1} =\int_{\mathbb{T}^2}
u_n(s)(-\Delta)^{1+\frac\alpha2}u_n(s)dx\nonumber\\ &\geq& \int_{\mathbb{T}^2}
|(-\Delta)^{\frac12+\frac\alpha4}u_n(s)|^2dx\geq c |u_n(s)|^2_{\mathbb{H}^{1+\frac\alpha2, 2}}.\nonumber\\
\end{eqnarray}}
Using the semigroup property of $ (A^\beta)_{\beta\geq0}$ and the definition of the Sobolev spaces in Section \ref{sec-formulation}, we get

\vspace{-0.35cm}

\begin{eqnarray}\label{Eq-commutator-estimate-torus}
\langle u_n(s), A_\alpha u_n(s)\rangle_{\mathbb{H}^{1, 2}} =\del{ \langle u_n(s), A_\alpha u_n(s)\rangle_{{H}_d^1} = \int_{\mathbb{T}^2}
u_n(s)(-\Delta)^{1+\frac\alpha2}u_n(s)dx=} |u_n(s)|^2_{\mathbb{H}^{1+\frac\alpha2, 2}}.
\end{eqnarray}
The term $ \langle u_n(s),  P_nB(u_n(s))\rangle_{\mathbb{H}^{1, 2}}$ 
\del{($ \langle u_n(s),  P_nB(u_n(s))\rangle_{H_n^{1, 2}}$)} in the RHS of \eqref{Eq-first-norm-l-2n-torus} vanishes thanks to
\eqref{vanishes-bilinear-tous-H1}.\del{ pages $ \langle u_n(s),  P_nB_n(u_n(s))\rangle_{\mathbb{H}^1}=0$
$ \langle u,  B(u, u)\rangle_{\mathbb{H}^1}=0$.} To estimate the stochastic term in \eqref{Eq-first-norm-l-2n-torus}, 
we  use the stochastic isometry, Minkowski and H\"older inequalities, the contraction property of $ P_n$ and
Assumption $ (\mathcal{C})$ (\eqref{Eq-Cond-Linear-Q-G}, with $ q=2$ and $ \delta =1$),   we get
\begin{eqnarray}\label{Eq-second-norm-l-2n-start-Torus}
\mathbb{E}\sup_{[0, T]}|\int_0^t \langle u_n(s)\!\!\!\!\!\!&,&\!\!\!\!
P_nG(u_n(s))dW_n(s)\rangle_{\mathbb{H}^{1, 2}} |\nonumber \\
&\leq & c\mathbb{E}\left(\int_0^T\sum_{k\leq n}
\big(\int_{\mathbb{T}^d}|A^\frac12u_n(s)|| A^\frac12P_nG(u_n(s))Q^\frac12e_k|dx\big)^2 ds\right)^\frac12\nonumber\\
&\leq & c\mathbb{E}\left(\int_0^T|u_n(s)|^2_{\mathbb{H}^{1, 2}}|| G(u_n(s))Q^\frac12||_{HS(\mathbb{H}^{1, 2})}^2ds\right)^\frac12\nonumber\\
&\leq & c\mathbb{E}\left(\int_0^T
\big(|u_n(s)|^2_{\mathbb{H}^{1, 2}}+ |u_n(s)|^4_{\mathbb{H}^{1, 2}})ds\right)^\frac12.
\end{eqnarray}

\vspace{-0.35cm}

\noindent Than, we use Young and H\"older inequalities ($\epsilon <1$), we infer that
\begin{eqnarray}\label{Eq-third-norm-l-2n-torus}
\mathbb{E}\sup_{[0, T]}|\int_0^t \langle u_n(s)\!\!\!\!\!\!&,&\!\!\!\!
P_nG(u_n(s))dW_n(s)\rangle_{\mathbb{H}^{1, 2}} |\nonumber \\
&\leq & c\mathbb{E}\left(\sup_{[0,
T]}|u_n(s)|_{\mathbb{H}^{1, 2}}\left[\int_0^T (1+
|u_n(s)|^{2}_{\mathbb{H}^{1, 2}})
ds\right]^\frac12\right)\nonumber\\
&\leq& \epsilon\mathbb{E}\sup_{[0, T]}|u_n(s)|^{2}_{\mathbb{H}^{1, 2}}+
C\mathbb{E}\int_0^T (1+ |u_n(s)|^{2}_{\mathbb{H}^{1, 2}})
ds\nonumber\\
&\leq& \epsilon\mathbb{E}\sup_{[0, T]}|u_n(s)|^{2}_{\mathbb{H}^{1, 2}}+
C\mathbb{E}\int_0^T \sup_{\tau\in[0, s]}|u_n(\tau)|^{2}_{\mathbb{H}^{1, 2}}
ds +C.
\end{eqnarray}
\noindent For the last term in the RHS of  \eqref{Eq-first-norm-l-2n-torus}, we use Assumption $
(\mathcal{C})$ (\eqref{Eq-Cond-Linear-Q-G}, with $ q=2$ and $ \delta =1$), we infer the existence of a 
positive constant $ c>0$ such that,
\begin{eqnarray}\label{Eq-last-Term-Last-estimate-torus}
|\int_0^t\int_{\mathbb{T}^2} \sum_{|j|\leq n}|A^\frac12 P_nG(u_n(s))Q^\frac12 e_j|^2 dx ds|
&\leq& \int_0^t\|G(u_n(s))Q^\frac12\|^2_{HS(\mathbb{H}^{1, 2})}ds\nonumber\\
&\leq & c\int_0^t(1+\sup_{\tau\in[0, s]}|u_n(\tau)|_{\mathbb{H}^{1, 2}}^2)ds.
\end{eqnarray}
Now, we replace \eqref{Eq-commutator-estimate-torus}, \eqref{Eq-third-norm-l-2n-torus} and
\eqref{Eq-last-Term-Last-estimate-torus} in \eqref{Eq-first-norm-l-2n-torus}, we get
\begin{eqnarray}\label{Eq-second-norm-l-2n-torus}
\mathbb{E}\left[\sup_{[0, T]}|u_n(t)|^{2}_{\mathbb{H}^{1, 2}}+   \int_0^T|u_n(s)|^2_{\mathbb{H}^{1+\frac\alpha2, 2}}ds \right]&\leq &C\left(1+
\mathbb{E}|u_{n0}|^{2}_{\mathbb{H}^{1, 2}} + 
C\int_0^T\mathbb{E}\sup_{\tau\in[0, s]}|u_n(\tau)|_{\mathbb{H}^{1, 2}}^{2}ds\right).\nonumber\\
\end{eqnarray}
By application of Gronwall's lemma for the function  $ \mathbb{E}\sup_{[0, T]}|u_n(t)|^{2}_{\mathbb{H}^{1, 2}}$, 
we get the estimation of the first term in the LHS of \eqref{Eq-Ito-n-weak-estimation-torus}, 
(recall that $\mathbb{E}|u_{n0}|^{2}_{\mathbb{H}^{1, 2}}\leq \mathbb{E}|u_{0}|^{2}_{\mathbb{H}^{1, 2}}$). 
The second term in \eqref{Eq-Ito-n-weak-estimation-torus} is
then deduced from  \eqref{Eq-second-norm-l-2n-torus} and the uniform boundedness of 
 $ \mathbb{E}\sup_{[0, T]}|u_n(t)|^{2}_{\mathbb{H}^{1, 2}}$. 
Now,\del{ we assume without lose of generality that  $ p=p'$ (
for $ p'<p$ the result is easily obtained from Estimate  \eqref{eq-estim-energy-Gene-2} bellow, 
by using H\"older inequality).} we prove
\begin{equation}\label{eq-estim-energy-Gene-2}
\mathbb{E}\sup_{[0, T]}|u_n(t)|^p_{\mathbb{H}^{1, 2}} +
\mathbb{E}\int_0^{T}|u_n(t)|^{p-2}_{\mathbb{H}^{1, 2}}|u_n(s)|_{\mathbb{H}^{1+\frac\alpha2, 2}}^2 ds \leq c(1+ \mathbb{E}|u_0|_{\mathbb{H}^{1, 2}}^p).
\end{equation}
By application of Ito's formula to the process
$ (|u_n(t)|^2_{\mathbb{H}^{1, 2}}, t\in[0, T])$ given by \eqref{Eq-first-norm-l-2n-torus}
we get, see for similar calculus e.g 
\cite{Millet-Chueshov-Hydranamycs-2NS-10, Flandoli-Gatarek-95, Sundar-Sri-large-deviation-NS-06},
\begin{eqnarray}\label{stoch-eq-product-Ito-Form-Lp-norm-SDE-torus}
| u_n(t)|^p_{\mathbb{H}^{1, 2}} &+& p
\int_0^t| u_n(s)|^{p-2}_{\mathbb{H}^{1, 2}}|u_n(s)|_{\mathbb{H}^{1+\frac\alpha2, 2}}^2 ds\nonumber\\
&\leq& |u_0|^p_{\mathbb{H}^{1, 2}}
+ \frac p2\int_0^t | u_n(s)|^{p-2}_{\mathbb{H}^{1, 2}}||P_n G(u_n(s))Q^\frac12||_{HS(\mathbb{H}^{1, 2})}^2 ds \nonumber\\
&+& \frac p2 (\frac p2-1)\int_0^t| u_n(s)|^{p-4}_{\mathbb{H}^{1, 2}}|Q^\frac12G^*(u_n(s))u_n(s)|_{\mathbb{H}^{1, 2}}^2ds\nonumber\\
&+&\frac p2 \int_0^t| u_n(s)|^{p-2}_{\mathbb{H}^{1, 2}}\langle u_n(s), P_nG(u_n(s))dW(s)\rangle_{\mathbb{H}^{1, 2}}.
\end{eqnarray}
We argue as above and use Assumption $ (\mathcal{C})$ ( \eqref{Eq-Cond-Linear-Q-G}, with $ q=2$ and $ \delta=1$). 
In particular, for the third term in the RHS of
\eqref{stoch-eq-product-Ito-Form-Lp-norm-SDE-torus}, we follow a similar calculus as in
\eqref{Eq-second-norm-l-2n-start-Torus} and \eqref{Eq-third-norm-l-2n-torus},
we infer that
\begin{eqnarray}\label{Eq-last-part-L-2power-p-torus}
\mathbb{E}\sup_{[0,T]}&{}&\left|\int_0^t | u_n(s)|^{p-2}_{\mathbb{H}^{1, 2}}\langle u_n(s), P_nG(u_n(s))u_n(s),
dW(s)\rangle_{\mathbb{H}^{1, 2}} \right|\nonumber\\
&\leq & C\mathbb{E}\left(\sup_{[0,T]}|u_n(s)|^\frac p2_{\mathbb{H}^{1, 2}}\left[  \int_0^{T} |u_n(s)|^{p-4}_{
\mathbb{H}^{1, 2}} (1+ |u_n(s)|^2_{\mathbb{H}^{1, 2}})ds\right]^\frac12\right)\nonumber\\
&\leq & C_1\mathbb{E}\sup_{[0,T]}|u_n(s)|^p_{
\mathbb{H}^{1, 2}}+ C_2\mathbb{E}\int_0^{T} |u_n(s)|^\frac p2_{
\mathbb{H}^{1, 2}}(1+ |u_n(s)|^2_{\mathbb{H}^{1, 2}})ds\nonumber\\
&\leq & C\mathbb{E}\sup_{[0,T]}|u_n(s)|^p_{
\mathbb{H}^{1, 2}}+ C\mathbb{E}\int_0^{T} \sup_{[0, s]}|u_n(r)|^p_{\mathbb{H}^{1, 2}}ds.
\end{eqnarray}
Hence,
\begin{eqnarray}\label{Eq-1-term-Lp-norm-SDE-Grn-torus}
\mathbb{E}\sup_{[0, T]}|u_n(t)|^p_{\mathbb{H}^{1, 2}} &+& c
\mathbb{E}\int_0^{T}| u_n(s)|^{p-2}_{\mathbb{H}^{1, 2}}|u_n(s)|_{\mathbb{H}^{1+\frac\alpha2, 2}}^2 ds\nonumber\\
&\leq& \mathbb{E}|u_0|^p_{\mathbb{H}^{1, 2}}
+ c\int_0^{T} \mathbb{E}\sup_{[0, s]}|u_n(r)|^{p}_{\mathbb{H}^{1, 2}}ds.
\end{eqnarray}
By application of Gronwall's lemma, we conclude that the first term in the LHS of \eqref{eq-estim-energy-Gene-2} is uniformly bounded in $n$. 
The total estimate  \eqref{eq-estim-energy-Gene-2}  follows easily from the above statement and from \eqref{Eq-1-term-Lp-norm-SDE-Grn-torus}.
The uniformity boundedness of the third, fourth and last  terms in
LHS of \eqref{Eq-Ito-n-weak-estimation-1-Torus} is a consequence of the application of Sobolev embedding
see e.g. \cite{Adams-Hedberg-94, R&S-96, Schmeisser-Tribel-87-book, Sickel-periodic spaces-85} and Appendix \ref{Appendix-Sobolev}
 respectively  H\"older inequality respectively Sobolev interpolation and
Estimate \eqref{Eq-Ito-n-weak-estimation-torus}.

\del{By application of Gronwall's lemma, we conclude that the first term in the LHS of \eqref{Eq-Ito-n-weak-estimation-1-Torus}, with $ p'=p$,
is uniformly bounded. The uniformity boundedness of the third, fourth and last  terms in
LHS of \eqref{Eq-Ito-n-weak-estimation-1-Torus} is a consequence of the application of Sobolev embedding
see e.g. see \cite{Adams-Hedberg-94, R&S-96, Schmeisser-Tribel-87-book, Sickel-periodic spaces-85} and Appendix \ref{appendix-Sobolev-Embd}
 respectively  H\"older inequality respectively Sobolev interpolation and
Estimation \eqref{Eq-Ito-n-weak-estimation-torus}.}

\vspace{0.15cm}

\noindent Now we prove Estimate \eqref{Eq-B-n-weak-estimation-1-Torus}.
Thanks to the contraction of $ P_n$ on $ \mathbb{H}^{1-\frac\alpha2, 2}(\mathbb{T}^2)$, \eqref{est-B-estimatoion-q=2-eta} with $ \eta=1$ and \del{the Sobolev embedding,} the interpolation,   
we infer that for all sequence 
 $(u_n)_n$ satisfying \eqref{Eq-Ito-n-weak-estimation-1-Torus}, we have 
\begin{eqnarray}\label{Eq-B-n-weak-estimation-1-Torus-deter-2step}
\int_0^T |P_nB(u_n(t))|^{2}_{\mathbb{H}^{1-\frac\alpha2, 2}} dt&\leq& C \int_0^T |u_n(t)|^4_{\mathbb{H}^{2-\frac\alpha2, 2}}dt\nonumber\\
&\leq& \int_0^T |u_n(t)|^{8\frac{\alpha-1}{\alpha}}_{\mathbb{H}^{1, 2}} |u_n(t)|^{4\frac{(2-\alpha)}{\alpha}}_{\mathbb{H}^{1+\frac\alpha2, 2}}dt\leq c<\infty,
\end{eqnarray}
provided $ \alpha \in [\frac43, 2]$. Moreover, it is easy to see that 
\begin{eqnarray}\label{Eq-A-alpha-weak-estimation-1-Torus-deter-2step}
\int_0^T |A^\frac\alpha2 u_n(t)|^{2}_{\mathbb{H}^{1-\frac\alpha2, 2}} dt&\leq& \del{C \int_0^T |u_n(t)|^2_{\mathbb{H}^{\alpha,2}} dt\leq}
 c\int_0^T|u_n(t)|^2_{\mathbb{H}^{1+\frac\alpha2,2}}dt\leq c<\infty.
\end{eqnarray}
\end{proof}

\subsection*{Proof of the existence}\label{sec-subsection-existence-Torus} We shall follow for this proof a quiet standard scheme, see e.g.
\cite{Millet-Chueshov-Hydranamycs-2NS-10, Krylov-Rozovski-monotonocity-2007, Rockner-Pevot-06,  Roeckner-Zhang-tamedNS-12,
Sundar-Sri-large-deviation-NS-06}, but we shall use completely different  estimates. These latter are of fractional type and have been developed in Section \ref{sec-nonlinear-prop}.\del{completely different than the classical case} We shall focus more on the key estimates and on the main features of our equation. In deed, thanks to Lemma \ref{lem-unif-bound-theta-n-H-1} and Assumption $(\mathcal{C})$, with $ q=2$ and $ \delta =1$,
we conclude the existence of
a subsequence (we keep the same notation ) $(u_n)_n$,\del{ and adapted measurable processes $ u, F_1$ and $G_1$, such that} 
\begin{equation}\label{eq-u-first-belonging}
 u\in L^2(\Omega\times [0, T]; \mathbb{H}^{1+\frac\alpha2, 2}(\mathbb{T}^2))\cap
L^p(\Omega, L^\infty([0, T];
 \mathbb{H}^{1, 2}(\mathbb{T}^2))),
\end{equation}
\begin{equation}
F_1 \in L^2(\Omega\times [0, T]; \mathbb{L}^{2} (\mathbb{T}^2)) \;\; \text{and}\;\; 
G_1\in L^2(\Omega\times [0, T]; L_Q(\mathbb{H}^{1, 2} (\mathbb{T}^2)), s.t.
\end{equation}
\begin{itemize}
\item (1) $u_n \rightarrow u$ weakly in $ L^2(\Omega\times [0, T],; \mathbb{H}^{1+\frac\alpha2, 2}(\mathbb{T}^2))$.
\item (2) $u_n \rightarrow u$ weakly-star in $ L^p(\Omega, L^\infty([0, T]; \mathbb{H}^{1, 2}(\mathbb{T}^2)))$.
\item (3) $P_n(F(u_n):= (A^\frac\alpha2 + B)(u_n))\rightarrow F_1$ weakly in $ L^2(\Omega\times [0, T]; 
\mathbb{H}^{1-\frac\alpha2, 2} (\mathbb{T}^2))$.
\item (4) $u_n \rightarrow u$ weakly in $ L^{\frac{\alpha}{\eta}}(\Omega\times [0, T]; \mathbb{H}^{1+\eta, 2}(\mathbb{T}^2))$, for all
$ \frac\alpha p<\eta \leq \frac\alpha2 $.
\del{\item (5) $u_n(T) \rightarrow \xi$ weakly in $ L^2(\Omega, \mathbb{H}^{1, 2}(\mathbb{T}^2)))$.}
\item (5) $P_nG(u_n)\rightarrow G_1$ weakly in $ L^2(\Omega\times [0, T]; L_Q(\mathbb{H}^{1, 2} (\mathbb{T}^2))$.
\end{itemize}
The statements $ (1)-(3)$ are straightforward consequence of Lemma \ref{lem-unif-bound-theta-n-H-1}. 
Statement $ (4)$ is a consequence of  the combination of Lemma \ref{lem-unif-bound-theta-n-H-1} 
and the Sobolev interpolation.  Statement $ (5)$  holds 
thanks to the fact that $ P_n$ contracts the $\mathbb{H}^{1, 2}$-norm, 
Assumption $(\mathcal{C})$ (with $ q=2$ and $ \delta=1$) and the uniform boundedness of
 $ u_n$ in $ L^2(\Omega\times[0, T], \mathbb{H}^{1, 2}(\mathbb{T}^2))$.

Now, we construct a process $ (\tilde{u}(t), t\in [0, T])$ as
\begin{equation}\label{eq-def-u-tilde}
\tilde{u}(t) = u_0+ \int_0^t F_1(s)ds + \int_0^tG_1(s)dW(s)
\end{equation}
and prove that $ u= \tilde{u}, dt\times dP- a.e.$. Indeed, using Statement $ (1)$, Equation \eqref{FSBE-Galerkin-approxi} and  Fubini theorem, we infer that for all
$ \varphi \in L^\infty(\Omega\times[0, T], \mathbb{R})$ and $v\in \cup_{n}H_n $,
\begin{eqnarray}\label{eq-u=u-tilde-1}
 \mathbb{E}\int_0^T\langle u(t), \varphi(t) v\rangle_{\mathbb{L}^2} dt &=&
\lim_{n\rightarrow +\infty} \mathbb{E}\int_0^T\langle u_n(t), \varphi(t) v\rangle_{\mathbb{L}^2} dt\nonumber\\
&=& \lim_{n\rightarrow +\infty}\Big[\mathbb{E}\int_0^T \Big(\langle u_n(0), \varphi(t) v\rangle_{\mathbb{L}^2} +
\langle P_nF(u_n(t)), \int_t^T\varphi(s)ds v\rangle_{\mathbb{L}^2}\\
&+&\langle \int_0^t P_nG(u_n(s))dW_n(s), \varphi(t) v\rangle_{\mathbb{L}^2} \Big)dt\Big].\nonumber
\end{eqnarray}
The convergence of the terms in the RHS of \eqref{eq-u=u-tilde-1} to the terms in the RHS of \eqref{eq-def-u-tilde} is as follow. The first term is a consequence of the convergence of 
$ P_n \rightarrow I_{\mathbb{L}^2}$ with respect to the bounded linear operator topology on 
$ \mathbb{L}^2(\mathbb{T}^2)$ and the application of Lebesgue dominated convergence theorem. The second 
term converges thanks to Statement $ (3)$ and the last one converges thanks to the stochastic isometry,
Statement $ (5)$,  Lebesgue dominated  convergence theorem and Lemma \ref{lem-unif-bound-theta-n-H-1},
\del{ and to
the fact that the map $ \mathcal{T}: \mathcal{P}_T(\mathbb{L}^2) \ni \sigma \mapsto \int_0^T\sigma(s)dW(s) \in L^2(\Omega)$ is 
linear continuous} see e.g.
\cite{Millet-Chueshov-Hydranamycs-2NS-10}. Therefore,
\begin{eqnarray}\label{eq-u=u-tilde-2}
\mathbb{E}\int_0^T\langle u(t)- \Big(u(0) +
\int_0^t F_1(s)ds +  \int_0^t G_1(s)dW(s)\Big), \varphi(t) v\rangle dt=0.
\end{eqnarray}
To achieve the proof of the existence, we have to prove that  $F_1= A^\frac\alpha2 \tilde{u}+ B(\tilde{u})$ and $ G_1= G(\tilde{u})$. 
First, we prove the following key estimates
\begin{itemize}
 \item $ (\mathcal{K}_1)$  The local monotonicity property: There exists a constant $ c>0$ such that 
$\forall u, v \in \mathbb{H}^{1+\frac\alpha2}(\mathbb{T}^2)$,
\begin{eqnarray}\label{eq-a-alpha-B-G}
 -2\langle A_\alpha(u-v), u-v\rangle_{\mathbb{L}^2} &+&  2\langle B(u)-B(v), u-v\rangle_{\mathbb{L}^2} +
|| G(u) - G(v)||^2_{L_Q(\mathbb{L}^{2})}\nonumber\\
&\leq & c (1+ | v|^{\frac{2\alpha}{3\alpha-2}}_{\mathbb{H}^{1+\frac\alpha2, 2}})| u-v|^2_{\mathbb{L}^2}.
\end{eqnarray}
\item $ (\mathcal{K}_2)$ For all 
$ \psi\in L^\infty([0, T], \mathbb{R}_+)$ and\del{$ v\in L^2(\Omega\times [0, T]; H_n)$}
$ v\in L^2(\Omega\times [0, T]; \mathbb{H}^{1+\frac\alpha2}(\mathbb{T}^2))$,
\begin{eqnarray}\label{eq-def-Z-n}
 Z_n:= \int_0^T\psi(t)dt\mathbb{E}\big\{\int_0^te^{-r(s)}\big(\!\!\!\!\!&-&\!\!\!\!\!r'(s)|u_n(s)- v(s)|^2_{\mathbb{L}^2}+ 
|| P_nG(u_n(s)) - P_nG(v(s))||^2_{L_Q(\mathbb{L}^{2})}\nonumber\\
&+&2\langle F(u_n(s)) - F(v(s)), u_n(s)- v(s))\rangle_{\mathbb{L}^2}\big)ds\big\}\leq 0,
\end{eqnarray}
\end{itemize}
where $ r'(t):= c(1+ | v(t)|^{\frac{2\alpha}{3\alpha-2}}_{\mathbb{H}^{1+\frac\alpha2, 2}})$ and $c>0$ is a 
constant relevantly chosen.
In fact,  by using 
\begin{equation}\label{formula-B-v1-B-v2}
 B(u_1, u_1) - B(u_2, u_2) = B(u_1, u_1-u_2)+ B(u_1-u_2, u_2),
\end{equation}
\del{an elementary calculus as in \eqref{formula-B-v1-B-v2},} Property \eqref{Eq-3lin-propnull},  
H\"older inequality, Estimate 
\eqref{Eq-B-L-2-est}\del{{Eq-B-H-alpha-2-est}}, interpolation in the case $ 1< \alpha <2 $ (recall that
$ 1\leq \alpha \leq 2 \Rightarrow \mathbb{H}^{\frac\alpha2, 2}(\mathbb{T}^2) \hookrightarrow \mathbb{H}^{1-\frac\alpha2, 2}(\mathbb{T}^2)$)
and Young inequality, we infer that
\begin{eqnarray}
|\langle B(u)-B(v), u-v\rangle_{\mathbb{L}^2}| &= & |\langle B(u-v, v), u-v\rangle_{\mathbb{L}^2}|\leq
| B(u-v, v)|_{\mathbb{L}^2}|u-v|_{\mathbb{L}^2}\nonumber\\
&\leq& c|v|_{\mathbb{H}^{1+\frac\alpha2, 2}} |u-v|_{\mathbb{L}^2} |u-v|_{\mathbb{H}^{1-\frac\alpha2, 2}}\nonumber\\
&\leq& c|v|_{\mathbb{H}^{1+\frac\alpha2, 2}} |u-v|^{\frac{3\alpha-2}{\alpha}}_{\mathbb{L}^2}
|u-v|^{\frac{2-\alpha}{\alpha}}_{\mathbb{H}^{\frac\alpha2, 2}}\nonumber\\
&\leq& c|v|^{\frac{2\alpha}{3\alpha-2}}_{\mathbb{H}^{1+\frac\alpha2, 2}}|u-v|^{2}_{\mathbb{L}^2} + \frac12|u-v|^{2}_{\mathbb{H}^{\frac\alpha2, 2}}.
\end{eqnarray}
Moreover, thanks to the semigroup property of $ (A^\beta)_{\beta\geq0}$, we infer that 
\begin{eqnarray}
 -\langle A_\alpha(u-v), u-v\rangle_{\mathbb{L}^2} = - |u-v|^{2}_{\mathbb{H}^{\frac\alpha2, 2}}.
\end{eqnarray}
Therefore, there exists a constant $ c>0$ such that
\begin{eqnarray}\label{eq-last-exitence-Torus-H-1}
 -\langle A_\alpha(u-v), u-v\rangle_{\mathbb{L}^2} +  \langle B(u)-B(v), u-v\rangle_{\mathbb{L}^2} \leq
 -\frac12|u-v|^{2}_{\mathbb{H}^{\frac\alpha2, 2}} + c|v|^{\frac{2\alpha}{3\alpha-2}}_{\mathbb{H}^{1+\frac\alpha2, 2}}|u-v|^{2}_{\mathbb{L}^2}.\nonumber\\
\end{eqnarray}
Combining \eqref{eq-last-exitence-Torus-H-1} and Assumption $ (\mathcal{C})$: (\eqref{Eq-Cond-Lipschitz-Q-G} with 
$ q=2$, $ \delta=0$ and $ C_R:=C$),\del{ as mentioned in Theorem \ref{Main-theorem-strog-Torus},} 
we easily get \eqref{eq-a-alpha-B-G}. In particular, thanks to 
the contraction of $ P_n$ on $ \mathbb{L}^2(\mathbb{T}^{2})$,  Estimate \eqref{eq-a-alpha-B-G} is still valid when replacing 
$ u, v$ and $G$  by $ u_n$ (recall $ u_n$ is the solution of Equation \eqref{FSBE-Galerkin-approxi}), $v\in L^{2}(\Omega\times[0, T]; \mathbb{H}^{1+\frac\alpha2}(\mathbb{T}^{2}))$
and  $ P_nG$ respectively.\del{and thanks to 
the contraction of $ P_n$ on $ \mathbb{L}^2(O)$, we infer that 
Estimate \eqref{eq-a-alpha-B-G} is valid as well.}  Furthermore, estimating the LHS of  
\eqref{eq-def-Z-n} by  \eqref{eq-a-alpha-B-G} endowed with these latter variables, we get $ Z_n\leq 0$. Consequently $ (\mathcal{K}_1)$ and $ (\mathcal{K}_2)$ are proved.\del{with $ u= u_n(t)$ and $ v= v(t)$}
Let us also mention and recall the following two statements

\noindent (a)- If $ v\in L^{2}(\Omega\times[0, T]; \mathbb{H}^{1+\frac\alpha2}(\mathbb{T}^{2}))$,
 with $ \alpha\in [1, 2]$, then 
\begin{equation}
 \mathbb{E}\int_0^T|v(t)|^{\frac{2\alpha}{3\alpha-2}}_{\mathbb{H}^{1+\frac\alpha2, 2}}dt \leq 
 \mathbb{E}\int_0^T(1+|v(t)|^{2}_{\mathbb{H}^{1+\frac\alpha2, 2}})dt < \infty.
\end{equation}

\noindent (b)- If a sequence $(f_n)_n$ in a Hilbert space $ H$ converges weakly to $f$ then $|f|_{H}\leq \liminf_{n\rightarrow \infty}|f_n|_H$.

\vspace{0.25cm}

\noindent Now, we take $ \psi$ and $ r(t)$ as defined above. Thanks to the equality  $ u= \tilde{u}, dt\times dP- a.e.$,
statements $ (1)$ \& (b) and Fubini's theorem, we infer that 
\begin{eqnarray}\label{eq-equality-norms-1-Torus}
\int_0^T\psi(t)dt\mathbb{E}|u(s)|^2_{\mathbb{L}^2}e^{-r(t)}- \mathbb{E}|u_0|^2_{\mathbb{L}^2}
&=&\int_0^T\psi(t)dt\mathbb{E}|\tilde{u}(s)|^2_{\mathbb{L}^2}e^{-r(t)}- \mathbb{E}|u_0|^2_{\mathbb{L}^2} \nonumber\\
&\leq &\liminf_{n\rightarrow \infty}\int_0^T\psi(t)dt\mathbb{E}|u_n(s)|^2_{\mathbb{L}^2}e^{-r(t)}- \mathbb{E}|u_0|^2_{\mathbb{L}^2}.
\end{eqnarray}
By application of the Ito formula to the Ito process $ \tilde{u} $ given by \eqref{eq-def-u-tilde} 
and using\del{the first equality in \eqref{eq-equality-norms-1-Torus},} 
the equality  $ u= \tilde{u}, dt\times dP- a.e.$ and the elementary identity 
\begin{equation}\label{elme-identity-Hilbert}
 \forall f, g \in H, | f|^2_{H}= |f-g|^2_{H} +2\langle f-g, g \rangle_{H} +| g|^2_{H}
\end{equation}
with $ f= u(s)$, $ g= v(s)$ and\del{``$
|u(s)|^2_{\mathbb{L}^2} = |u(s)-v(s)|^2_{\mathbb{L}^2}
+2\langle u(s)- v(s), v(s)\rangle_{\mathbb{L}^2} + |v(s)|^2_{\mathbb{L}^2}$''  with} $ v\in 
L^{2}(\Omega\times[0, T]; \mathbb{H}^{1+\frac\alpha2}(\mathbb{T}^{2}))$, we get
\del{\begin{eqnarray}\label{eq-elementary-calculs}
\mathbb{E}\int_0^te^{-r(s)}r'(s)|u(s)|^2_{\mathbb{L}^2}ds = \mathbb{E}\int_0^te^{-r(s)}r'(s)
\big(|u(s)-v(s)|^2_{\mathbb{L}^2}
+2\langle u(s)- v(s), v(s)\rangle_{\mathbb{L}^2} + |v(s)|^2_{\mathbb{L}^2}\big)ds.\nonumber\\
\end{eqnarray}
By application of Ito formula on Ito process $ \tilde{u} $ and using the first equality in 
\eqref{eq-equality-norms-1} and the
identity \eqref{eq-elementary-calculs}, we get}
\begin{eqnarray}\label{eq-equality-norms-Torus}
\mathbb{E}|u(s)|^2_{\mathbb{L}^2}e^{-r(t)}&-&\mathbb{E}|u_0|^2_{\mathbb{L}^2}
=\mathbb{E}\int_0^t2e^{-r(s)}\langle F_1(s),  u(s)\rangle_{\mathbb{L}^2}+ \mathbb{E}\int_0^te^{-r(s)}||G_1(s)||^2_{L_Q(\mathbb{L}^2)}ds \nonumber\\
&- &  \mathbb{E}\int_0^te^{-r(s)}r'(s)\big(|u(s)-v(s)|^2_{\mathbb{L}^2}
+2\langle u(s)- v(s), v(s)\rangle_{\mathbb{L}^2} + |v(s)|^2_{\mathbb{L}^2}\big)ds.
\end{eqnarray}
Similarly, we get Identity \eqref{eq-equality-norms-Torus} for $ \mathbb{E}|u_n(s)|^2_{\mathbb{L}^2}e^{-r(t)}$ with 
$ u, F_1 $\del{in fact it is P_nF_1, but because the product <P_nF_1, u_n>, the P_n is killed} and $ G_1$ in the RHS of \eqref{eq-equality-norms-Torus} are respectively replaced by $ u_n, F(u_n), P_nG(u_n)$ (Recall that $F:= A^\frac\alpha2 + B$). Replacing Identity \eqref{eq-equality-norms-Torus} for 
$ \mathbb{E}|u(s)|^2_{\mathbb{L}^2}e^{-r(t)}$ in the LHS of the first equality in \eqref{eq-equality-norms-1-Torus} 
and Identity \eqref{eq-equality-norms-Torus} for
$ \mathbb{E}|u_n(s)|^2_{\mathbb{L}^2}e^{-r(t)}$ in the RHS of  the second Inequality \eqref{eq-equality-norms-1-Torus} and arranging terms
(in particular, we introduce the term $ G(v(s))$ and use the elementary identity \eqref{elme-identity-Hilbert}), we
infer that
\begin{eqnarray}\label{eq-equality-norms-Torus-sum-of-u}
\mathcal{E}:= \int_0^T\psi(t)dt\mathbb{E}\big\{\int_0^te^{-r(s)}\Big[\!\!\!\!&\!\!\!\!2\!\!\!\!&\!\!\!\!\langle F_1(s),  u(s)\rangle_{\mathbb{L}^2}+
||G_1(s)||^2_{L_Q(\mathbb{L}^2)}ds \nonumber\\
&- & r'(s)\big(|u(s)-v(s)|^2_{\mathbb{L}^2}
+2\langle u(s)- v(s), v(s)\rangle_{\mathbb{L}^2} \big)\Big] ds\big\}\nonumber\\
&\leq & \liminf_{n\rightarrow \infty}\big(Z_n + Y_n + X_n\big).
\end{eqnarray}
where $ Z_n$ is given by \eqref{eq-def-Z-n},
\begin{eqnarray}
Y_n:&=& \int_0^T\psi(t)dt\mathbb{E}\big\{\int_0^t e^{-r(s)}\big(-2r'(s)
\langle u_n(s)- v(s), v(s)\rangle_{\mathbb{L}^2}+
2 \langle P_nG(u_n(s)), G(v(s))\rangle_{L_Q(\mathbb{L}^2)}\nonumber\\
&+&2\langle F(u_n(s)), v(s)\rangle_{\mathbb{L}^2} +2
\langle F(v(s)), u_n(s))\rangle_{\mathbb{L}^2}-2
\langle F(v(s)), v(s))\rangle_{\mathbb{L}^2}
\big)ds\big\},\nonumber\\
\end{eqnarray}
and
\begin{eqnarray}
X_n:&=& \int_0^T\psi(t)dt\mathbb{E}\big\{\int_0^t e^{-r(s)}\big(
2 \langle P_nG(u_n(s)), P_nG(v(s))- G(v(s))\rangle_{\mathbb{L}^2}
-||P_nG(v(s))||^2_{L_Q(\mathbb{L}^2)}\big)ds\big\},\nonumber\\
\end{eqnarray}
The sequences $ (Y_n)_n$ and $ (X_n)_n$  converge to $ Y$ and $X$ respectively,
thanks to statements $(1)-(3)$ and  $(5)$, the convergence of $ P_n \rightarrow I_{\mathbb{L}^2}$,  
Assumption $(\mathcal{C})$:( \eqref{Eq-Cond-Lipschitz-Q-G} with $ q=2$, $ \delta =0$ and $ C_R:=C$), Lemma \ref{lem-unif-bound-theta-n-H-1},   
Estimate \eqref{Eq-B-L-2-est}, similar calculus as in \eqref{Eq-B-n-weak-estimation-1-Torus-deter-2step} 
and \eqref{Eq-A-alpha-weak-estimation-1-Torus-deter-2step} and the Lebesgue dominated, where
\begin{eqnarray}
Y:&=& \int_0^T\psi(t)dt\mathbb{E}\big\{\int_0^t e^{-r(s)}\big(-2r'(s)\langle u(s)- v(s), v(s)\rangle_{\mathbb{L}^2}+
2 \langle G(s), G(v(s))\rangle_{L_Q(\mathbb{L}^2)}\nonumber\\
&+&2\langle F(s), v(s)\rangle_{\mathbb{L}^2} +2
\langle F(v(s)), u(s))\rangle_{\mathbb{L}^2}-2
\langle F(v(s)), v(s))\rangle_{\mathbb{L}^2}
\big)ds\big\} \nonumber\\
\end{eqnarray}
and
\begin{eqnarray}
X:&=& -\int_0^T\psi(t)dt\mathbb{E}\big\{\int_0^t e^{-r(s)}
||G(v(s))||^2_{L_Q}\big)ds\big\}.
\end{eqnarray}
Replacing $ X$ and $ Y$ in \eqref{eq-equality-norms-Torus-sum-of-u} and taking into account \eqref{eq-def-Z-n}, we conclude that
\begin{equation}
 \mathcal{E} -X-Y \leq \liminf_{n\rightarrow \infty}Z_n \leq 0.
\end{equation}
Therefore, we get
\begin{eqnarray}\label{eq-key-leq-0-1}
\int_0^T\psi(t)dt\mathbb{E}\big\{\int_0^te^{-r(s)}\big(-r'(s)|u(s)\!\!\!&-&\!\!\! v(s)|^2_{\mathbb{L}^2}+ \langle F_1(s) -
F(v(s)), u(s)- v(s))\rangle_{\mathbb{L}^2}\nonumber\\
&+&2|| G_1(s) - G(v(s))||^2_{L_Q}\big)ds\big\}\leq 0.
\end{eqnarray}
\del{Consequently, for $  v \in L^2([\Omega\times0, T], \mathbb{H}^{1+\frac\alpha2, 2}(\mathbb{T}^2))$, $ P_nv$ satisfies \eqref{eq-key-leq-0-1}. 
By approximation, \eqref{eq-key-leq-0-1} is also satisfied by  $  v \in L^2([\Omega\times0, T], \mathbb{H}^{1+\frac\alpha2, 2}(\mathbb{T}^2))$.
In deed, the passage to the limit of the first and third terms is guaranteed thanks to the convergence of the projection 
$ P_n$ to the identity $ I_{\mathbb{H}^{s, 2}}$ for all Sobeolev spaces $ \mathbb{H}^{s, 2}(O), s\geq 0$, in particular for  $ I_{\mathbb{L}^2}(O)$
and by Assumption $ (\mathcal{C})$. To prove the convergence of the second term, it is sufficient to prove and justify the convergence
of the term $ \langle F(P_nv(s)),  P_nv(s))\rangle_{\mathbb{L}^2}$. In fact, we rewrite this last as follow
\begin{eqnarray}\label{local-est-1}
 \langle F(P_nv(s)),  P_nv(s)\rangle_{\mathbb{L}^2}&-& \langle F(v(s)),  v(s)\rangle_{\mathbb{L}^2}\nonumber\\
&=& \langle F(P_nv(s))-F(v(s)),  v(s)\rangle_{\mathbb{L}^2} + \langle F(P_nv(s)),  v(s)- P_nv(s)\rangle_{\mathbb{L}^2}.
\end{eqnarray}
Using H\"older and  Minowksky inequalities, Estimation \eqref{Eq-B-L-2-est} and the contraction property of the projection 
$ P_n$ on all Sobeolev spaces $ \mathbb{H}^{s, 2}(O), s\geq 0$, we get
\begin{eqnarray}\label{local-est-2}
|\langle F(P_nv(s)),  v(s)- P_nv(s)\rangle_{\mathbb{L}^2}|&\leq & | v(s)- P_nv(s)|_{\mathbb{L}^2}|F(P_nv(s))|_{\mathbb{L}^2}\nonumber\\
&\leq &c| v(s)- P_nv(s)|_{\mathbb{L}^2}\big( |P_nv(s)|_{\mathbb{H}^{\alpha, 2}}+   |B(P_nv(s))|_{\mathbb{L}^2}  \big)\nonumber\\
\del{&\leq &c| v(s)- P_nv(s)|_{\mathbb{L}^2}\big( |v(s)|_{\mathbb{H}^{\alpha, 2}}+  
 |P_nv(s))|_{\mathbb{H}^{1-\frac\alpha2, 2}}|P_nv(s))|_{\mathbb{H}^{1+\frac\alpha2, 2}} \big)\nonumber\\}
&\leq &c| v(s)- P_nv(s)|_{\mathbb{L}^2}\big( |v(s)|_{\mathbb{H}^{\alpha, 2}}+  
 |v(s)|_{\mathbb{H}^{1-\frac\alpha2, 2}}|v(s)|_{\mathbb{H}^{1+\frac\alpha2, 2}} \big).
\end{eqnarray}
For the second term in \eqref{local-est-1}, we use H\"older and  Minoksky inequalities, \eqref{Eq-def-B-theta1-Theata2}, 
\eqref{Eq-B-H-alpha-2-est-d}, we infer 
\begin{eqnarray}\label{local-est-2}
|\langle F(P_nv(s))&-& F(v(s)),  v(s)\rangle_{\mathbb{L}^2}|
\leq  |v(s)|_{\mathbb{H}^{+1+\frac\alpha2, 2}} |F(v(s))- F(P_nv(s))|_{\mathbb{H}^{-1-\frac\alpha2, 2}}\nonumber\\
&\leq &c |v(s)|_{\mathbb{H}^{+1+\frac\alpha2, 2}}\big( |v(s)- P_nv(s)|_{\mathbb{L}^{2}}+ 
|B(v(s)-P_nv(s), P_nv(s))|_{\mathbb{H}^{-1-\frac\alpha2, 2}} \nonumber\\
&+& |B(v(s), P_nv(s)-v(s))|_{\mathbb{H}^{-1-\frac\alpha2, 2}}\big)\nonumber\\
&\leq &c |v(s)|_{\mathbb{H}^{1+\frac\alpha2, 2}}\big( |v(s)- P_nv(s)|_{\mathbb{L}^{2}}+ 
|v(s)|_{\mathbb{H}^{1-\frac\alpha4, 2}}|P_nv(s)-v(s)|_{\mathbb{H}^{1-\frac\alpha4, 2}}\big)\nonumber\\
&\leq &c (1+|v(s)|_{\mathbb{H}^{+1+\frac\alpha2, 2}})|P_nv(s)-v(s)|_{\mathbb{H}^{1-\frac\alpha4, 2}}.
\end{eqnarray}
Finally, the integrand in RHS of \eqref{eq-key-leq-0-1} with $ v(s)$ being replaced by $ P_nv(s)$, converges by application of the Lebesgue 
dominate convergence theorem.}
\noindent Now, we take $ v = u$ in $ L^2([\Omega\times0, T], \mathbb{H}^{1+\frac\alpha2, 2}(\mathbb{T}^2))$, we conclude from \eqref{eq-key-leq-0-1}, that
\del{$|| G(s) - G(v(s))||^2_{L_Q}$} $ G(s)= G(u(s)), \; ds\times dP-a.e.$. To get the equality $ F(s)= F(u(s)), \; ds\times dP-a.e.$, 
we consider Estimate \eqref{eq-key-leq-0-1} without the last term and we introduce
$ \tilde{v} \in L^\infty(\Omega\times[0, T], \mathbb{H}^{1+\frac\alpha2}(\mathbb{T}^2))$ and a
parameter $ \lambda \in [-1, +1]$. Replacing $ v$  and $ r'(s)$ by $ u-\lambda\tilde{v}$ respectively
$ r'_\lambda(s):= c(1+|u-\lambda\tilde{v}|^{\frac{2\alpha}{3\alpha-2}}_{\mathbb{H}^{1+\frac\alpha2}})$, we get
\begin{eqnarray}\label{eq-equality-F-Fn-*}
\mathbb{E}\int_0^Te^{-r_\lambda(s)}\big(-r'_\lambda(s)\lambda^2|\tilde{v}(s)|^2_{\mathbb{L}^2}+ 2\lambda\langle F(s) -
F(u(s)-\lambda \tilde{v}(s)), \tilde{v}(s))\rangle_{\mathbb{L}^2}\big)ds\leq 0.\nonumber\\
\end{eqnarray}
Dividing on $ \lambda<0$ and on $ \lambda>0$, we conclude that, when $ \lambda \rightarrow 0$, the limit of
the LHS of \eqref{eq-equality-F-Fn-*} exists and vanishes. Moreover, using the fact that
$ \tilde{v} \in L^\infty(\Omega\times[0, T], \mathbb{H}^{1+\frac\alpha2}(\mathbb{T}^2))$, 
the continuity of $ r_\lambda$ and 
$ r'_\lambda$  with respect to $ \lambda$ and the Lebesgue 
dominated convergence theorem, we conclude that the first term in $ \frac1\lambda$LHS of \eqref{eq-equality-F-Fn-*} 
vanishes and also 
\begin{eqnarray}\label{eq-equality-F-Fn-1}
\mathbb{E}\int_0^Te^{-r_0(s)}\langle F(s) -
F(u(s)), \tilde{v}(s))\rangle_{\mathbb{L}^2}ds= 0.
\end{eqnarray}
The justification of the use of the Lebesgue dominated convergence theorem is due to, the positivity of 
$ r_\lambda(s)$, Inequality
\del{$ |B(f)|_{\mathbb{H}^{-1, 2}} \leq |f|_{\mathbb{H}^{1, 2}}|f|_{\mathbb{L}^{2}}$}\eqref{classical-B-H-1}, Minikowskii inequality,
the conditions $ 0< \alpha\leq 2$ and $ |\lambda|\leq 1$,
the statements $ (1) \;\& \; (3)$ and the definition of $ \tilde{v}$. In fact,
\del{\begin{eqnarray}
 e^{-r_\lambda(s)}|\langle F(s) &-&
F(u(s)-\lambda \tilde{v}(s)), \tilde{v}(s))\rangle_{\mathbb{L}^2}|\nonumber\\
&\leq&
 |\tilde{v}(s)|_{\mathbb{L}^2}|F(s) -
F(u(s)-\lambda \tilde{v}(s))|_{\mathbb{L}^2} \nonumber\\
&\leq&
 |\tilde{v}(s)|_{\mathbb{L}^2}\big(|F(s)|_{\mathbb{L}^2} + |u(s)|_{\mathbb{H}^{\frac\alpha2, 2}}+ |\tilde{v}(s)|_{\mathbb{H}^{\frac\alpha2, 2}}
|B(u(s)-\lambda \tilde{v}(s))|_{\mathbb{L}^2} \nonumber\\
&\leq&
|\tilde{v}(s)|_{\mathbb{L}^2}\big(|F(s)|_{\mathbb{L}^2} + |u(s)|_{\mathbb{H}^{\frac\alpha2, 2}}+ |\tilde{v}(s)|_{\mathbb{H}^{\frac\alpha2, 2}}
|B(u(s)-\lambda \tilde{v}(s))|_{\mathbb{L}^2} \nonumber\\
&\leq&
 |\tilde{v}(s)|_{\mathbb{L}^2}|F(s) -
F(u(s)-\lambda \tilde{v}(s))|_{\mathbb{L}^2} \nonumber\\
\end{eqnarray}}
\begin{eqnarray}\label{just-domin-Torus}
 e^{-r_\lambda(s)}|\langle F(s) &-&
F(u(s)-\lambda \tilde{v}(s)), \tilde{v}(s))\rangle_{\mathbb{L}^2}|\nonumber\\
&\leq&
 |\tilde{v}(s)|_{\mathbb{H}^{1, 2}}\big[|F(s)|_{\mathbb{L}^2}+
|u(s)|_{\mathbb{H}^{\alpha-1, 2}}+ |\tilde{v}(s)|_{\mathbb{H}^{\alpha-1, 2}}+ |B(u(s)-\lambda \tilde{v}(s))|_{\mathbb{H}^{-1, 2}}\big] \nonumber\\
&\leq& c
 |\tilde{v}(s)|_{\mathbb{H}^{1, 2}}\big[|F(s)|_{\mathbb{L}^2}+
|u(s)|_{\mathbb{H}^{\alpha-1, 2}}+ |\tilde{v}(s)|_{\mathbb{H}^{\alpha-1, 2}}+
|u(s)|_{\mathbb{H}^{1, 2}}|u(s)|_{\mathbb{L}^{ 2}} \nonumber\\
&+& |\tilde{v}(s)|_{\mathbb{H}^{1, 2}}|\tilde{v}(s)|_{\mathbb{L}^{ 2}}
+|u(s)|_{\mathbb{H}^{1, 2}}|\tilde{v}(s)|_{\mathbb{L}^{ 2}} + |\tilde{v}(s)|_{\mathbb{H}^{1, 2}}|u(s)|_{\mathbb{L}^{ 2}}\big].
\end{eqnarray}
This ends the proof of the existence of a solution $ (u(t), t\in [0, T])$ belonging to the first intersection in \eqref{eq-set-solu-weak-torus} and satisfying by construction  \eqref{cond-solu-torus-H1}.\del{ and  $ u(\cdot, \omega) \in L^\infty(0, T; \mathbb{H}^{1, 2}(\mathbb{T}^2)) \cap 
L^2(0, T; \mathbb{H}^{1+\frac\alpha2, 2}(\mathbb{T}^2)), \; P-a.s.$}

\subsection*{Proof of the time regularity.}

To prove the continuity of the trajectories of the weak solution 
$ (u(t), t\in[0, T])$, i.e. $ u(\cdot, \omega)\in C([0, T], \mathbb{L}^2(O)), \; P-a.s.$, we apply \cite[Proposition VII.3.2.2]{Metivier-book-SPDEs-88}, see also
\cite[Proposition 2.5]{Sundar-Sri-large-deviation-NS-06}. We consider the dense Gelfand Triple 
$$ \mathbb{H}^{1, 2}(\mathbb{T}^2) \hookrightarrow \mathbb{L}^{2}(\mathbb{T}^2) \hookrightarrow 
(\mathbb{H}^{1, 2}(\mathbb{T}^2))^*=\mathbb{H}^{-1, 2}(\mathbb{T}^2).$$
Using  \eqref{classical-B-H-1}, Sobolev Embedding\del{,  the trivial identity $ |u|\leq 1+|u|^2$ } and  \eqref{cond-solu-torus-H1}, we infer that 
$ B(u(\cdot, \omega))\in L^2(0, T; \mathbb{H}^{-1, 2}(\mathbb{T}^2)),\; P-a.s.$. In fact,
\begin{equation}\label{cont-1}
\mathbb{E} \int_0^T|B(u(s))|_{\mathbb{H}^{-1, 2}}^{2}ds \leq c \mathbb{E} \int_0^T|u(s)|^4_{\mathbb{H}^{1, 2}}ds
\leq c (1+\mathbb{E}\sup_{[0, T]}|u(s)|^p_{\mathbb{H}^{1, 2}})<\infty. 
\end{equation}
And that for $\alpha\leq 2$,
\begin{equation}\label{cont-1-1}
\mathbb{E} \int_0^T|A_\alpha u(s)|^2_{\mathbb{H}^{-1, 2}}ds \leq c \mathbb{E} \int_0^T|u(s)|^2_{\mathbb{H}^{\alpha-1, 2}}ds
\del{\leq c \mathbb{E}\sup_{[0, T]}|u(s)|^2_{\mathbb{H}^{1, 2}}}\leq c (1+\mathbb{E}\sup_{[0, T]}|u(s)|^p_{\mathbb{H}^{1, 2}})<\infty. 
\end{equation}
Moreover, we prove that\del{ the trajectories of the 
continuous} the martingale $ M(t):= \int_0^tG(u(s))dW(s)$ belongs to $ L^2(\Omega, \del{L^\infty}C(0, T; \mathbb{H}^{1, 2}(\mathbb{T}^2)))$. In fact, we use 
Burkholdy-Davis-Gandy inequality, Assumption $ (\mathcal{C})$: (\eqref{Eq-Cond-Linear-Q-G}
with $ q=2$ and $ \delta=1$), \eqref{cond-solu-torus-H1}, we obtain 
\begin{eqnarray}\label{cont-2}
\mathbb{E}\sup_{[0, T]} |\int_0^tG(u(s))dW(s)|^2_{\mathbb{H}^{1, 2}}
&\leq& c \mathbb{E}\int_0^T|G(u(s))|^2_{L_Q(\mathbb{H}^{1, 2})}ds
\leq c \mathbb{E}\int_0^T(1+|u(s)|^2_{\mathbb{H}^{1, 2}})ds \nonumber\\
&\leq& c (1+ \mathbb{E}\sup_{[0, T]}|u(s)|^p_{\mathbb{H}^{1, 2}})<\infty.
\end{eqnarray}
Hence from \eqref{cond-solu-torus-H1}, \eqref{cont-1}, \eqref{cont-1-1}  and \eqref{cont-2}, 
we establish the existence of a subset $ \Omega'\subset \Omega$ (independent of "t"),  such that $ P(\Omega')=0$ and
$ F(u(\cdot, \omega))\in L^2(0, T; \mathbb{H}^{-1, 2}(\mathbb{T}^2))$, $  u(\cdot, \omega) \;\text{and}\;  M(\cdot, \omega) 
\in L^\infty(0, T; \mathbb{H}^{1, 2}(\mathbb{T}^2))$, $\forall \omega \in \Omega'^c$. These ingredients are enough to apply  \cite[Proposition VII.3.2.2]{Metivier-book-SPDEs-88}\del{\cite[Proposition 2.5.]{Sundar-Sri-large-deviation-NS-06}}, hence we get the result. It is important to mention that the property $  u(\cdot, \omega) \;\text{and}\;  M(\cdot, \omega) 
\in L^\infty(0, T; \mathbb{H}^{1, 2}(\mathbb{T}^2))$ is more what we need here. In deed, it is sufficient to prove $  u(\cdot, \omega) \;\text{and}\;  M(\cdot, \omega) 
\in L^2(0, T; \mathbb{H}^{1, 2}(\mathbb{T}^2))$. By the above two subsections, the proof of $ (3.7.1)$ is achieved.

\del{ Therefore, we have
\begin{eqnarray}\label{eq-equality-F-Fn-1}
\lim_{\lambda\rightarrow 0}\mathbb{E}\int_0^Te^{-r_\lambda(s)}\big\{2\langle F(s)&-&
F(u(s)), \tilde{v}(s))\rangle_{\mathbb{L}^2}+
\big(-r'_\lambda(s)\lambda|\tilde{v}(s)|^2_{\mathbb{L}^2}\nonumber\\
&+& 2\langle F(u(s)) - F(u(s)-\lambda \tilde{v}(s), \tilde{v}(s))\rangle_{\mathbb{L}^2}\big)\big\}ds= 0.
\end{eqnarray}
\begin{eqnarray}\label{eq-equality-F-Fn-1}
\lim_{\lambda\rightarrow 0}\mathbb{E}\int_0^Te^{-r_\lambda(s)}\big\{2\langle F(s)&-&
F(u(s)), \tilde{v}(s))\rangle_{\mathbb{L}^2}+
\big(-r'_\lambda(s)\lambda|\tilde{v}(s)|^2_{\mathbb{L}^2}\big\}ds\nonumber\\
&+& \lim_{\lambda\rightarrow 0}\mathbb{E}\int_0^T2e^{-r_\lambda(s)}\langle F(u(s)) - F(u(s)-\lambda \tilde{v}(s)), \tilde{v}(s))
\rangle_{\mathbb{L}^2}\big)ds= 0.
\end{eqnarray}}

\del{WITOUT STOP TIME\subsection{Proof of pathwise uniqueness.}\label{sec-subsection-uniqueness-Torus}
Let $ u^1$ and $ u^2$ be two weak solutions of Equation \eqref{Main-stoch-eq} with $ 1<\alpha \leq 2$ as constructed in Subsection
\ref{sec-subsection-existence-Torus}. Let $ w:=  u^1- u^2$, then  $ w$ satisfied the following equation
\begin{eqnarray}
 w(t) = \int_0^t \big(-A_\alpha w(s)+ B(w(s), u^1(s))&+& B(u^2(s), w(s))\big)ds \nonumber\\
&+& \int_0^t\big(G(u^1(s))- G(u^2(s))\big)dW(s).
\end{eqnarray}
Using Ito formula to the product $ e^{-r(t)}|w(t)|^2_{\mathbb{L}^{2}}$, with $(r(t), t\in [0, T])$ is a
real positive stochastic process to be precise later, Property \eqref{Eq-3lin-propnull},
condition \eqref{Eq-Cond-Lipschitz-Q-G} with $ q=2$ and $\delta =0$ (locally Lipschitz), 
Estimation \eqref{3linear-H1-H-1}, Young inequality  and arguing as in the proof of 
\eqref{Eq-Ito-n-weak-estimation-1-Torus} by replacing
the spaces $ \mathbb{H}^{1, 2}(\mathbb{T}^2)$ and $ \mathbb{H}^{1+\frac\alpha2, 2}(\mathbb{T}^2)$ by
$ \mathbb{L}^{2}(\mathbb{T}^2)$ respectively $ \mathbb{H}^{\frac\alpha2, 2}(\mathbb{T}^2)$.  we infer that 
\begin{eqnarray}\label{Torus-uniquenss-ito-formula}
\mathbb{E}\!\!\!\!&{}&\!\!\!\!e^{-r(t)}|w(t)|^2_{\mathbb{L}^{2}}+
2\mathbb{E}\int_0^{t}e^{-r(s)}|w(s)|^2_{\mathbb{H}^{\frac\alpha2, 2}}ds \nonumber\\
&\leq& \mathbb{E}\int_0^{t}e^{-r(s)}|| G(u^1(s))- G(u^2(s))||^2_{L_Q(\mathbb{L}^2)}ds\nonumber\\
& -& \mathbb{E}\int_0^{t} e^{-r(s)} \big(2\langle B(w(s),  w(s)),  u^1(s)\rangle-
r'(s)|w(s)|^2_{\mathbb{L}^{2}}\big)ds\nonumber\\
& \leq & c_N\mathbb{E}\int_0^{t}e^{-r(s)}\big(|w(s)|^2_{\mathbb{L}^2}+
|u^1(s)|_{\mathbb{H}^{1, 2}}|w(s)|^{\frac2\alpha}_{\mathbb{H}^{\frac{\alpha}{2}, 2}}
|w(s)|^{2\frac{\alpha-1}\alpha}_{{\mathbb{L}^{2}}}-
r'(s)|w(s)|^2_{\mathbb{L}^{2}}\big)ds\nonumber\\
& \leq & c_N
\mathbb{E}\int_0^{t}e^{-r(s)}\big(|w(s)|^2_{\mathbb{L}^2}+ 2 c|w(s)|^2_{\mathbb{H}^{\frac\alpha2, 2}}+
2c_1|u^1(s)|^{\frac{\alpha}{\alpha-1}}_{\mathbb{H}^{1, 2}}|w(s)|^2_{\mathbb{L}^{2}}-
r'(s)|w(s)|^2_{\mathbb{L}^{2}}\big)ds.\nonumber\\
\end{eqnarray}
Now, we choose $ c<1$ and $ r'(s)= 2c_1|u^1(s)|^{\frac{\alpha}{\alpha-1}}_{\mathbb{H}^{1, 2}}$ and replace in
\eqref{Torus-uniquenss-ito-formula},
we end up by the simple formula
\begin{eqnarray}
\mathbb{E}e^{-r(t)}|w(t)|^2_{\mathbb{L}^{2}}&+& 2(1-c)\mathbb{E}\int_0^{t}
e^{-r(s)}|w(s)|^2_{\mathbb{H}^{\frac\alpha2, 2}}ds
\leq c_N\mathbb{E}\int_0^{t}e^{-r(s)}|w(s)|^2_{\mathbb{L}^2}ds.\nonumber\\
\end{eqnarray}
Than by application of Gronwall's lemma, we get
$ \forall t\in [0, T, \, e^{-r(t)}|w(t)|^2_{\mathbb{L}^{2}} = 0, \, P-a.s.$ as
$P( e^{-c\int_0^{t}|u^1(s)|^{\frac{\alpha}{\alpha-1}}_{\mathbb{H}^{1, 2}}ds} <\infty)= 1 $.
The proof is achieved once we remark that thanks to \eqref{cond-solu-torus-H1},
$ P( \int_0^T|u^1(s)|^{\frac{\alpha}{\alpha-1}}_{\mathbb{H}^{1, 2}}ds <\infty)= 1$.}
\del{\noindent To prove the $ H^1-$continuity in \eqref{Eq-H-1-regu-2D-torus}, we use again \cite[Proposition VII.3.2.2]{Metivier-book-SPDEs-88}, with the gelfand triple \eqref{Gelfand-triple-Torus} and argue as above. In particular, we have 
\begin{itemize}
\item thanks to \eqref{est-B-estimatoion-q=2-eta-to-use} with $ \eta=1$ and by interpolation than by application \eqref{cond-solu-torus-H1} (remark that $ 4\frac{\alpha-1}{4-\alpha}<4$), we infer that $ B(u(\cdot, \omega))\in L^\frac{\alpha+2}{4-\alpha}(0, T; \mathbb{H}^{1-\frac\alpha2, 2}(\mathbb{T}^2)),\; P-a.s.$. In fact,
\begin{eqnarray}\label{cont-1-power-funct-alpha}
\mathbb{E} \int_0^T|B(u(s))|_{\mathbb{H}^{1-\frac\alpha2, 2}}^{\frac{\alpha+2}{4-\alpha}}ds &\leq & c \mathbb{E} \int_0^T|u(s)|^{2\frac{\alpha+2}{4-\alpha}}_{\mathbb{H}^{2-\frac\alpha2, 2}}ds
\leq c \mathbb{E} \int_0^T\Big(|u(s)|^{\frac{4-\alpha}{\alpha+2}}_{\mathbb{H}^{1+\frac\alpha2, 2}}
|u(s)|^{2\frac{\alpha-1}{\alpha+2}}_{\mathbb{H}^{1, 2}}\Big)^{2\frac{\alpha+2}{4-\alpha}}ds\nonumber\\
&\leq& c \mathbb{E} \int_0^T|u(s)|^{2}_{\mathbb{H}^{1+\frac\alpha2, 2}}
|u(s)|^{4\frac{\alpha-1}{4-\alpha}}_{\mathbb{H}^{1, 2}}ds
\del{\leq c \mathbb{E}\sup_{[0, T]}|u(s)|^2_{\mathbb{H}^{1, 2}}}<\infty. 
\end{eqnarray}
\item thanks to $\alpha\leq 2$, we have $ \frac{\alpha+2}{4-\alpha} \leq 2$ and
\begin{equation}\label{cont-1-1}
\mathbb{E} \int_0^T|A_\alpha u(s)|_{\mathbb{H}^{1-\frac\alpha2, 2}}^{\frac{\alpha+2}{4-\alpha}}ds \leq c \mathbb{E} \int_0^T|u(s)|_{\mathbb{H}^{1+\frac\alpha2, 2}}^{\frac{\alpha+2}{4-\alpha}}ds
\leq c (1+\mathbb{E}\int_0^T|u(s)|^2_{\mathbb{H}^{1+\frac\alpha2, 2}}ds)<\infty. 
\end{equation}

\item Now, we prove that\del{To prove that the trajectories of the 
continuous martingale $ M(t):= \int_0^tG(u(s))dW(s)$ belong $ P-a.s.$ to} $ M\in L^{\frac{2+\alpha}{2(\alpha-1)}}(\Omega, L^\infty(0, T; \mathbb{H}^{1+\frac\alpha2, 2}(\mathbb{T}^2)))$, we use 
Burkholdy-Davis-Gandy inequality, Assumption $ (\mathcal{C})$: (\eqref{Eq-Cond-Linear-Q-G}
with $ q=2$ and $ \delta=1$), \eqref{cond-solu-torus-H1}, we obtain 
\begin{eqnarray}\label{cont-2}
\mathbb{E}\sup_{[0, T]} |\int_0^tG(u(s))dW(s)|^2_{\mathbb{H}^{1, 2}}
&\leq& c \mathbb{E}\int_0^T|G(u(s))|^2_{L_Q(\mathbb{H}^{1, 2})}ds
\leq c \mathbb{E}\int_0^T(1+|u(s)|^2_{\mathbb{H}^{1, 2}})ds \nonumber\\
&\leq& c (1+ \mathbb{E}\sup_{[0, T]}|u(s)|^2_{\mathbb{H}^{1, 2}})<\infty.
\end{eqnarray}
\end{itemize}
Hence from \eqref{cond-solu-torus-H1}, \eqref{cont-1}, \eqref{cont-1-1}  and \eqref{cont-2}, 
we establish the existence of a subset $ \Omega'\subset \Omega$ (independent of "t"), 
such that $ P(\Omega')=0$ and
$ F(u(\cdot, \omega))\in L^2(0, T; \mathbb{H}^{-1, 2}(\mathbb{T}^2)),\; \text{and} \; u(\cdot, \omega),  M(\cdot, \omega) 
\in L^\infty(0, T; \mathbb{H}^{1, 2}(\mathbb{T}^2))\; \forall \omega \in \Omega'^c$. These ingredients are enough to apply  \cite[Proposition VII.3.2.2]{Metivier-book-SPDEs-88}\del{\cite[Proposition 2.5.]{Sundar-Sri-large-deviation-NS-06}}, hence we get the result. }

\subsection*{Proof of the pathwise uniqueness.}\label{sec-subsection-uniqueness-Torus}
Let $ u^1$ and $ u^2$ be two weak solutions of Equation \eqref{Main-stoch-eq} satisfying \eqref{eq-set-solu-weak-torus} 
and \eqref{cond-solu-torus-H1}. Let $ w:=  u^1- u^2$, then  $ w$ satisfies the following equation
\begin{eqnarray}\label{eq-uniq-w}
 w(t) = \int_0^t \big(-A_\alpha w(s)+ B(w(s), u^1(s))+ B(u^2(s), w(s))\big)ds + \int_0^t\big(G(u^1(s))- G(u^2(s))\big)dW(s).\nonumber\\
\end{eqnarray}
For $ N>0$, we define the stopping times\del{, see Appendix \ref{append-stop-time} for the proof}, $ \tau_N^i: \inf\{t\in (0, T); |u^i(t)|_{\mathbb{L}^{2}}> N\}\wedge T, i=1, 2$, with the understanding that $ \inf(\emptyset)= +\infty$ and define
$ \tau_N:=\min_{i\in\{1,2\}}\{ \tau_N^i\}$.
Using Ito formula for the product $ e^{-r(t)}|w(t)|^2_{\mathbb{L}^{2}}$, with $(r(t), t\in [0, T])$ being a
 positive real stochastic process to be defined later, Property \eqref{Eq-3lin-propnull},
Assumption $(\mathcal{C})$ (\eqref{Eq-Cond-Lipschitz-Q-G} with $ q=2$ and $\delta =0$, locally Lipschitz), 
Estimate \eqref{3linear-H1+alpha2-H-1}, Young inequality  and arguing as in the proof of \eqref{Eq-Ito-n-weak-estimation-torus}\del{
\eqref{Eq-Ito-n-weak-estimation-1-Torus}} with the replacement of
the spaces $ \mathbb{H}^{1, 2}(\mathbb{T}^2)$ and $ \mathbb{H}^{1+\frac\alpha2, 2}(\mathbb{T}^2)$ by
$ \mathbb{L}^{2}(\mathbb{T}^2)$ respectively $ \mathbb{H}^{\frac\alpha2, 2}(\mathbb{T}^2)$,  we infer that for $ 1\leq \alpha< 2$ 
(here we omit to writ the proof for the dissipative regime, as it is classical.)
\del{\begin{eqnarray}\label{Torus-uniquenss-ito-formula-H-1}
\mathbb{E}\!\!\!\!&{}&\!\!\!\!e^{-r(t\wedge \tau_N)}|w(t\wedge \tau_N)|^2_{\mathbb{L}^{2}}+
2\mathbb{E}\int_0^{t\wedge \tau_N}e^{-r(s)}|w(s)|^2_{\mathbb{H}^{\frac\alpha2, 2}}ds \nonumber\\
&\leq& \mathbb{E}\int_0^{t\wedge \tau_N}e^{-r(s)}|| G(u^1(s))- G(u^2(s))||^2_{L_Q(\mathbb{L}^2)}ds\nonumber\\
& -& \mathbb{E}\int_0^{t\wedge \tau_N} e^{-r(s)} \big(2\langle B(w(s)),  u^1(s)\rangle-
r'(s)|w(s)|^2_{\mathbb{L}^{2}}\big)ds\nonumber\\
& \leq & c_N\mathbb{E}\int_0^{t\wedge \tau_N}e^{-r(s)}\big(|w(s)|^2_{\mathbb{L}^2}+
|u^1(s)|_{\mathbb{H}^{1, 2}}|w(s)|^{\frac2\alpha}_{\mathbb{H}^{\frac{\alpha}{2}, 2}}
|w(s)|^{2\frac{\alpha-1}\alpha}_{{\mathbb{L}^{2}}}-
r'(s)|w(s)|^2_{\mathbb{L}^{2}}\big)ds\nonumber\\
& \leq & c_N
\mathbb{E}\int_0^{t\wedge \tau_N}e^{-r(s)}\big(|w(s)|^2_{\mathbb{L}^2}+ 2 c|w(s)|^2_{\mathbb{H}^{\frac\alpha2, 2}}+
2c_1|u^1(s)|^{\frac{\alpha}{\alpha-1}}_{\mathbb{H}^{1, 2}}|w(s)|^2_{\mathbb{L}^{2}}-
r'(s)|w(s)|^2_{\mathbb{L}^{2}}\big)ds.\nonumber\\
\end{eqnarray}}
\begin{eqnarray}\label{Torus-uniquenss-ito-formula-H-1}
\mathbb{E}\!\!\!\!&{}&\!\!\!\!e^{-r(t\wedge \tau_N)}|w(t\wedge \tau_N)|^2_{\mathbb{L}^{2}}+
2\mathbb{E}\int_0^{t\wedge \tau_N}e^{-r(s)}|w(s)|^2_{\mathbb{H}^{\frac\alpha2, 2}}ds \nonumber\\
&\leq& \mathbb{E}\int_0^{t\wedge \tau_N}e^{-r(s)}|| G(u^1(s))- G(u^2(s))||^2_{L_Q(\mathbb{L}^2)}ds\nonumber\\
& -& \mathbb{E}\int_0^{t\wedge \tau_N} e^{-r(s)} \big(2\langle B(w(s)),  u^1(s)\rangle-
r'(s)|w(s)|^2_{\mathbb{L}^{2}}\big)ds\nonumber\\
& \leq & c_N\mathbb{E}\int_0^{t\wedge \tau_N}e^{-r(s)}\big(|w(s)|^2_{\mathbb{L}^2}+
| u^1|_{\mathbb{H}^{1+\frac\alpha2, 2}}|w|^{\frac{2-\alpha}{\alpha}}_{\mathbb{H}^{\frac{\alpha}{2}, 2}}
|w|^{\frac{3\alpha-2 }{\alpha}}_{{\mathbb{L}^{2}}}-
r'(s)|w(s)|^2_{\mathbb{L}^{2}}\big)ds\nonumber\\
& \leq & c_N
\mathbb{E}\int_0^{t\wedge \tau_N}e^{-r(s)}\big(|w(s)|^2_{\mathbb{L}^2}+ 2 c|w(s)|^2_{\mathbb{H}^{\frac\alpha2, 2}}+
2c_1|u^1(s)|^{\frac{2\alpha}{3\alpha-2}}_{\mathbb{H}^{1+\frac\alpha2, 2}}|w(s)|^2_{\mathbb{L}^{2}}-
r'(s)|w(s)|^2_{\mathbb{L}^{2}}\big)ds.\nonumber\\
\end{eqnarray}
We choose $ c<1$ and $ r'(s)= 2c_1|u^1(s)|^{\frac{2\alpha}{3\alpha-2}}_{\mathbb{H}^{1+\frac\alpha2, 2}}$ and replace in
\eqref{Torus-uniquenss-ito-formula-H-1}, we end up with the simple formula
\begin{eqnarray}
\mathbb{E}e^{-r(t\wedge \tau_N)}|w(t\wedge \tau_N)|^2_{\mathbb{L}^{2}}&+& 2(1-c)\mathbb{E}\int_0^{t\wedge \tau_N}
e^{-r(s)}|w(s)|^2_{\mathbb{H}^{\frac\alpha2, 2}}ds\nonumber\\
&\leq& c_N\mathbb{E}\int_0^{t}e^{-r(s\wedge \tau_N)}|w(s\wedge \tau_N)|^2_{\mathbb{L}^2}ds.
\end{eqnarray}
By application of Gronwall's lemma, we get
$ \forall t\in [0, T]$ the random variable $|w(t\wedge \tau_N)|^2_{\mathbb{L}^{2}} = 0, \, P-a.s.$ as much as 
$ P( \int_0^{t\wedge \tau_N }|u^1(s)|^{\frac{2\alpha}{3\alpha-2}}_{\mathbb{H}^{1+\frac\alpha2, 2}}ds <\infty)= 1$. 
This last statement is confirmed thanks to 
\eqref{cond-solu-torus-H1} and the condition $ 1\leq \alpha<2$. 
The proof is then achieved once we remark that thanks to Chebyshev inequality and \eqref{cond-solu-torus-H1},
we have $ \lim_{N\rightarrow \infty}\tau_N = T, P-a.s.$
\subsection*{Proof of the space regularity.}
In the aim to get Estimate \eqref{propty-of-2D-global-mild-sol},\del{(the first term of this estmate),}\del{\eqref{est-u-torus-H-1-q-1}} we use the regularization effect of the vorticity. Let $ (u(t), t\in [0, T])$ be a
weak solution of Equation \eqref{Main-stoch-eq} in the sense of Definition
\ref{def-variational solution}. Thanks to Appendix \ref{sec-Passage Velocity-Vorticity}, the $ curl u \del{\in L^{2}(\mathbb{T}^2)}$ is a weak solution 
of  Equation \eqref{Eq-vorticity-Torus-2-diff}. We know from \cite{Debbi-scalar-active} that this equation\del{ \eqref{Eq-vorticity-Torus-2-diff}} admits a unique global solution  which is simultaneously weak and mild and satisfies
\begin{equation}\label{eq-est-theta-brut}
 \mathbb{E}\Big(\sup_{[0, T]}| \theta(t)|_{L^q}^q + \int_0^T
 | \theta(t)|_{H^{\frac\alpha2, 2}}^2dt\Big)<\infty,
\end{equation}
for $q_0, q $ and $ \alpha$
being characterized as in $(6.3.2.)$\del{Theorem \ref{Main-theorem-strog-Torus}} and  
provided that $ curl u_0$\del{$ curl u_0\in L^p(\Omega, \mathcal{F}_0, P; L_1^{q_0}(O))$} fulfills  \eqref{eq-curl-u-0-torus} and 
$ \tilde{G}$, defined by \eqref{inte-g-tilde}, satisfies the Lipschitz and the growth conditions, i.e.  $ \tilde{G}$ satisfies \eqref{Eq-Cond-Lipschitz-Q-G} and \eqref{Eq-Cond-Linear-Q-G}, with   
$R_Q(\mathbb{L}^2, \mathbb{H}^{\delta, q})$ in the LHSs and 
$ \mathbb{H}^{\delta, q}$ in the RHSs are replaced by 
$R_Q(L^2, L^q) $ and $ L^q$ respectively. As \eqref{eq-curl-u-0-torus} is fulfilled by assumption, we check that the two latter conditions are also satisfied. In fact, thanks to the definition of $ \tilde{G}$, Assumption $(\mathcal{C})$, with $ \delta =1$
and Lemma \ref{lem-R1-bounded},\del{the fact that the
operator $\mathcal{R}^{1}$ is bounded, on $\mathbb{H}^{s, q}(\mathbb{T}^2)$,} we get
\begin{eqnarray}
|| \tilde{G}(\theta)||_{R_\gamma(L^2, L^q)}&=&|\big[\sum_{k\in\Sigma}| curl G(\mathcal{R}^1(\theta))Q^\frac12 e_k|^2\big]^\frac12|_{L^q}
\leq c|\big[\sum_{k\in\Sigma}| \partial_j G(\mathcal{R}^1(\theta))Q^\frac12 e_k|^2\big]^\frac12|_{L_2^q}\nonumber\\
&\leq & c ||G(\mathcal{R}^1(\theta))||_{R_\gamma(\mathbb{L}^2, \mathbb{H}^{1, q})}\leq c(1+|\mathcal{R}^1(\theta)|_{\mathbb{H}^{1, q}})\leq c(1+|\theta|_{L^q}). 
\del{\sum_{k\in\Sigma}| curl \sigma_k(\mathcal{R}^1(\theta(s)))|_{L^q}\nonumber\\
&\leq &c \sum_{k\in\Sigma}| \sigma_k(\mathcal{R}^1(\theta(s)))|_{H^{1,q}}
\leq c (1+| \mathcal{R}^1(\theta(s))|_{H^{1,q}})\leq c (1+| \theta(s)|_{H^{1,q}}).}
\end{eqnarray} 
By the same way, we prove the Lipschitz condition. Estimate \eqref{propty-of-2D-global-mild-sol} follows from \eqref{eq-est-theta-brut} and Lemma \ref{lem-basic-curl-gradient}.

\del{\section{Global existence and uniqueness of the mild solution of the multi-dimensional FSNSEs.}\label{sec-global-mild-solution}
Recall that we have constructed in Section \ref{sec-1-approx-local-solution}, see also Appendix \ref{appendix-local-solution}, a local mild solution $ (u, \tau_\infty)$, where $ \tau_\infty $  is defined by 
\eqref{Eq-def-tau-n-delta} and \eqref{Eq-def-tau-delta} and $ u(t)=u_N(t),$ for $ t\leq \tau_N$\footnote{we use capital N to avoid confusion with the Faedo-Galerkin approximations.}, provided that $ \alpha \in(1+\frac dq, 2]$, $ q>d$ and $u_0$ and $ G$ satisfying assumptions $ (\mathcal{B})$ and $ (\mathcal{C})$ respectively. The solution can start from an $ \mathbb{L}^q-$ initial data, therefore we take  $ \delta =0$. To prove that the local solution is global, it is sufficient to prove that the $ \mathbb{L}^q-$norm does not explode,  i.e. for $ P-a.s.$, $ \tau_\infty =T$.

In the case $ O=\mathbb{T}^2$, thanks to Lemma \ref{lem-basic-curl-gradient} and \cite[Theorem 2.6]{Debbi-scalar-active} and according to the cases cited in $ (3.6.2)$, we infer that
\begin{equation}\label{est-nabla-u-theta-Global-existence}
 \exists c>0, s.t. \forall N\in \mathbb{N}_0, \mathbb{E}\sup_{[0, \tau_N]}|\nabla u(t)|^q_{q}\leq c \mathbb{E}\sup_{[0, \tau_N]}|\theta(t)|^q_{L^{q}}\leq c<\infty.
\end{equation}
Thus both the $\mathbb{L}^q$  and the $\mathbb{H}^{1, q}$  norms do not explode, therefore the solution is global. The estimation \eqref{est-nabla-u-theta-Global-existence} is also enough to prove the uniqueness as we shall show for the general case bellow, see also \cite{Debbi-scalar-active}. The regularity in \eqref{propty-of-2D-global-mild-sol} is a consequence of the application of \cite[Theorem 2.6]{Debbi-scalar-active} and Lemma \ref{lem-basic-curl-gradient} as we have seen in Section \ref{sec-Torus}.

\noindent For the general case, we adapt to the stochastic framework, the method used in \cite{Giga-al-Globalexistence-2001} and than apply Hasminskii's criteria \cite{BrzezniakDebbi1,  Brzez-Beam-eq, Hasminski-book-80}.\del{ It is easily seen that Thanks to Assumption $ (\mathcal{C}_b)$ the OU process, $ (z(t\wedge \tau_\infty), t\in[0, T])$ defined by  \eqref{Eq-z-t}, enjoys
\begin{equation}\label{cond-z-nabla-z}
 \mathbb{E}\sup_{[0, \tau)}(|z(s)|^2_{\mathbb{L}^q}+ |\nabla z(s)|^2_{q}) 
\leq c<\infty.
\end{equation}} Using Lemma \ref{Giga-Mikayawa-solutionLr-NS} and Lemma \ref{Lem-semigroup}, the continuity of the 
Helmholtz projection and  H\"older inequality, we get $P-a.s$ for all $ N\in \mathbb{N}_0$ and  $ t\in [0, T]$, 
\begin{eqnarray}\label{est-b-global-mild-solu-0}
|\int_0^{t\wedge\tau_N} e^{-(t\wedge\tau_N-s)A_\alpha}B(u(s))ds|_{\mathbb{L}^q}
&\leq & c \int_0^{t\wedge\tau_N} |A_\alpha^\frac d{\alpha q} e^{-(t\wedge\tau_N-s)A_\alpha}|_{\mathcal{L}(\mathbb{L}^{\frac q2})} |\Pi((u(s)\nabla)u(s))|_{\mathbb{L}^{\frac q2}}ds\nonumber\\
&\leq& c\int_0^{t\wedge\tau_N} (t\wedge\tau_N-s)^{-\frac d{\alpha q}}  |u(s)|_{\mathbb{L}^q}|\nabla u(s)|_{q}ds.
\end{eqnarray}
Therefore, as the local solution $ (u, \tau_\infty)$ satisfies Equation \eqref{Eq-Mild-Solution-stoped} up to $ \tau_\infty$, we infer that \del{for all $N \in \mathbb{N}_0 $, we have $P-a.s$ for all $ N\in \mathbb{N}_0$ and $ t\in [0, T]$,\del{ Thus for all $ t\leq \tau_\infty$ we have}
\begin{equation}\label{v-mild-sol-aux-pbm}
 |u(t\wedge\tau_N)|_{\mathbb{L}^q}= |e^{-tA_\alpha}u_0|_{\mathbb{L}^q}+ |\int_0^t e^{-(t-s)A_\alpha}B(u(s))|_{\mathbb{L}^q} + |z(s)|_{\mathbb{L}^q}.
\end{equation}}
\begin{eqnarray}\label{est-b-global-mild-solu-0}
|u(t\wedge\tau_N)|_{\mathbb{L}^q}&\leq& c\Big(|u_0|_{\mathbb{L}^q}+ c\int_0^{t\wedge\tau_N} (t\wedge\tau_N-s)^{-\frac d{\alpha q}}  |u(s)|_{\mathbb{L}^q}|\nabla u(s)|_{q}ds+ |z(t\wedge\tau_N)|_{\mathbb{L}^q}\Big),\nonumber\\
\end{eqnarray}
where the stopped Ornstein-Uhlenbeck process $( z(t\wedge\tau_N), t\in [0, T])$ is obtained as in  Remark \ref{Rem-1}. By application of  Gronwall's lemma, we conclude that  $P-a.s$ for all $ N\in \mathbb{N}_0$ and $ t\in [0, T]$, 
\del{ for all $ t\leq \tau_N$,}
\begin{eqnarray}
|u(t\wedge\tau_N)|_{\mathbb{L}^q}&\leq& c\Big(|u_0|_{\mathbb{L}^q}+ \sup_{[0, \tau_\infty)}|z(s)|_{\mathbb{L}^q}\Big)\exp{c\int_0^{t\wedge\tau_N} (t\wedge\tau_N -s)^{-\frac d{\alpha q}}|\nabla u(s)|_{q}ds}.
\end{eqnarray}
\noindent We apply the increasing function $\ln^+x=\max\{0,\ln x\}$  to
 both sides of the above inequality and than we use the classical
inequalities for $ a, b > 0$, $\ln^+(a+b)\leq \ln^+(a)+ \ln^+(b)$, $
\ln^+(ab)\leq \ln^+(a)+\ln^+(b)$ and $a\leq 1+a^2$, the elementary property 
\begin{equation}\label{elementary-property}
\exists c>0,\;\;s.t.\; ln^+x \leq x +c,\;\; \forall x>0.
\end{equation}
\noindent Estimate  \eqref{Eq-nabla-z-t}\del{{cond-z-nabla-z}} and  Condition \eqref{cond-global-mild-solu}, we get for all $ N\in \mathbb{N}_0$ and $ t\in[0, T]$,
\begin{eqnarray}
\mathbb{E}\ln^+|u(t\wedge \tau_N)|_{\mathbb{L}^q}&\leq& c\Big(\mathbb{E}\ln^+|u_0|_{\mathbb{L}^q}+ \mathbb{E}\ln^+\sup_{[0, \tau_\infty)}|z(s)|_{\mathbb{L}^q} +\mathbb{E}\sup_{[0, \tau_\infty)}\int_0^t (t-s)^{-\frac d{\alpha q}}|\nabla u(s)|_{q}ds\Big)\nonumber\\
&\leq& c\Big(1+\mathbb{E}\ln^+|u_0|_{\mathbb{L}^q}\Big)<\infty.
\end{eqnarray}
Now, we introduce the function $ V: u\in \mathbb{L}^q(O) \mapsto \mathbb{R}_+ \ni V(u)=ln^+(|u|_{\mathbb{L}^q})$. 
Obviously  $V$ is uniformly continuous on bounded sets and from the 
calculus above, it is easy to check that $ V$ is a Lyapunov function.  In fact, 
\begin{eqnarray} V &\geq&  0 \; \text{on}\;\;  \mathbb{L}^q(O), \label{2.1}
\\
q_N &:=& \inf_{|u|_{\mathbb{L}^q\ge N}} ln^+(|u|_{\mathbb{L}^q}) \to \infty \text{ as
 }\;N\to\infty, \label{2.2}
\\
\mathbb{E} ln^+(|u_0|_{\mathbb{L}^q}) &<& c+ \mathbb{E}|u_0|_{\mathbb{L}^q}<\infty, \label{2.3}
\\ 
\text{and} &&\nonumber\\
\mathbb{E} V( u(t\land \tau_N)) &\leq&  c\bigl(1+\mathbb{E} V(u_0) \bigr)
\text{ for all } t\ge 0,\;  N\in\mathbb{ N}_0. \label{2.5}
\end{eqnarray}
Consequently, 
\begin{eqnarray}\label{eq-prob-tau-n=0}
\lim_{N\rightarrow +\infty} P \bigl\{ \tau_N<t\bigr\} &\leq&
 \lim_{N\rightarrow +\infty} \frac1{q_N}\mathbb{E}
\bold 1_{\{\tau_N <t\}} V( u(t\land \tau_N))\nonumber\\
&\leq& \lim_{N\rightarrow +\infty} \frac1{q_N}c\bigl(1+\mathbb{E} V( u_0)\bigr)=0.
\end{eqnarray}
\noindent By this we achieve the proof of the global existence of the mild solution. The next step is to prove the uniqueness. Let  $ u^1$ and  $ u^2$  be two global mild solutions
satisfying\del{ the estimation \eqref{eq-last-est} and} $ P(u^1(0) = u^2(0)= u_0)=1$  and let $
u^0:= u^1- u^2$. We define for $ R>0$ the stopping times, see Appendix \ref{append-stop-time} for the proof,
\begin{equation}
 \tau^i_R := \inf\{t\in (0, T), \;\; s.t. \;\; |u^i(t) |_{\mathbb{L}^q}> R\}, \;\;\; \inf(\emptyset)=+\infty\;\;\; \text{and}\;\;\; \tau_R:= \tau^1_R\wedge \tau^2_R.
\end{equation}
\del{with the understanding that $ \inf(\emptyset)=+\infty$. } We show that for all $ R>0 $, the stopped processes  of $( u^1(t), t\in [0, T\wedge \tau_R])$ and $ ( u^2(t), t\in [0, T\wedge \tau_R]) $  are modifications of each other.
Than thanks \del{to the facts,
\begin{equation}\label{eq-lim-tau-n}
\lim_{R\rightarrow \infty}P(\tau_R<T) \leq \sum_{i=1}^2 \lim_{R\rightarrow \infty}P(\tau^i_R<T)
\leq \sum_{i=1}^2 \lim_{R\rightarrow \infty} \frac{\mathbb{E}|u^i(t)|^k_{L^q}}{R^k}=0,
\end{equation}}
to \eqref{eq-prob-tau-n=0} we conclude that the same is true for $ (u^1(t), t\in[0, T])$ and $ (u^2(t), t\in[0, T])$.
The stopped process $u^0$ satisfies the following equation $ P-a.s$ for all $ t\in [0, T]$, see \cite{BrzezniakDebbi1, Brzez-Beam-eq, Debbi-scalar-active},
\begin{equation}\label{eq-theta-0}
u^0(t\wedge \tau_R)= I^1_{\tau_R}(t\wedge \tau_R) +I^2_{\tau_R}(t\wedge \tau_R),
\end{equation}
where
\begin{equation}
 I^1_{\tau_R}(t\wedge \tau_R):=\int_0^{t\wedge \tau_R}
 e^{(t\wedge \tau_R-s)A_\alpha}(B(u^1(s\wedge \tau_R))-B(u^2(s\wedge \tau_R)))ds
\end{equation}
and
\begin{equation}
I^2_{\tau_R}(t\wedge \tau_R):= \int_0^{t}
1_{[0, \tau_R)}(s) e^{(t-s)A_\alpha}(G(u^1(s\wedge \tau_R))-G(u^2(s\wedge \tau_R)))dW(s).
\end{equation}
 First let us remark that 
\begin{equation}\label{eq-I-1-out-Tau}
1_{[0, \tau_R]}(t) I^1_{\tau_R}(t\wedge \tau_R)= 1_{[0, \tau_R]}(t)\int_0^{t}
 1_{[0, \tau_R]}(s)e^{(t-s)A_\alpha}(B(u^1(s\wedge \tau_R))-B(u^2(s\wedge \tau_R)))ds.
\end{equation}
Now let $ p>2$. Using formulas \eqref{formula-B-v1-B-v2} and \eqref{eq-I-1-out-Tau}, a similar calculation as in the proof of 
Proposition \ref{Prop-Main-I} and H\"older inequality in $ L^1(0, t; t^{-\gamma}dt)$ with $ \gamma := \frac1\alpha(1+\frac dq)<1$, 
we estimate $I^1_{\tau_R}(t\wedge \tau_R)$ as follow
\begin{eqnarray}\label{eq-est-B-theta-0}
\mathbb{E}\!\!\!\!\!&|&\!\!\!\!\!1_{[0, \tau_R]}(t)I^1_{\tau_R}(t\wedge \tau_R)|_{\mathbb{L}^q}^p
= \mathbb{E} |1_{[0, \tau_R]}(t)\int_0^{t}
 e^{(t-s)A_\alpha}1_{[0, \tau_R]}(s)(B(u^1(s\wedge \tau_R))-B(u^2(s\wedge \tau_R)))ds|^p_{\mathbb{L}^q} \nonumber\\
&\leq& c\int_0^{t}(t-s)^{-\gamma}\mathbb{E}\Big[|1_{[0, \tau_R]}(s)
(u^1(s\wedge \tau_R)-u^2(s\wedge \tau_R))|^p_{\mathbb{L}^q}(\sup_{[0, \tau_R]}(|u^1(s)|_{L^q}^p+ |u^2(s)|_{\mathbb{L}^q}^p)) \Big]ds \nonumber\\
&\leq& cR^p\int_0^{t}(t-s)^{-\gamma}\mathbb{E}|1_{[0, \tau_R]}(s)
(u^1(s\wedge \tau_R)-u^2(s\wedge \tau_R))|^p_{\mathbb{L}^q} ds. \nonumber\\
\end{eqnarray}
\noindent Furthermore, using \cite[Proposition 4.2.]{Neerven-Evolution-Eq-08} and a similar calculation 
as in Lemma \ref{lem-est-z-t}, in particular Assumption $(\mathcal{C})$, we infer the existence of
$ c_R>0, \; 0<\gamma_1<\frac12$ such that
\begin{eqnarray}\label{eq-est-G-theta-0}
\mathbb{E}\!\!\!\!\!&[&\!\!\!\!\!1_{[0, \tau_R]}(t)I^2_{\tau_R}(t\wedge \tau_R)|_{\mathbb{L}^q}^p\nonumber\\
&=&\mathbb{E} |1_{[0, \tau_R]}(t)\int_0^{t}
 e^{(t-s)A_\alpha}1_{[0, \tau_R]}(s)(G(u^1(s\wedge \tau_R))-G(u^2(s\wedge \tau_R)))dW(s)|^p_{\mathbb{L}^q} \nonumber\\
&\leq& c\mathbb{E}\Big(\int_0^{t}(t-s)^{-\gamma_1}||1_{[0, \tau_R]}(s)(
G(u^1(s\wedge \tau_R))-G(u^2(s\wedge \tau_R)))||^2_{R_\gamma(\mathbb{L}^2, \mathbb{L}^q)}ds \Big)^\frac p2 \nonumber\\
&\leq& c_R\int_0^{t}(t-s)^{-\gamma_1}\mathbb{E}|1_{[0, \tau_R]}(s)
(u^1(s\wedge \tau_R)-u^2(s\wedge \tau_R))|^p_{\mathbb{L}^q} ds. 
\end{eqnarray}
Using  \eqref{eq-est-B-theta-0} and \eqref{eq-est-G-theta-0} and \eqref{eq-theta-0}, we infer the existence of
an $L^1-$integrable function  $ \psi: (0, T]\rightarrow \mathbb{R}_+$ such that
\begin{equation}\label{eq-theta-0-with-phi}
\mathbb{E}|1_{[0, \tau_R]}u^0(t\wedge \tau_R)|_{\mathbb{L}^q}^p\leq c_R\int_0^{t}\psi(t-s)\mathbb{E}|1_{[0, \tau_R]}(s)
u^0(s\wedge \tau_R)|^p_{\mathbb{L}^q} ds.
\end{equation}
Using Gronwall Lemma, we get for all $ t\in [o, T]$,
\begin{equation}
 | u^1(t\wedge\tau_R)- u^2(t\wedge\tau_R)|_{\mathbb{L}^q}=0. \;\;\; a.s., \;\;\; \forall t\in [0, T].
\end{equation}
In particular, thanks to Estimate \eqref{eq-prob-tau-n=0}, we infer that for all $ t\in [o, T]$,
\begin{equation}
  u^1(t)= u^2(t)=0. \;\;\; a.s. \;\;\; \del{\forall t\in [0, T]}
\end{equation}
\del{\noindent To complete the proof of the uniqueness let us recall that Estimation \eqref{eq-last-est} is satisfied by the construction of the solution ($ \mathbb{E}\sup_{[0, T]}|u(t)|^p_{\mathbb{L}^q}<\infty$).}

\del{\noindent The whole calculus above remains valid to prove the results in $(3.6.3)$, in the exception that we have not \eqref{est-nabla-u-theta-Global-existence}. Hence, to estimate \eqref{eq-lastln-u}, we use \eqref{cond-mild-3d} and \eqref{elementary-property}.} }

\section{Martingale solution of the multi-dimensional FSNSEs.}\label{sec-Marting-solution}
In this section, we prove Theorem \ref{Main-theorem-martingale-solution-d}. The main ingredients are Faedo-Galerkin approximations,
compactness, Skorokhod embedding theorem and the representation theorem. In particular, once we prove Lemma \ref{lem-bounded-W-gamma-p} bellow, we can follow the same scheme e.g. as in \cite{Capinski-peaszat-Mart-SNSE-01, Flandoli-Gatarek-95}, see also similar calculus for the fractional stochastic scalar active equation in \cite{Debbi-scalar-active}. Thus we omit to give full details.\del{ in Appendix \ref{Appendix-Martingale-solu}.}

\begin{lem}\label{lem-bounded-W-gamma-p}
The sequence $ (u_n)_n$ of solutions of the equations \eqref{FSBE-Galerkin-approxi} is uniformly bounded in the space
\begin{eqnarray}\label{Eq-W-}
L^2(\Omega, W^{\gamma, 2}(0, T; \mathbb{H}^{-\delta', 2}(O))
\del{H_d^{-\delta', 2}(O))}\cap L^2(0, T; \mathbb{H}^{\frac\alpha2, 2}(O))),
\end{eqnarray}
where  $ \delta'\geq_1\max\{\alpha, 1+\frac d{2}\}$ and $ \gamma <\frac12$.
\end{lem}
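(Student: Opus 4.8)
The plan is to follow the classical Flandoli--Gatarek route adapted to the fractional setting: first record the uniform energy estimate, then split each Galerkin approximant $u_n$ from \eqref{FSBE-Galerkin-approxi} into its initial, drift and stochastic parts, controlling the drift in $W^{1,2}$ and the stochastic part in $W^{\gamma,2}$ (which is what forces $\gamma<\tfrac12$), all with values in the negative space $\mathbb{H}^{-\delta',2}(O)$. First I would apply Ito's formula to $|u_n(t)|^2_{\mathbb{L}^2}$: the nonlinear contribution $\langle P_nB(u_n),u_n\rangle_{\mathbb{L}^2}=b(u_n,u_n,u_n)$ vanishes by \eqref{Eq-3lin-propnull} (recall $P_nu_n=u_n$), while the Ito correction term is controlled by the growth condition \eqref{Eq-Cond-Linear-Q-G} with $q=2,\ \delta=0$. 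Burkholder--Davis--Gundy, Young and Gronwall then give, for $p\geq4$,
\[
\sup_n\mathbb{E}\Big(\sup_{[0,T]}|u_n(t)|^p_{\mathbb{L}^2}+\int_0^T|u_n(s)|^2_{\mathbb{H}^{\frac\alpha2,2}}\,ds\Big)<\infty,
\]
which is already the uniform bound in $L^2(\Omega;L^2(0,T;\mathbb{H}^{\frac\alpha2,2}(O)))$, the second member of the intersection.

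Next I would establish the bilinear estimate underlying Proposition \ref{prop-Ben-q=2} in the form needed here, namely $|B(u,v)|_{\mathbb{H}^{-\delta',2}}\leq c\,|u|_{\mathbb{L}^2}|v|_{\mathbb{L}^2}$ whenever $\delta'>1+\tfrac d2$. Writing $B(u,v)=\Pi\partial_j(u_jv)$ via the divergence-free condition, the boundedness of $\Pi$ and of $\partial_j$ on the Sobolev scale reduces matters to bounding $|u_jv|_{\mathbb{H}^{-d/2-\epsilon,2}}$; the Sobolev embedding $\mathbb{H}^{d/2+\epsilon,2}(O)\hookrightarrow L^\infty(O)$ dualizes to $L^1(O)\hookrightarrow\mathbb{H}^{-d/2-\epsilon,2}(O)$, whence $|u_jv|_{\mathbb{H}^{-d/2-\epsilon,2}}\leq c|u_jv|_{L^1}\leq c|u|_{\mathbb{L}^2}|v|_{\mathbb{L}^2}$ by Cauchy--Schwarz. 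The extra negative order $\epsilon>0$, forced by the mere integrability of the product of two $L^2$ functions, is exactly what imposes the strict inequality $\delta'>1+\tfrac d2$.

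I would then decompose $u_n(t)=P_nu_0+\int_0^tJ_n(s)\,ds+M_n(t)$, with $J_n=-A_\alpha u_n+P_nB(u_n)$ and $M_n(t)=\int_0^tP_nG(u_n)\,dW_n$. For the drift, $\delta'\geq\alpha$ gives $|A_\alpha u_n|_{\mathbb{H}^{-\delta',2}}=|u_n|_{\mathbb{H}^{\alpha-\delta',2}}\leq c|u_n|_{\mathbb{L}^2}$ (since $\alpha-\delta'\leq0$), while the contraction of $P_n$ together with the previous step yields $|P_nB(u_n)|_{\mathbb{H}^{-\delta',2}}\leq c|u_n|^2_{\mathbb{L}^2}$; using $p\geq4$ in the energy bound, $J_n$ is uniformly bounded in $L^2(\Omega\times[0,T];\mathbb{H}^{-\delta',2})$, so $\int_0^\cdot J_n\,ds$ is uniformly bounded in $L^2(\Omega;W^{1,2}(0,T;\mathbb{H}^{-\delta',2}))\hookrightarrow L^2(\Omega;W^{\gamma,2})$. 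The constant-in-time term $P_nu_0$ lies in $W^{\gamma,2}$ and is controlled by Assumption $(\mathcal{B})$ through $\mathbb{H}^{\delta_0,q_0}(O)\hookrightarrow\mathbb{L}^2(O)\hookrightarrow\mathbb{H}^{-\delta',2}(O)$.

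The main obstacle is the fractional time regularity of the stochastic term $M_n$. I would estimate the Sobolev--Slobodeckij seminorm directly: by the Ito isometry $\mathbb{E}|M_n(t)-M_n(s)|^2_{\mathbb{L}^2}=\mathbb{E}\int_s^t\|P_nG(u_n)Q^{1/2}\|^2_{HS(\mathbb{L}^2)}\,dr$, which by \eqref{Eq-Cond-Linear-Q-G} and the energy estimate is bounded by $c|t-s|$, so that
\[
\mathbb{E}\int_0^T\!\!\int_0^T\frac{|M_n(t)-M_n(s)|^2_{\mathbb{L}^2}}{|t-s|^{1+2\gamma}}\,dt\,ds\leq c\int_0^T\!\!\int_0^T|t-s|^{-2\gamma}\,dt\,ds<\infty
\]
precisely because $\gamma<\tfrac12$. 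Together with the elementary $L^2(\Omega\times[0,T];\mathbb{L}^2)$ bound this shows $M_n$ is uniformly bounded in $L^2(\Omega;W^{\gamma,2}(0,T;\mathbb{L}^2))\hookrightarrow L^2(\Omega;W^{\gamma,2}(0,T;\mathbb{H}^{-\delta',2}))$. Summing the three contributions gives the claimed uniform bound in $L^2(\Omega;W^{\gamma,2}(0,T;\mathbb{H}^{-\delta',2}(O)))$, and intersecting with the first step completes the proof.
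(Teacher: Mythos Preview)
Your proof is correct and follows essentially the same Flandoli--Gatarek route as the paper: energy estimate, decomposition into drift and martingale parts, $W^{1,2}$ control on the drift, and $W^{\gamma,2}$ control on the stochastic integral via the It\^o isometry bound $\mathbb{E}|M_n(t)-M_n(s)|^2_{\mathbb{L}^2}\leq c|t-s|$. The only variation is in how the nonlinear bound $|P_nB(u_n)|_{\mathbb{H}^{-\delta',2}}\leq c|u_n|^2_{\mathbb{L}^2}$ is obtained: you write $B=\Pi\partial_j(u_jv)$ and use the dual embedding $L^1\hookrightarrow\mathbb{H}^{-d/2-\epsilon,2}$, whereas the paper tests against $\phi\in\mathbb{H}^{\delta',2}$, uses the antisymmetry \eqref{Eq-3lin-propsym} to shift the gradient onto $P_n\phi$, and then invokes $\mathbb{H}^{\delta'-1,2}\hookrightarrow L^\infty$ directly on $\nabla P_n\phi$---these are dual phrasings of the same estimate and lead to the same constraint $\delta'>1+\tfrac d2$.
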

\begin{proof}
Thanks to Lemma \ref{lem-unif-bound-theta-n-H-1-domain}, it is sufficient to prove that $ (u_n(t), t\in [0, T])$ is
uniformly bounded in $L^2(\Omega, W^{\gamma, 2}(0, T; \mathbb{H}^{-\delta', 2}(O))$. We recall that the Besov-Slobodetski space $W^{\gamma,
p}(0, T; E)$, with $ E$ being a Banach space, $ \gamma \in (0, 1)$ and $ p\geq 1$, is the space
of all $ v\in L^P(0, T; E) $ such that
\begin{eqnarray}
||v||_{W^{\gamma, p}}:= \left(\int_0^T|v(t)|_E^pdt+
\int_0^T\int_0^T\frac{|v(t)-v(s)|_E^p}{|t-s|^{1+\gamma p}}
dtds\right)^{\frac1p}<\infty.
\end{eqnarray}
\noindent As  $(u_n(t), t\in [0, T])$ is the strong solution  of the finite dimensional  stochastic
differential equation \eqref{FSBE-Galerkin-approxi}, then  $u_n(t)$  is the solution of the stochastic integral equation 
\begin{equation}\label{FSBE-Integ-solu-Galerkin-approxi}
u_n(t)= P_nu_0 + \int_0^t(-A_\alpha u_n(r) + P_nB(u_n(r))dr + \int_0^tP_nG(u_n(r))\,dW_n(r),\; a.s.,\\
\end{equation}
for all $t\in [0, T]$. We denote by 
\begin{equation}\label{Eq-Drift-term}
I(t):=  \int_0^t(-A_\alpha u_n(r) + P_nB(u_n(r))dr
\end{equation}
and
\begin{equation}\label{Eq-Drift-term}
J(t):= \int_0^tP_nG
(u_n(r))\,dW_n(r).
\end{equation}
\noindent We prove that $ I(\cdot)$ is uniformly bounded in  $L^2(\Omega; W^{\gamma, 2}(0, T; \mathbb{H}^{-\delta', 2}(O))$ 
and that the stochastic term $ J(\cdot )$ is uniformly bounded in $ L^2(\Omega; W^{\gamma, 2}(0, T;  \mathbb{L}^2(O))$, for all $ \gamma<\frac12$.\del{ as the stochastic term $ J$ is more regular then the drift term, see e.g. \cite[Lemma 2.1]{Flandoli-Gatarek-95}. \\}
Let  $ \phi \in \mathbb{H}^{{\delta'}, 2}(O)$, using Identity \eqref{Eq-3lin-propsym}, we get
\begin{eqnarray}
| {}_{\mathbb{H}^{-{\delta'}, 2}}\langle P_nB(u_n(r)), \phi\rangle_{\mathbb{H}^{{\delta'}, 2}}|
&=& |\langle u_n(r) \cdot
\nabla P_n\phi, u_n(r)\rangle_{\mathbb{L}^{2}}|\nonumber\\
&\leq& |\nabla P_n\phi|_{L^\infty}| u_n(r)|^2_{\mathbb{L}^{2}}.
\end{eqnarray}
Thanks to \cite[Remark 4 p 164, Theorem 3.5.4.ps.168-169 and Theorem 3.5.5 p 170]{Schmeisser-Tribel-87-book} for $ O= \mathbb{T}^d$, to
\cite[Theorem 7.63  and point 7.66]{Adams-Hedberg-94} for $ O$ being a bounded domain and to
the condition $ {\delta'}>1+\frac d{2}$,
we deduce for  $ 0<\epsilon < \delta'-1-\frac d{2}$,
 $$ |\nabla P_n\phi|_{L^\infty} \leq c |\nabla P_n\phi|_{H^{\epsilon+\frac d2, 2}} \leq c
|\phi|_{H_d^{1+\epsilon+\frac d2, 2}} \leq c |\phi|_{\mathbb{H}^{\delta', 2}}.$$
Therefore,
\begin{eqnarray}\label{}
 |P_nB(u_n(r))|_{\mathbb{H}^{-\delta', 2}}\leq c |u_n(r)|_{\mathbb{L}^{2}}^2
\end{eqnarray}
 and
\begin{eqnarray}\label{eq-unif-int-I(t)}
\int_0^T|I(t)|_{\mathbb{H}^{-\delta', 2}}^2dt&\leq& c\int_0^T\int_0^t \big(|(-A_\alpha
u_n(r)|^2_{\mathbb{H}^{-\delta', 2}} + |P_nB(u_n(r))|^2_{\mathbb{H}^{-\delta', 2}}\big)drdt\nonumber\\
&\leq& c\int_0^T\int_0^t\big(|
u_n(r)|^2_{\mathbb{L}^{ 2}} + |u_n(r)|^4_{\mathbb{L}^{2}}\big)drdt.
\end{eqnarray}
Moreover, using H\"older inequality and arguing as before, we get for $ t\geq s > 0$,
\begin{eqnarray}\label{eq-unif-int-I(t)-I(s)}
|I(t)- I(s)|^2_{\mathbb{H}^{-\delta', 2}}&=& |\int_s^t(-A_\alpha u_n(r) +
P_nB(u_n(r))dr|^2_{\mathbb{H}^{-{\delta'}, 2}}\nonumber\\
&\leq & C(t-s)\left(\int_s^t(| u_n(r)|^2_{\mathbb{L}^{2}} +
|u_n(r)|^4_{\mathbb{L}^{2}})dr \right).
\end{eqnarray}
From \eqref{eq-unif-int-I(t)},  \eqref{eq-unif-int-I(t)-I(s)} and \eqref{Eq-Ito-n-weak-estimation-1-bounded}, 
we have for  $ \gamma <\frac12$,
\begin{eqnarray}\label{eq-unif-int-I(t)x2}
\mathbb{E}\big(\int_0^T|I(t)|_{\mathbb{H}^{-\delta', 2}}^2dt&+&\int_0^T\int_0^T
\frac{|I(t)- I(s)|^2_{\mathbb{H}^{-\delta',
2}}}{|t-s|^{1+2\gamma }} dtds\big)^{\frac12} \nonumber\\
&\leq& C\mathbb{E}\left(\int_0^T(| u_n(r)|^2_{\mathbb{L}^{2}} +
|u_n(r)|^4_{\mathbb{L}^{2}})dr
\right)^\frac12 \leq C<\infty.
\end{eqnarray}
Now, we estimate the stochastic term $ J$. 
Using the stochastic isometry, the contraction property of $ P_n$ and Assumption $(\mathcal{C})$,( Condition  \eqref{Eq-Cond-Linear-Q-G} with $ q=2$ and $ \delta =0$), we get
\begin{eqnarray}
\int_0^T\mathbb{E}|\int_0^tP_nG(u_n(r))dW_n(r)|_{\mathbb{L}^{ 2}}^2dt&\leq&
C\int_0^T\mathbb{E}\int_0^t||G(u_n(r))||^2_{L_Q(\mathbb{L}^2)}drdt\nonumber\\
&\leq&
C\int_0^T\mathbb{E}\int_0^t(1+|u_n(r)|^2_{\mathbb{L}^2})drdt \leq c<\infty.
\end{eqnarray}
Moreover, for $ t\geq s> 0$ and $ \gamma <\frac12$, the same ingredients
above yield to
\begin{eqnarray}
\mathbb{E}\int_0^T\int_0^T\frac{|J(t)-
J(s)|^2_{\mathbb{L}^{2}}}{|t-s|^{1+2\gamma }} dtds &\leq&
C\mathbb{E}\int_0^T\int_0^T\frac{\int_s^t||G(u_n(r))||^2_{L_Q(\mathbb{L}^2)}dr}{|t-s|^{1+2\gamma
}} dtds \nonumber\\
&\leq& C\mathbb{E}\sup_{[0, T]}(1+|u_n(t)|^2_{\mathbb{L}^{2}})
\int_0^T\int_0^T|t-s|^{-2\gamma } dtds \leq c <\infty.
\end{eqnarray}
The proof of the lemma is now completed.
\end{proof}

To prove the existence of a martingale solution, we use \del{consider the Gelfand triplet \eqref{Gelfand-triple-Domain} and 
 use lemmas \ref{lem-unif-bound-theta-n-H-1-domain} and \ref{lem-bounded-W-gamma-p} 
and }the following compact embedding, see \cite[Theorem 2.1]{Flandoli-Gatarek-95},
\begin{equation}
 W^{\gamma, 2}(0, T; \mathbb{H}^{-\delta', 2}(O))\cap \mathbb{L}^2(0, T; \mathbb{H}^{\frac\alpha2, 2}(O)) 
 \hookrightarrow L^2(0, T; \mathbb{L}^2(O)).
\end{equation}
\del{$ L^2(0, T; \mathbb{H}^{\frac\alpha2, 2}(O)) \hookrightarrow L^2(0, T; \mathbb{L}^2(O))$,} Therefore, we deduce that the sequence of laws $ (\mathcal{L}(u_n))_n$  is tight on $ L^2(0, T; \mathbb{L}^2(O))$. 
Thanks to Prokhorov's theorem there exists a  subsequence, still denoted $ (u_n)_n$, for which  the sequence of laws $ (\mathcal{L}(u_n))_n$ converges  weakly on $ L^2(0, T; \mathbb{L}^2(O))$  to a probability measure $ \mu$. By Skorokhod's embedding theorem, we can construct a probability basis $ (\Omega_*, F_*, \mathbb{F}_*,  P_*)$  and a sequence of $ L^2(0, T; \mathbb{L}^2(O))\cap C([0, T]; \mathbb{H}^{-\delta', 2}(O))-$random variables
$ (u^*_n)_n$ and $ u^*$ such that  $\mathcal{L}(u^*_n) = \mathcal{L}(u_n), \forall n \in \mathbb{N}_0$,  $\mathcal{L}(u^*) = \mu$ and
$ u^*_n \rightarrow u^* a.s.$ in $ L^2(0, T; \mathbb{L}^2(O))\cap C([0, T]; \mathbb{H}^{-\delta', 2}(O))$. Moreover,  $ u^*_n(\cdot, \omega) \in C([0, T]; H_n)$. Thanks to  Lemma \ref{lem-unif-bound-theta-n-H-1-domain} and to the equality in law, we infer that the sequence  $ u^*_n$ converges weakly in $  L^2(\Omega\times [0, T]; \mathbb{H}^{\frac\alpha2, 2}(O))$ and weakly-star in $ L^p(\Omega, L^\infty([0, T]; \mathbb{L}^{2}(O))$ to a limit $ u^{**}$. It is easy to see that $u^{*} = u^{**},\;  dt\times dP-a.e.$ and
\begin{eqnarray}\label{eq-bound-u-*-n-u-*}
\mathbb{E}_*\sup_{[0, T]}| u^*(s)|^p_{\mathbb{L}^2}+ \mathbb{E}_*\int_0^T| u^*(s)|^2_{ \mathbb{H}^{\frac\alpha2, 2}}ds \leq c<\infty.
\end{eqnarray}
 
\del{\begin{equation}
u^{*}(\cdot, \omega)\in L^2(0, T; \mathbb{H}^{\frac\alpha2, 2}(O))\cap L^\infty(0, T; \mathbb{L}^2(O)).
\end{equation} }
We introduce the filtration 
\begin{equation}
(\mathit{G}_n^*)_t:= \sigma\{u^{*}_n(s), s\leq t\}
\end{equation}
and construct (with respect to $ (\mathit{G}_n^*)_t$) the time continuous square integrable martingale
$ (M_n(t), t\in [0, T])$ with trajectories in
$ C([0, T]; \mathbb{L}^2(O))$ by
\begin{equation}
M_n(t):= u_n^*(t) - P_nu_0+\int_0^t A_\alpha u_n^*(s) ds -\int_0^t P_nB(u_n^*(s))ds.
\end{equation}
The equality in law yields to the fact that the quadratic variation is given by 
\begin{equation}
\langle\langle M_n\rangle\rangle_t= \int_0^tP_nG(u^*_n(s))QG(u^*_n(s))^*ds,
\end{equation}
where $ G(u^*_n(s))^*$ is the adjoint of $G(u^*_n(s))$. We prove that, for $ a.s.$, $ M_n(t)$ converges weakly in $ \mathbb{H}^{-\delta', 2}(O)$ to  the martingale $ M(t)$, for all $ t\in [0, T]$, where $ M(t)$ is given by 
\begin{equation}\label{eq-M(t)}
M(t):= u^*(t) - u_0+\int_0^t A_\alpha u^*(s) ds -\int_0^t B(u^*(s))ds.
\end{equation}
\del{In fact, for all $ v\in V_2:= \mathbb{H}^{\delta', 2}$ with $ \delta'>1+\frac d2$, we have $ P-a.s.$,  $ \langle P_nu_0, v\rangle $  converges to $ \langle u_0, v\rangle $, thanks to the fact that $ P_n \rightarrow I$ in $ \mathbb{L}^2(O)$, $ \langle u_n^*(t), v\rangle $  converges to $ \langle u^*(t), v\rangle $ as a consequence of the 
the a.s. convergence in $ L^2(0, T; \mathbb{L}^2(O))$ (in fact, we speak about the convergence of a subsequence but as usual we keep the same notation) and  the weak convergence and the continuity in $\mathbb{H}^{-\delta', 2}(O))$, the term $\int_0^t A_\alpha u_n^*(s) ds$
converges thanks to the weak convergence $ L^2(0, t; \mathbb{L}^2(O))$, for all $ t\in [0, T]$ and the elementry inequality $ \langle A_\alpha u_n^*(t), v\rangle = \langle u_n^*(t), A_\alpha v\rangle $ with $ v \in \mathbb{H}^{\delta', 2}(O)$ and  $\delta'>1+\frac d2>\alpha $. The convergence of $\int_0^t \langle B(u_n^*(s)), v\rangle ds$ is completely described in \cite[Appendix 2]{Flandoli-Gatarek-95}, in particular the condition $ \delta'>1+\frac d2$ implies that $ \partial_j v \in C^0(O)$, which we need to do the calculus. To prove that $ M(t)$ is  a quadratic martingale, we see that for all $ \phi \in C_b(L^2(0, s; \mathbb{L}^2(O)))$ and $ v\in \mathcal{D}(O)$
\begin{equation}
\mathbb{E}(\langle M(t)- M(s), v\rangle \phi(u^*|_{[0, s]}))= \lim_{n\rightarrow +\infty} \mathbb{E}(\langle M_n(t)- M_n(s), v\rangle \phi(u^*|_{[0, s]}))=0
\end{equation}
and 
\begin{eqnarray}
&{}&\mathbb{E}(\langle M(t), v\rangle \langle M(t), y\rangle - \langle M(s), v\rangle \langle M(s), y\rangle -\int_s^t\langle G^*(u^*(r)P_nv, G^*(u^*(r)P_ny \rangle dr)\phi(u^*|_{[0, r]}))\nonumber\\
&=& \lim_{n\rightarrow +\infty} \mathbb{E}(\langle M_n(t), v\rangle \langle M_n(t), y\rangle - \langle M_n(s), v\rangle \langle M_n(s), y\rangle -\int_s^t\langle G^*(u_n^*(r)P_nv, G^*(u_n^*(r)P_ny \rangle dr)\phi(u_n^*|_{[0, r]}))\nonumber\\
&=& 0
\end{eqnarray}}
Some of the main ingredients are the $a.s.$ convergence of $ u_n^*$ in $ L^2(0, T; \mathbb{L}^2)$, $ \partial_j\phi \in C^0 $ and therefore we can estimate $\int_0^t \langle B(u_n^*(s)), v\rangle ds$ by $\int_0^t|B(u_n^*(s))|_{L^1} |v|_{C^1} ds$. Now we apply the representation theorem \cite[Theorem 8.2]{DaPrato-Zbc-92}, we infer that there exists a probability basis $ (\Omega^*, \mathcal{F}^*, P^*, \mathbb{F}^*, W^*)$ such that  
\begin{equation}
M(t)=\int_0^tG(u^*(s))W^*(ds).
\end{equation}
If moreover, $ \alpha \in [\alpha_0(d):= 1+\frac{d-1}{3}, 2]$, then thanks to 
Burkholdy-Davis-Gandy inequality, Assumption \eqref{Eq-Cond-Linear-Q-G}, with $q=2, \delta =0$ and  \eqref{eq-bound-u-*-n-u-*}
\begin{eqnarray}\label{cont-2-martg-stochas}
\mathbb{E}\sup_{[0, T]} |\int_0^tG(u^*(s))dW^*(s)|^2_{\mathbb{L}^{2}}
&\leq& c \mathbb{E}\int_0^T|G^*(u^*(s))|^2_{L_Q(\mathbb{L}^{2})}ds
\leq c (1+ \mathbb{E}\sup_{[0, T]}|u^*(s)|^2_{\mathbb{L}^{2}})<\infty.\nonumber\\
\end{eqnarray}
Further more, using Estimate \eqref{B-u-v-h-alpha-2-d} with $ \eta=0$, the Sobolev embedding 
$ \mathbb{H}^{\frac{d+2-\alpha}{4}, 2}(O)\subset \mathbb{H}^{\frac{\alpha}{2}, 2}(O)$, ( $ 1+\frac{d-1}{3}\leq \alpha \leq 2$) 
and the boundedness of the  operator $ A_\alpha: \mathbb{H}^{\frac{\alpha}{2}, 2}(O) \rightarrow \mathbb{H}^{\frac{-\alpha}{2}, 2}(O)$, we get
\begin{eqnarray}\label{cont-1-martg}
\mathbb{E}\int_0^T\big( |A_\alpha u^*(s)|_{\mathbb{H}^{-\frac\alpha2, 2}}&+& |B(u^*(s))|_{\mathbb{H}^{-\frac\alpha2, 2}}\big)ds \leq c
\mathbb{E} \int_0^T\big( |u^*(s)|_{\mathbb{H}^{\frac\alpha2, 2}} + |u^*(s)|^2_{\mathbb{H}^{\frac{d+2-\alpha}{4}, 2}}\big)ds\nonumber\\
&\leq& c (1+\mathbb{E} \int_0^T|u^*(s)|^2_{\mathbb{H}^{\frac\alpha2, 2}}ds)<\infty. 
\end{eqnarray}
Therefore, using the densely embedding $ \mathbb{H}^{\delta', 2}(O) \hookrightarrow \mathbb{H}^{\frac\alpha2, 2}(O)$, we can see the equality \eqref{Eq-weak-Solution} in the $\mathbb{H}^{\frac\alpha2, 2}- \mathbb{H}^{-\frac\alpha2, 2}-$duality.

\vspace{0.25cm}

{\bf Pathewise uniqueness $ (\ref{Main-theorem-martingale-solution-d}.2)$.}
To prove the uniqueness of the martingale solution under the condition \eqref{uniquness-con-martg}, we follow the scheme of the uniqueness in  
Section \ref{sec-Torus} taking into account the changes of the norms. Let
$ u^1$ and $ u^2$ be two martingale solutions on the same probability basis $ (\Omega^*, \mathbb{F}^*, P^*, W^*)$ and 
such that $u^1$ satisfies \eqref{uniquness-con-martg}.\del{\subsection*{Proof of pathwise uniqueness for the martingale solution.}
\label{sec-subsection-uniqueness-martingale}.} We define  $ \tau_N^i$, $ \tau_N$  and  $ w:=  u^1- u^2$ as in Section \ref{sec-Torus}. 
Then $ w$ satisfies\del{ in $ \mathbb{H}^{-\frac\alpha2, 2}(O)$}
Equation \eqref{eq-uniq-w}  with $ W$ replaced by $ W^*$. We use 
 Ito formula to the product $ e^{-r(t)}|w(t)|^2_{\mathbb{L}^{2}}$, with $(r(t):=
c\int_0^t|u^1(s)|^{\frac{4\alpha}{3\alpha-d-2}}_{\mathbb{H}^{\frac{d+2-\alpha}{4}, 2}}ds, t\in [0, T])$, 
Identity  \eqref{formula-B-v1-B-v2},
\del{Property \eqref{Eq-3lin-propnull},}condition \eqref{Eq-Cond-Lipschitz-Q-G}, with $ q=2, \delta=0$, Estimate \eqref{B-u-v-h-alpha-2-d} with $ \eta=0$ 
and argue as around  \eqref{Torus-uniquenss-ito-formula-H-1} we get the proof of the uniqueness. Combinning this latter result with Yamada-Watabnabe theorem \cite{Kurtz-Yamada-07, Ondrejat-thesis-03}, the global existence of a unique weak-strong solution follows.

\del{\section{{} {d}D-Fractional stochastic Navier-Stokes equation on Bounded domain and on the Torus with no smooth data}\label{sec-Domain}}

\del{\section{Maximal local weak-strong solutions for the multi-dimensional FSNSEs.}\label{sec-Domain}}

\section{Similarity of the 2D-FSNSE and the 3D-NSE.\del{ weak-strong solutions for the multi-dimensional FSNSEs.}}\label{sec-Domain}

In this section, we \del{prove \del{the first part of Theorem \ref{Main-theorem-boubded-2}}$(3.8.1)$. But first let us} illustrate the fact  that the 2D-FSNSE exhibits the same difficulty to prove the existence of the global solution as the 3D-NSE.\del{ This part will be also used in Section \ref{sec-global-weak-solution}.} We follow a similar calculus as 
in Section \ref{sec-Torus}, replacing Property \eqref{vanishes-bilinear-tous-H1} by Property
\eqref{Eq-3lin-propnull}\del{is intrinsic for that  calculus and is valid neither for bounded domains of $ \mathbb{R}^d$ nor for the $ 3D-$torus, we use the property
\eqref{Eq-3lin-propnull}.} and considering\del{ for $ d\in \{2, 3\}$ and $ \alpha_0(d):= 1+\frac{d-1}3\leq \alpha\leq 2$,} 
the  densely continuously embedding Gelfand triple \del{\eqref{Gelfand-triple-Domain}.}
\begin{equation}\label{Gelfand-triple-Domain}
 V_2=V:= \mathbb{H}^{\frac\alpha2, 2} (O)\hookrightarrow \mathbb{L}^{2}(O)\hookrightarrow \mathbb{H}^{-\frac\alpha2, 2}(O)=:V^*. 
\end{equation}
 We obtain the following Lemma
\begin{lem}\label{lem-unif-bound-theta-n-H-1-domain}
Let $ d\in \{2, 3\}$, $  \alpha_0(d):= 1+\frac{d-1}3\leq \alpha\leq 2$ and $ u_0\in L^{p}(\Omega, \mathbb{L}^{2}(O)), p\geq 4$ and let $ G$ satisfying Assumption $ (\mathcal{C})$ (\eqref{Eq-Cond-Linear-Q-G} with $ q=2$ and $ \delta =0$). Then
the solutions $ (u_n(t), t\in [0, T])$ of the equations \eqref{FSBE-Galerkin-approxi},  $ n\in \mathbb{N}_0$,
satisfy the following estimates

\begin{eqnarray}\label{Eq-Ito-n-weak-estimation-1-bounded}
\sup_{n}\mathbb{E}\Big(\sup_{[0, T]}|u_n(t)|^{p}_{\mathbb{L}^{ 2}}&+&
\int_0^T|u_n(t)|^{p-2}_{\mathbb{L}^{ 2}}\Big(|u_n(t)|^2_{\mathbb{H}^{\frac\alpha2, 2}} +
|u_n(t)|_{\mathbb{H}^{\beta, q_1}}^2 \Big)dt \nonumber\\
&+& \int_0^T|u_n(t)|^4_{\mathbb{L}^{ 2}}dt+ \int_0^T|u_n(t)|^{\frac{\alpha}{\eta}}_{\mathbb{H}^{\eta, 2}}dt\Big)<\infty,
\end{eqnarray}
where $ \beta\leq \frac\alpha2-\frac d2+\frac d{q_1}$, $ 2\leq q_1<\infty$ and $ \frac\alpha p<\eta\leq \frac\alpha2$.
\del{\item \begin{eqnarray}\label{Eq-B-n-weak-estimation-1-bounded}
\sup_{n}\left(\mathbb{E}\int_0^T |P_nB_n(u_n(t))|^{\frac{2\alpha}{4-\alpha}}_{\mathbb{H}^{-\frac\alpha2, 2}}+
\int_0^T |A^\frac\alpha2 u_n(t))|^{2}_{\mathbb{H}^{-\frac\alpha2, 2}} \right) dt <\infty.\nonumber\\
\end{eqnarray}}
\begin{eqnarray}\label{Eq-B-n-weak-estimation-1-bounded}
\sup_{n}\left(\mathbb{E}\int_0^T (|P_nB(u_n(t))|_{\mathbb{H}^{-\frac\alpha2, 2}}+| A^\frac\alpha2 u_n(t))|_{\mathbb{H}^{-\frac\alpha2, 2}})^{\frac{2\alpha}{d+2-\alpha}}dt \right) <\infty.
\end{eqnarray}
\end{lem}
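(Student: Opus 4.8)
The strategy mirrors the torus computation of Lemma \ref{lem-unif-bound-theta-n-H-1}, but shifted down one level of regularity, working in the Gelfand triple \eqref{Gelfand-triple-Domain} with base space $\mathbb{L}^2(O)$ in place of $\mathbb{H}^{1,2}(O)$. Since each $u_n$ takes values in the finite–dimensional space $H_n$ spanned by smooth Stokes eigenfunctions, $u_n(t)$ is smooth in space and the finite–dimensional It\^o formula applies directly to $|u_n(t)|^2_{\mathbb{L}^2}$. The drift produces three terms: the dissipative contribution $\langle A_\alpha u_n, u_n\rangle_{\mathbb{L}^2} = |u_n|^2_{\mathbb{H}^{\frac\alpha2,2}}$, which follows from the spectral representation \eqref{construction-of-fract-bounded} (so that $\langle A^{\frac\alpha2}u,u\rangle = |A^{\frac\alpha4}u|^2_{\mathbb{L}^2}$); the nonlinear contribution $\langle P_nB(u_n), u_n\rangle_{\mathbb{L}^2} = b(u_n,u_n,u_n)$, which vanishes by \eqref{Eq-3lin-propnull} (using $P_nu_n = u_n$); and the It\^o correction $\|P_nG(u_n)Q^{\frac12}\|^2_{HS(\mathbb{L}^2)}$, bounded by $c(1+|u_n|^2_{\mathbb{L}^2})$ through the linear growth hypothesis \eqref{Eq-Cond-Linear-Q-G} with $q=2$, $\delta=0$. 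Estimating the stochastic integral by the Burkholder--Davis--Gundy inequality and then applying Young and Gronwall's inequalities exactly as in \eqref{Eq-second-norm-l-2n-start-Torus}--\eqref{Eq-second-norm-l-2n-torus} gives the baseline bound $\sup_n\mathbb{E}\big(\sup_{[0,T]}|u_n|^2_{\mathbb{L}^2} + \int_0^T|u_n|^2_{\mathbb{H}^{\frac\alpha2,2}}dt\big)\leq C$.

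For the two leading terms of \eqref{Eq-Ito-n-weak-estimation-1-bounded} I would then apply It\^o's formula to $|u_n(t)|^p_{\mathbb{L}^2}$, as in \eqref{stoch-eq-product-Ito-Form-Lp-norm-SDE-torus}, and repeat the BDG/Young/Gronwall scheme to obtain $\sup_n\mathbb{E}\big(\sup_{[0,T]}|u_n|^p_{\mathbb{L}^2} + \int_0^T|u_n|^{p-2}_{\mathbb{L}^2}|u_n|^2_{\mathbb{H}^{\frac\alpha2,2}}dt\big)\leq c(1+\mathbb{E}|u_0|^p_{\mathbb{L}^2})$, finite since $u_0\in L^p(\Omega;\mathbb{L}^2)$ with $p\geq4$ and $|u_{0n}|_{\mathbb{L}^2}\leq|u_0|_{\mathbb{L}^2}$. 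The three remaining terms then reduce to these controlled quantities: the $\mathbb{H}^{\beta,q_1}$–term by the Sobolev embedding $\mathbb{H}^{\frac\alpha2,2}(O)\hookrightarrow\mathbb{H}^{\beta,q_1}(O)$, valid precisely for $\beta\leq\frac\alpha2-\frac d2+\frac d{q_1}$; the $L^4$–in–time term by $\int_0^T|u_n|^4_{\mathbb{L}^2}dt\leq T\sup_{[0,T]}|u_n|^4_{\mathbb{L}^2}$ together with $p\geq4$; and the $\int_0^T|u_n|^{\frac\alpha\eta}_{\mathbb{H}^{\eta,2}}dt$–term by the interpolation inequality $|u_n|_{\mathbb{H}^{\eta,2}}\leq c|u_n|^{\frac{2\eta}\alpha}_{\mathbb{H}^{\frac\alpha2,2}}|u_n|^{1-\frac{2\eta}\alpha}_{\mathbb{L}^2}$, which raised to the power $\frac\alpha\eta$ gives $|u_n|^2_{\mathbb{H}^{\frac\alpha2,2}}|u_n|^{\frac\alpha\eta-2}_{\mathbb{L}^2}$, with $\frac\alpha\eta-2<p-2$ because $\eta>\frac\alpha p$.

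The decisive step is the bound \eqref{Eq-B-n-weak-estimation-1-bounded} on the image of the drift in $\mathbb{H}^{-\frac\alpha2,2}(O)$, and this is where the threshold $\alpha_0(d)=1+\frac{d-1}{3}$ enters and where the real work lies. For the linear part, $|A^{\frac\alpha2}u_n|_{\mathbb{H}^{-\frac\alpha2,2}}=|u_n|_{\mathbb{H}^{\frac\alpha2,2}}$, and since $\frac{2\alpha}{d+2-\alpha}\leq2$ for all admissible $(d,\alpha)$, H\"older in time reduces $\int_0^T|u_n|^{\frac{2\alpha}{d+2-\alpha}}_{\mathbb{H}^{\frac\alpha2,2}}dt$ to the energy estimate. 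For the nonlinear part I would invoke the refined fractional estimate \eqref{Eq-B-H-alpha-2-est} of Corollary \ref{coro-lem-Giga}, namely $|B(u_n)|_{\mathbb{H}^{-\frac\alpha2,2}}\leq c|u_n|^2_{\mathbb{H}^{\frac{2+d-\alpha}4,2}}$, and then interpolate $\mathbb{H}^{\frac{2+d-\alpha}4,2}$ between $\mathbb{L}^2$ and $\mathbb{H}^{\frac\alpha2,2}$ with exponent $\theta=\frac{2+d-\alpha}{2\alpha}$; the constraint $\theta\leq1$ is equivalent to $\alpha\geq\frac{2+d}3=\alpha_0(d)$, exactly the standing hypothesis. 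A direct computation shows that raising the resulting bound to the power $\frac{2\alpha}{d+2-\alpha}$ yields precisely $|u_n|^2_{\mathbb{H}^{\frac\alpha2,2}}|u_n|^{a}_{\mathbb{L}^2}$ with $a=\frac{2(3\alpha-2-d)}{d+2-\alpha}$, and one checks $0\leq a\leq2\leq p-2$ throughout the admissible range, so $\mathbb{E}\int_0^T|B(u_n)|^{\frac{2\alpha}{d+2-\alpha}}_{\mathbb{H}^{-\frac\alpha2,2}}dt$ is again dominated by the already controlled $\mathbb{E}\int_0^T|u_n|^{p-2}_{\mathbb{L}^2}|u_n|^2_{\mathbb{H}^{\frac\alpha2,2}}dt$. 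The main obstacle is thus not the energy bookkeeping, which is routine, but securing this nonlinear estimate in the weak–dissipation regime: the classical $\mathbb{H}^{-1}$ bound is useless here, and it is only the sharp estimate \eqref{Eq-B-H-alpha-2-est} coupled with the exact interpolation exponent that makes the powers close, and they close precisely at $\alpha=\alpha_0(d)$.
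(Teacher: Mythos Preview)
Your proof is correct and follows essentially the same route as the paper: the energy estimate \eqref{Eq-Ito-n-weak-estimation-1-bounded} is obtained exactly as in Lemma~\ref{lem-unif-bound-theta-n-H-1} with $\mathbb{H}^{1,2}$, $\mathbb{H}^{1+\frac\alpha2,2}$ replaced by $\mathbb{L}^2$, $\mathbb{H}^{\frac\alpha2,2}$ and the cancellation \eqref{Eq-3lin-propnull} in place of \eqref{vanishes-bilinear-tous-H1}, while \eqref{Eq-B-n-weak-estimation-1-bounded} is handled via \eqref{Eq-B-H-alpha-2-est} and the interpolation $|u_n|_{\mathbb{H}^{\frac{d+2-\alpha}4,2}}\leq c|u_n|^{\frac{d+2-\alpha}{2\alpha}}_{\mathbb{H}^{\frac\alpha2,2}}|u_n|^{\frac{3\alpha-d-2}{2\alpha}}_{\mathbb{L}^2}$, whose validity is precisely the threshold $\alpha\geq\alpha_0(d)$. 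Your exponent bookkeeping ($a\leq 2\leq p-2$) is in fact slightly sharper than the paper's stated bound $a\leq 4\leq p$, but the argument is the same.
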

\begin{proof}
The proof of \eqref{Eq-Ito-n-weak-estimation-1-bounded} follows exactly as for \eqref{Eq-Ito-n-weak-estimation-1-Torus} by replacing
the spaces $ \mathbb{H}^{1, 2}(\mathbb{T}^2)$ and $ \mathbb{H}^{1+\frac\alpha2, 2}(\mathbb{T}^2)$ respectively by
$ \mathbb{L}^{2}(O)$ and $ \mathbb{H}^{\frac\alpha2, 2}(O)$.
For the first term in the Estimate \eqref{Eq-B-n-weak-estimation-1-bounded}, we use the contraction property of $ P_n$, 
Estimate\del{ \eqref{B-u-v-h-alpha-2-d} with $ \eta =0$ (which coincides with} \eqref{Eq-B-H-alpha-2-est}\del{\eqref{Eq-B-H-alpha-2-est}}
and the Sobolev interpolation (recall that thanks to the condition $1+ \frac{d-1}3\leq  \alpha \leq 2$, we have 
the following embedding 
$ \mathbb{H}^{\frac\alpha2, 2}(O) \hookrightarrow \mathbb{H}^{\frac{d+2-\alpha}{4}, 2}(O)\hookrightarrow \mathbb{L}^2(O)$), we end up, for $1+ \frac{d-1}3<  \alpha \leq 2$,  with
\begin{eqnarray}\label{unif-estint-B-u}
\mathbb{E}\int_0^T |P_nB(u_n(t))|_{\mathbb{H}^{-\frac\alpha2}}^{\frac{2\alpha}{d+2-\alpha}}dt &\leq& c
\mathbb{E}\int_0^T |u_n(t)|_{\mathbb{H}^{\frac{d+2-\alpha}{4}, 2}}^{\frac{4\alpha}{d+2-\alpha}} dt \nonumber\\
&\leq& c
\mathbb{E}\int_0^T \big(|u_n(t)|^{\frac{d+2-\alpha}{2\alpha}}_{\mathbb{H}^{\frac\alpha2, 2}} 
|u_n(t)|^{\frac{3\alpha-d-2}{2\alpha}}_{\mathbb{L}^{2}}\big)^{\frac{4\alpha}{d+2-\alpha}} dt\nonumber\\
&\leq& c
\mathbb{E}\int_0^T |u_n(t)|^{2}_{\mathbb{H}^{\frac\alpha2, 2}} 
|u_n(t)|^{2\frac{3\alpha-d-2}{d+2-\alpha}}_{\mathbb{L}^{2}} dt.
\end{eqnarray}
The last term in the RHS of \eqref{unif-estint-B-u} is uniformly bounded thanks to \eqref{Eq-Ito-n-weak-estimation-1-bounded} and the condition
$ 2\frac{3\alpha-d-2}{d+2-\alpha}\leq p$. But this last is guaranteed thanks to $ 2\frac{3\alpha-d-2}{d+2-\alpha}\leq 4\leq p$. The case $1+ \frac{d-1}3=\alpha$ is easily obtained by application of Estimation \eqref{Eq-B-H-alpha-2-est}.
The second term in the RHS of \eqref{Eq-B-n-weak-estimation-1-bounded} is uniformly bounded thanks to
the fact that $ A: V:= D(A^\frac\alpha4) \rightarrow V^*$ is bounded, the condition $ \alpha \leq 1+\frac d2$ which yileds to 
$ \frac{2\alpha}{d+2-\alpha} \leq 2$ and thus we get
\begin{eqnarray}
\mathbb{E}\int_0^T |A^\frac\alpha2 u_n(t)|^{\frac{2\alpha}{d+2-\alpha}}_{\mathbb{H}^{-\frac\alpha2, 2}} dt
\leq c\mathbb{E}\int_0^T |u_n(t)|^{\frac{2\alpha}{d+2-\alpha}}_{\mathbb{H}^{\frac\alpha2, 2}} dt\leq c
\mathbb{E}\int_0^T (1+| u_n(t)|^{2}_{\mathbb{H}^{\frac\alpha2, 2}} )dt <\infty.\nonumber\\
\end{eqnarray}
Finaly we apply  Estimate \eqref{Eq-Ito-n-weak-estimation-1-bounded}.
\end{proof}

\noindent {\bf Existence of the solution.}
Assume that $1+ \frac{d-1}3<\alpha \leq 2$. Thanks to \eqref{Eq-Ito-n-weak-estimation-1-bounded}  and \eqref{Eq-B-n-weak-estimation-1-bounded}, we conclude the existence of
a subsequence, which is still denoted by $(u_n)_n$,\del{ and adapted processes $ u, F_2, G_2 $, such that}
\begin{equation}\label{eq-u-first-belonging}
 u\in L^2(\Omega\times [0, T]; \mathbb{H}^{\frac\alpha2, 2}(O))\cap
L^p(\Omega, L^\infty([0, T]; \mathbb{L}^{2}(O))),
\end{equation}
\begin{equation}
F_2 \in L^{\frac{2\alpha}{d+2-\alpha}}(\Omega\times [0, T];
\mathbb{H}^{-\frac\alpha2, 2} (O))\;\; \text{and}\;\; 
G_2\in L^2(\Omega\times [0, T]; L_Q(\mathbb{L}^{ 2} (O))), s.t.
\end{equation}
\begin{itemize}
\item (1') $u_n \rightarrow u$ weakly in $ L^2(\Omega\times [0, T]; \mathbb{H}^{\frac\alpha2, 2}(O)))$.
\item (2') $u_n \rightarrow u$ weakly-star in $ L^p(\Omega, \mathbb{L}^\infty([0, T]; \mathbb{L}^{ 2}(O)))$,
\item (3') $P_nF(u_n):= A^\frac\alpha2 u_n + P_nB(u_n)\rightarrow F_2$ weakly in $ L^{\frac{2\alpha}{d+2-\alpha}}(\Omega\times [0, T];
\mathbb{H}^{-\frac\alpha2, 2} (O))$.
\item (4')$u_n \rightarrow u$ weakly in $ L^{\frac{\alpha}{\eta}}(\Omega\times [0, T]; \mathbb{H}^{\eta, 2}(O))$, for all
$ \frac\alpha p<\eta \leq \frac\alpha2 $. \del{\footnote{Remark that $ \frac43 < \alpha \leq 2 \Leftrightarrow 1\leq \frac{2\alpha}{4-\alpha}\leq 2$}}
\item (5') $P_nG(u_n)\rightarrow G_2$ weakly in $ L^2(\Omega\times [0, T]; L_Q(\mathbb{L}^{ 2} (O)))$.

\end{itemize}
To prove the existence of a weak-strong solution of \eqref{Main-stoch-eq}, we can follow the same scheme as in Section \ref{sec-Torus} with the replacement of the spaces $ \mathbb{H}^{1, 2}(\mathbb{T}^2)$ and $ \mathbb{H}^{1+\frac\alpha2, 2}(\mathbb{T}^2)$ by
$ \mathbb{L}^{2}(O)$ and $ \mathbb{H}^{\frac\alpha2, 2}(O)$ respectively.  We construct a
process $ \tilde{\tilde {u}}$  as in \eqref{eq-def-u-tilde}, with $ F_1$ and $ G_1$ are replaced by $ F_2$ respectively  $ G_2$.
The proof of the statement $ u= \tilde{\tilde{u}},\; dt\times dP-a.e.$ can be done exactly as in Section \ref{sec-Torus}
with the brackets now stand for the $ V-V^*$-duality.
To check the  main key estimates, we use \eqref{formula-B-v1-B-v2}, \eqref{Eq-3lin-propnull}, H\"older inequality, 
\eqref{Eq-B-H-alpha-2-est}\del{{Eq-B-H-alpha-2-est}}, Sobolev interpolation\del{(recall that
$ 1+\frac{d-1}{3}< \alpha <2 \Rightarrow \mathbb{H}^{\frac\alpha2, 2}(O) \hookrightarrow \mathbb{H}^{\frac{d+2-\alpha}{4}, 2}(O) $)}
and Young inequality, we get
\begin{eqnarray}\label{ineq-B-u-v-v-local}
|{}_{V^*}\langle B(u)-B(v), u-v\rangle_{V} |&= & |{}_{V^*}\langle B(u-v, v), u-v\rangle_{V}|\leq
| B(u-v, v)|_{\mathbb{H}^{-\frac\alpha2, 2}}|u-v|_{\mathbb{H}^{\frac\alpha2, 2}}\nonumber\\
&\leq& c|v|_{\mathbb{H}^{\frac{d+2-\alpha}{4}, 2}} |u-v|_{\mathbb{H}^{\frac\alpha2, 2}}|u-v|_{\mathbb{H}^{\frac{d+2-\alpha}{4}, 2}}\nonumber\\
&\leq & c|v|_{\mathbb{H}^{\frac{d+2-\alpha}{4}, 2}} |u-v|^{\frac{d+2+\alpha}{2\alpha}}_{\mathbb{H}^{\frac\alpha2, 2}}
|u-v|_{\mathbb{L}^2}^{\frac{3\alpha-d-2}{2\alpha}}\nonumber\\
&\leq& c|v|^{\frac{4\alpha}{3\alpha-d-2}}_{\mathbb{H}^{\frac{d+2-\alpha}{4}, 2}}|u-v|^{2}_{\mathbb{L}^2} + \frac12|u-v|^{2}_{\mathbb{H}^{\frac\alpha2, 2}}.
\end{eqnarray}
Using the semigroup property of $ (A^\beta)_{\beta\geq0}$ and Assumption $ (\mathcal{C})$ ( \eqref{Eq-Cond-Lipschitz-Q-G}, with $ \delta =0$, $ q=2$ and $ C_R:=c$), we confirm\del{ 
\begin{eqnarray}\label{eq-monoto-local}
 -2{}_{V^*}\langle A_\alpha(u-v), u-v\rangle_{V} &+&  2{}_{V^*}\langle B(u)-B(v), u-v\rangle_{V} + 
|| G(u) - G(v)||_{L_Q(\mathbb{L}^2)}\nonumber\\
&\leq&
 -|u-v|^{2}_{\mathbb{H}^{\frac\alpha2, 2}} + c(1+|v|^{\frac{4\alpha}{3\alpha-d-2}}_{\mathbb{H}^{\frac{d+2-\alpha}{4}, 2}})
|u-v|^{2}_{\mathbb{L}^2}.
\end{eqnarray}
Consequently, we get}
\begin{itemize}
 \item $ (\mathcal{K}'_1)$- The local monotonicity property: There exists a constant $ c>0$ such that $\forall u, v \in \mathbb{H}^{\frac\alpha2}(O)$,
\begin{eqnarray}\label{eq-a-alpha-B-G-d}
 -2{}_{V^*}\langle A_\alpha(u-v), u-v\rangle_{V} &+&  2{}_{V^*}\langle B(u)-B(v), u-v\rangle_{V} +
|| G(u) - G(v)||_{L_Q(\mathbb{L}^2)}\nonumber\\
&\leq & r'(t)| u-v|^2_{\mathbb{L}^2}.
\end{eqnarray}
\end{itemize}
where $ r'(t):= c(1+ |v(t)|^{\frac{4\alpha}{3\alpha-d-2}}_{\mathbb{H}^{\frac{d+2-\alpha}{4}, 2}})$ and $c>0$ is a constant relevantly chosen.

\del{The main obstacle which prevent us in this stage to follow  the same steps as in Section \ref{sec-Torus} is the fact that we can not prove 
 for  $ v\in L^2(\Omega\times[0, T], \mathbb{H}^{\frac\alpha2, 2}(O))\cap  L^p(\Omega, L^\infty([0, T]; \mathbb{L}^{2}(O))$, that
$ v \in L^{\frac{4\alpha}{3\alpha-d-2}}(\Omega \times[0, T]; \mathbb{H}^{\frac{d+2-\alpha}{4}, 2}(O))$, unless $ \alpha \geq 1+\frac d2$. 
In fact, by interpolation,  we get
\begin{equation}\label{eq-impossible-local}
\mathbb{E}\int_0^T |v(t)|^{\frac{4\alpha}{3\alpha-d-2}}_{\mathbb{H}^{\frac{d+2-\alpha}{4}, 2}}dt \leq 
\mathbb{E}\int_0^T |v(t)|^{2\frac{d+2-\alpha}{3\alpha-d-2}}_{\mathbb{H}^{\frac\alpha2, 2}}|v(t)|^{2}_{\mathbb{L}^{2}} dt.
\end{equation}
Our claim here is that thanks to \eqref{Eq-Ito-n-weak-estimation-1-bounded}, a sufficient condition for the convergence of the integral in the RHS of 
\eqref{eq-impossible-local} is that $ 2\frac{d+2-\alpha}{3\alpha-d-2} \leq 2 \Leftrightarrow  \alpha \geq 1+\frac d2$. 
Remark that the classical values $ d=2, \alpha =2$ and $ d=3, \alpha=\frac52$ known in the literature for the
dD-NSEs are special cases of our condition above. 
The obstacle mentioned in \eqref{eq-impossible-local} is well known for the classical 3D-NSE  but not for the 2D-NSE. 
As we are using completely different calculus than the classical one, we need here to prove that our technique is optimal. Simultaneousily, we shall prove that our claim above is true.
In fact,  let $ \alpha =2=d$, then by interpolation, we get
\begin{equation}\label{eq-interp-alpha=2}
 |v|^{\frac{4\alpha}{3\alpha-d-2}}_{\mathbb{H}^{\frac{d+2-\alpha}{4}, 2}}\leq c  
( |v|^{\frac12}_{\mathbb{H}^{1, 2}}|v|^{\frac12}_{\mathbb{L}^{2}})^{4}
\leq  c|v|^{2}_{\mathbb{H}^{1, 2}}|v|^{2}_{\mathbb{L}^{2}}.
\end{equation}
Replacing \eqref{eq-interp-alpha=2} in \eqref{eq-impossible-local} and using the fact that $ u \in L^2(\Omega\times[0, T], \mathbb{H}^{1, 2}(O))\cap  L^p(\Omega, L^\infty([0, T]; \mathbb{L}^{2}(O))\cap L^4(\Omega\times[0, T], \mathbb{H}^{\frac{d+2-\alpha2}{4}, 2}(O))$ and the interpolation , thanks to \eqref{Eq-Ito-n-weak-estimation-1-bounded},  the fact that 

Follow the same machenery as in Section \ref{sec-Torus},  we prove the existence and uniquness of global solution for the 2D-NSE.}

\vspace{0.25cm}
The main obstacle which prevent us in this stage to follow  the same steps as in Section \ref{sec-Torus} is the fact that we are unable to prove that the solution \del{  $ u\in L^2(\Omega\times[0, T], \mathbb{H}^{\frac\alpha2, 2}(O))\cap  L^p(\Omega, L^\infty([0, T]; \mathbb{L}^{2}(O))$ is not enough to get} 
$ u \in L^{\frac{4\alpha}{3\alpha-d-2}}(\Omega \times[0, T]; \mathbb{H}^{\frac{d+2-\alpha}{4}, 2}(O))$, unless we suppose that $ \alpha \geq 1+\frac d2$.  In fact, under the condition $ 2\frac{d+2-\alpha}{3\alpha-d-2} \leq 2 \Leftrightarrow  \alpha \geq 1+\frac d2$ and using the interpolation and Estimate \eqref{Eq-Ito-n-weak-estimation-1-bounded},  we conclude that 
\begin{equation}\label{eq-impossible-local}
\sup_{n}\mathbb{E}\int_0^T |u_n(t)|^{\frac{4\alpha}{3\alpha-d-2}}_{\mathbb{H}^{\frac{d+2-\alpha}{4}, 2}}dt \leq 
c\sup_{n}\mathbb{E}\int_0^T |u_n(t)|^{2\frac{d+2-\alpha}{3\alpha-d-2}}_{\mathbb{H}^{\frac\alpha2, 2}}|u_n(t)|^{2}_{\mathbb{L}^{2}} dt<\infty.
\end{equation}
Remak that under the condition $\alpha \geq 1+\frac d2$, the regime is either dissipative or hyperdissipative. The proof of the existence and the uniqueness of the global solution for the dD-FSNSE under these two regimes is  classical.\del{for which we know that the calculus is easier to get  $ (3)'$. Than following} In particular, one can follow the same machinery as in Section \ref{sec-Torus} with the relevant changes mentioned above. The obstacle mentioned in \eqref{eq-impossible-local} is similar to the well known one for the classical 3D-NSE  but not for the 2D-NSE. To support more our claim mentioned in the begining of this section and in Section \ref{sec-intro}, we emphasize that the  2D-SNSE is\del{or 2D-NSE are} covered by our technique and this proves that this latter is optimal. Moreover, we can remark also that the values,  ($ \alpha\geq 1+\frac d2$),  ($ d=2, \alpha =2$) and ($ d=3, \alpha\geq\frac52$), known in the literature for the
dD-NSEs emerge in our setting in a natural way.\del{are special cases of our condition above.}

\del{one can see that the classical  2D-SNSE or 2D-NSE are 
covered by our method and this proves that our techniquethe optimality of our technique.}

\del{t is of great importance here to emphasis the optimality of our technique as  the classical  2D-SNSE or 2D-NSE are 
covered. by our method, this proves that our technique is optimal.}

\vspace{0.25cm}


\del{According to the calculus above, we can deduce that, if we follow this technique, it might be possible that we are only able to prove the existence of a $L^2-$ local maximal solution. This last is still again difficult, see the discussion in the end of this section. But the important deduction we can get, in particular from \eqref{eq-impossible-local}-\eqref{eq-impossible-local}, is the role could be played by the space $ \mathbb{H}^{\frac{d+2-\alpha}{4}, 2}(O)$, see also Condition  \eqref{uniquness-con-martg}. Thus, we shall look for local maximal solutions in this space. 

In this aim we use an approximation approach. We consider Equation \eqref{Main-stoch-eq} with $ B$  replaced by $ B\pi_m$ and $ \pi_m$ is defined in Lemma \ref{lem-Lipschitz-pi-n} with $  X:=\mathbb{H}^{\frac{d+2-\alpha}{4}, 2}(O)$ and with initila condition 
$ u_0$ satisfies Assumption $ (\mathcal{B}_\alpha)$. i.e. we consider, for $m \in \mathbb{N}_0$, the equations\del{   
Otherwise, we consider Equation \eqref{Eq-approx-n} with $ G$ globally Lipschitz, therefore, we do not need to use $G(\pi_m) $,}
\begin{equation}\label{Eq-approx-m}
\Bigg\{
\begin{array}{lr}
 du^m= \big(-
A_{\alpha}u^m(t) + B(\pi_m u^m(t))\big)dt+ G(u^m(t))dW(t), \; 0< t\leq T,\\
u_m(0)= u_0 \in L^2(\Omega, \mathcal{F}_0, P, \mathbb{H}^{\frac{d+2-\alpha}{4}, 2}(O)).
\end{array}
\end{equation}
First, let us mention that, using  Formula \eqref{construction-of-fract-bounded} and the fact that $ div \pi_m u=0$, it is easy to see that  $ \pi_m$ is well defined. Moreover, $ B\pi_m$ is globaly Lipschitz and we have for all  $(u, v)\in D(B(\cdot, \cdot))$,
\begin{eqnarray}\label{eq-relation-B-B-m}
B(\pi_m(u), \pi_m(v)) &=& \mathcal{X}_m(u, v)B(u, v), 
\end{eqnarray}
where 
\begin{eqnarray}\label{eq-x-m}
\mathcal{X}_m(u, v):&=&
\min\{1, m{|u|^{-1}_{\mathbb{H}^{\frac{d+2-\alpha}{4}, 2}}},
 m{|v|^{-1}_{\mathbb{H}^{\frac{d+2-\alpha}{4}, 2}}}, m^2{|u|^{-1}_{\mathbb{H}^{\frac{d+2-\alpha}{4}, 2}}|v|^{-1}_{\mathbb{H}^{\frac{d+2-\alpha}{4}, 2}}}\}
\end{eqnarray}
We have the following existence and  uniqueness result:
\begin{lem}\label{Lem-approx-weak-solution}
Assume that $ u_0$ satisfies Assumption $ (\mathcal{B}_\alpha)$ and $ G$ satisfies $ (\mathcal{C})$ with $ delta$ replaced by $ \frac{d+2-\alpha}{4}$, $ q=2$ and the constant in the RHS of \eqref{Eq-Cond-Lipschitz-Q-G} is independant of $ R$ ($ G$ is globally Lipschitz). Then, for all $ m\in \mathbb{N}_0$, Equation \eqref{Eq-approx-m} admits a unique continuous $\mathbb{H}^{\frac{d+2-\alpha}{4}, 2}$-valued $ \mathbb{F}-$adapted  process $ (u^m(t), t\in [0, T])$ in the sense of Definition \eqref{Def-solution-variational}, with $ V_2= V=\mathbb{H}^{\frac{d+2-\alpha}{4}+\frac\alpha2, 2}(O)$, $ H=\mathbb{H}^{\frac{d+2-\alpha}{4}, 2}(O)$ and $ V_1= V^*=\mathbb{H}^{\frac{d+2-\alpha}{4}-\frac\alpha2, 2}(O)$. Moreover, $ (u^m(t), t\in [0, T])$ is  a $\mathbb{H}^{\frac{d+2-\alpha}{4}+\frac\alpha2, 2}-$valued progressively measurable $ dt\otimes P$ satisfying \eqref{eq-weak-H-d-alpha-solu}.
\end{lem} 

\begin{proof}
In the aim to prove the existence and the uniqueness of the global weak solution of Equation \eqref{Eq-approx-m}, for all $ m\in \mathbb{N}_0$,
we use the variational approach, see e.g. \cite{Krylov-Rozovski-monotonocity-2007, Metivier-book-SPDEs-88, Rockner-Pevot-06}. Or equivalently we can follow either the scheme above or the scheme in Section \ref{sec-Torus} with the relevant changes mentioned before. In particular, we apply \cite[Definition 4.2.1 \&  Theorem 4.2.4 (One can see also Theorem 4.2.5)]{Rockner-Pevot-06}. Therefore, we check for all $ u, v, \theta \in \mathbb{H}^{\frac\alpha2, 2}(O)$, the following properties (let us for simplicity of notations, denote $ \frac{d+2-\alpha}{4}$ by $ \beta$)

{\bf $(a_1)$  The monotonicity.} Using \eqref{formula-B-v1-B-v2}, H\"older inequality, \eqref{B-u-v-h-alpha-2-d} with $ \eta =0$ (or \eqref{Eq-B-H-alpha-2-est}), Lemma \ref{lem-Lipschitz-pi-n}, Sobolev interpolation (recall that
$ 1+\frac{d-1}{3}< \alpha <2$)\del{(recall that
$ 1+\frac{d-1}{3}< \alpha <2 \Rightarrow \mathbb{H}^{\frac\alpha2, 2}(O) \hookrightarrow \mathbb{H}^{\frac{d+2-\alpha}{4}, 2}(O) $)}
and Young inequality, we infer that
\del{\begin{eqnarray}\label{ineq-B-u-v-v-local-n}
|{}_{V^*}\langle B(\pi_nu)&-&B(\pi_nv), u-v\rangle_{V} |\nonumber\\
&= & |{}_{V^*}\langle B(\pi_nu-\pi_nv, \pi_nv), u-v\rangle_{V}
+{}_{V^*}\langle B(\pi_nu, \pi_nu-\pi_nv), u-v\rangle_{V}|\nonumber\\
&\leq & |u-v|_{\mathbb{H}^{\frac\alpha2, 2}}\big(
| B(\pi_nu-\pi_nv, \pi_nv)|_{\mathbb{H}^{-\frac\alpha2, 2}}+| B(\pi_nu-\pi_nv, \pi_nv)|_{\mathbb{H}^{-\frac\alpha2, 2}}\big)\nonumber\\
&\leq & c|u-v|_{\mathbb{H}^{\frac\alpha2, 2}}
|\pi_nu-\pi_nv|_{\mathbb{H}^{\frac{d+2-\alpha}{4}, 2}}
\big(|\pi_nu|_{\mathbb{H}^{\frac{d+2-\alpha}{4}, 2}}+|\pi_nv|_{\mathbb{H}^{\frac{d+2-\alpha}{4}, 2}}\big)\nonumber\\
&\leq& c|v|_{\mathbb{H}^{\frac{d+2-\alpha}{4}, 2}} |u-v|_{\mathbb{H}^{\frac\alpha2, 2}}|u-v|_{\mathbb{H}^{\frac{d+2-\alpha}{4}, 2}}\nonumber\\
&\leq & c|v|_{\mathbb{H}^{\frac{d+2-\alpha}{4}, 2}} |u-v|^{\frac{d+2+\alpha}{2\alpha}}_{\mathbb{H}^{\frac\alpha2, 2}}
|u-v|_{\mathbb{L}^2}^{\frac{3\alpha-d-2}{2\alpha}}\nonumber\\
&\leq& c|v|^{\frac{4\alpha}{3\alpha-d-2}}_{\mathbb{H}^{\frac{d+2-\alpha}{4}, 2}}|u-v|^{2}_{\mathbb{L}^2} + \frac12|u-v|^{2}_{\mathbb{H}^{\frac\alpha2, 2}}.
\end{eqnarray}}
\begin{eqnarray}\label{ineq-B-u-v-v-local-m}
{}_{V^*}\langle B(\pi_mu)&-&B(\pi_mv), u-v\rangle_{V} \nonumber\\
&= & {}_{V^*}\langle B(\pi_mu, \pi_mu-\pi_mv), u-v\rangle_{V}+{}_{V^*}\langle B(\pi_mu-\pi_mv, \pi_mv), u-v\rangle_{V} \nonumber\\
&\leq & |u-v|_{\mathbb{H}^{\frac\alpha2, 2}}\big(
| B(\pi_mu, \pi_mu-\pi_mv)|_{\mathbb{H}^{-\frac\alpha2, 2}}+| B(\pi_mu-\pi_mv, \pi_mv)|_{\mathbb{H}^{-\frac\alpha2, 2}}\big)\nonumber\\
&\leq & c|u-v|_{\mathbb{H}^{\frac\alpha2, 2}}
|\pi_mu-\pi_mv|_{\mathbb{H}^{\frac{d+2-\alpha}{4}, 2}}
\big(|\pi_mu|_{\mathbb{H}^{\frac{d+2-\alpha}{4}, 2}}+|\pi_mv|_{\mathbb{H}^{\frac{d+2-\alpha}{4}, 2}}\big)\nonumber\\
&\leq& cm |u-v|_{\mathbb{H}^{\frac\alpha2, 2}}|u-v|_{\mathbb{H}^{\frac{d+2-\alpha}{4}, 2}}\nonumber\\
\del{&\leq & cn|u-v|^{\frac{d+2+\alpha}{2\alpha}}_{\mathbb{H}^{\frac\alpha2, 2}}
|u-v|_{\mathbb{L}^2}^{\frac{3\alpha-d-2}{2\alpha}}\nonumber\\}
&\leq& cm^{\frac{4\alpha}{3\alpha-d-2}}|u-v|^{2}_{\mathbb{L}^2} + \frac12|u-v|^{2}_{\mathbb{H}^{\frac\alpha2, 2}}.
\end{eqnarray}
Therefore, using the semigroup property of $ (A_\beta)_{\beta\geq 0}$, 
\eqref{ineq-B-u-v-v-local-m} and Assumption $ (\mathcal{C})$ (with $ q=2$, $ \delta=0$ and $c_R=c$), we get the monotonicity property\del{we conclude the monotonicity property of the term  $ -A_\alpha+B\pi_m$. In particular, thanks to Assumption $ (\mathcal{C})$,  Estimation \eqref{eq-monoto-local} with $ B\pi_m$ and $ m$ are in the place of $ B$ and $ |v|_{\mathbb{H}^{\frac{d+2-\alpha}{4}, 2}}$ respectively, holds. }
\begin{eqnarray}
-2{}_{V^*}\langle A_\alpha (u-v), u-v\rangle_{V} &+& 2{}_{V^*}\langle B(\pi_m u)- B(\pi_m v), u-v\rangle_{V} \nonumber\\ 
&+& ||G(u)- G(v)||^2_{HS(\mathbb{L}^2)}\leq  c_m|u-v|^2_{\mathbb{L}^{2}}.
\end{eqnarray}

{\bf $(a_2)$  The coercitivity.} \del{ Thanks to  H\"older inequality, \eqref{B-u-v-h-alpha-2-d} with $ \eta =0$ (or \eqref{Eq-B-H-alpha-2-est}), Lemma \ref{lem-Lipschitz-pi-n}, interpolation and Young inequality, we infer that 
\begin{eqnarray}\label{eq-coercive}
{}_{V^*}\langle B(\pi_m u), u\rangle_{V}&\leq &
|u|_{\mathbb{H}^{\frac\alpha2, 2}}|B(\pi_m u)|_{\mathbb{H}^{-\frac\alpha2, 2}} \leq  
|u|_{\mathbb{H}^{\frac\alpha2, 2}}|\pi_m u|^2_{\mathbb{H}^{\frac{d+2-\alpha}4, 2}}\nonumber\\
&\leq & m |u|_{\mathbb{H}^{\frac\alpha2, 2}}| u|_{\mathbb{H}^{\frac{d+2-\alpha}4, 2}}\leq 
m |u|^{\frac{\alpha+d+2}{2\alpha}}_{\mathbb{H}^{\frac\alpha2, 2}}|u|^ {\frac{3\alpha-d-2}{2\alpha}}_{\mathbb{L}^{2}}
\nonumber\\
&\leq & \frac12|u|^2_{\mathbb{H}^{\frac\alpha2, 2}} + m^{\frac{4\alpha}{3\alpha-d-2}} 2^{\frac{\alpha}{\alpha+d+2}}|u|^ {2}_{\mathbb{L}^{2}}.
\end{eqnarray}
Now i}It is easy, using the semigroup property of $ (A_\beta)_{\beta\geq 0}$,\del{\eqref{eq-coercive}}\eqref{eq-relation-B-B-m}, \eqref{eq-x-m},  \eqref{Eq-3lin-propnull} and Assumption $ (\mathcal{C})$(with $q=2$, $ \delta=0$ and $c_R=c$) that  
\begin{eqnarray}
-2{}_{V^*}\langle A_\alpha u, u\rangle_{V} + 2{}_{V^*}\langle B(\pi_m u), u\rangle_{V} + ||G(u)||^2_{L_Q(\mathbb{L}^2)}+ 2|u|^2_{\mathbb{H}^{\frac\alpha2, 2}}&\leq & c_m(1+|u|^2_{\mathbb{L}^{2}}).\nonumber\\
\end{eqnarray}

{\bf $(a_3)$  The growth.} Thanks to  \eqref{Eq-B-H-alpha-2-est}, Lemma \ref{lem-Lipschitz-pi-n} and Sobolev embedding, we infer that 
\begin{eqnarray}\label{eq-growth}
| A_\alpha u|_{\mathbb{H}^{-\frac\alpha2, 2}} + |B(\pi_m u)|_{\mathbb{H}^{-\frac\alpha2, 2}} &\leq & (m+1)|u|_{\mathbb{H}^{\frac\alpha2, 2}}. 
\end{eqnarray}

{\bf $(a_4)$  The hemicontinuity.} The hemicintinuity is obtained thanks to the continuity of the following real functions 
on $ \mathbb{R}$
\begin{equation}
s\mapsto |u+sv|_{\mathbb{H}^{\frac{d+2-\alpha}{4}, 2}}\;\;\text{and}\;\;\; 
s\mapsto \mathcal{X}_m(u+sv, u+sv).
\end{equation}
\del{and
\begin{equation}
s\mapsto \phi_{u, v, \theta}(s):=
\Big\{
\begin{array}{lr}
{}_{V^*}\langle B(u+sv), \theta\rangle_{V}, \;\; |u+sv|_{\mathbb{H}^{\frac{d+2-\alpha}{4}, 2}}\leq m\nonumber\\
\frac{m^2}{|u+sv|^2_{\mathbb{H}^{\frac{d+2-\alpha}{4}, 2}}}\;\;{}_{V^*}\langle B(u+sv), \theta\rangle_{V},\geq m \;\; |u+sv|_{\mathbb{H}^{\frac{d+2-\alpha}{4}, 2}}. \nonumber\\
\end{array}
\end{equation}}
\del{\begin{equation}
s\mapsto {}_{V^*}\langle B(\pi_m (u+sv)), \theta\rangle_{V}= 
\phi_{u, v}(s) ,
\end{equation}}
\del{Now it is easy to check, using \eqref{Eq-B-H-alpha-2-est}, that in addition to the monotonicity property, the coefficients of the approximated equation satisfy the hemicontinuity, the coercitivity and the growth properties.} 
\noindent Consequently, for all $ m \in \mathbb{N}_0$, there exists a unique global solution $ (u^m(t), t\in [0, T])$ of Equation \eqref{Eq-approx-m} in the sense of Definition \ref{def-variational solution},  i.e. $ (u^m(t), t\in [0, T])$ satisfies, Equation \eqref{Eq-weak-Solution}, with $ V_2= V=\mathbb{H}^{\frac\alpha2, 2}(O)$, $ V_1= \mathbb{H}^{-\delta', 2}(O)$, $ \delta'>1+\frac d2$, \eqref{eq-set-solu-weak} and \eqref{eq-mart-l-2solu} with the constant $ c$ in the RHS of the later estimate depends on $m$ (or equivalently $ (u^m(t), t\in [0, T])$ satisfies \eqref{eq-weak-l-2solu} up to $ T$. \del{
\begin{equation}
\mathbb{E}\sup_{[0, T]}|u^m(t)|^p_{\mathbb{L}^2}+ \mathbb{E}\int_0^T\del{|u^m(t)|^p_{\mathbb{L}^2}}|u^m(t)|^2_{\mathbb{H}^{\frac\alpha2, 2}}dt\leq c_m<\infty.
\end{equation}}The $\mathbb{H}^{-\delta', 2}$-continuity is obtained by a standard way using Section \ref{sec-nonlinear-prop}. In particular,  one of the main ingredients of the proof  is Formula \eqref{eq-B-estimatoion-q=2}\del{{eq-est-H-1-d-q}}. Thanks to Remark \ref{Rem-1}, we deduce that the trajectories of 
$ (u^m(t), t\in [0, T])$ are $\mathbb{L}^2-$weakly continuous, thus  the random time  
\begin{equation}\label{eq-stop-time-weak-solu-L2}
 \xi_m:=\inf\{t\in (0, T), s.t.\;\; |u^m(t)|_{\mathbb{H}^{\frac{d+2-\alpha}{4}, 2}} > m\}\wedge T,
\end{equation}
with the understanding that $ \inf(\emptyset)=+\infty$, is a stopping time, see complete proof in Appendix \ref{append-stop-time}. The sequence $(\xi_m)_m $ is an increasing sequence, hence the following random time exists and is then a predictable stopping time.
\begin{equation}
 \xi := \lim_{m\rightarrow +\infty}\xi_m.
\end{equation}
Thanks to the uniqueness of $ (u^m(t), t\in [0, T])$, we can define the process \del{( as for the mild solution)} $ (u(t), \;  t\in [0, \xi) )$ by 
$ (u(t):=u^m(t), t\in [0, \xi_m))$. Remark that as much as $ u(t)$ stays in the ball $ B_{\mathbb{H}^{\frac{d+2-\alpha}{4}, 2}}(0, m)$, it is a solution of our main equation \eqref{Main-stoch-eq}, hence  $ (u, \xi)$ is a maximal local weak-strong solution.
\del{\noindent Now, we assume that $ u\in L^2(\Omega\times[0, T], \mathbb{H}^{\frac\alpha2, 2}(O)) $,  
$ v\in L^2(\Omega\times[0, T], \mathbb{H}^{\frac\alpha2, 2}(O)) \cap  
L^{\frac{4\alpha}{3\alpha-d-2}}(\Omega\times[0, T], \mathbb{H}^{\frac{d+2-\alpha}{4}, 2}(O))$ and define 
$ r'(t):= c(1+ |v(t)|^{\frac{4\alpha}{3\alpha-d-2}}_{\mathbb{H}^{\frac{d+2-\alpha}{4}, 2}})$. Let $ \tilde{\xi}$ be any predictable stopping time.
We follow a similar calculus as in 
Section \ref{sec-Torus}, taking in consideration the changes mentioned in the beginning of this section and integrating on $ (0, \tilde{\xi})$.
Then, we end up with the estimation
\begin{eqnarray}\label{eq-key-leq-0}
\int_0^T\psi(t)dt\mathbb{E}\big\{\int_0^{t\wedge \tilde{\xi}}\!\!\!\!\!\!&{}&\!\!\!\!\!\! e^{-r(s)}
\big(-r'(s\wedge \tilde{\xi})|u(s\wedge \tilde{\xi})- v(s\wedge \tilde{\xi})
|^2_{\mathbb{L}^2}+ 2|| G_2(s\wedge \tilde{\xi}) - G(v(s\wedge \tilde{\xi}))||^2_{L_Q(\mathbb{L}^2)}\nonumber\\
&+& {}_{V^*}\langle F_2(s\wedge \tilde{\xi}) -
F(v(s\wedge \tilde{\xi})), u(s\wedge \tilde{\xi})- v(s\wedge \tilde{\xi}))\rangle_{V}\big)ds\big\}\leq 0.
\end{eqnarray}
\noindent We define, for $ N\in \mathbb{N}_0$, the predictable stopping time 
\begin{equation}
 \xi_N:=\inf\{t\in [0, T], s.t. |u(t)|_{\mathbb{H}^{\frac{d+2-\alpha}{4}, 2}} \geq N\}\wedge T,
\end{equation}
with the understanding that $ \inf(\emptyset)=+\infty$. Let 
\begin{equation}
 \xi := \lim_{N\rightarrow +\infty}\xi_N.
\end{equation}
It is well known that $ \xi$ exists thanks to the monotonicity of $ \xi_N$.
We replace in \eqref{eq-key-leq-0},  $ \tilde{\xi}$ by  $ \xi_N$ and take $ v(t\wedge \xi_N)= u(t\wedge \xi_N)$, 
\del{in $ L^2([\Omega\times0, T], \mathbb{H}^{\frac\alpha2, 2}(O))$,}we conclude from \eqref{eq-key-leq-0}, that for all fixed $ N$,
\del{$|| G(s) - G(v(s))||^2_{L_Q}$}$ G_2(s\wedge \xi_N)= G(u(s\wedge \xi_N)), \; ds\times dP-a.e.$. 
To get the equality $ F_2(s\wedge \xi_N)= F(u(s\wedge \xi_N)))$, we consider Estimation \eqref{eq-key-leq-0} without the last term and introduce
$ \tilde{v} \in L^\infty(\Omega\times[0, T], \mathbb{H}^{\frac\alpha2}(\mathbb{T}^2))$, the
parameter $ \lambda \in [-1, +1]$ and argue as in Section \ref{sec-Torus}, we get \del{. Then, replacing $ v$  and $ r(s)$ by 
$ u-\lambda\tilde{v}$ respectively
$ r'_\lambda(s):= c(1+|u-\lambda\tilde{v}|^{\frac{2\alpha}{3\alpha-2}}_{\mathbb{H}^{1+\frac\alpha2}})$, we get
\begin{eqnarray}\label{eq-equality-F-Fn}
\mathbb{E}\int_0^Te^{-r_\lambda(s)}\big(-r'_\lambda(s)\lambda^2|\tilde{v}(s)|^2_{\mathbb{L}^2}+ 2\lambda\langle F(s) -
F(u(s)-\lambda \tilde{v}(s)), \tilde{v}(s))\rangle_{\mathbb{L}^2}\big)ds\leq 0.\nonumber\\
\end{eqnarray}
Dividing on $ \lambda<0$ and on $ \lambda>0$, we conclude that if the limit of
the LHS of \eqref{eq-equality-F-Fn} exists, when $ \lambda \rightarrow 0$, then it vanishes.
For the first term in the LHS of \eqref{eq-equality-F-Fn}, we use the fact that
$ \tilde{v} \in L^\infty(\Omega\times[0, T], \mathbb{H}^{1+\frac\alpha2}(\mathbb{T}^2))$ and then we calculate the integral. Then it is easy that
the limit of this term vanishes. For the second term we use the dominated convergence theorem, we get}

\begin{eqnarray}\label{eq-equality-F-Fn-1}
\mathbb{E}\int_0^{T\wedge \xi_N}e^{-r_0(s\wedge \xi_N)}{}_{V^*}\langle F_2(s\wedge \xi_N) -
F(u(s\wedge \xi_N)), \tilde{v}(s\wedge \xi_N))\rangle_{V}ds= 0.
\end{eqnarray}
The justification of the application of the dominated convergence theorem  follows as in Section \ref{sec-Torus} using \eqref{Eq-B-H-alpha-2-est}.}\del{for $ s\leq \xi_N$
\begin{eqnarray}
|{}_{V^*}\langle F_2(s) &-&
F(u(s)-\lambda \tilde{v}(s)), \tilde{v}(s))\rangle_{V}|\leq
 |\tilde{v}(s)|_{\mathbb{H}^{\frac\alpha2, 2}}\big( |F_2(s)|_{\mathbb{H}^{-\frac\alpha2, 2}}+
|u(s)|_{\mathbb{H}^{\frac\alpha2, 2}}+ |\tilde{v}(s)|_{\mathbb{H}^{\frac\alpha2, 2}}\nonumber\\
&+& |B(u(s)-\lambda \tilde{v}(s))|_{\mathbb{H}^{-\frac\alpha2, 2}} \nonumber\\
&\leq&
 |\tilde{v}(s)|_{\mathbb{H}^{\frac\alpha2, 2}}\big( |F_2(s)|_{\mathbb{H}^{-\frac\alpha2, 2}}+
|u(s)|_{\mathbb{H}^{\frac\alpha2, 2}}+ |\tilde{v}(s)|_{\mathbb{H}^{\frac\alpha2, 2}}+
|u(s)|_{\mathbb{H}^{1, 2}}|u(s)|_{\mathbb{L}^{ 2}} \nonumber\\
&+& |\tilde{v}(s)|_{\mathbb{H}^{1, 2}}|\tilde{v}(s)|_{\mathbb{L}^{ 2}}
+|u(s)|_{\mathbb{H}^{1, 2}}|\tilde{v}(s)|_{\mathbb{L}^{ 2}} + |\tilde{v}(s)|_{\mathbb{H}^{1, 2}}|u(s)|_{\mathbb{L}^{ 2}}.
\end{eqnarray}By this procedure, we have constructed a sequence of progressively measurable processes $ (u_N)_{N\in \mathbb{N}_1}$ resolving up to a stoping time $ \xi_n$ te dD-FSNSE} 
\del{\section{Global existence and uniqueness of solution for 2D-FSNSEs.}\label{sec-global-mild-weak-solution}
We assume that $d=2$, $2< q<\infty$, $ p\geq 4$ and $ u_0\in L^p(\Omega, \mathbb{H}^{1, q}(O))$.  
We prove the global existence and the uniqueness of  mild  and weak solutions for multiplicative stochastic 2D-FSNSE.}
\end{proof}}

\section{Global existence and uniqueness of the weak solution of the multi-dimensional FSNSEs.}\label{sec-global-weak-solution}
\del{We denote by\del{ $ \mathfrak{V}$,  } $ \mathscr{E}$ the set of progressively measurable stochastic processes $ v$, such that 
$(v, \tau)$ is a maximal weak solution for  \eqref{Main-stoch-eq} with $\tau$ being a stopping time. 
Thanks to Appendix \ref{sec-Passage Velocity-Vorticity}, $ \theta := curl u$ is a local weak solution for the scalar active equation 
\eqref{Eq-vorticity-Torus-2-diff}

In this section, we prove the global existence and the uniqueness of the weak solution for the 2D-FNSE \eqref{Main-stoch-eq}.  
Thanks to Section \ref{sec-Domain}, we have   constructed a local weak solution $ (u, \xi)$ satisfying \eqref{est-local-u-tau-principle} and up to $ \xi_N$ for $ N\in \mathbb{N}_0$.
Under hypothesis in $ 8.3.2.$\del{ of Theorem \ref{Main-theorem-boubded-2} for this case ($d=2$) and} and using Appendix \ref{sec-Passage Velocity-Vorticity}
and \cite{Debbi-scalar-active}, we 
infer that (here we take, for simplicity, $ q= q_0\geq 6$)  
\begin{equation}\label{est-nabla-u-theta-Global-existence}
 \exists c>0, s.t. \forall N\in \mathbb{N}_0,  \mathbb{E}\sup_{[0, \xi_N)}|\nabla u(t)|^q_{L_{2\times 2}^q}\leq c \mathbb{E} \sup_{[0, \xi_N]}
|\theta(t)|^q_{L^q}\leq c <\infty.
\end{equation}
Recall $ \theta:= curl u$. Now, we change the calculus in \eqref{ineq-B-u-v-v-local} as follow. Using H\"older twice ($ 1/q+1/q'=1/2$), 
Gaglairdo-Nirenberg and than Young inequalities, we get
\begin{eqnarray}\label{ineq-B-u-v-v-global}
|{}_{V^*}\langle B(u)&-&B(v), u-v\rangle_{V} |\leq
| (u-v)\nabla v|_{L^2_2}|u-v|_{\mathbb{L}^{2}}
\leq
|u-v|_{L^{q'}_2}|\nabla v|_{L^q_2}|u-v|_{\mathbb{L}^{2}}\nonumber\\
&\leq& c|\nabla v|_{L^q_{2\times 2}}|u-v|^{2-\frac4{\alpha q}}_{\mathbb{L}^{2}}|u-v|^{\frac4{\alpha q}}_{\mathbb{H}^{\frac\alpha2, 2}}
\leq  c|\nabla v|^{\frac{1}{1-\frac{2}{\alpha q}}}_{L^q_{2\times 2}}|u-v|^{2}_{\mathbb{L}^{2}}+ c|u-v|^{2}_{\mathbb{H}^{\frac\alpha2, 2}}.
\end{eqnarray}
We take $ r'(t):= c(1+ |\nabla v(t)|^{\frac{1}{1-\frac{2}{\alpha q}}}_{L^q_{2\times 2}})$ with relevant constant $ c>0$ and 
$ v\in \mathscr{E}$, where $\mathscr{E}$ is the set of progressively measurable stochastic processes satisfying 
$ v \in  L^2(\Omega\times[0, T]; \mathbb{H}^{\frac\alpha2, 2}(\mathbb{T}^2))$ and 
$ \nabla v \in L^{(1-\frac{2}{\alpha q})^{-1}}(\Omega\times[0, T]; L_{2\times 2}^{q}(\mathbb{T}^2))$. We claim that  
 $\mathscr{E} \neq  \varnothing$. In fact, let us define for $ N$ fixed,
 $ (v_N(t), t\in [0, T])$, by 
  $v_N(t):= u(t\wedge \xi_N), \forall t\in [0, T])$, where 
$ (u, \xi)$ is the local solution constructed in Section \ref{sec-Domain}. It is easy to check, thanks to 
\eqref{est-nabla-u-theta-Global-existence} and the fact that $ q\geq 4$ and $ \frac43\leq \alpha \leq 2$, that $ v_N\in \mathscr{E}$. Indeed, we get
\begin{eqnarray}
\mathbb{E}\int_0^T|\nabla v_N(t)|^{\frac{1}{1-\frac{2}{\alpha q}}}_{L^q_{2\times 2}} dt &\leq &
 \mathbb{E}\int_0^{T}|\nabla u(t\wedge \xi_N)|^{\frac{1}{1-\frac{2}{\alpha q}}}_{L^q_{2\times 2}} dt
 \leq c\mathbb{E}\int_0^T|\theta(t\wedge \xi_N)|^{\frac{1}{1-\frac{2}{\alpha q}}}_{L^q})dt \nonumber\\
& \leq & c\mathbb{E}\int_0^T(1+|\theta(t)|^q_{L^q})dt<\infty.
\end{eqnarray}
\del{\begin{equation}
 \mathbb{E}\int_0^T|\nabla u(t)|^{\frac{1}{1-\frac{2}{\alpha q}}}_{L^q_{2\times 2}} dt
\leq c\mathbb{E}\int_0^T|\theta(t)|^{\frac{1}{1-\frac{2}{\alpha q}}}_{L^q})dt \leq c\mathbb{E}\int_0^T(1+|\theta(t)|^q_{L^q})dt<\infty.
\end{equation}}
Than, we apply the whole machinery as in Section \ref{sec-Torus} and  estimation as in Section \ref{sec-Domain} 
to prove the existence of the weak (strong in probability) solution of 
Equation \eqref{Main-stoch-eq} in the set $\mathscr{E}$.\del{We justify the 
application of the dominated convergence theorem as follow,
The only remained calculus is to give the justification to the 
application of the dominated convergence theorem as in \eqref{just-domin-Torus},} To prove the uniqueness of the solution in the set 
$\mathscr{E}$, we follow the steps as in Section \ref{sec-Torus}. In particular, in Formula \eqref{Torus-uniquenss-ito-formula-H-1}, 
we estimate the term $ \langle B(w(s)), u^1(s)\rangle = - \langle B(w(s), u^1(s)), w(s)\rangle$ using \eqref{ineq-B-u-v-v-global}. 
The existence and the uniqueness hold, therefore the local solution constructed in Section \ref{sec-Domain} is global and unique.

\begin{remark}
For $ d=3$, we can follow \cite{Giga-al-Globalexistence-2001}......
\end{remark}}

In this section, we prove the global existence and the uniqueness of the weak solution for the dD-FSNSE \eqref{Main-stoch-eq}.  For $ O=\mathbb{T}^2$, we have thanks to the  conditions in (\ref{Main-theorem-boubded-2}.1) and arguing as in the proof of the regularity in Section \ref{sec-Torus}, we infer that the maximal solution  $ (u, \xi)$ satisfies 
\begin{equation}\label{eq-2D-weak-sol-up-xi}
 \mathbb{E}\sup_{[0, \xi)}|u(t)|^q_{\mathbb{H}^{1, q}}+ \mathbb{E}\int_0^{T\wedge \xi}|u(t)|^2_{\mathbb{H}^{1+\frac\alpha2, 2}}dt\leq c<\infty.
\end{equation}
We denote by $\mathscr{E}$ the set of predictable stochastic processes $ (v(t), t\in [0, T])$ (or the extsension of $ v$ in the case $v$ is defined up to a stopping time)\del{(we can also take  an extension of $v$ if this latter is only defined up to a stopping time)} satisfying that there exists a stopping time $ \tau$ such that 
$ v \in  L^2(\Omega\times[0, \tau); \mathbb{H}^{\frac\alpha2, 2}(\mathbb{T}^2))$ and the process $ (\nabla v(t), t\in [0, \tau)) $ can be extended (we keep the some notation) to
$ \nabla v \in L^{(1-\frac{2d}{\alpha q})^{-1}}(\Omega\times[0, T]; L^{q}(\mathbb{T}^2))$, with the norm of $ \nabla v$ in this space is uniformly bounded, i.e. independently of the extension. We claim that  
 $\mathscr{E} \neq  \varnothing$. In fact, let us define,
 $ (\tilde{v}(t), t\in [0, T])$, by 
  $\tilde{v}(t):= u(t\wedge \xi), \forall t\in [0, T])$, where $ (u, \xi)$ is our maximal local solution. We have, for $ q$ characterized as in $(\ref{Main-theorem-strog-Torus}.3)$, (bellow d=2)
\begin{eqnarray}
\del{\mathbb{E}\int_0^T|\nabla v_N(t)|^{\frac{1}{1-\frac{2d}{\alpha q}}}_{L^q_{2\times 2}} dt &\leq &}
 \mathbb{E}\int_0^{T}|\nabla u(t\wedge \xi_N)|^{\frac{1}{1-\frac{2d}{\alpha q}}}_{q} dt
 \leq c\mathbb{E}\int_0^T|\theta(t\wedge \xi_N)|^{\frac{1}{1-\frac{2d}{\alpha q}}}_{L^q})dt \leq c\mathbb{E}\int_0^T(1+|\theta(t)|^q_{L^q})dt<\infty.\nonumber\\
\end{eqnarray}
Therfore\del{it is easy to check thanks to Estimate  \eqref{eq-bale-kato-majda-con} that} $ \tilde{v}\in \mathscr{E}$. Remark that the condition $ \frac{1}{1-\frac{2d}{\alpha q}}\leq q\Leftrightarrow 1+\frac{2d}{\alpha}\leq q$, see Remark \ref{Rem-2}. Now, we shall look for a solution in the set $\mathscr{E}$. We can go back to the first part of Section \ref{sec-Domain} and we repeat the same calculus until Estimate \eqref{ineq-B-u-v-v-local}, which we treat now  as follow. Using H\"older twice ($ 1/q+1/q'=1/2$), 
Gaglairdo-Nirenberg and than Young inequalities, we get (recall $V:=\mathbb{H}^{\frac\alpha2, 2} (O)$)
\begin{eqnarray}\label{ineq-B-u-v-v-global}
|{}_{V^*}\langle B(u)&-&B(v), u-v\rangle_{V} |\leq
| (u-v)\nabla v|_{L^2_2}|u-v|_{\mathbb{L}^{2}}
\leq
|u-v|_{L^{q'}_2}|\nabla v|_{q}|u-v|_{\mathbb{L}^{2}}\nonumber\\
&\leq& c|\nabla v|_{q}|u-v|^{2-\frac{2d}{\alpha q}}_{\mathbb{L}^{2}}|u-v|^{\frac{2d}{\alpha q}}_{\mathbb{H}^{\frac\alpha2, 2}}
\leq  c|\nabla v|^{\frac{1}{1-\frac{2d}{\alpha q}}}_{q}|u-v|^{2}_{\mathbb{L}^{2}}+ c|u-v|^{2}_{\mathbb{H}^{\frac\alpha2, 2}}.
\end{eqnarray}
We take $ r'(t):= c(1+ |\nabla v(t)|^{\frac{1}{1-\frac{2d}{\alpha q}}}_{q})$ with relevant constant $ c>0$. Than, we apply the whole machinery as in Section \ref{sec-Torus} and  the estimations as in Section \ref{sec-Domain} to get the existence of the global solution.\del{, we prove then the existence of global weak solution.}\del{to prove the existence of the weak (strong in probability) solution of Equation \eqref{Main-stoch-eq} in the set $\mathscr{E}$. We justify the application of the dominated convergence theorem as follow,
The only remained calculus is to give the justification to the 
application of the dominated convergence theorem as in \eqref{just-domin-Torus}} To prove the uniqueness of the solution in the set 
$\mathscr{E}$, we follow the steps as in Section \ref{sec-Torus}. In particular, in Formula \eqref{Torus-uniquenss-ito-formula-H-1}, 
we estimate the term $ \langle B(w(s)), u^1(s)\rangle = - \langle B(w(s), u^1(s)), w(s)\rangle$ using \eqref{ineq-B-u-v-v-global}. 
The existence and the uniqueness hold, therefore the local solution \del{constructed in Section \ref{sec-Domain}} is global and unique. The estimate \eqref{propty-of-2D-global-mild-sol} is obtained from \eqref{eq-2D-weak-sol-up-xi}.\del{as in the proof of the regularity in Section \ref{sec-Torus}.}

\vspace{0.25cm}

For the general case $ (\ref{Main-theorem-boubded-2}.2) $,\del{we know from Section \ref{sec-Domain}, that there exists at least one maximal local solution. I} if a maximal local weak solution enjoys \eqref{eq-bale-kato-majda-con}, then we have $\mathscr{E} \neq  \varnothing $ and thus we follow the proof above (for $ O=\mathbb{T}^2$) to get the results. If a maximal local weak solution enjoys Condition \eqref{eq-other-bale-kato-majda-con}, then the set $ \mathscr{E}_1 \neq  \varnothing$, where $ \mathscr{E}_1$ is the set of predictable  stochastic processes $ (v(t), t\in [0, T])$ (or the extsension of $ v$ in the case $v$ is defined up to a stopping time) satisfying that there exists a predictable stopping time $ \tau$ such that $ v \in  L^2(\Omega\times[0, \tau); \mathbb{H}^{\frac\alpha2, 2}(\mathbb{T}^2))$ and can be extended (we keep the some notation) to
$ v \in L^{\frac{4\alpha}{3\alpha-d-2}}(\Omega\times[0, T]; \mathbb{H}^{\frac{d+2-\alpha}{4}, 2}(O))$ uniformly, i.e. with the norm of $ v$ in this space is uniformly bounded independently of the extension. Now, we can continue from Estimate \eqref{ineq-B-u-v-v-local}\del{{eq-monoto-local}} and follow the proof as above and as in Section \ref{sec-Torus}.

\del{   Now, for $ O=\mathbb{T}^2$, the condition \eqref{eq-bale-kato-majda-con} follows from Lemma \ref{lem-basic-curl-gradient} and \cite[Theorem 2.6.]{Debbi-scalar-active}. In fact, 
\noindent The verification of the application of \cite[Theorem 2.6.]{Debbi-scalar-active} and of the obtention of the regularity  \eqref{propty-of-2D-global-mild-sol} are done exactly as in Section \ref{sec-global-mild-solution}.}

\del{We start doing the calculus in a general setting. 
In Section \ref{sec-Domain}, we have constructed  a local maximal weak solution $ (u, \xi)$ satisfying \eqref{eq-weak-l-2solu} up to $ \xi_N$ for all $ N\in \mathbb{N}_0$. We assume that $ (u, \xi)$ enjoys also Estimate  \eqref{eq-bale-kato-majda-con}.  We go back to the firts part of Section \ref{sec-Domain} and we repeat the same calculus until Estimate \eqref{ineq-B-u-v-v-local}, which we treat now  as follow. Using H\"older twice ($ 1/q+1/q'=1/2$), 
Gaglairdo-Nirenberg and than Young inequalities, we get (recall $V:=\mathbb{H}^{\frac\alpha2, 2} (O)$)
\begin{eqnarray}\label{ineq-B-u-v-v-global}
|{}_{V^*}\langle B(u)&-&B(v), u-v\rangle_{V} |\leq
| (u-v)\nabla v|_{L^2_2}|u-v|_{\mathbb{L}^{2}}
\leq
|u-v|_{L^{q'}_2}|\nabla v|_{q}|u-v|_{\mathbb{L}^{2}}\nonumber\\
&\leq& c|\nabla v|_{q}|u-v|^{2-\frac{2d}{\alpha q}}_{\mathbb{L}^{2}}|u-v|^{\frac{2d}{\alpha q}}_{\mathbb{H}^{\frac\alpha2, 2}}
\leq  c|\nabla v|^{\frac{1}{1-\frac{2d}{\alpha q}}}_{q}|u-v|^{2}_{\mathbb{L}^{2}}+ c|u-v|^{2}_{\mathbb{H}^{\frac\alpha2, 2}}.
\end{eqnarray}
We take $ r'(t):= c(1+ |\nabla v(t)|^{\frac{1}{1-\frac{2d}{\alpha q}}}_{q})$ with relevant constant $ c>0$ and 
$ v\in \mathscr{E}$, where $\mathscr{E}$ is the set of progressively measurable stochastic processes satisfying 
$ v \in  L^2(\Omega\times[0, T]; \mathbb{H}^{\frac\alpha2, 2}(\mathbb{T}^2))$ and 
$ \nabla v \in L^{(1-\frac{2}{\alpha q})^{-1}}(\Omega\times[0, T]; L^{q}(\mathbb{T}^2))$. We claim that  
 $\mathscr{E} \neq  \varnothing$. In fact, let us define for $ N$ fixed,
 $ (v_N(t), t\in [0, T])$, by 
  $v_N(t):= u(t\wedge \xi_N), \forall t\in [0, T])$, where 
$ (u, \xi)$ is our local solution\del{ (constructed in Section \ref{sec-Domain}).} Therfore, it is easy to check thanks to Estimate  \eqref{eq-bale-kato-majda-con} that $ v_N\in \mathscr{E}$. 
Than, we apply the whole machinery as in Section \ref{sec-Torus} and  estimation as in Section \ref{sec-Domain} 
to prove the existence of the weak (strong in probability) solution of 
Equation \eqref{Main-stoch-eq} in the set $\mathscr{E}$.\del{We justify the 
application of the dominated convergence theorem as follow,
The only remained calculus is to give the justification to the 
application of the dominated convergence theorem as in \eqref{just-domin-Torus},} To prove the uniqueness of the solution in the set 
$\mathscr{E}$, we follow the steps as in Section \ref{sec-Torus}. In particular, in Formula \eqref{Torus-uniquenss-ito-formula-H-1}, 
we estimate the term $ \langle B(w(s)), u^1(s)\rangle = - \langle B(w(s), u^1(s)), w(s)\rangle$ using \eqref{ineq-B-u-v-v-global}. 
The existence and the uniqueness hold, therefore the local solution constructed in Section \ref{sec-Domain} is global and unique.

Now, for $ O=\mathbb{T}^2$, the condition \eqref{eq-bale-kato-majda-con} follows from Lemma \ref{lem-basic-curl-gradient} and \cite[Theorem 2.6.]{Debbi-scalar-active}. In fact, for $ q\geq 4$,
\begin{eqnarray}
\del{\mathbb{E}\int_0^T|\nabla v_N(t)|^{\frac{1}{1-\frac{2}{\alpha q}}}_{L^q_{2\times 2}} dt &\leq &}
 \mathbb{E}\int_0^{T}|\nabla u(t\wedge \xi_N)|^{\frac{1}{1-\frac{2}{\alpha q}}}_{q} dt
 \leq c\mathbb{E}\int_0^T|\theta(t\wedge \xi_N)|^{\frac{1}{1-\frac{2}{\alpha q}}}_{L^q})dt \leq c\mathbb{E}\int_0^T(1+|\theta(t)|^q_{L^q})dt<\infty.\nonumber\\
\end{eqnarray}
\noindent The verification of the application of \cite[Theorem 2.6.]{Debbi-scalar-active} and of the obtaintion of the regularity  \eqref{propty-of-2D-global-mild-sol} are done exactly as in Section \ref{sec-global-mild-solution}.}

 \del{we conclude that $ \nabla u$ enjoys \eqref{est-nabla-u-theta-Global-existence} for $ \tau_N$ here is replaced by $ \xi_m$.Appendix \ref{sec-Passage Velocity-Vorticity}
and \cite{Debbi-scalar-active}, we  infer that (here we take, for simplicity, $ q= q_0\geq 6$)  
\begin{equation}\label{est-nabla-u-theta-Global-existence}
 \exists c>0, s.t. \forall N\in \mathbb{N}_0,  \mathbb{E}\sup_{[0, \xi_N)}|\nabla u(t)|^q_{L_{2\times 2}^q}\leq c \mathbb{E} \sup_{[0, \xi_N]}
|\theta(t)|^q_{L^q}\leq c <\infty.
\end{equation}
Recall $ \theta:= curl u$.}


\del{In this section, we prove Theorem \ref{Main-theorem-martingale-solution-d}.\del{ The method is quiet standard, see for Banach spaces 
\cite{Debbi-scalar-active} and for Hilbert space \cite{Flandoli-Gatarek-95}.} The main ingredients are Faedo-Galerkin approximations,
compactness and Skorokhod embedding theorem, see for similar calculus in
\cite{Debbi-scalar-active, Flandoli-Gatarek-95}.

\begin{lem}\label{lem-bounded-W-gamma-p}
The sequence $ (u_n)_n$ of solutions of Equations \eqref{FSBE-Galerkin-approxi} is uniformly bounded in the space
\begin{eqnarray}\label{Eq-W-}
L^2(\Omega, W^{\gamma, 2}(0, T; \del{\mathbb{H}^{-\delta', 2}(O))}
H_d^{-\delta', 2}(O))\cap L^2(0, T; \mathbb{H}^{\frac\alpha2, 2}(O))),
\end{eqnarray}
where  $ \delta'\geq_1\max\{\alpha, 1+\frac d{2}\}$ and $ \gamma <\frac12$.
\end{lem}
\begin{proof}
Thanks to Lemma \ref{lem-unif-bound-theta-n-H-1-domain}, it is sufficient to prove that $ (u_n(t), t\in [0, T])$ is
uniformly bounded in $L^2(\Omega, W^{\gamma, 2}(0, T; \mathbb{H}^{-\delta', 2}(O))$. We recall that the Besov-Slobodetski space $W^{\gamma,
p}(0, T; E)$, with $ E$ being a Banach space, $ \gamma \in (0, 1)$ and $ p\geq 1$, is the space
of all $ v\in L^P(0, T; E) $ such that
\begin{eqnarray}
||v||_{W^{\gamma, p}}:= \left(\int_0^T|v(t)|_E^pdt+
\int_0^T\int_0^T\frac{|v(t)-v(s)|_E^p}{|t-s|^{1+\gamma p}}
dtds\right)^{\frac1p}<\infty.
\end{eqnarray}
\noindent As  $(u_n(t), t\in [0, T])$ is the strong solution  of the finite dimensional  stochastic
differential equation \eqref{FSBE-Galerkin-approxi}, then  $u_n(t)$  is the solution of the stochastic integral equation 
\begin{equation}\label{FSBE-Integ-solu-Galerkin-approxi}
u_n(t)= P_nu_0 + \int_0^t(-A_\alpha u_n(r) + P_nB(u_n(r))dr + \int_0^tP_nG(u_n(r))\,dW_n(r),\; a.s.,\\
\end{equation}
for all $t\in [0, T]$. We denote by 
\begin{equation}\label{Eq-Drift-term}
I(t):=  \int_0^t(-A_\alpha u_n(r) + P_nB(u_n(r))dr
\end{equation}
and
\begin{equation}\label{Eq-Drift-term}
J(t):= \int_0^tP_nG
(u_n(r))\,dW_n(r).
\end{equation}
\noindent We prove that $ I(\cdot)$ is uniformly bounded in  $L^2(\Omega; W^{\gamma, 2}(0, T; H_d^{-\delta', 2}(O))$ 
and that the stochastic term $ J(\cdot )$ is uniformly bounded in $ L^2(\Omega; W^{\gamma, 2}(0, T;  \mathbb{L}^2(O))$, for all $ \gamma<\frac12$.\del{ as the stochastic term $ J$ is more regular then the drift term, see e.g. \cite[Lemma 2.1]{Flandoli-Gatarek-95}. \\}
Let  $ \phi \in H_d^{{\delta'}, 2}(O)$, using Identity \eqref{Eq-3lin-propsym}, we get
\begin{eqnarray}
| {}_{H_d^{-{\delta'}, 2}}\langle P_nB(u_n(r)), \phi\rangle_{H_d^{{\delta'}, 2}}|
&=& |\langle u_n(r) \cdot
\nabla P_n\phi, u_n(r)\rangle_{L_d^{2}}|\nonumber\\
&\leq& |\nabla P_n\phi|_{L^\infty}| u_n(r)|^2_{\mathbb{L}^{2}}.
\end{eqnarray}
Thanks to \cite[Remark 4 p 164, Theorem 3.5.4.ps.168-169 and Theorem 3.5.5 p 170]{Schmeisser-Tribel-87-book} for $ O= \mathbb{T}^d$, to
\cite[Theorem 7.63  and point 7.66]{Adams-Hedberg-94} for $ O$ being a bounded domain and to
the condition $ {\delta'}>1+\frac d{2}$,
we deduce for  $ 0<\epsilon < \delta'-1-\frac d{2}$,
 $$ |\nabla P_n\phi|_{L^\infty} \leq c |\nabla P_n\phi|_{H^{\epsilon+\frac d2, 2}} \leq c
|\phi|_{H_d^{1+\epsilon+\frac d2, 2}} \leq c |\phi|_{H_d^{\delta', 2}}.$$
Therefore,
\begin{eqnarray}\label{}
 |P_nB(u_n(r))|_{H_d^{-\delta', 2}}\leq c |u_n(r)|_{\mathbb{L}^{2}}^2
\end{eqnarray}
 and
\begin{eqnarray}\label{eq-unif-int-I(t)}
\int_0^T|I(t)|_{H_d^{-\delta', 2}}^2dt&\leq& c\int_0^T\int_0^t \big(|(-A_\alpha
u_n(r)|^2_{\mathbb{H}^{-\delta', 2}} + |P_nB(u_n(r))|^2_{H_d^{-\delta', 2}}\big)drdt\nonumber\\
&\leq& c\int_0^T\int_0^t\big(|
u_n(r)|^2_{\mathbb{L}^{ 2}} + |u_n(r)|^4_{\mathbb{L}^{2}}\big)drdt.
\end{eqnarray}
Moreover, using H\"older inequality and arguing as before, we get for $ t\geq s > 0$,
\begin{eqnarray}\label{eq-unif-int-I(t)-I(s)}
|I(t)- I(s)|^2_{H_d^{-\delta', 2}}&=& |\int_s^t(-A_\alpha u_n(r) +
P_nB(u_n(r))dr|^2_{\mathbb{H}^{-{\delta'}, 2}}\nonumber\\
&\leq & C(t-s)\left(\int_s^t(| u_n(r)|^2_{\mathbb{L}^{2}} +
|u_n(r)|^4_{\mathbb{L}^{2}})dr \right).
\end{eqnarray}
From \eqref{eq-unif-int-I(t)},  \eqref{eq-unif-int-I(t)-I(s)} and \eqref{Eq-Ito-n-weak-estimation-1-bounded}, 
we have for  $ \gamma <\frac12$,
\begin{eqnarray}\label{eq-unif-int-I(t)x2}
\mathbb{E}\big(\int_0^T|I(t)|_{H_d^{-\delta', 2}}^2dt&+&\int_0^T\int_0^T
\frac{|I(t)- I(s)|^2_{H_d^{-\delta',
2}}}{|t-s|^{1+2\gamma }} dtds\big)^{\frac12} \nonumber\\
&\leq& C\mathbb{E}\left(\int_0^T(| u_n(r)|^2_{\mathbb{L}^{2}} +
|u_n(r)|^4_{\mathbb{L}^{2}})dr
\right)^\frac12 \leq C<\infty.
\end{eqnarray}
Now, we estimate the stochastic term $ J$. 
Using the stochastic isometry, the contraction property of $ P_n$ and\del{ Assumption $
(\mathcal{C})$} Condition  \eqref{Eq-Cond-Linear-Q-G}, we get
\begin{eqnarray}
\int_0^T\mathbb{E}|\int_0^tP_nG(u_n(r))dW_n(r)|_{\mathbb{L}^{ 2}}^2dt&\leq&
C\int_0^T\mathbb{E}\int_0^t||G(u_n(r))||^2_{L_Q(\mathbb{L}^2)}drdt\nonumber\\
&\leq&
C\int_0^T\mathbb{E}\int_0^t(1+|u_n(r)|^2_{\mathbb{L}^2})drdt \leq c<\infty.
\end{eqnarray}
Moreover, for $ t\geq s> 0$ and $ \gamma <\frac12$, the same ingredients
above yield to
\begin{eqnarray}
\mathbb{E}\int_0^T\int_0^T\frac{|J(t)-
J(s)|^2_{\mathbb{L}^{2}}}{|t-s|^{1+2\gamma }} dtds &\leq&
C\mathbb{E}\int_0^T\int_0^T\frac{\int_s^t||G(u_n(r))||^2_{L_Q(\mathbb{L}^2)}dr}{|t-s|^{1+2\gamma
}} dtds \nonumber\\
&\leq& C\mathbb{E}\sup_{[0, T]}(1+|u_n(t)|^2_{\mathbb{L}^{2}})
\int_0^T\int_0^T|t-s|^{-2\gamma } dtds \leq c <\infty.
\end{eqnarray}
The proof of the Lemma is now completed.
\end{proof}
\begin{remark}
It is also possible, see e.g. \cite{Flandoli-Gatarek-95}, to 
\del{replace in \eqref{Eq-W-}, }prove the boundedness of the sequence $ (u_n)_n$ in 
\begin{eqnarray}
L^2(\Omega, W^{\gamma, 2}(0, T; D(A^{-\delta'})\cap L^2(0, T; \mathbb{H}^{\frac\alpha2, 2}(O))),
\end{eqnarray}

\end{remark}
\del{\noindent {\bf Proof of the existence of martingale solution
\del{Theorem \ref{Main-theorem-martingale-solution-d}}}}
To prove the existence of a martingale solution, we consider the Gelfand triplet \eqref{Gelfand-triple-Domain} and 
 use lemmas \ref{lem-unif-bound-theta-n-H-1-domain} and \ref{lem-bounded-W-gamma-p} 
and the compact embedding, see \cite[Theorem 2.1]{Flandoli-Gatarek-95}
\begin{equation}
 W^{\gamma, 2}(0, T; H_d^{-\delta', 2}(O))\cap \mathbb{L}^2(0, T; \mathbb{H}^{\frac\alpha2, 2}(O)) 
 \hookrightarrow L^2(0, T; \mathbb{L}^2(O)).
\end{equation}
\del{$ L^2(0, T; \mathbb{H}^{\frac\alpha2, 2}(O)) \hookrightarrow L^2(0, T; \mathbb{L}^2(O))$,}   Then we deduce
 that the sequence of laws $ (\mathcal{L}(u_n))_n$  is tight on $ L^2(0, T; \mathbb{L}^2(O))$. 
 Thanks to Prokhorov's theorem
there exists a  subsequence, still denoted $ (u_n)_n$, for which  the sequence of laws $ (\mathcal{L}(u_n))_n$ converges  weakly, on
$ L^2(0, T; \mathbb{L}^2(O))$,  to a probability measure $ \mu$. By Skorokhod's embedding theorem, we can construct a probability basis
$ (\Omega^*, F^*, \mathbb{F}^*,  P^*)$  and a sequence of $ L^2(0, T; \mathbb{L}^2(O))\cap C([0, T]; H_d^{-\delta', 2}(O))-$random variables
$ (u^*_n)_n$ and $ u^*$ such that  $\mathcal{L}(u^*_n) = \mathcal{L}(u_n), \forall n \in \mathbb{N}_0$,  $\mathcal{L}(u^*) = \mu$ and
$ u^*_n \rightarrow u^* a.s.$ in $ L^2(0, T; \mathbb{L}^2(O))\cap C([0, T]; H_d^{-\delta', 2}(O))$. Therefore, we conclude that
 for all $ n\in \mathbb{N}$,
\begin{eqnarray}\label{eq-bound-u-*-n-u-*}
\mathbb{E}\sup_{[0, T]}| u^*_n(s)|^p_{\mathbb{L}^2}+ \mathbb{E}\int_0^T| u^*_n(s)|_{ \mathbb{H}^{\frac\alpha2, 2}}ds
+\mathbb{E}\sup_{[0, T]}| u^*(s)|^p_{\mathbb{L}^2}+ \mathbb{E}\int_0^T| u^*(s)|^2_{ \mathbb{H}^{\frac\alpha2, 2}}ds \leq c<\infty.\nonumber\\
\end{eqnarray}
Consequently, the sequence  $ u^*_n$ converges weakly in $  L^2(\Omega\times [0, T]; \mathbb{H}^{\frac\alpha2, 2}(O))$ to a limit $ u^{**}$.
It is easy to see that $u^{*} = u^{**},  P\times dt a.e.$. We construct, as in \cite{Flandoli-Gatarek-95}, the process $ (M_n(t), t\in [0, T])$ with trajectories in
$ C([0, T]; \mathbb{L}^2(O))$ by
\begin{equation}
M_n(t):= u_n^*(t) - P_nu_0+\int_0^t A_\alpha u_n^*(s) ds -\int_0^t P_nB(u_n^*(s))ds.
\end{equation}
...>>>>>>>>>>>>>>>>>>>>>>>>>>>>>>>
We follow the same steps as in \cite{Flandoli-Gatarek-95} (hence we omit here the details), we end up with the statement that $ u^*$ is solution
in $ V^*:= \mathbb{H}^{\frac\alpha2, 2}(O)$ to Equation \eqref{Eq-weak-Solution} with $ W$ being replaced by $ W^*$.

\del{It is easy to see, thanks to Equation \eqref{FSBE-Galerkin-approxi} and to the equality in Law of $ u_n $ and $ u_n^*$,
that $ (M_n(t), t\in [0, T])$ is a square integrable martingale with respect to the filtration 
$(\mathcal{G}_n)_t:=\sigma\{u_n^*(s), s\leq t\}$. The remain part of the proof follows the same steps as in \cite{Flandoli-Gatarek-95},
hence we omit here.
\del{$ \mathbb{G}^* := (\mathcal{G}^*_t)_t$,  with $ \mathcal{G}^*_t $ is the $ \sigma-$algebra generated by
$ \cup_n (\mathcal{G}_n)_t:=\sigma\{u_n^*(s), s\leq t\}$.}}

The continuity of the trajectories of the solution $ u $ follows by a similar calculus as in 
Section \ref{sec-Torus} with the Gelfand triplet in \eqref{Gelfand-triple-Domain} and application of 
\cite[Proposition 2.5.]{Sundar-Sri-large-deviation-NS-06}. In deed,  we have thanks to 
Burkholdy-Davis-Gandy inequality, Assumption \eqref{Eq-Cond-Linear-Q-G}, \eqref{eq-bound-u-*-n-u-*}, we get
\begin{eqnarray}\label{cont-2-martg-stochas}
\mathbb{E}\sup_{[0, T]} |\int_0^tG(u(s))dW^*(s)|^2_{\mathbb{L}^{2}}
&\leq& c \mathbb{E}\int_0^T|G^*(u^*(s))|^2_{L_Q(\mathbb{L}^{2})}ds
\leq c (1+ \mathbb{E}\sup_{[0, T]}|u^*(s)|^2_{\mathbb{L}^{2}})<\infty.\nonumber\\
\end{eqnarray}
Moreover, using \eqref{Eq-B-H-alpha-2-est}, the Sobolev embedding 
$ \mathbb{H}^{\frac{d+2-\alpha}{4}, 2}(O)\subset \mathbb{H}^{\frac{\alpha}{2}, 2}(O)$, ( $ 1+\frac{d-1}{3}\leq \alpha \leq 2$) 
and the boundedness of the  operator $ A_\alpha: \mathbb{H}^{\frac{\alpha}{2}, 2}(O) \rightarrow \mathbb{H}^{\frac{-\alpha}{2}, 2}(O)$, we get
\begin{eqnarray}\label{cont-1-martg}
\mathbb{E}\int_0^T\big( |A_\alpha u^*(s)|_{\mathbb{H}^{-\frac\alpha2, 2}}&+& |B(u^*(s))|_{\mathbb{H}^{-\frac\alpha2, 2}}\big)ds \leq c
\mathbb{E} \int_0^T\big( |u^*(s)|_{\mathbb{H}^{\frac\alpha2, 2}} + |u^*(s)|^2_{\mathbb{H}^{\frac{d+2-\alpha}{4}, 2}}\big)ds\nonumber\\
&\leq& c (1+\mathbb{E} \int_0^T|u^*(s)|^2_{\mathbb{H}^{\frac\alpha2, 2}}ds)<\infty. 
\end{eqnarray}
>>>>>>>>>>>>>>>>>>>>>>>>>>>>>>>>>>>>>>>>>>>>}
\del{\begin{equation}\label{uniquness-con-martg}
 P^*( u (\cdot, \omega)\in L^{\frac{4\alpha}{3\alpha-d-2}}(0, T; \mathbb{H}^{\frac{d+2-\alpha}{4}, 2}(O)))=1.
\end{equation} \eqref{3linear-H1-H-1}, Young inequality,  we infer that
\begin{eqnarray}\label{Torus-uniquenss-ito-formula}
\mathbb{E}\!\!\!\!&{}&\!\!\!\!e^{-r(t\wedge \tau)}|w(t\wedge \tau)|^2_{\mathbb{L}^{2}}+
2\mathbb{E}\int_0^{t\wedge \tau}e^{-r(s)}|w(s)|^2_{\mathbb{H}^{\frac\alpha2, 2}}ds \nonumber\\
&\leq& \mathbb{E}\int_0^{t\wedge \tau}e^{-r(s)}|| G(u^1(s))- G(u^2(s))||^2_{HS_Q(\mathbb{L}^2)}\nonumber\\
& -& \mathbb{E}\int_0^{t\wedge \tau} e^{-r(s)} \big(2\langle B(w(s),  w(s)),  u^1(s)\rangle-
r'(s)|w(s)|^2_{\mathbb{L}^{2}}\big)ds\nonumber\\
& \leq & c_N\mathbb{E}\int_0^{t\wedge \tau}e^{-r(s)}\big(|w(s)|^2_{\mathbb{L}^2}+
|u^1(s)|_{\mathbb{H}^{?, 2}}|w(s)|^{\frac2\alpha}_{\mathbb{H}^{\frac{\alpha}{2}, 2}}
|w(s)|^{2\frac{1-\alpha}\alpha}_{{\mathbb{L}^{2}}}-
r'(s)|w(s)|^2_{\mathbb{L}^{2}}\big)ds\nonumber\\
& \leq & c_N
\mathbb{E}\int_0^{t\wedge \tau}e^{-r(s)}\big(|w(s)|^2_{\mathbb{L}^2}+ 2 c|w(s)|^2_{\mathbb{H}^{\frac\alpha2, 2}}+
2c_1|u^1(s)|^{\frac{\alpha}{\alpha-1}}_{\mathbb{H}^{1, 2}}|w(s)|^2_{\mathbb{L}^{2}}-
r'(s)|w(s)|^2_{\mathbb{L}^{2}}\big)ds.\nonumber\\
\end{eqnarray}
Now, we choose $ c<1$ and $ r'(s)= 2c_1|u^1(s)|^{\frac{\alpha}{\alpha-1}}_{\mathbb{H}^{1, 2}}$ and replace in
\eqref{Torus-uniquenss-ito-formula},
we end up by the simple formula
\begin{eqnarray}
\mathbb{E}e^{-r(t\wedge \tau)}|w(t\wedge \tau)|^2_{\mathbb{L}^{2}}&+& 2(1-c)\mathbb{E}\int_0^{t\wedge \tau}
e^{-r(s)}|w(s)|^2_{\mathbb{H}^{\frac\alpha2, 2}}ds
\leq c_N\mathbb{E}\int_0^{t\wedge \tau}e^{-r(s)}|w(s)|^2_{\mathbb{L}^2}ds.\nonumber\\
\end{eqnarray}
Than by application of Gronwall's lemma, we get
$ \forall t\in [0, T, \, e^{-r(t\wedge \tau)}|w(t\wedge \tau)|^2_{\mathbb{L}^{2}} = 0\, P-a.s.$ as
$P( e^{-c\int_0^{t\wedge \tau}|u^1(s)|^{\frac{\alpha}{\alpha-1}}_{\mathbb{H}^{1, 2}}ds} <\infty)= 1 $.
The proof is achieved once we remark that thanks to Chebyshev inequality and \eqref{cond-on the solution-Torus},
we have $ \lim_{N\rightarrow \infty}\tau_N = T a.s.$  and
$ P( \int_0^T|u^1(s)|^{\frac{\alpha}{\alpha-1}}_{\mathbb{H}^{1, 2}}ds <\infty)= 1$
\del{$ P( e^{-2c_1|u^1(s)|^{\frac{\alpha}{\alpha-1}}_{\mathbb{H}^{1, 2}}}<\infty)= 1$,
we conclude that
$\forall t\in [0, T] w(t)= 0, P-a.s.$}}

\del{\begin{lem}\label{lem-bounded-W-gamma-p}
The sequence $ (u_n)_n$ of solutions of Equations \eqref{FSBE-Galerkin-approxi} is uniformly bounded in the space
\begin{eqnarray}\label{Eq-W-}
L(\Omega, W^{\gamma, 2}(0, T; \mathbb{H}^{-\delta', 2}(O))\cap \mathbb{L}^2(0, T; \mathbb{H}^{\frac\alpha2, 2}(O)),
\end{eqnarray}
where  $ \delta'\geq_1\max\{\alpha, 1+\frac d{2}\}$.
\end{lem}
\begin{proof}
Thanks to Lemma \ref{lem-unif-bound-theta-n-l-2}, it is sufficient to prove that $ (u_n(t), t\in [0, T])$ is
uniformly bounded in $L(\Omega, W^{\gamma, 2}(0, T; \mathbb{H}^{-\delta', 2}(O))$. We recall that the Besov-Slobodetski space $W^{\gamma,
p}(0, T; E)$, with $ E$ being a Banach space, $ \gamma \in (0, 1)$ and $ p\geq 1$, is the space
of all $ v\in L^P(0, T; E) $ such that
\begin{eqnarray}
||v||_{W^{\gamma, p}}:= \left(\int_0^T|v(t)|_E^pdt+
\int_0^T\int_0^T\frac{|v(t)-v(s)|_E^p}{|t-s|^{1+\gamma p}}
dtds\right)^{\frac1p}<\infty.
\end{eqnarray}
\noindent As  $(u_n(t), t\in [0, T])$ is the solution  of the finite dimensional  stochastic
differential equation then, for $t\in [0, T]$,  we rewrite  $u_n(t)$  as
\begin{equation}\label{FSBE-Integ-solu-Galerkin-approxi}
u_n(t)= P_nu_0 + \int_0^t(-A_\alpha u_n(r) + P_nB_n(u_n(r))dr + \int_0^tP_nG_n(u_n(r))\,dW_n(r)\; a.s.\\
\end{equation}
We denote
\begin{equation}\label{Eq-Drift-term}
I(t):=  \int_0^t(-A_\alpha u_n(r) + P_nB_n(u_n(r))dr
\end{equation}
and
\begin{equation}\label{Eq-Drift-term}
J(t):= \int_0^tP_nG_n(u_n(r))\,dW^n(r).
\end{equation}
\noindent We prove that $ I(\cdot)$ is uniformly bounded in  $ W^{\gamma, 2}(0, T; \mathbb{H}^{-\delta', 2}(O))$
and that the stochastic term $ J(\cdot )$
is uniformly bounded in $ W^{\gamma, 2}(0, T;  L^2(\mathbb{T}^d)$, for all $ \gamma<\frac12$. \\
Let  $ \phi \in H^{{\delta'}, q^*}(\mathbb{T}^d)$. Recall that thanks to the choice of $ \delta'$ and to
\cite[Remark 4 p 164, Theorem 3.5.4.ps.168-169 and Theorem 3.5.5 p 170]{Schmeisser-Tribel-87-book}, we have
$  H^{{\delta'}, q^*}(\mathbb{T}^d) \hookleftarrow  L^2(\mathbb{T}^d)$. Therefore by use of Identity \eqref{Eq-3lin-propsym},
 we get
\begin{eqnarray}
| {}_{H^{-{\delta'}, q}}\langle P_nB_n(u_n(r)), \phi\rangle_{H^{{\delta'}, q^*}}|
&=& \mathcal{X}(|u_n(r)|_{L^2})|{}_{L^{2}}\langle \mathcal{R}^{\gamma, \sigma}u_n(r) \cdot \nabla P_n\phi, u_n(r)\rangle_{L^{2}}|\nonumber\\
&\leq& |\nabla P_n\phi|_{L^\infty}| \mathcal{R}^{\gamma, \sigma}u_n(r)|_{L^{2}}|u_n(r)|_{L^2}.
\end{eqnarray}
Thanks to \cite[Remark 4 p 164, Theorem 3.5.4.ps.168-169 and Theorem 3.5.5 p 170]{Schmeisser-Tribel-87-book} and to
the condition $ {\delta'}>1+\frac d{q^*}$,
we deduce for  $ 0<\epsilon < \delta'-1-\frac d{q^*}$,
 $$ |\nabla P_n\phi|_{L^\infty} \leq c |\nabla P_n\phi|_{H^{\epsilon+\frac d2, 2}} \leq c |\phi|_{H^{1+\epsilon+\frac d2, 2}}
\leq c |\phi|_{H^{{\delta'}, q^*}}.$$
Using Lemma \ref{lem-R-bounded-H-s} and the fact that $ 2\leq q <\infty$, we infer
\begin{eqnarray}\label{}
 |P_nB_n(u_n(r))|_{H^{-{\delta'}, q}}\leq c |u_n(r)|_{L^{q}}^2.
\end{eqnarray}
Hence,
\begin{eqnarray}\label{eq-unif-int-I(t)}
\int_0^T|I(t)|_{H^{-{\delta'}, q}}^2dt&\leq& c\int_0^T\int_0^t \big(|(-A_\alpha
u_n(r)|^2_{H^{-{\delta'}, q}} + |P_nB(u_n(r))|^2_{H^{-{\delta'}, q}}\big)drdt\nonumber\\
&\leq& c\int_0^T\int_0^t\big(|
u_n(r)|^2_{L^{ q}} + |u_n(r)|^4_{L^{q}}\big)drdt.
\end{eqnarray}
Moreover, using H\"older inequality and arguing as before, we get for $ t\geq s > 0$,
\begin{eqnarray}\label{eq-unif-int-I(t)-I(s)}
|I(t)- I(s)|^2_{H^{-{\delta'}, q}}&=& |\int_s^t(-A_\alpha u_n(r) +
P_nB_n(u_n(r))dr|^2_{H^{-{\delta'}, q}}\nonumber\\
&\leq & C(t-s)\left(\int_s^t(| u_n(r)|^2_{L^{ q}} +
|u_n(r)|^4_{L^{q}})dr \right).
\end{eqnarray}
Hence, from \eqref{eq-unif-int-I(t)} and \eqref{eq-unif-int-I(t)-I(s)}, we have for  $ \gamma <\frac12$,
\begin{eqnarray}\label{eq-unif-int-I(t)x2}
\mathbb{E}(\int_0^T|I(t)|_{H^{-{\delta'}, q}}^2dt&+&\int_0^T\int_0^T\frac{|I(t)- I(s)|^2_{H^{-\delta',
q}}}{|t-s|^{1+2\gamma }} dtds)^{\frac12} \nonumber\\
&\leq& C\mathbb{E}\left(\int_0^T(| u_n(r)|^2_{L^{ q}} +
|u_n(r)|^4_{L^{q}})dr
\right)^\frac12 \leq C<\infty.
\end{eqnarray}
The RHS of \eqref{eq-unif-int-I(t)x2} is uniformly bounded thanks to Estimation \eqref{Eq-Ito-n-weak-estimation-Lq}.
Now, we estimate the stochastic term $ J$. This last is more regular
then the drift term. We  prove $ J \in L(\Omega, W^{\gamma, 2}(0, T;
L^{q}(\mathbb{T}^d))$.
Using the stochastic isometry and Assumption $
(\mathcal{A})$, we get
\begin{eqnarray}
\int_0^T\mathbb{E}|\int_0^tP_nG_n(u_n(r))dW^n(r)|_{L^{ q}}^2dt&\leq&
C\int_0^T\mathbb{E}\int_0^t||G(u_n(r))Q^\frac12||^2_{R_\gamma(L^2, L^q)}drdt\nonumber\\
&\leq&
C\int_0^T\mathbb{E}\int_0^t(1+|u_n(r)|^2_{L^q})drdt <\infty.
\end{eqnarray}
Moreover, for $ t\geq s> 0$ and $ \gamma <\frac12$, the same ingredients
above yield to
\begin{eqnarray}
\mathbb{E}\int_0^T\int_0^T\frac{|J(t)-
J(s)|^2_{L^{2}}}{|t-s|^{1+2\gamma }} dtds &\leq&
C\mathbb{E}\int_0^T\int_0^T\frac{\int_s^t||G(u_n)Q^\frac12||^2_{R_\gamma(L^2, L^q)}}{|t-s|^{1+2\gamma
}} dtds \nonumber\\
&\leq& C\mathbb{E}\sup_{[0, T]}(1+|u_n(t)|^2_{L^{q}})
\int_0^T\int_0^T|t-s|^{-2\gamma } dtds <\infty.
\end{eqnarray}
The proof is now completed.
\end{proof}
\noindent {\bf Proof of  Theorem \ref{Main-theorem-mild-solution-3}.}
\noindent To prove  Theorem \ref{Main-theorem-mild-solution-3}, we can proceed by tow methods which seem, following our technique,
equivalent.
\begin{itemize}
 \item {\bf Method 1. Hilbert space setting.}  Thanks to Lemmas  \ref{lem-unif-bound-theta-n-L-2} and
\ref{lem-bounded-W-gamma-p}  with $ q=2$, we prove the first statement of Theorem \ref{Main-theorem-mild-solution-3}, i.e.
we prove the existence of a martingale solution
$(u^*(t), t\in [0, T])$ for Equation  \eqref{Main-stoch-eq} with $ (\sigma, \gamma)\in C_a$, in the sense of
Definition \eqref{def-martingle-solution}. Indeed, one can follow
\cite{Flandoli-Gatarek-95} combined with Lemma \ref{lem-R-bounded-H-s} and Proposition \ref{prop-Ben-q=2}.
To prove the second statement, we prove that $(u^*(t), t\in [0, T])$ satisfies
Equation \eqref{Eq-weak-Solution},  with $ \varphi \in H^{{\eta}, q^*}(\mathbb{T}^d)$ with $ q^*$ being the conjugate of $ q>2$.
To get the estimations \eqref{eq-set-solu-weak} and \eqref{Eq-reg-theta-mild}, we use Lemma \ref{lem-unif-bound-theta-n-l-2}.

\item {\bf Method 2. Banach space setting.} The idea here is to  extend the techniques used for the Hilbert space setting to Banach
spaces $ L^q(\mathbb{T}^d)$ with $ q>2 $. Let $ 2\leq q\leq_\infty q_0$. We prove that the
family of Laws of solutions $ (\mathcal{L}(u_n))_n$ is tight in $
L^2(0, T; L^{q}(\mathbb{T}^d))$. In fact, this latter is deduced thanks to Lemmas
 \ref{lem-unif-bound-theta-n-l-2} and  \ref{lem-bounded-W-gamma-p} and  the following embedding, see \cite[Theorem 2.1]{Flandoli-Gatarek-95} and
\cite[Remark 4 p 164, Theorem 3.5.4.ps.168-169 and Theorem 3.5.5 p 170]{Schmeisser-Tribel-87-book},
\begin{equation}
 L^2(0, T; H^{\frac\alpha2, 2})\cap W^{\gamma,
2}(0, T; H^{-{\delta'}, q}) \hookrightarrow L^2(0, T; L^{q}),
\end{equation}
provided $ d(1-\frac2q)\leq \alpha \leq 2$. This means that $ q $ and $d$ should enjoy the relation
$ d-2\leq \frac{2d}{q}$. This last is trivial for  $ d\leq \alpha$ and $ q\in[2, +\infty)$.
For $ d>\alpha$, we assume $  q\in [2, \frac{2d}{d-\alpha}]$. Therefore, under the conditions in Theorem \ref{Main-theorem-mild-solution-3} and
thanks to Skorokhod Theorem, we infer the existence of a probabilistic basis
$ (\Omega^*, \mathcal{F}^*, \mathbb{P}^*, \mathbb{F}^*, W^*)$ and  $L^2(0, T; L^{q})\cap C([0, T]; H^{-{\delta'}, q})$-valued random variables
$u^*$  and $(u^*_n)_n$ defined on this basis such that   $
\mathcal{L}(u_n^*) = \mathcal{L}(u_n) $ and  $u^*_n \rightarrow u^*, \;\; P-a.s. $
in $ L^2(0, T; L^{q})\cap C([0, T]; H^{-{\delta'}, q})$. Moreover, thanks to the above equality in law,
$(u_n^*(t), t\in[0, T])$ satisfies \eqref{Eq-Ito-n-weak-estimation-1} for all $ n\in \mathbb{N}_0$,
\eqref{Eq-Ito-n-weak-estimation-Lq} and \eqref{Eq-Ito-n-weak-estimation}.
Hence we deduce that $(u^*(t), t\in[0, T])$ satisfies  \eqref{eq-set-solu-weak} and \eqref{Eq-reg-theta-mild}.
To prove that $(u^*(t), t\in[0, T])$ is a weak solution of Equation \eqref{Main-stoch-eq} in the sense
of Definition \ref{def-variational solution}, i.e.  $ u^*$ satisfies Equation \eqref{Eq-weak-Solution}, we use Lemma \ref{lem-R-bounded-H-s} and
proceed as in \cite{Flandoli-Gatarek-95}.
\end{itemize}

\noindent {\bf Proof of Part 3. of Theorem \ref{Main-theorem-mild-solution}}

\noindent Let us remark, that thanks to Lemma  \ref{lem-unif-bound-theta-n-L-2} and  Lemma
\ref{lem-bounded-W-gamma-p}  with $ q=2$ in this later, we can prove Part 3. of
Theorem \ref{Main-theorem-mild-solution}, see, for this technique in Hilbert space, e.g. \cite{Flandoli-Gatarek-95, Rockner-quasi-geostro-1}.
As our method to prove the existence of the global mild solution is to use the result of the existence of the martingale solution,
we are then interested to extend this result for $ L^q-$spaces with $ q \geq 2$. We can proceed by two ways.
One way, is to prove that the $ L^2$-martingale solution  satisfies equation \eqref{Eq-weak-Solution} with $ \phi \in H^{\delta, q}$.
The second way is to prove the existence of martingale solution in the $ L^2(0, T; L^q).$ The techniques we are using seems equivalent in this task,
hence we will follow the second way.  It is obvious that our technique to prove
the martingale solution in $ L^q-$spaces is still valid for $ q=2$ without any conditions on the space dimension.

\noindent  We prove that the
family of laws of the solutions $ (\mathcal{L}(u_n))_n$ is tight in $
L^2(0, T; \mathbb{L}^{q}(O))$. In fact, the tightness of  $(\mathcal{L}(u_n))_n$ is deduced thanks to Lemmas
 \ref{lem-unif-bound-theta-n-l-2} and  \ref{lem-bounded-W-gamma-p} and  the following embedding
\begin{equation}
 L^2(0, T; H^{\frac\alpha2, 2})\cap W^{\gamma,
2}(0, T; H^{-\delta, q^*}) \hookrightarrow L^2(0, T; \mathbb{L}^{q}),
\end{equation}
provided $ \alpha \geq d(1-\frac2q)$*, see, e.g. \cite[Theorem 2.1]{Flandoli-Gatarek-95},
\cite[Theorem 7.63. p 221 + 7.66, p 222]{Adams-Hedberg-94}.   Therefore, we infer the existence of a probabilistic basis
$ (\Omega^*, \mathcal{F}^*, \mathbb{P}^*, \mathbb{F}^*)$ and an $L^2(0, T; \mathbb{L}^{q})\cap C([0, T]; H^{-\delta, q^*})$-valued random variables
$u^*$  and $u^*_n$ defined on this basis and such that \del{$
\mathcal{L}(u^*) = \mu$,}  $
\mathcal{L}(u_n^*) = \mathcal{L}(u_n) $ and  $u^*_n \rightarrow u^*, \;\; P-a.s. $
in $ L^2(0, T; \mathbb{L}^{q})\cap C([0, T]; H^{-\delta, q^*})$. Moreover, thanks to the above equality in law, the sequence,
$(u_n^*)_n$ satisfies \eqref{Eq-Ito-n-weak-estimation-1}, \eqref{Eq-Ito-n-weak-estimation-Lq} and \eqref{Eq-Ito-n-weak-estimation}.
Hence, we deduce that $u^*_n \rightarrow u^*, \;\; \mathbb{P}^*-a.s. $ weakly in $ L^2(\Omega\times [0, T]; H^{\frac\alpha2, 2})$
\footnote{In fact,$u^*_n \rightarrow u^*$ weakly to other limit which could be easily identified with $u^*$} and
\begin{equation}
 u^*(\cdot, \omega)\in L^2(0, T;H^{\frac\alpha2, 2}(O))\cap  L^\infty(0, T;\mathbb{L}^{q}(O)), \;\;\; P-a.s..
\end{equation}
Using the classical scheme in particular Skorokhod Theorem see \cite{Flandoli-Gatarek-95},  we prove $ u^*$ is a weak solution of the equation \eqref{Main-stoch-eq} in the sense
of Definition \ref{def-variational solution}, i.e.  $ u^*$ satisfies equation \eqref{Eq-weak-Solution}.

\noindent {\bf End of the proof} }


\del{\section{special Section Resolution of an auxiliary problem}\label{sec-2-approxim}

This definition is a simulation of Definition 4.4 in \cite{Daprato-Debussche-Martingale-Pbm-2-3-NS08}
\begin{defn}
We say that an $\mathcal{F}-$adapted process is a local weak (strong) solution for Equation \eqref{Main-stoch-eq}, if  there exists an increasing
sequence of stopping time $ (\tau_N)_N$ such that Equation \eqref{Eq-weak-Solution} is satisfied for all $ t\leq \tau_N$.
\end{defn}

\begin{lem}
 \begin{equation}
  |\rangle B(u-v, v), u-v\langle|\leq c|u-v|_{\mathbb{H}^{\frac\alpha2, 2}}^2 + |u-v|_{\mathbb{L}^2}^2|v|^{\frac{2}{\theta}}_{\mathbb{H}^{\frac\alpha2, q}}
 \end{equation}
with $ \theta := 2+\frac{2d}{\alpha}(\frac{1q} +\frac{1-s}{d})$, s>>>>>>>>>>>
\end{lem}

In this section, we introduce the following auxiliary problem SPDE
\begin{equation}\label{FSB-approx-Xi}
\Bigg\{
\begin{array}{lr}
dv(t)= (-A_\alpha v(t) + B(\xi(t), v(t))dt + G(v(t))\,dW(t), \; 0< t\leq T,\\
v(0) = u_0,
\end{array}
\end{equation}
where $ \xi_t$ is an $ \mathcal{F}-$adapted process satisfying for some $ \delta \geq  \frac{2d-1-\alpha}{4}$,
\begin{equation}
 \mathbb{E}[\sup_{[0, T]}|\xi(t)|_{H^{\delta, 2}}^p]<\infty.
\end{equation}
We will use the monotonicity method to prove the existence of a variational solution to , for more details about this method see e.g.
\cite{Krylov-Rozovski-monotonocity-2007, Pardoux-thesis, Rockner-Pevot-06} and  Appendix \ref{appendix-Monotonocity}. But, we will only give the
following definition of a variational solution of equation \eqref{FSB-approx-Xi}, see \cite[Definition 3.4.]{Krylov-Rozovski-monotonocity-2007}.
Let us give the Gelfant triple
\begin{equation}\label{Gelfant-triple}
 V \hookrightarrow H\tilde{=}  H^*\hookrightarrow  V^*,
\end{equation}
where $ H $ is a Hilbert space.
\begin{defn}\label{Def-solution-variational}
A strong continuous $ L^2-$valued $ \mathcal{F}_t-$adapted process $ (u(t))_{t\in [0, T]}$ is called a solution of equation \eqref{FSB-approx-Xi} iff

\begin{itemize}
 \item $ u \in V $ a.e. in $ (t, \omega)$ and for some given $ p>0$,
\begin{equation}\label{cond-Def-Kry-Rozo}
 \mathbb{E}\left(| u(t)|_{L^2}^p +\int_0^T|u(t)|_V^2dt \right).
\end{equation}
\item there exists a set $ \Omega'\subset \Omega $ of probability one on which for all $ t\in [0, T]$, the following equality holds in $ V^*$,
\begin{equation}\label{Eq-Def-Kry-Rozo}
 u(t)= u_0+ \int_0^t \left(-A_\alpha u(s) + B(\xi(s), u(s)) \right) ds + \int_0^t G(u(s))\,dW(s).
\end{equation}
\end{itemize}
\end{defn}

Let us here make precise the specificity of the application of the monotonicity method on the fractional dissipative problems, relatively
to the classical onces.  It is well known that the main ingredient of this
method is first to consider a  Gelfant triple \eqref{Gelfant-triple},  such that the drift term (let us denote by $ A(t, \omega, v)$) is well
defined and bounded as an operator from $V$ to $ V^*$.  In the case of the second order differential equations (classical case)
(e. g.  $ A= \Delta$ with linearity $ B(v)$ given by the gradient such as in our case, the Gelfant triplet is taken as
 \begin{equation}
 V= D((A^\frac12))=H_0^{1, 2}(\mathbb{T}^d) \hookrightarrow  H:= L^2(\mathbb{T}^d))^d \tilde{=}  H^*\hookrightarrow  V^*:= H^{-1, 2}(\mathbb{T}^d).
 \end{equation}
 It is obvious that $ V\subset D(B) =H^{1, 2}(\mathbb{T}^d)$. Hence,  the restriction of
$ B $ on $ V$ denoted also by $ B$ and the operator $ A$ are both  well defined and bounded from $V$ to $ V^*$.

In the fractional case, we can consider the Gelfant triple
\begin{equation}
 V=D(A_\alpha^\frac12)=H_0^{\frac\alpha2, 2} \hookrightarrow H^{0, 2}\tilde{=}  H^*\hookrightarrow  V^*:= H^{-\frac\alpha2, 2}.
\end{equation}
Hence, it is also obvious that $A_\alpha: V=H_0^{\frac\alpha2, 2} \rightarrow V^*=H^{-\frac\alpha2, 2}$  is bounded, however for $ \alpha<2$,
the space $ V $ is larger than the domain of definition ob $ B$. In particular,
$ H^{1}_0(\mathbb{T}^d) \subsetneq V$. Consequently,  to be able to apply the  monotonicity theorem,
we need to extend uniquely the operator $ B $ to a  bounded operator on $ V$. This will be  the content of the following proposition \del{This will be  the content of the following two propositions.
The first proposition is more relevant for the small regularization effect of the operator $ \mathcal{R}^{\gamma, \sigma}$, i.e. small $ \gamma $,
in particular for $ \gamma=1 $. In the second proposition, we investigate more the large regularization effect}
\begin{prop}
 Let $ d\in \{1, 2, 3\}$ and  $ \alpha > \frac{d+2}{3}$. Then the operator $ B $ is extended uniquely to a bounded operator
(we keep the same notation)  $ B:V \rightarrow V^*$.

\end{prop}

\begin{proof}
For generality reasons, we consider the bilinear form $ B( u, v)$ as defined in \eqref{B-theta1-theta2-divergence}.
Thanks to the fact that $ \sigma \in \sum_d^0$ and Lemma \ref{Lem-classic}, it is easy to deduce that  there exists a constant $ c>0$, s.t.
\begin{equation}
|B(u, v)|_{V^*} \leq \ c\sum_{j =1}^d| \mathcal{R}_j^{\gamma, \sigma}u \cdot  v|_{H^{\frac{2- \alpha}2}}.
\end{equation}
Using the pointwise multiplication in Theorem \ref{theor-1-SR},  Lemma \ref{lem-R-bounded-H-s} and the condition that $ \alpha > \frac{d+2}{3} $,
we  get
\begin{equation}\label{Eq-B-u-v-V-dual}
|B(u, v)|_{V^*} \leq \ c|u |_{H^{\frac{d+2- \alpha}4, 2}} |v|_{H^{\frac{d+2- \alpha}4, 2}}\leq c |u|_V|v|_V<\infty.
\end{equation}

\end{proof}
\begin{remark}
Let us remark that the above method is also applied for $ d>3$, but then $ \alpha >2$. Let us summarize:
\begin{itemize}
\item $ d=1 $ and $ \alpha >1$,
\item $ d=2 $ and $ \frac43\leq  \alpha \leq 2$,
\item $ d=3$ and $ \frac53\leq  \alpha \leq 2$,
\item $ d=4 $ and  $ \alpha = 2$,
\end{itemize}
\end{remark}

Our main result in this section is the following Theorem,
\begin{theorem}\label{Main-theorem-approxi-2}
 Equation \eqref{FSB-approx-Xi} admits a unique solution $ u$ (in the sense of Definition \ref{Def-solution}).
Moreover,  for all $ p>1$, there exists $ c>0$  independent of $ \xi$, such that
\begin{equation}\label{est-solution-variational}
 \mathbb{E}\left(\sup_{[0, T]}| u(t)|_{L^2}^p +\int_0^T|u(t)|_V^2dt \right)\leq c<\infty.
\end{equation}
\end{theorem}

\begin{proof}
\noindent It is easy to see that the coefficients of equation \eqref{FSB-approx-Xi} satisfy the assumptions  $ (H_1)-(H_3)$, with $ p'=2$
and $ \lambda =1$.  We are enable to get  $ (H_4')$. Then we try to get other estimation. Indeed, using the first inequality in
\eqref{Eq-B-u-v-V-dual}  and Young inequality, we get
\begin{eqnarray}
 | A(v)|_{V^*} &\leq & c( | v|_{H^{\frac\alpha2, 2}}+ |\xi|_{H^{\frac{d+2-\alpha}{4}}} | v|_{H^{\frac{d+2-\alpha}{4}}})\nonumber\\
&\leq & c\left( 2|v|_{H^{\frac\alpha2, 2}}+ |v|_{L^2} |\xi_t|_{H^{\frac{d+2-\alpha}{4}}}^{\frac{2\alpha}{3\alpha-(d+2)}} \right).\nonumber\\
\end{eqnarray}
We replace the assumption $ (H_4')$ in Theorem \ref{Teorem-Krylov-Rozovsky} by  the assumption
\begin{itemize}
\item  $(H_4)$ (Boundedness of the growth of $ A(t, .)$),
\begin{equation}
 | A(v)|^2_{V^*}\leq  c(N_t^{2}+ K|v|_V^{2})(1+ |v|_H^{2}),
\end{equation}
\end{itemize}
where here $ N_t := |\xi_t|_{H^{\frac{d+2-\alpha}{4}}}^{\frac{2\alpha}{3\alpha-(d+2)}} \in L^2([0, T]\times\Omega; dt\times P)$.
Using the same proof as in  \cite{Lui-Roekner-nonlocal-monotonicity-10}, we infer the existence of a unique strong solution of equation
\eqref{FSB-approx-Xi} in the sense of definition \ref{Def-solution-variational} satisfying the estimation
\begin{equation}\label{est-solution-variational-c-xi}
 \mathbb{E}\left(\sup_{[0, T]}| v(t)|_{L^2}^p +\int_0^T|v(t)|_V^2dt \right)\leq c_\xi<\infty.
\end{equation}

One also can apply the result in  \cite{Zhang-random-coe-09}, to check all the details.

The last step is to prove that the LHS in \eqref{est-solution-variational-c-xi} is uniformly bounded (independently of $ \xi$). In fact, by
application of Ito formula,
see, e.g. \cite[Theorem 4.2.5]{ Roeckner-Pevot-06},  we get,

\begin{equation}
|u(t)|_{H} ^p= |u_0|_H^p+ 2\int_0^t\left(|u(s)|_H^{p-2}({}_{V^*}\langle -A_\alpha u(s)+ \xi_s\cdots \nabla u(s), u(s)\rangle_V)+ 2\int_0^t
|u(s)|_H^{p-2}||G(u(s))||_Q^2\right)ds + 2 \int_0^t{}_{V^*}\langle  u(s), G(u(s)) dW(s)\rangle_V.
\end{equation}
 Thanks to  property  \eqref{B-v-v-v} (the divergence free  property of $ \xi$), we infer
\begin{equation}
\mathbb{E}\left[\sup_{[0, T]}|u(t)|_{H} ^p +  \mathbb{E}\int_0^T|u(t)|_{H} ^{p-2}| u(s)|^2_Vds \right] \leq C(\mathbb{E}|u_0|_H^2+ 1).
\end{equation}

\end{proof}
  Thanks to our hypothesis $ $
as it is announced above.
The proof is a mixture of the proofs in \cite{Krylov-Rozovski-monotonocity-2007, Lui-Roekner-nonlocal-monotonicity-10, Roeckner-Pevot-06}, hence
some standard facts will be only omitted.

\noindent We use the Galerkin approximation. Let $ P_n$ be the projection of $ V^*$ onto the span $ \{e_1, e_2, \cdots, e_n\}$ and let
$ P_n$ be the projection of $\mathbb{E} $ onto the span $ \{h_1, h_2, \cdots, h_n\}$.
We set $ u^n:= \sum_{j=1}^n{}_{V^*}\langle u^n, e_j\rangle_Ve_j $. Then the following equation admits a unique solution,

\begin{equation}\label{Eq-Def-Kry-Rozo}
 u^n(t)= P_nu_0+ \int_0^t \left(-P_nA_\alpha u^n(s) + P_n(B(\xi(s), u^n(s))) \right) ds + \int_0^t \tilde{P}_nG(u^n(s))\,dW(s).
\end{equation}}

\appendix
\section{Equivalence between FSNS and SFNS equations.}\label{Appendix-Equivalence}
Recall that we have proved in Section \ref{sec-formulation} that Equation \eqref{Main-stoch-eq} with $ A_\alpha:= (A^S)^{\frac{\alpha}{2}}$ is well defined. This equation can be seen as the fractional version of the stochastic Navier-Stokes equation (FSNSE). 
A stochastic version of the fractional Navier-Stokes equation (SFNSE)
 can also be constructed by taking 
$ A_\alpha:= \Pi(-\Delta)^{\frac{\alpha}{2}}$ on $ \mathbb{L}^q(O)$. 
For simplicity, let us keep in mind for a short time that the two equations, FSNSE and SFNSE, are different. Later on, we shall prove that they are equivalent. By a fractional Navier-Stokes equation (FNSE), we mean Equation 
 \eqref{Eq-classical-SNSE-O-p}, with $ \Delta$ replaced by $ -(-\Delta)^\frac\alpha2$. The SFNSE is a 
stochastic perturbation of FNSE. Thanks to theorems \ref{Prop-1-Laplace-Stokes}-\ref{Lem-semigroup} 
and to the calculus above, the SFNSE is also well defined.\del{The main question now is whether or not the two equations FSNSE and SFNSE are equivalent. Let us  before moving to this last question,  
clarify some practical and theoretical matters for the two versions. 
Indeed,} As the derivation of equations describing physical phenomena is mainely based on the deterministic case it is intuitively seen that the stochastic version of the fractional Navier-Stokes equation is more suitable for physical modeling, rather than the fractional version of the stochastic Navier-Stokes equation, see e.g. \cite{Caffarelli-2009, SugKak, Sug, Sug-Frac-cal-89}. As we shall prove the equivalence of the two versions, we conclude that the FSNSE, seems intuitively more theoretical, is  of practical interest as well.

\noindent  In the case $ O= \mathbb{T}^d$, the operators  $ \Delta $ and $ div$ are commuting. 
Therefore, the Stokes operator $A^S$
is minus the Laplacian $ \Delta$ on $ \mathbb{L}^q( \mathbb{T}^d)$, see e.g.
\cite{Gigaweak-strong83},
\cite[p. 48]{Foias-book-2001}, \cite[p. 9]{Temam-NS-Functional-95} and \cite[p 105]{Temam-Inf-dim-88} for the torus,
 \cite{Kato-Ponce-86} for $ O=\mathbb{R}^d$  and see also a direct  proof in Section \ref{sec-Torus}. Combining this statement with the results in Theorem \ref{theorem-domains-A-D-A-S}, we conclude that  
\begin{eqnarray}\label{eq-A-S-alpha-Delta-alpha}
D((A^S)^\frac\alpha2) &=& D(\Pi_q(-\Delta)^\frac\alpha2\Pi_q) = H_d^{\alpha, q}(\mathbb{T}^d)\cap \mathbb{L}^q(\mathbb{T}^d),\nonumber\\
(A^S)^\frac\alpha2 u &=& \Pi(-\Delta)^\frac\alpha2 u= (-\Delta)^\frac\alpha2 u, \;\; \forall u \in D((A^S)^\frac\alpha2).
\end{eqnarray}
This proves that the FSNSE and the SFNSE  defined on the torus are ''equivalent''. 
In the case $ O\subset \mathbb{R}^d$  being bounded, the Stokes operator $ A^S$ is not equal to $ -\Delta$.  In fact,
 as  we can not in general expect that if $ u \in D(A^S)$ we also have
$ \Delta u\cdot \vec{n} =0 $ on $\partial O$ it is not obvious whether or not $ \Delta u \in \mathbb{L}^q(O)$. Our claim here is that $ (A^S)^\frac\alpha2 =  \Pi(A^D)^\frac\alpha2 \Pi$.
In deed, thanks to \eqref{def-A-D} and \eqref{def-A-S}, it is easy to deduce that 
 $ A^S = \Pi A^D\Pi$, see also \cite{Fujiwara-Morimoto-L-r-Helmholtz-decomposition-77}.\del{In fact, this latter is a simple 
consequence of \eqref{def-A-D} and \eqref{def-A-S}.
using \eqref{def-A-D} and \eqref{def-A-S}, we get
\begin{eqnarray}
 D(\Pi A^D\Pi)&=& D(A^D)\cap \mathbb{L}^q(O) = D(A^S)\nonumber\\
\Pi A^D u &=& -\Pi \Delta u = A^Su, \;\;\;  \forall u \in  D(\Pi A^D)\cap \mathbb{L}^q(O).
\end{eqnarray}
In the second step, we prove that $ (A^S)^\frac\alpha2 =  \Pi(A^D)^\frac\alpha2 \Pi$.}
Using Theorem \ref{theorem-domains-A-D-A-S}, we infer that
\begin{equation}
 D(\Pi (A^D)^\frac\alpha2\Pi)=  D((A^D)^\frac\alpha2)\cap \mathbb{L}^q(O) = D((A^S)^\frac\alpha2).
\end{equation}
\del{Recall the following definition of the negative power of $ A$, with  
$ A$ could be either $ A^S$ or $ A^D$,  see e.g. 
\cite{Giga-Doamian-fract-Stokes-Laplace, Pazy-83}, 
\begin{equation}\label{negative-fractional-A}
 A^{-\frac\alpha2} u = \frac1{2\pi i}\int_\Gamma z^{-\frac\alpha2}(A-zI_{L_d^q})^{-1} u dz, \; \forall u \in \mathbb{L}^q(O).
\end{equation}
where  $ \Gamma $ is the path running the resolvent set from $ \infty e^{-i\theta}$ to  $ \infty e^{i\theta}$, $ <0\theta <\pi$,  avoiding the 
negative real axis and the origin and such that the branch $  z^{-\frac\alpha2}$ is taken to be positive for real for real positive values of $z$ and $ I_X$ is the identity on the space $ X$.
The integral in the RHS of \eqref{negative-fractional-A-D} converges in the uniform operator topology.}
Moreover, using the definition of the negative power of $ A^S$ and $ A^D$ via the resolvent, see e.g. 
\cite{Giga-Doamian-fract-Stokes-Laplace, Pazy-83} and the definition of the Helmholtz projection, we infer that 
\begin{equation}\label{negative-fractional-A-D}
 (A^D)^{-\frac\alpha2} \Pi^{-1}u = \frac1{2\pi i}
 \int_\Gamma z^{-\frac\alpha2}(A^D-zI_{L_d^q})^{-1}\Pi^{-1}u dz,\;\;\; \forall u \in \mathbb{L}^q(O),
\end{equation}
where  $ \Gamma $ is the path running the resolvent set from $ \infty e^{-i\theta}$ to  
$ \infty e^{i\theta}$, $ 0<\theta <\pi$,  avoiding the 
negative real axis and the origin and such that the branch $  z^{-\frac\alpha2}$ is taken to be positive
for real  positive values of $z$ and $ I_X$ is the identity on the space $ X$.
The integral in the RHS of \eqref{negative-fractional-A-D} 
converges in the uniform operator topology. Furthermore, we have for all $ u \in \mathbb{L}^q(O)$,
\begin{eqnarray}\label{negative-fractional-A-D-last}
 (A^D)^{-\frac\alpha2} \Pi^{-1}u &=& \frac1{2\pi i}\int_\Gamma z^{-\frac\alpha2}(\Pi A^D-z\Pi)^{-1}u dz
 =\frac1{2\pi i}\int_\Gamma z^{-\frac\alpha2}(A^S-zI_{\mathbb{L}^q})^{-1}u dz
= :(A^S)^{-\frac\alpha2}.\nonumber\\
\end{eqnarray}
As the operators $(A^S)^{-1}$ and $(A^D)^{-1}$ are one-to-one this achieved the proof of the equivalence between the FSNSE and the 
SFNSE.

\del{\section{Local  mild solution of the multidimensional FSNSE.}\label{appendix-local-solution}

\noindent\del{ In this Section, we prove Theorem \ref{Main-theorem-mild-solution-d}.} Let us first recall the following result,
\begin{lem}\label{lem-Lipschitz-pi-n}
 \noindent  Let $ n$ be fixed and let $ X$ be a Banach space.
We define the application $ \pi_n: X \rightarrow  X$, by
\begin{equation}\label{Eq-Pi-n-X}
\pi_n(x)=
\Bigg\{
\begin{array}{lr}
x, \;\;  |x|_{X}\leq n,\\
nx|x|^{-1}_{X}, \;\;  |x|_{X}> n.
\end{array}
\end{equation}
\noindent Then $  \pi_n$ satisfies the following estimates,
\begin{equation}\label{inea-Pi-n}
 |\pi_n x- \pi_n y|_{X}\leq 2 |x-y|_{X}\;\;\; \text{and} \;\;\;
 |\pi_n x|_{X}\leq  \min\{n, |x|_{X}\}.
\end{equation}
\end{lem}
\noindent  For $ n\in \mathbb{N}_0$, we introduce the following sequence of equations
\begin{equation}\label{Eq-approx-n}
\Bigg\{
\begin{array}{lr}
 du_n= \left(-
A_{\alpha}u_n(t) + B(\pi_nu_n(t))\right)dt+ G(\pi_n(u_n(t)))dW(t), \; 0< t\leq T,\\
u_n(0)= u_0.
\end{array}
\end{equation}
\noindent For the fixed parameters $ 2< p <\infty$, $ 2< q\leq q_0$ and $ T_1\leq T$ (for simplicity reasons, we omit the subscription of all  these parameters),  we define the spaces $ \mathcal{E}_T$ and $ E_T$ as in \eqref{E-cal-T} and \eqref{E-T} respectively with $ X:= D(A_q^{\frac\delta2})$. Recall that as we have seen in Section \ref{sec-Domain}, $ \pi_n$ is well defined on $D(A_q^{\frac\delta2})$. Furthermore, we define the map  $ \Phi_{n}$ on $ \mathcal{E}_T$ by\del{ We denote by $ \mathcal{M}_{q, \delta, p}$
the space of adapted  stochastic
processes with values in $ D(A_q^{\frac {\delta}2})$,  (later, we use for simplicity, the
notation $\mathcal{M}_{\delta} $),  endowed  by the norm
\begin{equation}\label{norm-M}
|\theta |_{\mathcal{M}_{\delta}}:= \left(\mathbb{E}\sup_{[0,T_1]}|\theta(t)|^p_{D(A_q^{\frac{\delta}2})}\right)^\frac 1p<\infty.
\end{equation}}

\begin{equation}\label{Eq-def-Phi}
\Phi_{n}(u)(t) := e^{-A_\alpha t}u_0 + \int_0^te^{-A_\alpha (t-s)}B(\pi_n(u(s)))ds + \int_0^te^{-A_\alpha (t-s)}
G(\pi_n(u(s)))W(ds).
\end{equation}
\del{The aim now  is to prove that the map $ \Phi_{n} $ is a contraction on $ \mathcal{E}_{T}$ and than we apply the
fixed point Theorem. The claim is}

\noindent {\bf Claim}
\noindent Under the conditions of Theorem \ref{Main-theorem-mild-solution-d} the map $\Phi_{n}$  is well defined and it is a
contraction on $ \mathcal{E}_{T}$.

\noindent {\bf Proof of  the claim}
Let us denote by $ I_0,\; I_1(u), \; I_2(u)$ respectively the terms in the RHS of \eqref{Eq-def-Phi} and prove that $I_j(u)
\in \mathcal{E}_{T}$, for $ j \in\{0, 1, 2\}$.
In fact, $ I_0 \in \mathcal{E}_{T} $ thanks to \eqref{Eq-initial-cond} and to the boundedness of the operators of the semigroup. From Lemma \ref{lem-est-z-t} and the second
estimate in \eqref{inea-Pi-n}, we conclude that
$ I_2(u)  \in \mathcal{E}_{T}$.  In particular, there exists $ c_n >0$, such that
\begin{eqnarray}\label{est-2-term-B-0-2}
|I_2 (u)|_{\mathcal{E}_{T_1}} & \leq & c_nT_1 (1+|u|_{\mathcal{E}_{T_1}}).
\end{eqnarray}
Moreover, thanks to Proposition \ref{Prop-Main-I} and
the second estimate in  \eqref{inea-Pi-n}, we infer the existence of $ c, \mu>0$ such that
\begin{eqnarray}\label{est-2-term-B-0-1}
|I_1(u)|_{\mathcal{E}_{T_1}} &\leq& CT_1^{\mu}\left(\mathbb{E}\sup_{[0, T_1]}|\pi_nu(s)|^{2p}_{D(A_q^\frac{\delta}2)} \right)^\frac1p
\leq  ncT_1^{\mu}|u|_{\mathcal{E}_{T_1}}<\infty. \nonumber\\
\end{eqnarray}
This proves that $ \Phi_{n} $ is well defined. To prove that $ \Phi_{n} $ is a contraction, we first remark that
\begin{equation}\label{formula-B-v1-B-v2}
 B(u_1, u_1) - B(u_2, u_2) = B(u_1, u_1-u_2)+ B(u_1-u_2, u_2).
\end{equation}
Thanks to Proposition \ref{Prop-Main-I} and the first estimate in \eqref{inea-Pi-n}, we get\del{ ( see also a similar calculus in the proof of Theorem
\ref{prop-global-mild-solution} bellow)}
\begin{equation}\label{eq-lipschitz-B}
\Big(\mathbb{E} \sup_{[0, T_1]}|\int_0^te^{-A_\alpha (t-s)}\big(B(\pi_n(u_1(s)))-B(\pi_n(u_2(s)))\big)
ds|^p_{D(A_q^{\frac\delta2})}\Big)^\frac1p\leq cnT_1^{1-\frac1\alpha(1+\frac dq)}
|u_1-u_2|_{\mathcal{E}_{T_1}}.
\end{equation}
Using Assumption $(\mathcal{C})$, \cite[Proposition 4.2]{Neerven-Evolution-Eq-08}, the calculus as in Lemma \ref{lem-est-z-t}
 and the fact that $ \pi_n$ is globally Lipschitz, we get
\begin{equation}\label{eq-lipschitz-G}
\Big(\mathbb{E} \sup_{[0, T_1]}|\int_0^te^{-A_\alpha (t-s)}\big(G(\pi_n(u_1(s))) - G(\pi_n(u_2(s)))\big)W(ds)|^p_{D(A_q^{\frac\delta2})}\Big)^\frac1p
\leq c_n T_1|u_1-u_2|_{\mathcal{E}_{T_1}}.
\end{equation}
Consequently, thanks to \eqref{eq-lipschitz-B} and \eqref{eq-lipschitz-G} we infer that
\begin{equation}
 | \Phi_{n}(u_1) -\Phi_{n}(u_2)|_{\mathcal{E}_{T_1}}\leq C_nT_1^\mu | u_1 -u_2|_{\mathcal{E}_{T_1}},\;\; \mu>0.
\end{equation}
The time interval $ [0, T_1]$ is chosen such that $ C_nT_1^\mu <1$. Therefore by application of the fixed point theorem, we infer the existence
 of $ (u_n(t), t\in [0, T_1]) \in \mathcal{E}_{T}$  solution of \eqref{Eq-approx-n}, for all $ t \leq T_1$. Using the semigroup property of
$ (e^{-tA_\alpha}, t \in [0, T])$, we extend $  (u_n(t), t\in [0, T_1]) $ to be defined on $ [0, T]$.
\noindent We define the following stopping time, see Appendix \ref{append-stop-time} for the proof,
\begin{equation}\label{Eq-def-tau-n-delta}
 \tau_n := \inf\{t\in (0, T), \; s.t. |u_n(t) |_{D(A^{\frac\delta2})}> n\}\wedge T,
\end{equation}
with the understanding that $ \inf(\emptyset)=+\infty$.\del{and prove that $ \tau_n $ is a stopping time. Indeed, it is easy to check,  using Lemma \ref{lem-est-z-t}, Estimate \eqref{Eq-Pi-n-X} and a similar calculus as in \eqref{est-1-semi-group-second-term-1}, that the real  
$ \mathcal{F}_t-$adapted stochastic processes $ (\langle u_n(s), e_k\rangle_{\mathbb{L}^2})_{k\in\Sigma}$ are continuous for all $ k\in \Sigma$, see also Remark \ref{Rem-1}.
Therefore, \del{ the positive $ \mathcal{F}_t-$adapted stochastic real processes
\begin{equation}
X_j(\omega, s):= |\sum_{k\in \Sigma_j}|k|^{\delta} \langle u_n(\omega, s), e_k\rangle_{\mathbb{L}^2}e_k|_{\mathbb{L}^q},
\end{equation} }the positive $ \mathcal{F}_t-$adapted stochastic real processes
\begin{equation}
X_j(\omega, s):= |\sum_{k\in \Sigma_j}|k|^{\delta} \langle u_n(\omega, s), e_k\rangle_{\mathbb{L}^2}e_k|_{\mathbb{L}^q},
\end{equation} 
are also continuous for all $ j\in \mathbb{N}_1$, with $ (\Sigma_j)_j\subset \Sigma$  being an increasing sequence of finite subsets converging to $ \Sigma$. Therefore, see e.g. \cite[Propositions 4.5 or 4.6 ]{Revuz-Yor}, (Recall that we have assumed that the filtration $ (\mathcal{F}_t)_{t\in[0, T]}$ is right continuous), \del{therefore an optional time is also a stopping time.}  
\del{\begin{eqnarray}
\{\tau_n> t\} &=& \cap_{s\leq t}\{\omega, |u_n(s, \omega)|_{D(A_q^\frac\delta2)}\leq n\}\nonumber\\
&=& \cap_{s\leq t}\{\omega, |\sum_{k\in \Sigma}|k|^{\delta} \langle u_n(\omega, s), e_k\rangle_{\mathbb{L}^2}e_k|_{\mathbb{L}^q}\leq n\}
\nonumber\\
&=& \cap_{s\leq t}\cup_{m\in\mathbb{N}}\cap_{j\geq m}\{\omega, |\sum_{k\in \Sigma_j}|k|^{\delta} \langle u_n(\omega, s), e_k\rangle_{\mathbb{L}^2}e_k|_{\mathbb{L}^q}\leq n\}.
\end{eqnarray}}
\begin{eqnarray}
\{\tau_n\leq t\} &=& \cup_{s\leq t}\{\omega, |u_n(s, \omega)|_{D(A_q^\frac\delta2)}\geq n\}\nonumber\\
&=& \cup_{s\leq t}\{\omega, |\sum_{k\in \Sigma}|k|^{\delta} \langle u_n(\omega, s), e_k\rangle_{\mathbb{L}^2}e_k|_{\mathbb{L}^q}\geq n\}
\nonumber\\
&=& \cup_{s\leq t}\cap_{m\in\mathbb{N}}\cup_{j\geq m}\{\omega, |\sum_{k\in \Sigma_j}|k|^{\delta} \langle u_n(\omega, s), e_k\rangle_{\mathbb{L}^2}e_k|_{\mathbb{L}^q}\geq n\}
\nonumber\\
&=& \cup_{s\leq t}\cap_{m\in\mathbb{N}}\cup_{j\geq m}\{\omega, X_j(\omega, s):= \sum_{k\in \Sigma_j}|k|^{\delta} \langle u_n(\omega, s), e_k\rangle_{\mathbb{L}^2}e_k \in (B_{\mathbb{L}^q}(0, n))^c\}
\nonumber\\
&=& \cup_{s\in \mathbb{Q} \&\leq t}\cap_{m\in\mathbb{N}}\cup_{j\geq m}\{\omega, X_j(\omega, s) \in (B_{\mathbb{L}^q}(0, n))^c\} \in \mathcal{F}_t.
\end{eqnarray}
\del{Thanks to the continuity of the positive $ \mathcal{F}_t-$adapted process $X_j(\omega, s):=  \sum_{k\in \Sigma_j}|k|^{\delta} \langle u_n(\omega, s), e_k\rangle_{\mathbb{L}^2}e_k|_{\mathbb{L}^q}$ 
As the filtration $\mathcal{F}_t$ is right continuous, it is sufficient to prove that for all $ t\in [0, T]$, $ \{\tau_n\geq t\}\in \mathcal{F}_t$.}} The sequence $ (\tau_n)_n $ increases, hence there exists a stopping time $ \tau_\infty$ such that
\begin{equation}\label{Eq-def-tau-delta}
 \tau_\infty = \lim_{n\nearrow +\infty}\tau_n\leq T.
\end{equation}
Thanks to the uniqueness of the fixed point in $ \mathcal{E}_{T}$, we can construct $ (u(t), t< \tau_\infty)$,
by $ u(t)= u_n(t), \; \forall t < \tau_n$.
Then   $ (u(t), \; t < \tau_\infty)$  is solution of Equation \eqref{Main-stoch-eq} up to $ \tau_\infty$.
The estimate \eqref{eq-cond-norm-mild-solution} follows from the construction of the solution as an element of $ \mathcal{E}_{T}$ with $ X= D(A_q^\frac\delta2)$. The regularity of the trajectories in \eqref{eq-cond-cont-mild-solution} is deduced,
by a standard way, thanks to Lemma \ref{lem-est-z-t}, Proposition \ref{Prop-Main-I}, Assumption \eqref{Eq-initial-cond} and the strong continuity
of the semigroup $ (e^{-tA^{\frac{\alpha}{2}}})_{t\geq 0}$ with $ X_1:= H^{-\delta'', q}(O)$ and $ \delta'' \geq \alpha+1+\frac{d}{q}-\delta$.}

\del{\section{Passage Velocity-Vorticity forms of 2D-FSNSEs.}\label{sec-Passage Velocity-Vorticity}}
\del{We define the operator "curl" as follow
\begin{eqnarray}
 curl: v\in H_2^{s, q}(O) \rightarrow  H_1^{s-1, q}(O) \ni curlv:= \partial_1v_2-  \partial_2v_1,
\end{eqnarray}
with $ O $ being either $ \mathbb{T}^2$ or a bounded domain from $ \mathbb{R}^2$, $ 1<q<\infty$ and $ s\in \mathbb{R}$. 
The following result characterizes an intrinsic property between $ curlv$ and the Sobolev regularity of $ v$, 
\begin{lem}\label{lem-basic-curl-gradient}
 Let  $ \mathbb{R}^2$, $ 1<q<\infty$ and $ s\in \mathbb{R}$. Then there exists a constant $ c>0$ such that for all $ v \in H_2^{s+1, q}(O)$
\begin{equation}
 |\nabla v|_{H_2^{s, q}}\leq c |curl v|_{H_1^{s, q}}.
\end{equation}
\end{lem}
\begin{proof}
Let \begin{equation}
     \zeta_{jk}:= \partial_jv_k- \partial_kv_j.
    \end{equation}
By a straightforward calculus, we get
\begin{equation}
     \partial_jv_k=  \partial_j \partial_i\Delta^{-1}\zeta_{ik}.
    \end{equation}
For $ O=\mathbb{T}^2$, it is easy to see, using Marcinkiewicz's theorem, that the  pseudodifferential operators
operator $  \partial_j \partial_i\Delta^{-1}$, with symbol
$ k_ik_j|k|^{-2}$ are bounded  on 
$ H^{s, q}(\mathbb{T}^2)_1$, $ 1<q<\infty $ and $ s\in \mathbb{R}$.  Therefore,
\begin{eqnarray}
 | \nabla v|_{H_2^{s, q}}&\leq& c|\partial_jv_{k}|_{H_1^{s, q}} \leq c|\partial_j \partial_i\Delta^{-1}\zeta_{ik}|_{H_1^{s, q}}\nonumber\\
&\leq& c|\zeta_{ik}|_{H_1^{s, q}} \leq c |curl v|_{H_1^{s, q}}.
\end{eqnarray}
 See also \cite[Lemma 3.1]{Kato-Ponce-86} for the proof for $ O=\mathbb{R}^2$.
\end{proof}}
\del{ Later on, we unify the representation of $ u $ and $ \theta $ as follow
\begin{eqnarray}
\mathcal{R}^{1}:H_1^{s, q}(O) &\rightarrow & \mathbb{H}^{s, q}(O)\nonumber\\
\theta &\mapsto& \mathcal{R}^{1}\theta:= \nabla^\perp\cdot \Delta^{-1}\theta = \int_O \nabla^\perp_xg_{\mathbb{T}^2}(\cdot, y)\theta(t, y)dy.
\end{eqnarray}
Let us now separate the discussion of the cases $ O$ bounded and  $ O=\mathbb{T}^2$.
{\bf The case $ O=\mathbb{T}^2$.} 
In this case, the 
\begin{equation}
\left\{
 \begin{array}{lr}
  \Delta \psi = \theta
 \end{array}
\right.
\end{equation}
\begin{equation}\label{eq-def-u-theta}
u = \mathcal{R}^{1} \theta.
\end{equation}
{\bf The case $ O\subset \mathbb{R}^2$ bounded.} 
\begin{equation}
\left\{
 \begin{array}{lr}
  \Delta \psi = \theta
 \end{array}
\right.
\end{equation}
The following result characterizes an intrinsic property between $ curlv$ and the Sobolev regularity of $ v$, 
\begin{lem}\label{lem-basic-curl-gradient}
 Let  $ \mathbb{R}^2$, $ 1<q<\infty$ and $ s\in \mathbb{R}$. Then there exists a constant $ c>0$ such that for all $ v \in H_2^{s+1, q}(O)$
\begin{equation}
 |\nabla v|_{H_2^{s, q}}\leq c |curl v|_{H_1^{s, q}}.
\end{equation}
\end{lem}
\begin{proof}
Let \begin{equation}
     \zeta_{jk}:= \partial_jv_k- \partial_kv_j.
    \end{equation}
By a straightforward calculus, we get
\begin{equation}
     \partial_jv_k=  \partial_j \partial_i\Delta^{-1}\zeta_{ik}.
    \end{equation}
For $ O=\mathbb{T}^2$, it is easy to see, using Marcinkiewicz's theorem, that the  pseudodifferential operators
operator $  \partial_j \partial_i\Delta^{-1}$, with symbol
$ k_ik_j|k|^{-2}$ are bounded  on 
$ H^{s, q}(\mathbb{T}^2)_1$, $ 1<q<\infty $ and $ s\in \mathbb{R}$.  Therefore,
\begin{eqnarray}
 | \nabla v|_{H_2^{s, q}}&\leq& c|\partial_jv_{k}|_{H_1^{s, q}} \leq c|\partial_j \partial_i\Delta^{-1}\zeta_{ik}|_{H_1^{s, q}}\nonumber\\
&\leq& c|\zeta_{ik}|_{H_1^{s, q}} \leq c |curl v|_{H_1^{s, q}}.
\end{eqnarray}
 See also 
 \cite[Lemma 3.1]{Kato-Ponce-86} for the proof for $ O=\mathbb{R}^2$.
\end{proof}

In this section we consider only the case $ d=2$. \del{The passage from velocity to vorticity forms and vise versa is done in 4 steps. 

{\bf Step 1. General framework.} } We defined the operator "curl" as follow
\del{\begin{eqnarray}
 curl: v\in \mathbb{H}^{s, q}(O) \rightarrow  H_1^{s-1, q}(O) \ni \theta = curlv:= \partial_1v_2-  \partial_2v_1,
\end{eqnarray}}
\begin{eqnarray}
 curl: v\in H_2^{s, q}(O) \rightarrow  H_1^{s-1, q}(O) \ni \theta = curlv:= \partial_1v_2-  \partial_2v_1,
\end{eqnarray}
with $ O $ being either $ \mathbb{T}^2$ or a bounded domain from $ \mathbb{R}^2$, $ 1<q<\infty$ and $ s\in \mathbb{R}$.
To recuperate the  velocity $ u$ of the fluid once the vorticity $ \theta$ is known, we introduce the stream function $ \psi$,
which is the solution of the 
Poisson equation endowed with relevant boundary conditions in the case $ O\subset \mathbb{R}^2$ being bounded. In particular, 
for  $ O =\mathbb{T}^2$, the problem is formulated as follow,
\begin{equation}\label{recuper-eq-u-theta-Torus}
\left\{
 \begin{array}{lr}
 \Delta \psi = \theta,\\
u= \nabla^\perp \psi, \;\;\; \text{and} \;\; \nabla^\perp:=(-\frac{\partial }{\partial x_2}, \frac{\partial
}{\partial x_1}).
 \end{array}
\right.
\end{equation}
\del{\begin{equation}\label{recuper-eq-u-theta}
\left\{
 \begin{array}{lr}
 \Delta \psi = \theta,\\
\psi/\partial O =0,\\
u= \nabla^\perp \psi, \;\;\; \text{and} \;\; \nabla^\perp:=(-\frac{\partial }{\partial x_2}, \frac{\partial
}{\partial x_1}).
 \end{array}
\right.
\end{equation}
\begin{equation}\label{psi-theta-u}
 \Delta \psi = \theta, \;\;\; \text{and} \;\; u= \nabla^\perp \psi, 
\end{equation}}
Thanks to the vanishing average condition, Poisson equation is well posed. The velocity $ u $ is then obtained by a direct calculus using the 
second equation in \eqref{recuper-eq-u-theta-Torus}.
For  $ O\subset \mathbb{R}^2$ bounded, we consider \eqref{recuper-eq-u-theta-Torus} and we conclude from Dirichlet boundary conditions of the 
velocity $ u$ that 
$ \psi$ should satisfy vanishing Neumann boundary conditions. Therefore, $ \psi = const.$ on $ \partial O$. We suppose that this constant is null, 
for further discussion, see e.g. \cite{Marchioro-Puvirenti-Vortex-84}. Let us denote by $ A_1$ either the Laplacian on $ O \subset \mathbb{R}^2 $ 
with Dirichlet boundary conditions or  a component of Stokes operator in the case $O= \mathbb{T}^2$,  then we formulate the recuperation  problem 
for both cases as 
\begin{equation}\label{recuper-eq-u-theta}
\left\{
\begin{array}{lr}
 A_1 \psi = \theta,\\
u= \nabla^\perp \psi.
\end{array}
\right.
\end{equation}
The problem, as mentioned before, is well posed, see also Section \ref{sec-formulation} and we have
\begin{equation}\label{repres-u-theta}
 u(t, x) = \nabla^\perp A_1^{-1}\theta(t, x)= \int_O \nabla^\perp_xg_O(x, y)\theta(t, y)dy,
\end{equation}
where $ g_O$ is the Green function corresponding to the Poisson equation with Dirichlet boundary conditions. 
In the case $  O= \mathbb{T}^2 $, the Green function  $g_{\mathbb{T}^2}$ is explicitly given by
\begin{equation}
 g_{\mathbb{T}^2}(x, y):= -\frac1{(2\pi)^2}\sum_{k\in\mathbb{Z}^2_0}\frac{1}{|k|^2}e^{k\cdot(x-y)},\;\; x, y \in \mathbb{T}^2.
\end{equation}
Moreover,  the operator 
\del{\begin{eqnarray}
\mathcal{R}^{1}:H_1^{s, q}(\mathbb{T}^2) &\rightarrow & H_2^{s, q}(\mathbb{T}^2)\nonumber\\
\theta &\mapsto& \mathcal{R}^{1}\theta:= \nabla^\perp\cdot \Delta^{-1}\theta
\end{eqnarray}}
\begin{eqnarray}
\mathcal{R}^{1}:H_1^{s, q}(O) &\rightarrow & H_2^{s+1, q}(O)\nonumber\\
\theta &\mapsto& \mathcal{R}^{1}\theta:= u= \nabla^\perp\cdot A_1^{-1}\theta = \int_O \nabla^\perp_{\cdot}g_{O}(\cdot, y)\theta(y)dy.
\end{eqnarray}
is well defined and bounded for all $ 1<q<\infty$ and $ s\in \mathbb{R}$. In fact,
\begin{eqnarray}
\del{ |u|_{\mathbb{H}^{s+1, q}}&\leq& }|u|_{H_2^{s+1, q}} &\leq& c | \nabla^\perp A_1^{-1}\theta|_{H_2^{s+1, q}}
\leq c|\partial_j A_1^{-1}\theta|_{H_2^{s+1, q}} \leq c|A_1^{-1}\theta|_{H_2^{s+2, q}}\leq c|\theta|_{H_2^{s, q}}.
\end{eqnarray}
The proof of this statement 
for a more large class of operators on  $\mathbb{T}^d, d\in \mathbb{N}_0$ which includes 
 $ \mathcal{R}^{1}$,  can be found in \cite{Debbi-scalar-active} and for the case $ O=\mathbb{R}^2$, one can  see \cite[Lemma 3.1]{Kato-Ponce-86}.
 For the convenience of the reader, let us mention here that  $ \mathcal{R}^{1} $ is a 
pseudoodifferential operator of Calderon-Zygmund Reisz type. Otherwise, we can rewrite  
$\mathcal{R}^{1} = -\mathcal{R}^\perp (-\Delta)^{-\frac12}$, where $ \mathcal{R}$ is Riesz transform and $ v^\perp := (-v_2, v_1)$. 
 One can also use the representation of $ u$ via Green function (recall that $ \Delta_xg_O(x, y)= \delta_x(y)$). \\}
\section{The Biot-Savart's law and the corresponding fractional stochastic vorticity equation.}\label{sec-Passage Velocity-Vorticity}
In this appendix we consider only the case $ d=2$, for the multidimensional case see e.g. \cite[Chap.3]{Chemin-Book-98}, \cite[Chap.2]{Majda-Bertozzi-02}, \cite{Marchioro-Puvirenti-Vortex-84} and \cite{Mikulevicius-H1-NS-solution-2004}. The Biot-Savart law determines the  velocity $ u$ from the vorticity $ \theta$. This law is given as a pseudo-differential operator of order $-1$ in the cases $ O=\mathbb{R}^d$ and $ O=\mathbb{T}^d$. The case $ O\subsetneq\mathbb{R}^d$, as mentioned before is much involved. here we give a survey and some results about this law in the cases  $ O\subsetneq\mathbb{R}^d$ and $ O=\mathbb{T}^d$ than we move to the derivation of the stochastic vorticity equation for the case $ O=\mathbb{T}^d$. A generalization of the Biot-Savart's law to a nonlocal pseudo-differential operators of fractional order $ \gamma \leq 0$ has been investigated in \cite{Debbi-scalar-active}.\del{The passage from velocity to vorticity forms and vise versa is done in 4 steps. 

{\bf Step 1. General framework.} } We define the operator "curl" as follow, see Preliminary Notations,
\del{\begin{eqnarray}
 curl: v\in \mathbb{H}^{s, q}(O) \rightarrow  H_1^{s-1, q}(O) \ni \theta = curlv:= \partial_1v_2-  \partial_2v_1,\;\; 1<q<\infty\;\; s\in \mathbb{R}.
\end{eqnarray}}
\begin{eqnarray}
 curl: v\in H_2^{\beta, q}(O) \rightarrow  H_1^{\beta-1, q}(O) \ni \theta = curlv:= \partial_1v_2-  \partial_2v_1,\; \beta\in \mathbb{R}, \; 1<q<\infty.\nonumber\\
\end{eqnarray}
\del{with $ O $ being either $ \mathbb{T}^2$ or a bounded domain from $ \mathbb{R}^2$, $ 1<q<\infty$ and $ s\in \mathbb{R}$.
To recuperate the  velocity $ u$ of the fluid once the vorticity $ \theta$ is known, }We introduce the stream function $ \psi$,
 as the solution of the 
Poisson equation endowed with a relevant boundary condition in the case $ O\subset \mathbb{R}^2$ being bounded. In deed,
we conclude from Dirichlet boundary condition of the 
velocity $ u$ and the third equation in \eqref{recuper-eq-u-theta-Torus} bellow, that 
$ \psi$ should satisfy vanishing Neumann boundary condition. Therefore, $ \psi/ \partial O= const.$\del{ on $ \partial O$.} We suppose that this constant is null, 
for further discussion, see e.g. \cite{ Marchioro-Puvirenti-Vortex-84}. The problem is then formulated as follow,
\begin{equation}\label{recuper-eq-u-theta-Torus}
\left\{
 \begin{array}{lr}
 \Delta \psi = \theta,\\
\psi/\partial O =0,\\
u= \nabla^\perp \psi, \;\;\; \text{and} \;\; \nabla^\perp:=(-\frac{\partial }{\partial x_2}, \frac{\partial
}{\partial x_1}).
 \end{array}
\right.
\end{equation}
The formulation  \eqref{recuper-eq-u-theta-Torus} is still valid for  $O= \mathbb{T}^2$ without the boundary condition.
\del{\begin{equation}\label{recuper-eq-u-theta}
\left\{
 \begin{array}{lr}
 \Delta \psi = \theta,\\
\psi/\partial O =0,\\
u= \nabla^\perp \psi, \;\;\; \text{and} \;\; \nabla^\perp:=(-\frac{\partial }{\partial x_2}, \frac{\partial
}{\partial x_1}).
 \end{array}
\right.
\end{equation}
\begin{equation}\label{psi-theta-u}
 \Delta \psi = \theta, \;\;\; \text{and} \;\; u= \nabla^\perp \psi, 
\end{equation}}Let us denote by $ A_1$ either the Laplacian on $O= \mathbb{T}^2$ or the Laplacian  
with Dirichlet boundary condition on $ O \subset \mathbb{R}^2 $,  then we formulate the recuperation  problem 
for both cases as 
\begin{equation}\label{recuper-eq-u-theta}
\left\{
\begin{array}{lr}
 A_1 \psi = \theta,\\
u= \nabla^\perp \psi.
\end{array}
\right.
\end{equation}
Problem \eqref{recuper-eq-u-theta} is well posed, see also Section \ref{sec-formulation}. Recall that for  $ O=\mathbb{T}^2$, the wellposdness is guaranteed   
thanks to the vanishing average condition for the torus.\\ The velocity $ u $ is obtained by a direct calculus,\del{ using the 
second equation in \eqref{recuper-eq-u-theta-Torus}. We get}
\begin{equation}\label{repres-u-theta}
 u(t, x) = \nabla^\perp A_1^{-1}\theta(t, x)= \int_O \nabla^\perp_xg_O(x, y)\theta(t, y)dy,
\end{equation}
where $ g_O$ is the Green function corresponding to the Poisson equation with Dirichlet boundary conditions for $ O$ bounded. 
In the case $  O= \mathbb{T}^2 $, the Green function  $g_{\mathbb{T}^2}$ is explicitly given by
\begin{equation}
 g_{\mathbb{T}^2}(x, y):= -\frac1{(2\pi)^2}\sum_{k\in\mathbb{Z}^2_0}\frac{1}{|k|^2}e^{k\cdot(x-y)},\;\; x, y \in \mathbb{T}^2.
\end{equation}
Moreover,
\begin{lem}\label{lem-R1-bounded}
The operator 
\del{\begin{eqnarray}
\mathcal{R}^{1}:H^{s, q}(\mathbb{T}^2) &\rightarrow & H_2^{s, q}(\mathbb{T}^2)\nonumber\\
\theta &\mapsto& \mathcal{R}^{1}\theta:= \nabla^\perp\cdot \Delta^{-1}\theta
\end{eqnarray}}
\begin{eqnarray}
\mathcal{R}^{1}:H_1^{\beta, q}(O) &\rightarrow & H_2^{\beta+1, q}(O)\nonumber\\
\theta &\mapsto& \mathcal{R}^{1}\theta:= u= \nabla^\perp\cdot A_1^{-1}\theta = \int_O \nabla^\perp_{\cdot}g_{O}(\cdot, y)\theta(y)dy
\end{eqnarray}
is well defined and bounded for all $ 1<q<\infty$ and $ \beta\in \mathbb{R}$. 
\end{lem}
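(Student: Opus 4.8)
The plan is to prove the bound $|\mathcal{R}^1\theta|_{H_2^{\beta+1,q}}\le c\,|\theta|_{H_1^{\beta,q}}$ by factoring $\mathcal{R}^1=\nabla^\perp A_1^{-1}$ into the two-derivative-gaining solution operator $A_1^{-1}$ followed by the one-derivative-costing operator $\nabla^\perp$. Since $\nabla^\perp$ (equivalently each $\partial_j$) maps $H^{\beta+2,q}(O)$ boundedly into $H^{\beta+1,q}(O)$ for every $\beta\in\mathbb{R}$ and $1<q<\infty$, which is immediate from the Riesz-potential definition of the Sobolev scale recalled in Section \ref{sec-formulation}, the whole statement reduces to the elliptic regularity estimate
\begin{equation}
|A_1^{-1}\theta|_{H^{\beta+2,q}}\le c\,|\theta|_{H^{\beta,q}},\qquad \theta\in H_1^{\beta,q}(O).
\end{equation}
Once this gain of two derivatives is known, composition yields the chain $|u|_{H_2^{\beta+1,q}}\le c\,|\partial_jA_1^{-1}\theta|_{H^{\beta+1,q}}\le c\,|A_1^{-1}\theta|_{H^{\beta+2,q}}\le c\,|\theta|_{H^{\beta,q}}$, and well-definedness then follows by density of smooth fields together with this a priori bound.

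For $O=\mathbb{T}^2$ I would argue by Fourier multiplier theory, exactly as in Lemma \ref{Lem-classic}. Writing $\theta=\sum_{k\in\mathbb{Z}_0^2}\hat\theta(k)e^{ik\cdot}$, the operator $A_1$ has symbol $|k|^2$ up to sign, so the composite $\partial_jA_1^{-1}$ has symbol $ik_j|k|^{-2}$. Conjugating $\partial_jA_1^{-1}$ by the Riesz potentials that realise the $H^{\beta,q}$ and $H^{\beta+1,q}$ norms produces the symbol $|k|^{\beta+1}\,(ik_j|k|^{-2})\,|k|^{-\beta}=ik_j|k|^{-1}$, which is homogeneous of degree $0$ and satisfies the Marcinkiewicz condition; hence it defines a bounded operator on $L^q(\mathbb{T}^2)$ for every $1<q<\infty$. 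This gives the one-derivative gain for $\mathcal{R}^1$ directly and settles the periodic case. Equivalently one may write $\mathcal{R}^1=-\mathcal{R}^\perp(-\Delta)^{-1/2}$ with $\mathcal{R}$ the Riesz transform, a Calder\'on--Zygmund operator, and invoke the general class of operators treated in \cite{Debbi-scalar-active} and the whole-space model \cite[Lemma 3.1]{Kato-Ponce-86}.

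For a bounded $O\subset\mathbb{R}^2$ the explicit multiplier calculus is unavailable, so I would instead invoke the $L^q$-theory of the Dirichlet Laplacian already recorded in the paper. By Theorem \ref{Prop-1-Laplace-Stokes} the scalar operator $A_1=A^D$ has bounded inverse and generates an analytic semigroup, and by Theorem \ref{theorem-domains-A-D-A-S} its fractional-power domains satisfy $D((A^D)^{s/2})=\mathring{H}^{s,q}(O)$. These identify $A_1^{-1}$ as a bounded map $H^{\beta,q}(O)\to\mathring{H}^{\beta+2,q}(O)\hookrightarrow H^{\beta+2,q}(O)$ on the range covered by the domain characterisation, and $\nabla^\perp$ then lands in the full space $H_2^{\beta+1,q}(O)$. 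I expect this bounded-domain case to be the main obstacle: the range $0<s<2$ of Theorem \ref{theorem-domains-A-D-A-S} must be upgraded to all $\beta\in\mathbb{R}$ by composing powers of $A^D$ and by duality, which requires the boundedness of the imaginary powers (the bounded $H^\infty$-calculus) of $A^D$ supplied by the resolvent estimates underlying Theorem \ref{Prop-1-Laplace-Stokes} and the references cited there. One must also track the boundary conditions carried by the $\mathring{H}$-spaces so that the final application of $\nabla^\perp$ correctly exits into the unrestricted Sobolev space, since the recovered velocity $u=\nabla^\perp\psi$ need not inherit the homogeneous conditions imposed on the stream function $\psi$.
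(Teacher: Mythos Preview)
Your proposal is correct and follows essentially the same approach as the paper: factor $\mathcal{R}^1=\nabla^\perp A_1^{-1}$, use that $\partial_j$ costs one derivative while $A_1^{-1}$ gains two, and for $O=\mathbb{T}^2$ alternatively recognise $\mathcal{R}^1=-\mathcal{R}^\perp(-\Delta)^{-1/2}$ as a Calder\'on--Zygmund/Marcinkiewicz multiplier. The paper's proof is just the one-line chain $|u|_{H_2^{\beta+1,q}}\le c|\partial_j A_1^{-1}\theta|_{H^{\beta+1,q}}\le c|\theta|_{H^{\beta,q}}$ together with the same Riesz-transform remark and a reference to \cite{Debbi-scalar-active}; you are in fact more careful than the paper in flagging the bounded-domain issues (range of $\beta$, boundary conditions carried by $\mathring{H}$-spaces), which the paper does not address explicitly.
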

\begin{proof}
In fact,
\begin{eqnarray}
\del{ |u|_{\mathbb{H}^{s+1, q}}&\leq& }|u|_{H_2^{\beta+1, q}} &\leq& c | \nabla^\perp A_1^{-1}\theta|_{H_2^{\beta+1, q}}
\leq c|\partial_j A_1^{-1}\theta|_{H_1^{\beta+1, q}} \leq\del{ c|A_1^{-1}\theta|_{H^{\beta+2, q}}\leq} c|\theta|_{H^{\beta, q}}.
\end{eqnarray}
One can also use the representation of $ u$ via Green function (recall that $ \Delta_xg_O(x, y)= \delta_x(y)$). For 
$ O=\mathbb{T}^2$, it is also convenient to remark that $ \mathcal{R}^{1} $ is a 
pseudo-differential operator of Calderon-Zygmund Reisz type, see 
definitions, results and further discussions in \cite{Debbi-scalar-active}. In deed, we can rewrite  
$\mathcal{R}^{1} = -\mathcal{R}^\perp (-\Delta)^{-\frac12}$, where $ \mathcal{R}$ is Riesz transform and $ \mathcal{R}^\perp := (-\mathcal{R}_2, \mathcal{R}_1)$. 
The proof of the above statement 
for a larger class of operators on  $\mathbb{T}^d, d\in \mathbb{N}_0$ which includes 
 $ \mathcal{R}^{1}$  can be found in \cite{Debbi-scalar-active}.\del{ For the case $ O=\mathbb{R}^2$ see e.g. \cite[Lemma 3.1]{Kato-Ponce-86}.}
\end{proof}

The following result characterizes an intrinsic property between $ curlv$ and the Sobolev regularity of $ v$.\del{ Recall that $ \nabla v$ is a matrix and we use the following notation for the norm matrix
$ | \nabla v|_{H_{2\times d}^{s, q}}:= | \partial_j v_i|_{H^{s, q}}$.}
\del{It is an easy consequence of the above discussion.} 
\begin{lem}\label{lem-basic-curl-gradient}
 Let  $O \subset \mathbb{R}^2$ bounded or $O = \mathbb{T}^2$, $ 1<q<\infty$ and $ \beta\in \mathbb{R}$. 
Then there exists a constant $ c>0$ such that for all $ v \in H_2^{\beta+1, q}(O)$
\begin{equation}
 c|\nabla v|_{H^{\beta, q}}\leq |curl v|_{H^{\beta, q}}\leq |\nabla v|_{H^{\beta, q}}.
\end{equation}
\del{Moreover, for $O \subset \mathbb{R}^2$ bounded or $O = \mathbb{T}^2$, $ 1<q<\infty$ and $ \beta\in \mathbb{R}$, there exists a constant $ c>0$ such that for all $ v \in \mathbb{H}_d^{\beta+1, q}(O)$
\begin{equation}
 |\nabla v|_{H_{2\times d}^{\beta, q}}\leq c |curl v|_{H^{\beta, q}}.
\end{equation}}
\end{lem}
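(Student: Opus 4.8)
For $O\subset\mathbb{R}^2$ bounded or $O=\mathbb{T}^2$, $1<q<\infty$, $\beta\in\mathbb{R}$, there is $c>0$ with
$$c|\nabla v|_{H^{\beta,q}}\leq |\operatorname{curl} v|_{H^{\beta,q}}\leq |\nabla v|_{H^{\beta,q}}\qquad\forall v\in H_2^{\beta+1,q}(O).$$

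Let me sketch how I would prove this.

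The plan is to prove the two inequalities separately. The right-hand inequality is elementary, while the left-hand one rests on the Biot--Savart representation together with the boundedness already recorded in Lemma \ref{lem-R1-bounded}. Throughout I regard $v=(v_1,v_2)$ as solenoidal (as it is in every application, where $v\in\mathbb{L}^q$), so that $curl\,v$ and $\nabla v$ are linked by the first-order identity below; without $\operatorname{div}v=0$ the lower bound fails (e.g. $v=(x_1,0)$ has $curl\,v=0$ but $\nabla v\neq 0$), so I would note explicitly that the lemma is invoked only for divergence-free fields.

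For the upper bound, observe that $curl\,v=\partial_1v_2-\partial_2v_1$ is, with coefficients $\pm1$, a linear combination of the entries $\partial_jv_i$ of the matrix $\nabla v$. Hence, with the matrix-norm convention fixed in the Preliminary Notations, the triangle inequality gives directly
\begin{equation*}
|curl\,v|_{H^{\beta,q}}\leq |\nabla v|_{H^{\beta,q}},
\end{equation*}
which is the claimed right-hand estimate.

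For the lower bound I would first record the pointwise identity linking the entries of $\nabla v$ to $\theta:=curl\,v$. Writing $\zeta_{jk}:=\partial_jv_k-\partial_kv_j$ and using $\operatorname{div}v=0$, a direct computation gives $\partial_jv_k=\partial_j\partial_i\Delta^{-1}\zeta_{ik}$ (Einstein summation in $i$); in two dimensions $\zeta_{12}=-\zeta_{21}=\theta$ and $\zeta_{11}=\zeta_{22}=0$, so every entry $\partial_jv_k$ equals $\pm\,\partial_j\partial_l\Delta^{-1}\theta$ for a suitable $l$. Equivalently, this is just the Biot--Savart law $v=\mathcal{R}^{1}\theta$ of \eqref{recuper-eq-u-theta} differentiated once. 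I would then estimate
\begin{equation*}
|\nabla v|_{H^{\beta,q}}=|\nabla\mathcal{R}^{1}\theta|_{H^{\beta,q}}\leq c\,|\mathcal{R}^{1}\theta|_{H_2^{\beta+1,q}}\leq c\,|\theta|_{H_1^{\beta,q}}=c\,|curl\,v|_{H^{\beta,q}},
\end{equation*}
where the middle inequality is the trivial boundedness of $\nabla\colon H_2^{\beta+1,q}\to H^{\beta,q}$ and the last one is Lemma \ref{lem-R1-bounded}. This yields the left-hand estimate with a constant depending only on $q,\beta,O$.

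The only genuine difficulty is the boundedness of the order-zero operators $\partial_j\partial_l\Delta^{-1}$ (symbol $k_jk_l/|k|^2$) on $H^{\beta,q}$. On $\mathbb{T}^2$ this is immediate from Marcinkiewicz's multiplier theorem, exactly as in the proof of Lemma \ref{Lem-classic}. On a bounded domain $O\subset\mathbb{R}^2$ the corresponding operators $\partial_j\partial_lA_1^{-1}$ are no longer Fourier multipliers and their mapping properties are the subtle point; this obstacle is, however, precisely what Lemma \ref{lem-R1-bounded} already resolves, so invoking that lemma lets me treat the torus and the bounded-domain cases uniformly. The remaining point to check carefully is that $v$ is genuinely recovered as $\mathcal{R}^{1}(curl\,v)$, i.e. that the stream-function problem \eqref{recuper-eq-u-theta} reconstructs $v$ exactly; this is where solenoidality and the boundary (resp. zero-average) conditions of the appendix enter, and it is the step I would state most carefully.
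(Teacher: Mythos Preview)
Your argument is correct and follows essentially the same route as the paper: the upper bound by the triangle inequality, and the lower bound via the Biot--Savart reconstruction $v=\mathcal{R}^1(curl\,v)$ combined with the boundedness of $\mathcal{R}^1$ from Lemma~\ref{lem-R1-bounded}. Your explicit remark that the divergence-free condition is needed for the lower bound (and your counterexample) is a useful clarification that the paper's statement leaves implicit but which is indeed required for the step $|v|_{H_2^{\beta+1,q}}\leq c|curl\,v|_{H^{\beta,q}}$ to hold.
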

\begin{proof}
Using the definition of the curl operator, the sobolev spaces and Lemma 
\ref{lem-R1-bounded}, we infer that there exists $ c>0$ such that 
\begin{eqnarray}
|\nabla v|_{H^{\beta, q}}&\leq & |\partial_j v_i|_{H^{\beta, q}}
\leq c|v|_{H_{2}^{\beta+1, q}}\leq c|curl v|_{H^{\beta, q}}.
\end{eqnarray}
Moreover, we have, 
\begin{eqnarray}
|curl v|_{H^{s, q}} &\leq & |\partial_j v_i|_{H^{\beta, q}}
\leq |\nabla v|_{H^{\beta, q}}.
\end{eqnarray}
\end{proof}
\begin{remark}
If we assume that $ v $ is of divergence free, i.e. $ v \in \mathbb{H}^{\beta, q}(\mathbb{T}^d)$, $d\in \mathbb{N}_1$, $O = \mathbb{T}^2$, $ 1<q<\infty$ and $ \beta \in \mathbb{R}_+$, then it is easy to adapt the proof of  \cite[Lemma 3.1]{Kato-Ponce-86}. 
\end{remark}

 \del{Now, we turn to the stochastic term. Let $(u(t), t \in [0, T])$ be a solution of FSNSE satisfying 
 \eqref{cond-solu-torus-H1}.  We denote by
\begin{equation}\label{eq-def-sigma-k}
\sigma^k(u):= G(u)Q^{\frac12}e_k= q_{k}^\frac12G(u)e_k,  for\;\;  k\in\Sigma.
\end{equation}
First, we claim that for  $P-a.s.$ the following stochastic integral $ \int_0^t  \sum_{k\in\Sigma}curl \sigma^k(u(s))d\beta_k(s)$ 
is well defined and
\begin{eqnarray}
 curl \int_0^tG(u(s))dW(s)= \int_0^t  \sum_{k\in\Sigma} curl \sigma^k(u(s))d\beta_k(s), \forall  t \in [o, T].
\end{eqnarray}

\noindent In fact, using the stochastic isometry,\del{ Lemma \ref{lem-basic-curl-gradient},} Assumption $ (\mathcal{C})$
and \eqref{cond-solu-torus-H1} we infer that for $ \beta$ either equals $ 1$, or $0$.
\del{\begin{eqnarray}\label{eq-def-sima-integ}
 \mathbb{E}\!\!\!\!&|&\!\!\!\!\int_0^t \sum_{k\in\Sigma} curl\sigma^k(u(s))d\beta_k(s)|_{L^2}^2\leq
\mathbb{E}\int_0^t\sum_{k\in\Sigma} |curl \sigma^k(u(s))|_{L^2}^2ds \nonumber\\
&\leq&
c\mathbb{E}\int_0^t\sum_{k\in\Sigma}|\partial_j \sigma_i^k(u(s))|_{L^2}^2ds
\del{\leq
c\mathbb{E}\int_0^t |G(u(s))Q^\frac12|_{HS(H^{1,2}) }^2ds \nonumber\\}
\leq c\mathbb{E}\int_0^t\sum_{k\in\Sigma}  
|\sigma^k(u(s))|_{H^{1,2}}^2ds\nonumber\\
&\leq&
c\mathbb{E}\int_0^t |G(u(s))|_{L_Q(H^{1,2}) }^2ds 
\leq
c\int_0^t(1+ \mathbb{E}|u(s)|_{\mathbb{H}^{1, 2}}^2)ds <\infty.
\end{eqnarray}}
\begin{eqnarray}\label{eq-def-sima-integ}
 \mathbb{E}\!\!\!\!&|&\!\!\!\!\int_0^t \sum_{k\in\Sigma} curl\sigma^k(u(s))d\beta_k(s)|_{H^{\beta-1, 2}}^2\leq
\mathbb{E}\int_0^t\sum_{k\in\Sigma} |curl \sigma^k(u(s))|_{H^{\beta-1, 2}}^2ds \nonumber\\
&\leq&
c\mathbb{E}\int_0^t\sum_{k\in\Sigma}|\partial_j \sigma_i^k(u(s))|_{H^{\beta-1, 2}}^2ds
\del{\leq
c\mathbb{E}\int_0^t |G(u(s))Q^\frac12|_{HS(H^{1,2}) }^2ds \nonumber\\}
\leq c\mathbb{E}\int_0^t\sum_{k\in\Sigma}  
|\sigma^k(u(s))|_{H^{\beta,2}}^2ds\nonumber\\
&\leq&
c\mathbb{E}\int_0^t |G(u(s))|_{L_Q(H^{\beta,2}) }^2ds 
\leq
c\int_0^t(1+ \mathbb{E}|u(s)|_{\mathbb{H}^{\beta, 2}}^2)ds <\infty.
\end{eqnarray}
\noindent Moreover, thanks to  \eqref{eq-W-n} and \eqref{eq-def-sigma-k}, we infer on one hand that 
\begin{eqnarray}
curl \sum_{k\in\Sigma_n} \int_0^t  \sigma^k(u(s))d\beta_k(s) \rightarrow
curl  \int_0^t \sum_{k\in\Sigma_n} \sigma^k(u(s))d\beta_k(s),\,\,  in \, L^2(\Omega, \mathcal{D}'(O)),  \nonumber\\
\end{eqnarray}
where  $ (\Sigma_n)_n$ is a sequence of subsets converging to $\Sigma$ and $ \mathcal{D}'(O)$ is the dual of $ \mathcal{D}(O)$. 
On the other hand, using the linearity of the operator $ curl$, the stochastic isometry identity, 
\eqref{eq-W-n},  Assumption $ (\mathcal{C})$, \eqref{cond-solu-torus-H1} and \eqref{eq-def-sima-integ}, we end up with
\begin{eqnarray}
curl \sum_{k\in\Sigma_n} \int_0^t  \sigma^k(u(s))d\beta_k(s) &=&\sum_{k\in\Sigma_n} \int_0^t curl \sigma^k(u(s))d\beta_k(s)\nonumber\\
&{}& \rightarrow
 \int_0^t \sum_{k\in\Sigma}curl  \sigma^k(u(s))d\beta_k(s),\,\,  in \, L^2(\Omega, \mathcal{D}'(O)). 
\end{eqnarray}
The uniqueness of the limit confirm the result. We use the following notation for the stochastic integral
\begin{eqnarray}\label{inte-g-tilde}
\int_0^t \tilde{G}(\theta(s))dW(s) := \sum_{k\in\Sigma} \int_0^t curl \sigma_k(u(s))d\beta_k(s)
& =& \sum_{k\in\Sigma} \int_0^t curl \sigma_k(\mathcal{R}^{1}(\theta(s))d\beta_k(s).\nonumber\\
\end{eqnarray}
The same calculus above is still valid for Orstein-Uhlenbeck stochastic process.\\
\noindent Using the definition of Helmholtz projection, in particular, the fact that $ \mathcal{Y}_q \subset Ker(Curl)$, we prove 
\begin{equation}
 curl B(u) = u\cdot \nabla \theta.
\end{equation}}
Now, we derive the stochastic vorticity equation. Let $(u, \tau)$ be a maximal weak solution of FSNSE satisfying \eqref{cond-solu-torus-H1}, 
 up to the stopping time $ \tau $.  
First, we claim that for  $P-a.s.$ the following stochastic integral 
$ \int_0^{t\wedge \tau}  \sum_{k\in\Sigma}curl \sigma^k(u(s))d\beta_k(s)$, with  $\sigma^k(u(s))$ is given by
\begin{equation}\label{eq-def-sigma-k}
\sigma^k(u):= G(u)Q^{\frac12}e_k= q_{k}^\frac12G(u)e_k,\;  for\;\;  k\in\Sigma,
\end{equation}
 is well defined and
\begin{eqnarray}
 \int_0^{t\wedge \tau} \sum_{k\in\Sigma} curl \sigma^k(u(s))d
 \beta_k(s)= curl \int_0^{t\wedge \tau}G(u(s))dW(s),\; \forall  t \in [o, T].
\end{eqnarray}

\noindent In fact, using the stochastic isometry,\del{ Lemma \ref{lem-basic-curl-gradient}} Assumption $ (\mathcal{C})$ (\eqref{Eq-Cond-Linear-Q-G}, with $ 2\leq q<\infty$ and $ \delta \in\{0, 1\}$)
and \eqref{cond-solu-torus-H1}, we infer that for $ \beta$  equals either $ 1$ or $0$,
\begin{eqnarray}\label{eq-def-sima-integ}
 \mathbb{E}\!\!\!\!&|&\!\!\!\!\int_0^{t\wedge \tau} \sum_{k\in\Sigma} curl\sigma^k(u(s))d\beta_k(s)|_{H^{\beta-1, q}}^2\leq
\del{\mathbb{E}\int_0^t\sum_{k\in\Sigma} |curl \sigma^k(u(s))|_{H^{\beta-1, q}}^2ds \nonumber\\
&\leq&}
c\mathbb{E}\int_0^{t\wedge \tau}\sum_{k\in\Sigma}|\partial_j \sigma_i^k(u(s))|_{H^{\beta-1, q}}^2ds\nonumber\\
\del{\leq
c\mathbb{E}\int_0^t |G(u(s))Q^\frac12|_{HS(H^{1,2}) }^2ds \nonumber\\}
&\leq& c\mathbb{E}\int_0^{t\wedge \tau}\sum_{k\in\Sigma}  
|\sigma^k(u(s))|_{H^{\beta, q}}^2ds
\leq
c\mathbb{E}\int_0^{t\wedge \tau} |G(u(s))|_{R_Q(H^{\beta, q}) }^2ds \nonumber\\
&\leq&
c\int_0^{t\wedge \tau}(1+ \mathbb{E}|u(s)|_{\mathbb{H}^{\beta, q}}^2)ds <\infty.
\end{eqnarray}
\noindent Moreover, thanks to  \eqref{eq-W-n} and \eqref{eq-def-sigma-k}, we infer on one hand that 
\begin{eqnarray}
curl \sum_{k\in\Sigma_n} \int_0^{t\wedge \tau}  \sigma^k(u(s))d\beta_k(s) \rightarrow
curl  \int_0^{t\wedge \tau} \sum_{k\in\Sigma} \sigma^k(u(s))d\beta_k(s),\,\,  in \, L^2(\Omega, \mathcal{D}'(O)),  \nonumber\\
\end{eqnarray}
where  $ (\Sigma_n)_n$ is a sequence of subsets converging to $\Sigma$ and $ \mathcal{D}'(O)$ is the dual of $ \mathcal{D}(O)$. 
On the other hand, using the linearity of the operator $ curl$, the stochastic isometry identity, 
\eqref{eq-W-n},  Assumption $ (\mathcal{C})$, \eqref{cond-solu-torus-H1} and \eqref{eq-def-sima-integ}, we end up with
\begin{eqnarray}
curl \sum_{k\in\Sigma_n} \int_0^{t\wedge \tau}  \sigma^k(u(s))d\beta_k(s) &=&\sum_{k\in\Sigma_n} \int_0^{t\wedge \tau} 
curl \sigma^k(u(s))d\beta_k(s)\nonumber\\
&{}& \rightarrow
 \int_0^{t\wedge \tau} \sum_{k\in\Sigma}curl  \sigma^k(u(s))d\beta_k(s),\,\,  in \, L^2(\Omega, \mathcal{D}'(O)). 
\end{eqnarray}
The uniqueness of the limit confirm the result. We use the following notation\del{ for the stochastic integral
\begin{eqnarray}\label{inte-g-tilde}
\int_0^{t\wedge \tau} \tilde{G}(\theta(s))dW(s) := \sum_{k\in\Sigma} \int_0^{t\wedge \tau} curl \sigma^k(u(s))d\beta_k(s)
& =& \sum_{k\in\Sigma} \int_0^{t\wedge \tau} curl \sigma^k(\mathcal{R}^{1}(\theta(s))d\beta_k(s).\nonumber\\
\end{eqnarray}}
\begin{eqnarray}\label{inte-g-tilde}
\tilde{G}(\theta):= curl G(\mathcal{R}^{1}(\theta)).
\end{eqnarray}
\noindent Using the definition of Helmholtz projection, in particular, the fact that $ \mathcal{Y}_q \subset Ker(Curl)$, an elementary calculus
yields to 
\begin{equation}
 curl B(u) = u\cdot \nabla \theta.
\end{equation}

 Now, we  assume that $ O= \mathbb{T}^2$,  using Fourier transform, it is easy to prove that 
\begin{equation}
 curl A_\alpha u = (-\Delta)^{\frac\alpha2}curl u, \; \; \forall u\in D(A_\alpha).
\end{equation}
In fact the relation above is also true for all $ u\in \mathbb{H}^{\beta+\alpha}(\mathbb{T}^2),  \beta\in \mathbb{R}$.
Applying the operator curl on the integral representation of Equation \eqref{Main-stoch-eq} stopped at the  stopping time $ \tau$
 and using the calculus above, we infer that if $ (u, \tau)$ is a local weak solution of \eqref{Main-stoch-eq}, then
 $ \theta:= curl u$ is a weak (strong in probability) solution of 
\begin{equation}\label{Eq-vorticity-Torus-2-diff}
\left\{
\begin{array}{lr}
 d\theta(t)= \left(- A_{\alpha}\theta(t) + u(t)\cdot \nabla \theta(t)\right)dt+ \tilde{G}(\theta(t))dW(t), \; 0< t\leq \tau.\\
\theta(t) = curl u_0. 
\end{array}
\right.
\end{equation}
By the same way, we can  prove that if $ (u, \tau)$ is a local mild solution of \eqref{Main-stoch-eq}, then the same calculus above is still valid\del{ for Orstein-Uhlenbeck stochastic process and \del{$ u$ is solution of \eqref{Eq-Mild-Solution}}} and $ \theta:= curl u$ is a mild solution 
to equation \eqref{Eq-vorticity-Torus-2-diff}. In the proof of this case, we use the commutativity property between the operators $ \partial_j$ and the 
semigroup $ (e^{-tA_\alpha})_{t\geq0}$. A general formula of Equation \eqref{Eq-vorticity-Torus-2-diff} has been studied in \cite{Debbi-scalar-active}.
\del{For the case $ O=\mathbb{R}^2$, we use the integral formula of the fractional Laplacian. } 

\del{For the case of bounded domain $ O\subset \mathbb{R}^2$, we prove, roughly speaking, a weak commutativity  between the the operator $ A_\alpha$
and the curl. It is easy to see that 
\begin{equation}
 \langle curl A^S u, \phi\langle =  \langle A^S curl u, \phi\langle, \;\;  \forall \phi \in C_0^\infty,
\end{equation}
with the notation  $ A^S$ stands for both 1D and 2D  Stokes operator.}

\del{the local solution $ (u, \tau)$ of \eqref{Eq-weak-Solution}. Using
the commutativity of $ A^\frac\alpha2$ and $ \partial_j, j=1,2,$, which is inherited from the commutativity of
the Laplacian $ A^S= -\Delta$ and $ \partial_j, j=1,2,$}
\del{satisfies in $ V^*$
\begin{equation}\label{Eq-vorticity-Torus-2}
\theta(t)= curl u_0 + \int_0^t\left(-A_{\alpha}\theta(s) + u(s)\cdot \nabla \theta(s)\right)ds+ \int_0^t \tilde{G}(\theta(s))dW(s), \; 0< t\leq T.\\
\end{equation}
Or equivalently, $ \theta $ is a weak (strong in probability) solution\del{ (in distribution sense)} of
\begin{equation}\label{Eq-vorticity-Torus-2-diff}
\left\{
\begin{array}{lr}
 d\theta(t)= \left(- A_{\alpha}\theta(t) + u(t)\cdot \nabla \theta(t)\right)dt+ \tilde{G}(\theta(t))dW(t), \; 0< t\leq T.\\
\theta(t) = curl u_0. 
\end{array}
\right.
\end{equation}}

\del{\section{Stopping times.}\label{append-stop-time}
\del{In this paper, we have defined in many places random times and claimed that they are stopping time. Some of them are easily seen that 
Here, we give the proof. Recall that  Remark \ref{Rem-1} confirms that the mild and weak solutions are weakly continuous.

\begin{lem}
Let $(Y(t), t\in [0, T])$ be a weakly continuous process.  We define the following random time
\begin{equation}\label{Eq-def-tau-n-delta-Y}
 \tau_n := \inf\{t\in (0, T), \; s.t. |Y(t)|_{D(A_q^{\frac\beta2})}\geq n\}\wedge T,
\end{equation}
with the understanding that $ \inf(\emptyset)=+\infty$. Then that $ \tau_n $ is a stopping time. 
\end{lem}

\begin{proof}
\del{Indeed, it is easy to check,  using Lemma \ref{lem-est-z-t}, Estimation \eqref{Eq-Pi-n-X} and a similar calculus as in \eqref{est-1-semi-group-second-term-1}, that the real  
$ \mathcal{F}_t-$adapted stochastic processes $ (\langle u_n(s), e_k\rangle_{\mathbb{L}^2})_{k\in\Sigma}$ are continuous for all $ k\in \Sigma$, see also Remark \ref{Rem-1}.}

Thanks to the weak continuity of $ Y$, the real $ \mathcal{F}_t-$adapted stochastic processes $ (\langle u_n(s), e_k\rangle_{\mathbb{L}^2})_{k\in\Sigma}$ are continuous for all $ k\in \Sigma$. Therefore, \del{the positive $ \mathcal{F}_t-$adapted stochastic real processes
\begin{equation}
X_j(\omega, s):= |\sum_{k\in \Sigma_j}|k|^{\delta} \langle u_n(\omega, s), e_k\rangle_{\mathbb{L}^2}e_k|_{\mathbb{L}^q},
\end{equation} }the positive $ \mathcal{F}_t-$adapted stochastic real processes
\begin{equation}
X_j(\omega, s):= |\sum_{k\in \Sigma_j}|k|^{\delta} \langle u_n(\omega, s), e_k\rangle_{\mathbb{L}^2}e_k|_{\mathbb{L}^q},
\end{equation} 
are also continuous for all $ j\in \mathbb{N}_1$, with $ (\Sigma_j)_j\subset \Sigma$  being an increasing sequence of finite subsets converging to $ \Sigma$. Therefore, see e.g. \cite[Propositions 4.5 or 4.6 ]{Revuz-Yor}, (Recall that we have assumed that the filtration $ (\mathcal{F}_t)_{t\in[0, T]}$ is right continuous), \del{therefore an optional time is also a stopping time.}  
\del{\begin{eqnarray}
\{\tau_n> t\} &=& \cap_{s\leq t}\{\omega, |u_n(s, \omega)|_{D(A_q^\frac\delta2)}\leq n\}\nonumber\\
&=& \cap_{s\leq t}\{\omega, |\sum_{k\in \Sigma}|k|^{\delta} \langle u_n(\omega, s), e_k\rangle_{\mathbb{L}^2}e_k|_{\mathbb{L}^q}\leq n\}
\nonumber\\
&=& \cap_{s\leq t}\cup_{m\in\mathbb{N}}\cap_{j\geq m}\{\omega, |\sum_{k\in \Sigma_j}|k|^{\delta} \langle u_n(\omega, s), e_k\rangle_{\mathbb{L}^2}e_k|_{\mathbb{L}^q}\leq n\}.
\end{eqnarray}}
\begin{eqnarray}
\{\tau_n\leq t\} &=& \cup_{s\leq t}\{\omega, |u_n(s, \omega)|_{D(A_q^\frac\delta2)}\geq n\}\nonumber\\
&=& \cup_{s\leq t}\{\omega, |\sum_{k\in \Sigma}|k|^{\delta} \langle u_n(\omega, s), e_k\rangle_{\mathbb{L}^2}e_k|_{\mathbb{L}^q}\geq n\}
\nonumber\\
&=& \cup_{s\leq t}\cap_{m\in\mathbb{N}}\cup_{j\geq m}\{\omega, |\sum_{k\in \Sigma_j}|k|^{\delta} \langle u_n(\omega, s), e_k\rangle_{\mathbb{L}^2}e_k|_{\mathbb{L}^q}\geq n\}
\nonumber\\
&=& \cup_{s\leq t}\cap_{m\in\mathbb{N}}\cup_{j\geq m}\{\omega, X_j(\omega, s):= \sum_{k\in \Sigma_j}|k|^{\delta} \langle u_n(\omega, s), e_k\rangle_{\mathbb{L}^2}e_k \in (B_{\mathbb{L}^q}(0, n))^c\}
\nonumber\\
&=& \cup_{s\in \mathbb{Q} \&\leq t}\cap_{m\in\mathbb{N}}\cup_{j\geq m}\{\omega, X_j(\omega, s) \in (B_{\mathbb{L}^q}(0, n))^c\} \in \mathcal{F}_t.
\end{eqnarray}
\del{Thanks to the continuity of the positive $ \mathcal{F}_t-$adapted process $X_j(\omega, s):=  \sum_{k\in \Sigma_j}|k|^{\delta} \langle u_n(\omega, s), e_k\rangle_{\mathbb{L}^2}e_k|_{\mathbb{L}^q}$ 
As the filtration $\mathcal{F}_t$ is right continuous, it is sufficient to prove that for all $ t\in [0, T]$, $ \{\tau_n\geq t\}\in \mathcal{F}_t$.}

\end{proof}}
In this work, we have defined in several places hitting times and we have claimed that they are stopping times.  The proofs of some of them are easily deduced thanks to the continuity of the norm-process and the right continuity of the filtration. This is the case for example for the stopping times\del{ $\tau_N^i$} defined in the proof of the uniqueness of the solution in Section \ref{sec-Torus}.\del{ the stopping times
$ \tau_N^i: \inf\{t\leq T; |u^i(t)|_{\mathbb{L}^{2}}\geq N\}\wedge T, i=1, 2$} Other proofs are consequences of the right continuity of the filtration and the $X-$weak continuity of the process.  Recall that Remark \ref{Rem-1} confirms that the local mild and weak solutions are $X-$weakly continuous. This is the case for the stopping time defined by \eqref{Eq-def-tau-n-delta} in Section \ref{appendix-local-solution}. Recall that this  stopping time is used implicitly in the proof of the local mild solution in Section \ref{sec-1-approx-local-solution}. Other proofs are more sophisticated as is the case for  $ \xi_N$ defined by \eqref{eq-stop-time-weak-solu-L2} in  Section \ref{sec-Domain}. In deed, in this case, the weak continuity concerns the $\mathbb{L}^2-$norm and the hitting time is defined via the $ \mathbb{H}^{\frac{d+2-\alpha}{4}, 2}-$norm. Here, we give the proof for this case.\del{ For simplicity, we consider the case $ q=2$. This is a direct proof of}   

\begin{lem}
Let $(Y(t), t\in [0, T])$ be an $\mathbb{L}^2-$weakly continuous  $ \mathcal{F}_t-$adapted $\mathbb{L}^2-$valued process.  We define the following random time
\begin{equation}\label{Eq-def-tau-n-delta-Y}
 \tau_n := \inf\{t\in (0, T), \; s.t.\;\;  |Y(t)|_{D(A_2^{\frac\delta2})}> n\}\wedge T,
\end{equation}
with the understanding that $ \inf(\emptyset)=+\infty$. Then $ \tau_n $ is a stopping time. 
\end{lem}

\begin{proof}
\del{Indeed, it is easy to check,  using Lemma \ref{lem-est-z-t}, Estimation \eqref{Eq-Pi-n-X} and a similar calculus as in \eqref{est-1-semi-group-second-term-1}, that the real  
$ \mathcal{F}_t-$adapted stochastic processes $ (\langle u_n(s), e_k\rangle_{\mathbb{L}^2})_{k\in\Sigma}$ are continuous for all $ k\in \Sigma$, see also Remark \ref{Rem-1}.}

Thanks to the weak continuity of $ Y$,\del{ the real $ \mathcal{F}_t-$adapted stochastic processes $ (\langle u_n(s), e_k\rangle_{\mathbb{L}^2})_{k\in\Sigma}$ are continuous for all $ k\in \Sigma$. Therefore, \del{the positive $ \mathcal{F}_t-$adapted stochastic real processes
\begin{equation}
X_j(\omega, s):= \sum_{k\in \Sigma_j}\lambda_k^{\beta} \langle u_n(\omega, s), e_k\rangle_{\mathbb{L}^2}^2,
\end{equation} }} the positive $ \mathcal{F}_t-$adapted stochastic processes
\begin{equation}
X_j(\omega, s):= \sum_{k\in \Sigma_j}\lambda_k^{\delta} \langle Y(\omega, s), e_k\rangle_{\mathbb{L}^2}^2
\end{equation} 
are continuous for all $ j\in \mathbb{N}_1$, with $ (\Sigma_j)_j\subset \Sigma$  being an increasing sequence of finite subsets converging to $ \Sigma$. Therefore,  
\begin{eqnarray}
\{\tau_n> t\} &=& \cap_{s\leq t}\{\omega, |Y(s, \omega)|_{D(A_2^\frac\delta2)}\leq n\}
\del{= \cap_{s\leq t}\{\omega, \sum_{k\in \Sigma}|k|^{2\beta} \langle Y(\omega, s), e_k\rangle_{\mathbb{L}^2}^2< n^2\}
\nonumber\\
=\cap_{j}\cap_{s\leq t}\{\omega, \sum_{k\in \Sigma_j}|k|^{2\beta} \langle Y(\omega, s), e_k\rangle_{\mathbb{L}^2}^2<\leq n^2\}.\nonumber\\}
=\cap_{j}\cap_{s\leq t}\{\omega, X_j(\omega, s)< n^2\}
= \cap_{j}\{ T_n^j>t\},\nonumber\\
\del{&=& \cap_{s\leq t}\cap_{j\geq m}\{\omega, |\sum_{k\in \Sigma_j}|k|^{2\beta} \langle u_n(\omega, s), e_k\rangle_{\mathbb{L}^2}^2\leq n\}.}
\end{eqnarray}
\del{\begin{eqnarray}
\{\tau_n\leq t\} &=& \cup_{s\leq t}\{\omega, |Y(s, \omega)|_{D(A^\frac\beta2)}\geq n\}\nonumber\\
&=& \cup_{s\leq t}\{\omega, |\sum_{k\in \Sigma}|k|^{\delta} \langle u_n(\omega, s), e_k\rangle_{\mathbb{L}^2}e_k|_{\mathbb{L}^q}\geq n\}
\nonumber\\
&=& \cup_{s\leq t}\cap_{m\in\mathbb{N}}\cup_{j\geq m}\{\omega, |\sum_{k\in \Sigma_j}|k|^{\delta} \langle u_n(\omega, s), e_k\rangle_{\mathbb{L}^2}e_k|_{\mathbb{L}^q}\geq n\}
\nonumber\\
&=& \cup_{s\leq t}\cap_{m\in\mathbb{N}}\cup_{j\geq m}\{\omega, X_j(\omega, s):= \sum_{k\in \Sigma_j}|k|^{\delta} \langle u_n(\omega, s), e_k\rangle_{\mathbb{L}^2}e_k \in (B_{\mathbb{L}^q}(0, n))^c\}
\nonumber\\
&=& \cup_{s\in \mathbb{Q} \&\leq t}\cap_{m\in\mathbb{N}}\cup_{j\geq m}\{\omega, X_j(\omega, s) \in (B_{\mathbb{L}^q}(0, n))^c\} \in \mathcal{F}_t.
\end{eqnarray}}
where\del{$ T_n^j:=\inf\{t, X_j \notin B_{\mathbb{R}_+}(0, n^2)\}$  (open ball)} $ T_n^j:=\inf\{t, X_j >n^2\}$. As $ X_j$ is an $ \mathcal{F}_t$-adapted and continuous and thanks to \cite[Proposition 4.6]{Revuz-Yor}, we conclude that $ T_n^j$ is an optional time with respect to the filteration $ \sigma(X_j(s), s\leq t)\subset \mathcal{F}_t$. Using the right continuity property of the filtration $\mathcal{F}_t$, see the assumption in Section \ref{sec-formulation}, we conclude that $ T_n^j$ is a stopping time with respect to this latter. Therefore, $ \{\tau_n\leq t\} = \{\tau_n> t\}^c \in \mathcal{F}_t$. 

\del{We can also use other tools like, \cite[Problems 2.6 \& 2.7]{Karatzas-Book},  \cite[Proposition 4.14]{Metivier-book-Mart-82} and \cite[Proposition 4.5]{Revuz-Yor}. In particualar, we can also use \cite[Theorem 1.6]{Metivier-book-Mart-82} to  prove that $X_j $, as an adapted continuous process, is progressively measurable therefore, thanks to \cite[Proposition 4.15]{Metivier-book-Mart-82} and to the right continuity the filtration $\mathcal{F}_t$, we confirm that for all $ n, j$ the random time $ T_n^j$ is a stopping time with respect to $\mathcal{F}_t$. }

\del{Thanks to the continuity of the trajectories of  $X_j$\del{$X_j(\omega, s):=  \sum_{k\in \Sigma_j}|k|^{\delta} \langle u_n(\omega, s), e_k\rangle_{\mathbb{L}^2}e_k|_{\mathbb{L}^q}$} 
As the filtration $\mathcal{F}_t$ is right continuous, it is sufficient to prove that for all $ t\in [0, T]$, $ \{\tau_n\geq t\}\in \mathcal{F}_t$.}
\end{proof} }

\del{\section{Complementary of the proof of the martingale solution.}\label{Appendix-Martingale-solu}
To prove the existence of a martingale solution, we use \del{consider the Gelfand triplet \eqref{Gelfand-triple-Domain} and 
 use lemmas \ref{lem-unif-bound-theta-n-H-1-domain} and \ref{lem-bounded-W-gamma-p} 
and }the following compact embedding, see \cite[Theorem 2.1]{Flandoli-Gatarek-95},
\begin{equation}
 W^{\gamma, 2}(0, T; \mathbb{H}^{-\delta', 2}(O))\cap \mathbb{L}^2(0, T; \mathbb{H}^{\frac\alpha2, 2}(O)) 
 \hookrightarrow L^2(0, T; \mathbb{L}^2(O)).
\end{equation}
\del{$ L^2(0, T; \mathbb{H}^{\frac\alpha2, 2}(O)) \hookrightarrow L^2(0, T; \mathbb{L}^2(O))$,} Therefore, we deduce that the sequence of laws $ (\mathcal{L}(u_n))_n$  is tight on $ L^2(0, T; \mathbb{L}^2(O))$. 
Thanks to Prokhorov's theorem there exists a  subsequence, still denoted $ (u_n)_n$, for which  the sequence of laws $ (\mathcal{L}(u_n))_n$ converges  weakly on $ L^2(0, T; \mathbb{L}^2(O))$  to a probability measure $ \mu$. By Skorokhod's embedding theorem, we can construct a probability basis
$ (\Omega^*, F^*, \mathbb{F}^*,  P^*)$  and a sequence of $ L^2(0, T; \mathbb{L}^2(O))\cap C([0, T]; \mathbb{H}^{-\delta', 2}(O))-$random variables
$ (u^*_n)_n$ and $ u^*$ such that  $\mathcal{L}(u^*_n) = \mathcal{L}(u_n), \forall n \in \mathbb{N}_0$,  $\mathcal{L}(u^*) = \mu$ and
$ u^*_n \rightarrow u^* a.s.$ in $ L^2(0, T; \mathbb{L}^2(O))\cap C([0, T]; \mathbb{H}^{-\delta', 2}(O))$. Moreover,  $ u^*_n(\cdot, \omega) \in C([0, T]; H_n)$. Thanks to  Lemma \ref{lem-unif-bound-theta-n-H-1-domain} and to the equality in law, we infer that
 for all $ n\in \mathbb{N}$,
\begin{eqnarray}\label{eq-bound-u-*-n-u-*}
\mathbb{E}\sup_{[0, T]}| u_n^*(s)|^p_{\mathbb{L}^2}+ \mathbb{E}\int_0^T| u_n^*(s)|^2_{ \mathbb{H}^{\frac\alpha2, 2}}ds \leq c<\infty.
\end{eqnarray}
Consequently, the sequence  $ u^*_n$ converges weakly in $  L^2(\Omega\times [0, T]; \mathbb{H}^{\frac\alpha2, 2}(O))$ to a limit $ u^{**}$. It is easy to see that $u^{*} = u^{**},  P\times dt a.e.$ and 
\begin{equation}
u^{*}(\cdot, \omega)\in L^2(0, T; \mathbb{H}^{\frac\alpha2, 2}(O))\cap L^\infty(0, T; \mathbb{L}^2(O)).
\end{equation} 
We introduce the filtration 
\begin{equation}
(\mathit{G}_n^*)_t:= \sigma\{u^{*}_n(s), s\leq t\}
\end{equation}
and construct with respect to the filtration $ (\mathit{G}_n^*)_t$ the time continuous square integrable martingale
$ (M_n(t), t\in [0, T])$ with trajectories in
$ C([0, T]; \mathbb{L}^2(O))$ by
\begin{equation}
M_n(t):= u_n^*(t) - P_nu_0+\int_0^t A_\alpha u_n^*(s) ds -\int_0^t P_nB(u_n^*(s))ds
\end{equation}
with the quadratic variation 
\begin{equation}
\langle\langle M_n\rangle\rangle_t= \int_0^tP_nG(u^*_n(s))QG(u^*_n(s))^*ds,
\end{equation}
where $ G(u^*_n(s))^*$ is the adjoint of $G(u^*_n(s))^*$. The proof yields as a consequence of the equality in law. The main task now is to prove that for $ a.s.$, $ M_n(t)$ converges weakly in $ \mathbb{H}^{-\delta', 2}(O)$ to a martingale $ M(t)$, for all $ t\in [0, T]$, where $ M(t)$ given by 
\begin{equation}\label{eq-M(t)}
M(t):= u^*(t) - u_0+\int_0^t A_\alpha u^*(s) ds -\int_0^t B(u^*(s))ds.
\end{equation}
In fact, for all $ v\in V_2:= \mathbb{H}^{\delta', 2}$ with $ \delta'>1+\frac d2$, we have $ P-a.s.$,  $ \langle P_nu_0, v\rangle $  converges to $ \langle u_0, v\rangle $, thanks to the fact that $ P_n \rightarrow I$ in $ \mathbb{L}^2(O)$, $ \langle u_n^*(t), v\rangle $  converges to $ \langle u^*(t), v\rangle $ as a consequence of the 
the a.s. convergence in $ L^2(0, T; \mathbb{L}^2(O))$ (in fact, we speak about the convergence of a subsequence but as usual we keep the same notation) and  the weak convergence and the continuity in $\mathbb{H}^{-\delta', 2}(O))$, the term $\int_0^t A_\alpha u_n^*(s) ds$
converges thanks to the weak convergence $ L^2(0, t; \mathbb{L}^2(O))$, for all $ t\in [0, T]$ and the elementry inequality $ \langle A_\alpha u_n^*(t), v\rangle = \langle u_n^*(t), A_\alpha v\rangle $ with $ v \in \mathbb{H}^{\delta', 2}(O)$ and  $\delta'>1+\frac d2>\alpha $. The convergence of $\int_0^t \langle B(u_n^*(s)), v\rangle ds$ is completely described in \cite[Appendix 2]{Flandoli-Gatarek-95}, in particular the condition $ \delta'>1+\frac d2$ implies that $ \partial_j v \in C^0(O)$, which we need to do the calculus. To prove that $ M(t)$ is  a quadratic martingale, we see that for all $ \phi \in C_b(L^2(0, s; \mathbb{L}^2(O)))$ and $ v\in \mathcal{D}(O)$
\begin{equation}
\mathbb{E}(\langle M(t)- M(s), v\rangle \phi(u^*|_{[0, s]}))= \lim_{n\rightarrow +\infty} \mathbb{E}(\langle M_n(t)- M_n(s), v\rangle \phi(u^*|_{[0, s]}))=0
\end{equation}
and 
\begin{eqnarray}
&{}&\mathbb{E}(\langle M(t), v\rangle \langle M(t), y\rangle - \langle M(s), v\rangle \langle M(s), y\rangle -\int_s^t\langle G^*(u^*(r)P_nv, G^*(u^*(r)P_ny \rangle dr)\phi(u^*|_{[0, r]}))\nonumber\\
&=& \lim_{n\rightarrow +\infty} \mathbb{E}(\langle M_n(t), v\rangle \langle M_n(t), y\rangle - \langle M_n(s), v\rangle \langle M_n(s), y\rangle -\int_s^t\langle G^*(u_n^*(r)P_nv, G^*(u_n^*(r)P_ny \rangle dr)\phi(u_n^*|_{[0, r]}))\nonumber\\
&=& 0
\end{eqnarray}
The main ingredeints are formula \eqref{eq-M(t)}, $ \partial_j\phi \in C^0 $ and therefore we can estimate $\int_0^t \langle B(u_n^*(s)), v\rangle ds$ by $\int_0^t|B(u_n^*(s))|_{L^1} |v|_{C^1} ds$.

\del{We follow the same steps as in \cite{Flandoli-Gatarek-95} (hence we omit here the details), we end up with the statement that $ u^*$ is a solution in $ V_2:= \mathbb{H}^{1+\frac d2, 2}(O)$ of Equation \eqref{Eq-weak-Solution} with $ W$ being replaced by $ W^*$. 

It is easy to see, thanks to Equation \eqref{FSBE-Galerkin-approxi} and to the equality in Law of $ u_n $ and $ u_n^*$,
that $ (M_n(t), t\in [0, T])$ is a square integrable martingale with respect to the filtration 
$(\mathcal{G}_n)_t:=\sigma\{u_n^*(s), s\leq t\}$. The remain part of the proof follows the same steps as in \cite{Flandoli-Gatarek-95},
hence we omit here.
\del{$ \mathbb{G}^* := (\mathcal{G}^*_t)_t$,  with $ \mathcal{G}^*_t $ is the $ \sigma-$algebra generated by
$ \cup_n (\mathcal{G}_n)_t:=\sigma\{u_n^*(s), s\leq t\}$.}}
}


\section{Some Sobolev inequalities.}\label{Appendix-Sobolev}
\del{\subsection{Sobolev pointwise multiplication}\label{Sobolev pointwise multiplication}
Let us recall the following classical result, see e.g. \cite[Theorem 1.4.6.1, p 190-191]{R&S-96}
\begin{theorem}\label{theor-1-SR}
 Let   $ s_1, s_2 \in \mathbb{R}$, satisfy  $ s_1\leq  s_2$ and  $ s_1+ s_2>0$. Then
\begin{itemize}
 \item (i) if $ s_2> \frac dq$, $ H^{s_1, 2}\cdot H^{s_2, 2} \hookrightarrow H^{s_1, 2}$.
\item (ii) if $ s_2< \frac dq$, $ H^{s_1, 2}\cdot H^{s_2, 2} \hookrightarrow H^{s_1+s_2-\frac dq, 2}$.
\end{itemize}
\end{theorem}}

\subsection{Sobolev pointwise multiplication on bounded sets}\label{Sobolev pointwise multiplication-Bounded-Domain}
Assume that $ O \subset \mathbb{R}^d$ is a bounded  $ C^\infty$ domain,(recall, domain means an open subset, see e.g.
\cite[5.2 p43]{Triebel-Structure-Function-vol97}). The notation
$A^s_{pq}(\mathbb{R}^d), s\in \mathbb{R}, 0<q\leq \infty, 0<p<\infty,$ stands either for Triebel-Lizorkin spaces $F^s_{pq}(\mathbb{R}^d) $ or for
Besov spaces $B^s_{pq}(\mathbb{R}^d)$, see
the definition in \cite[p.8]{R&S-96}. We know that, see e.g. \cite[Proposition Tr.6, 2.3.5, p 14]{R&S-96}, 
\begin{eqnarray}\label{main-relation-spaces}
F^s_{p2}(\mathbb{R}^d) &=& H^{s, p}(\mathbb{R}^d), \; 1<p<\infty, \; s\in \mathbb{R}, \nonumber\\
F^s_{pp}(\mathbb{R}^d) &=& B^s_{pp}(\mathbb{R}^d) =  W^{s, p}(\mathbb{R}^d), \; 1\leq p<\infty, \; 0<s\neq\; integer,
\end{eqnarray}
where $ H^{s, p}(\mathbb{R}^d)$ is the Bessel potential spaces or called also Sobolev spaces of fractional order and
$ W^{s, p}(\mathbb{R}^d), \; 1\leq p<\infty, \; 0<s\neq\; integer$  is Slobodeckij spaces. We define Triebel-Lizorkin and Besov spaces $ A^s_{pq}(O)$
on bounded sets by, see e.g. \cite[Definition 5.3. p 44]{Triebel-Structure-Function-vol97}
\begin{equation}
 A^s_{pq}(O)= \{ f \in D'(O); \;\; \text{there is a } g \in A^s_{pq}(\mathbb{R}^d),\; \text{with}\; g/O =f \; \text{in distribution sense}\},
\end{equation}
endowed with the norm
\begin{equation}
 |f|_{A^s_{pq}(O)}= \inf_{g\in A^s_{pq}(\mathbb{R}^d),\; g/O=f}|g |_{A^s_{pq}(\mathbb{R}^d)}.
\end{equation}
The relations in \eqref{main-relation-spaces} still also valid for
bounded sets, see e.g. \cite[5.8 p 52]{Triebel-Structure-Function-vol97}. Our main theorem is the following
\del{Let us recall the following classical result, see e.g. \cite[Theorem 1.4.6.1, p 190-191]{R&S-96}}
\begin{theorem}\label{Theo-pointwiseMulti-Bounded-Domain}
 Let\del{   $ s_1, s_2, s_3 \in \mathbb{R}$ and} $p, s, q, p_i, s_i, q_i, i=1,2$, such that the following pointwise multiplication is satisfied for
$ A^{s_i}_{p_iq_i}(\mathbb{R}^d)$
\begin{equation}\label{Eq-pointwise-R-d}
 |f_1f_2|_{A^{s}_{pq}}\leq c |f_1|_{A^{s_1}_{p_1q_1}} |f_2|_{A^{s_2}_{p_2q_2}}.
\end{equation}
Then Inequality \eqref{Eq-pointwise-R-d} is also valid for $ O \subset \mathbb{R}^d$ being a bounded open  $ C^\infty$ set.
\end{theorem}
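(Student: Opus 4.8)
The plan is to transfer the multiplication inequality from $\mathbb{R}^d$ to $O$ by an \emph{extension--multiply--restrict} argument, exploiting the very definition of $A^s_{pq}(O)$ as a space of restrictions with the infimum norm. Concretely, fix $f_1\in A^{s_1}_{p_1q_1}(O)$ and $f_2\in A^{s_2}_{p_2q_2}(O)$. By definition of the restriction spaces, for each $\varepsilon>0$ there exist extensions $g_1\in A^{s_1}_{p_1q_1}(\mathbb{R}^d)$ and $g_2\in A^{s_2}_{p_2q_2}(\mathbb{R}^d)$ with $g_i/O=f_i$ and
\begin{equation}
|g_i|_{A^{s_i}_{p_iq_i}(\mathbb{R}^d)}\leq |f_i|_{A^{s_i}_{p_iq_i}(O)}+\varepsilon,\qquad i=1,2.
\end{equation}
The full-space hypothesis \eqref{Eq-pointwise-R-d} then applies to the pair $(g_1,g_2)$, so the product $g_1g_2$ is a well-defined element of $A^s_{pq}(\mathbb{R}^d)$ and satisfies $|g_1g_2|_{A^s_{pq}(\mathbb{R}^d)}\leq c\,|g_1|_{A^{s_1}_{p_1q_1}(\mathbb{R}^d)}\,|g_2|_{A^{s_2}_{p_2q_2}(\mathbb{R}^d)}$.

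Next I would identify the restriction of this product. Since pointwise multiplication is a local operation, the distribution $(g_1g_2)/O$ depends only on $g_1/O=f_1$ and $g_2/O=f_2$, and in fact $(g_1g_2)/O=f_1f_2$ in the distribution sense on $O$. Consequently $f_1f_2$ admits $g_1g_2$ as an $A^s_{pq}(\mathbb{R}^d)$-extension, so $f_1f_2\in A^s_{pq}(O)$ and, directly from the infimum definition of the $O$-norm,
\begin{equation}
|f_1f_2|_{A^s_{pq}(O)}\leq |g_1g_2|_{A^s_{pq}(\mathbb{R}^d)}\leq c\,\big(|f_1|_{A^{s_1}_{p_1q_1}(O)}+\varepsilon\big)\big(|f_2|_{A^{s_2}_{p_2q_2}(O)}+\varepsilon\big).
\end{equation}
Letting $\varepsilon\to0$ yields the claimed inequality on $O$. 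The relations \eqref{main-relation-spaces}, valid also on bounded $C^\infty$ sets, then let one read off the corresponding statement for the Bessel-potential spaces $H^{s,p}(O)$ and the Slobodeckij spaces $W^{s,p}(O)$ that are actually used in the body of the paper.

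The step I expect to be the main obstacle is the locality claim $(g_1g_2)/O=f_1f_2$, i.e.\ that the restriction of the product depends only on the restrictions of the factors. For genuine (measurable) functions this is immediate, but the spaces $A^s_{pq}(\mathbb{R}^d)$ may contain distributions for which the ``pointwise'' product is defined only through a paraproduct/limiting procedure, so I would need to argue that this product commutes with restriction to the open set $O$ — most cleanly by approximating $g_1,g_2$ by smooth functions, for which locality is trivial, and passing to the limit using the continuity of multiplication granted by \eqref{Eq-pointwise-R-d} together with the continuity of the restriction operator $A^s_{pq}(\mathbb{R}^d)\to A^s_{pq}(O)$. The only structural fact about $O$ entering the argument is that it is a bounded open $C^\infty$ set so that the restriction spaces and their infimum norms are well behaved; no explicit extension operator is required, since the infimum over extensions already furnishes the near-optimal $g_i$ needed above.
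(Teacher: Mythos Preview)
Your argument is correct and is essentially the same as the paper's: the paper writes the infimum directly, $|f_1f_2|_{A^s_{pq}(O)}\leq \inf_{g_i/O=f_i}|g_1g_2|_{A^s_{pq}(\mathbb{R}^d)}\leq c\inf_{g_i/O=f_i}|g_1|_{A^{s_1}_{p_1q_1}}|g_2|_{A^{s_2}_{p_2q_2}}\leq c|f_1|_{A^{s_1}_{p_1q_1}(O)}|f_2|_{A^{s_2}_{p_2q_2}(O)}$, whereas you unfold this via $\varepsilon$-near-optimal extensions and a limit. You are in fact more careful than the paper about the locality step $(g_1g_2)/O=f_1f_2$, which the paper uses implicitly without comment.
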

\begin{proof}
Let $f_i\in A^{s_i}_{p_iq_i}(O)$, \del{and $g_i\in A^{s_i}_{p_iq_i}(\mathbb{R}^d)$, such that  $ g_i/O =f_i$}
then
\begin{equation}
 |f_1f_2|_{A^s_{pq}(O)}= \inf_{g\in A^s_{pq}(\mathbb{R}^d), g/O=(f_1f_2)}|g |_{A^s_{pq}(\mathbb{R}^d)}\leq
\inf_{g_i\in A^{s_i}_{p_iq_i}(\mathbb{R}^d), g_i/O=f_i} |g_1g_2|_{A^s_{pq}(\mathbb{R}^d)}.
\end{equation}
 Applying Estimate  \eqref{Eq-pointwise-R-d}, we infer that
\begin{equation}
 |f_1f_2|_{A^s_{pq}(O)}\leq c
\inf_{g_i\in A^{s_i}_{p_iq_i}(\mathbb{R}^d), g_i/O=f_i} (|g_1|_{A^{s_1}_{p_1q_1}(\mathbb{R}^d)}|g_2|_{A^{s_2}_{p_2q_2}(\mathbb{R}^d)})
\leq c|f_1|_{A^{s_1}_{p_1q_1}(O)}|f_2|_{A^{s_2}_{p_2q_2}(O)}
\end{equation}

\end{proof}

\subsection{Sobolev embedding }\label{lem-appendix-sobolev-embedding}
\begin{theorem} Let  $ O$ be  either the whole space $ \mathbb{R}^d$, or the torus $ \mathbb{T}^d$, or an arbitrary domain 
 $O\subset \mathbb{R}^d$.
 If $ t\leq s$ and $ 1<p\leq q\leq \frac{dp}{d-(s-t)p}<\infty$, then
\begin{equation}
 H^{s, p}(O) \hookrightarrow  H^{t, q}(O).
\end{equation}
\end{theorem}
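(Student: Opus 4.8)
The plan is to reduce everything to the case $O=\mathbb{R}^d$ and then transfer to the torus and to arbitrary domains by standard arguments. First I would treat $\mathbb{R}^d$. Recall that the Bessel potential space is $H^{s,p}(\mathbb{R}^d)=J^{-s}(L^p(\mathbb{R}^d))$, where $J:=(1-\Delta)^{1/2}$, with equivalent norm $|f|_{H^{s,p}}=|J^s f|_{L^p}$. Writing $\sigma:=s-t\geq 0$ and $g:=J^s f$, the desired embedding $|f|_{H^{t,q}}\leq c\,|f|_{H^{s,p}}$ is exactly the statement $|J^{-\sigma}g|_{L^q}\leq c\,|g|_{L^p}$; hence it suffices to prove that the negative Bessel power $J^{-\sigma}:L^p(\mathbb{R}^d)\to L^q(\mathbb{R}^d)$ is bounded for the given range of exponents.

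Then I would establish this operator bound through the Bessel kernel. The operator $J^{-\sigma}$ is convolution with the Bessel kernel $G_\sigma$, which satisfies $G_\sigma(x)\lesssim |x|^{\sigma-d}$ as $|x|\to 0$ and decays exponentially as $|x|\to\infty$. Splitting $G_\sigma=G_\sigma\mathbf{1}_{\{|x|\leq 1\}}+G_\sigma\mathbf{1}_{\{|x|>1\}}$, the far part lies in $L^1(\mathbb{R}^d)$, so by Young's inequality it maps $L^p\to L^p$ boundedly; the near part is dominated pointwise by the Riesz kernel $c|x|^{\sigma-d}$, so by the Hardy--Littlewood--Sobolev inequality it maps $L^p\to L^{q_*}$ with the endpoint exponent $q_*=\frac{dp}{d-\sigma p}$ (this is where the hypothesis $\sigma p<d$, equivalently $\frac{dp}{d-(s-t)p}<\infty$, is used). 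Combining the two bounds, $J^{-\sigma}$ is bounded from $L^p$ into both $L^p$ and $L^{q_*}$, and interpolating the target space yields boundedness $L^p\to L^q$ for every $q$ with $p\leq q\leq q_*$, which is precisely the claimed range.

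Next I would transfer the result. For the torus $\mathbb{T}^d$ one argues with the Fourier-multiplier definition of the periodic Bessel (equivalently, null-average Riesz) potential spaces: the same splitting of the periodic kernel, together with the periodic analogue of Hardy--Littlewood--Sobolev (or a transference/periodization from $\mathbb{R}^d$), gives the embedding; alternatively this is available in the literature already cited here (e.g. Schmeisser--Triebel, Sickel). For an arbitrary domain $O\subset\mathbb{R}^d$, the spaces $H^{s,p}(O)$ and $H^{t,q}(O)$ are defined by restriction, with norm the infimum over extensions (as in the definition preceding Theorem \ref{Theo-pointwiseMulti-Bounded-Domain}). Given $f\in H^{s,p}(O)$ and any extension $g\in H^{s,p}(\mathbb{R}^d)$ with $g|_O=f$, the $\mathbb{R}^d$ embedding gives $g\in H^{t,q}(\mathbb{R}^d)$ with $|g|_{H^{t,q}(\mathbb{R}^d)}\leq c\,|g|_{H^{s,p}(\mathbb{R}^d)}$; restricting and taking the infimum over all such $g$ yields $f\in H^{t,q}(O)$ with $|f|_{H^{t,q}(O)}\leq c\,|f|_{H^{s,p}(O)}$, exactly as in the proof of that earlier theorem.

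The main obstacle is the $\mathbb{R}^d$ kernel analysis: establishing the pointwise size of $G_\sigma$ near the origin and the endpoint Hardy--Littlewood--Sobolev bound, and then correctly interpolating the target to cover the whole interval $p\leq q\leq q_*$ rather than just the two endpoints. The torus case carries the additional (minor) subtlety that the convolution structure must be handled with periodic kernels; the arbitrary-domain case is comparatively soft once $\mathbb{R}^d$ is settled, since it reduces to the restriction/extension bookkeeping already used elsewhere in the appendix.
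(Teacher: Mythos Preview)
Your argument is correct, and in fact it is more detailed than the paper's own proof, which consists entirely of pointers to the literature (Adams, Stein, Taylor) where this embedding is recorded. The kernel approach you outline for $\mathbb{R}^d$---reducing to boundedness of $J^{-\sigma}$, splitting the Bessel kernel, and invoking Hardy--Littlewood--Sobolev for the endpoint---is precisely the argument one finds in Stein's book, and your transfer to arbitrary domains via the restriction definition mirrors exactly what the paper does for pointwise multiplication in Theorem~\ref{Theo-pointwiseMulti-Bounded-Domain}.

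One small point of phrasing: after the splitting you show the far part maps $L^p\to L^p$ and the near part maps $L^p\to L^{q_*}$, then claim the full operator maps $L^p$ into \emph{both} $L^p$ and $L^{q_*}$. That conclusion is right, but it relies on the cross-terms you did not write down: the near part is also in $L^1$ (since $|x|^{\sigma-d}$ is locally integrable for $\sigma>0$), so it too maps $L^p\to L^p$; and the far part, being in every $L^r$, also maps $L^p\to L^{q_*}$ by Young. With those two easy observations filled in, the interpolation step is justified as stated.
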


\begin{proof}
For the proof see \cite[Theorem 7.63. p221 + 7.66 p222]{Adams-Hedberg-94}. For   $O= \mathbb{R}^d $ and $ q=\frac{dp}{d-sp}$, see \cite[Theorem 1, p 119 or Theorem 2 p 124]{Stein} and
\cite[Proposition 6.4. p 24]{Taylor-PDE-III}. For  $O= \mathbb{T}^d$, see e.g. \cite[pp 23-24]{Taylor-PDE-III}. 
\end{proof}
As a consequence, we have
\begin{equation}
 H^{\frac\alpha2, 2}(O) \hookrightarrow H^{\frac\alpha2-\frac d2+\frac dq, q}( O), \;\;\; \forall q \geq 2.
\end{equation}
See also the above result for  the Sobolev solenoidal spaces in \cite[Theorem 3.10]{Amann-solvability-NSE-2000}.

\del{\subsection{Sobolev embedding }\label{lem-appendix-sobolev-embedding}
\begin{theorem} We take  $ O$ to be  the whole space $ \mathbb{R}^d$, or the torus $ \mathbb{T}^d$,
or an arbitrary regular (see \cite{Adams-Hedberg-94} for the definition of the regularity) domain (open set)
 $O\subset \mathbb{R}^d$.
 If $ t\leq s$ and $ 1<p\leq q\leq \frac{dp}{d-(s-t)p}<\infty$, then
\begin{equation}
 H^{s, p}( O) \hookrightarrow  H^{t, q}(O).
\end{equation}
\end{theorem}

\begin{proof}
For the proof, in the case  $O= \mathbb{R}^d $,  see \cite[Theorem 7.63]{Adams-Hedberg-94}
and in the case $ O$ open regular domain, see \cite[Theorem 7.63. p221 + 7.66 p222]{Adams-Hedberg-94}.
Moreover, for   $O= \mathbb{R}^d $ and $ q=\frac{dp}{d-sp}$, see \cite[Theorem 1, p 119 or Theorem 2 p 124]{Stein} and
\cite[Proposition 6.4. p 24]{Taylor-PDE-III}. For  $O= O $, see e.g. \cite[pp 23-24]{Taylor-PDE-III}.
\end{proof}
As a consequence, we have
\begin{equation}
  H^{\frac\alpha2, 2}(O) \hookrightarrow H^{\frac\alpha2-\frac d2+\frac dq, q}( O), \;\;\; \forall q \geq 2.
\end{equation}}

\del{\eqref{B-u-v-h-alpha-2-d} with $ \eta =0$ (or \eqref{Eq-B-H-alpha-2-est})}

\section*{Acknowledgement}

 The author would like to express her sincere gratitude to Prof. Alexandra Lunardi, Prof. Franco Flandoli, Prof. Yoshikazu Giga, Prof. Marco Romito for their fruitful discussions and for pointing out to the author important references and to Prof. Lyazid Abbaoui and Prof. Ed Corrigan for the administrative facilities they have provided.

\end{document}